\DeclareMathSymbol{\invques}{\mathord}{operators}{`>}
\DeclareRobustCommand{\tmquestiondown}{%
  \ifmmode\invques\else\textquestiondown\fi
}
\numberwithin{equation}{section}
\newcommand{\mylabel}[2]{#2\def\@currentlabel{#2}\label{#1}}
\newtheorem{theorem}{Theorem}[section]
\newtheorem{lemma}[theorem]{Lemma}
\newtheorem{conj}[theorem]{Conjecture}
\newtheorem{proposition}[theorem]{Proposition}
\newtheorem{corollary}[theorem]{Corollary}
\newtheorem{defn}[theorem]{Definition}
\newtheorem{remark}[theorem]{Remark}
\newcommand{\cM}{\mathcal{M}}
\newcommand{\dd}{\operatorname{d}}
\newcommand{\Gal}{\operatorname{Gal}}
\newcommand{\sign}{\operatorname{sign}}
\newcommand{\NN}{\mathbb{N}}
\newcommand{\QQ}{\mathbb{Q}}
\newcommand{\Qp}{\mathbb{Q}_p}
\newcommand{\Zp}{\mathbb{Z}_p}
\newcommand{\ZZ}{\mathbb{Z}}
\renewcommand{\AA}{\mathbb{A}}
\newcommand{\FF}{\mathbb{F}}
\newcommand{\FFF}{\mathcal{F}}
\newcommand{\ord}{\mathrm{ord}}
\newcommand{\fp}{\mathfrak{p}}
\newcommand{\fq}{\mathfrak{q}}
\newcommand{\cL}{\mathcal{L}}
\newcommand{\cO}{\mathcal{O}}
\newcommand{\GL}{\mathrm{GL}}
\newcommand{\bR}{\mathbf{R}}
\newcommand{\image}{\mathrm{Im}}
\newcommand{\cyc}{\textup{cyc}}
\newcommand{\uIk}{u(\mathbb{I}_{\mathbf{k}\otimes\chi})}
\newcommand{\Ik}{\mathbb{I}_{\mathbf{k}\otimes\chi}}
\newcommand{\ff}{\mathfrak{f}}
\newcommand{\Hom}{\mathrm{Hom}}
\newcommand{\Char}{\mathrm{char}}
\newcommand{\Ind}{\mathrm{Ind}}
\newcommand{\ac}{\textup{ac}}
\newcommand{\LL}{\Lambda}
\newcommand{\TT}{\mathbb{T}}
\newcommand{\f}{\textup{\bf f}}
\newcommand{\lra}{\longrightarrow}
\newcommand{\ra}{\lra}
\newcommand{\res}{\textup{res}}
\newcommand{\Bf}{\mathbf{f}}
\newcommand{\fP}{\mathfrak{P}}
\newcommand{\BF}{\textup{BF}}
\newcommand{\cP}{\mathcal{P}}
\newcommand{\cF}{\mathcal{F}}
\newcommand{\p}{\mathfrak{p}}
\newcommand{\q}{\mathfrak{q}}
\newcommand{\m}{\mathfrak{m}}
\newcommand{\calF}{\mathcal{F}}
\newcommand{\LSU}{\mathcal{L}^{\rm SU}}
\newcommand{\LN}{\mathcal{L}^{\mathrm{N}}}
\newcommand{\cE}{\mathcal{E}}
\newcommand{\cR}{\mathcal{R}}
\newcommand{\fa}{\mathfrak{a}}
\newcommand{\cA}{\mathcal{A}}
\begin{document}

\title{I\lowercase{wasawa theory for}  $\mathrm{GL}_2\times \mathrm{Res}_{K/\mathbb{Q}}\mathrm{GL}_1$}

\begin{abstract}
Let $K$ be an imaginary quadratic field where the prime $p$ splits. Our goal in this article is to prove results towards the Iwasawa main conjectures for $p$-nearly-ordinary families associated to $\mathrm{GL}_2\times \mathrm{Res}_{K/\mathbb{Q}}\mathrm{GL}_1$ with a minimal set of assumptions. The main technical input is an improvement on the locally restricted Euler system machinery that allows the treatment of residually reducible cases, which we apply with the Beilinson--Flach Euler system.

\medskip

\noindent\textsc{R\'esum\'e.}
Soit $K$ un corps imaginaire quadratique où le nombre premier $p$ est déployé. Le but de cet article est de démontrer des résultats envers les conjectures principales d'Iwasawa pour des familles presque-$p$-ordinaires associées à $\mathrm{GL}_2\times \mathrm{Res}_{K/\mathbb{Q}}\mathrm{GL}_1$  sous un ensemble minimal d'hypothèses. La contribution technique principale est une amélioration sur la machinerie de systèmes d'Euler localement restrients qui permet le traitement des cas résiduellement réductibles, que nous appliquons avec le système d'Euler de Beilinson--Flach.
\end{abstract}

\author{K\^az\i m B\"uy\"ukboduk}
\address{K\^az\i m B\"uy\"ukboduk\newline UCD School of Mathematics and Statistics\\ University College Dublin\\ Ireland}
\email{kazim.buyukboduk@ucd.ie}

\author{Antonio Lei}
\address{Antonio Lei\newline
D\'epartement de Math\'ematiques et de Statistique\\
Universit\'e Laval, Pavillion Alexandre-Vachon\\
1045 Avenue de la M\'edecine\\
Qu\'ebec, QC\\
Canada G1V 0A6}
\email{antonio.lei@mat.ulaval.ca}

\dedicatory{Cet article est dédié à Bernadette Perrin-Riou dont les travaux nous ont grandement inspirés.}

\thanks{The first named author acknowledges partial support from European Union's Horizon 2020 research and innovation programme under the Marie  Skłodowska-Curie Grant Agreement No. 745691. The second named author's research is supported by the NSERC Discovery Grants Program  RGPIN-2020-04259 and RGPAS-2020-00096.}
\subjclass[2020]{11R23 (primary); 11F11, 11R20 (secondary)}
\keywords{Iwasawa theory, Rankin--Selberg products, locally restricted Euler systems, Beilinson--Flach elements}

\maketitle
\tableofcontents
\section{Introduction}
Let $p\ge 7$ be a prime number. Let $f\in S_{k_f+2}(\Gamma_1(N_f),\epsilon_f)$ be a cuspidal eigenform  which is not of CM type, where $N_f$ is coprime to $p$. We assume that $f$ admits a $p$-ordinary stabilization $f^{\alpha}$ with $U_p$-eigenvalue $\alpha_f$. We fix an imaginary quadratic field $K$ with discriminant $D_K$ which is coprime to $N_f$ and where $p$ splits with $(p)=\p\p^c$; here $c$ denotes any lift of a generator of $\Gal(K/\QQ)$ to $G_\QQ$. We also fix a ray class character $\chi$ of $K$ with conductor $\ff$. We note that there will be no new result in the present article that concerns the ``Eisentein case'' $\chi= \chi^c$.

Our goal in this paper is to prove  divisibility results on the main conjectures for $p$-nearly-ordinary families of automorphic motives on $\GL_2\times {\rm Res}_{K/\QQ}\GL_1$. In more explicit (but still very rough) terms, we will prove divisibilities in the Iwasawa Main Conjectures for families that interpolate Rankin--Selberg convolutions $f^{\alpha} \times \theta_{\chi\psi}$, where $\psi$ is a Hecke character of $K$ with $p$-power conductor and $\theta_{\chi\psi}$ is the theta-series associated $\chi\psi$.  These results should be thought of as evidence towards variational versions of Bloch--Kato conjectures for the relevant class of motives.

There are many earlier works in this direction: 
\begin{itemize}
\item The work of Bertolini--Darmon \cite{BertoliniDarmon2005} addresses the ``definite'' anticyclotomic Iwasawa main conjectures for the base change $f_{/K}$ of a modular form $f\in S_{2}(\Gamma_0(N_f))$ to $K$. The nearly-ordinary family in question interpolates the self-dual twists of the Rankin--Selberg motives associated to $f\times \theta_{\psi}$ as $\psi$ varies among Hecke characters of $K$ with $p$-power conductor.  
\item Chida and Hiseh in~\cite{chidahsiehanticyclomainconjformodformscomposito} generalized the work of  Bertolini--Darmon to study the  ``definite'' anticyclotomic Iwasawa main conjectures for the base change to $K$ of a modular form $f\in S_{k_f+2}(\Gamma_0(N_f))$ of arbitrary weight.
\item Howard in \cite{howardcompositio1} studied Perrin-Riou's Heegner point (``indefinite'' anticyclotomic) Main Conjectures for $f\in S_{2}(\Gamma_0(N_f))$; and in \cite{howard2007}, he initiated the study of ``indefinite'' anticyclotomic Main Conjectures which allow variation in Hida families. The nearly-ordinary family in question interpolates the self-dual twists of the Rankin--Selberg motives associated to $f^{\alpha}\times \theta_{\psi}$ as $\psi$ varies among Hecke characters of $K$ with $p$-power conductor and $f^{\alpha}$ in a Hida family. Works of Fouquet~\cite{Fouquet2013} and the first named author~\cite{KbbBigHeegner} relied on the technology developed in \cite{howard2007} to obtain results towards ``indefinite'' anticyclotomic Main Conjectures in the latter setting.
\item Skinner--Urban~\cite{skinnerurbanmainconj}, combined with the work of Kato \cite{kato04}, proved (under certain hypotheses) the Iwasawa Main Conjecture for the base change $f_{/K}$ of a modular form $f$ of any weight, along the $\ZZ_p^2$-tower $\Gamma_K$ of $K$. Their results also allowed variation in $f$. The nearly-ordinary family in question interpolates the Rankin--Selberg motives associated to $f^{\alpha}\times \theta_{\psi}$ as $\psi$ varies among Hecke characters of $K$ with $p$-power conductor and $f^{\alpha}$ in a Hida family. In the work of Skinner--Urban, the global root number for $f_{/K}$ is assumed to be $+1$ (this property is constant in Hida families).
\end{itemize}  
All the results above concern the case when $\chi=\chi^c$ is abelian over $\QQ$. In the more general setup, there has also been some progress, which is somewhat weaker than the case when $\chi=\chi^c$:
\begin{itemize}
    \item Under the hypothesis that $\chi$ is $p$-distinguished (which amounts to the requirement that the difference $\chi(\p)-\chi(\p^c)$ does not belong to the maximal ideal of the coefficient ring),  the present authors~\cite{BLForum} (general $k_f$) obtained results for the nearly-ordinary family that consists of Rankin--Selberg motives associated to $f\times \theta_{\chi\psi}$ as $\psi$ varies among Hecke characters of $K$ with $p$-power conductor (Castella~\cite{castellaJLMS} has proved similar results in the case where $k_f=0$). In \cite{BLForum}, we were able to descend to the anticyclotomic tower to treat definite and indefinite cases simultaneously.
    \item In some cases, the work of Wan~\cite{xinwanwanrankinselberg} complements the results of \cite{BLForum} to obtain a proof of Iwasawa Main Conjectures (up to $\mu$-invariants).    
    \item We note that in \cite{CastellaHsiehGHC}, Castella and Hsieh  have also obtained (non-variational) results towards Bloch--Kato Conjectures for certain Rankin--Selberg motives $f\times \theta_{\chi\psi}$ at the central critical point.
\end{itemize}
All these results will be recalled in the main body of the article and their comparisons to the new results we obtain in the present article will be made clear.

The main goal of the present article is to remove the $p$-distinguished hypothesis, which is present in all previous works which concern the ``cuspidal'' scenario $\chi\neq \chi^c$ (c.f. Theorems~\ref{intro_thm_BK_type_result}, \ref{intro_thm_ordinary_Deltai_MC_via_horizontal_ES}, \ref{intro_thm_ordinary_definite_ac} and \ref{intro_thm_ordinary_Delta1_Selmer_torsion_via_horizontal_ES_indefinite} for a quick overview of our results; see also the paragraph following Theorem~\ref{intro_thm_ordinary_Deltai_MC_via_horizontal_ES} for a discussion concerning the term $\mathscr{C}(u)$ and Remark~\ref{remark_why_nonEis} for the reason why we assume that $\chi\neq \chi^c$). Before we present our results in their precise form and explain our strategy, we first recall the role $p$-distinguished hypothesis on $\chi$ played in earlier works.

When the prime $p$ splits in $K/\QQ$ and the Hecke character $\chi$ is $p$-distinguished, one may construct a full-fledged Euler system over $K$ associated to the Rankin--Selberg product $f_{/K}\otimes\chi$. This relies on a ``patching'' argument introduced in \cite{LLZ2} when $f$ has weight $2$ and  in \cite{BLForum} when $f$ is of higher weight. The $p$-distinguished hypothesis is crucial when identifying the irreducible component of the eigencurve that contains the $p$-ordinary stabilization of $\theta_\chi$ (which necessarily has CM by $K$ thanks to the $p$-distinguished hypothesis, by a result of Bella\"iche and Dimitrov).

In the absence of the $p$-distinguished hypothesis, we are forced to work directly with the motives over $\QQ$  associated to Rankin--Selberg convolutions of the form $f\times \theta_{\chi\psi}$ (where the Hecke character $\psi$ will vary). This approach will require an extension of the locally restricted Euler system machinery, which we develop in Appendix~\ref{appendix_sec_ES_main} and it constitutes the technical backbone of our strategy. The difficulty stems from the fact that in the absence of  $p$-distinguished hypothesis on $\chi$, the residual Galois representation $\overline{\rho}_f\otimes{\rm Ind}_{K/\QQ}\overline{\chi}$ may well be reducible. Therefore, locally restricted Euler system machinery developed in \cite{kbbesrankr, kbbstick, kbbCMabvar, KBOchiai_1}, which ultimately relies on the theory of Kolyvagin systems~\cite{mr02} (where residual irreducibility is also essential), cannot be put to use in the present setting. 

Before we present our main results, we introduce some notation.
\subsection{Setup}
Let us fix an algebraic closure $\overline{\QQ}$ of $\QQ$ and fix embeddings $\iota_\infty: \overline{\QQ} \hookrightarrow \mathbb{C}$ and $\iota_p: \overline{\QQ} \hookrightarrow \mathbb{C}_p$ as well as an isomorphism $j:\mathbb{C}\stackrel{\sim}{\longrightarrow} \mathbb{C}_p$ in a way that the diagram
$$\xymatrix@R=.1cm{&\mathbb{C}\ar[dd]^{j}\\
\overline{\QQ}\ar[ur]^{\iota_\infty}\ar[rd]_{\iota_p} &\\
&\mathbb{C}_p
}$$
commutes. 

\begin{defn}[Abelian extensions of $K$]
\label{define_K_Gamma}
\item[i)] We let $K(p^n)/K$ denote the ray class extension of conductor $p^n$ and set $K({p^\infty}):=\cup_n K({p^n})$. We let $K_\infty/K$ denote the maximal pro-$p$ subextension in $K({p^\infty})/K$. We put $H_{p^\infty}:=\Gal(K(p^\infty)/K)$ and $\Gamma_K:=\Gal(K_\infty/K)$; note that $\Gamma_K\cong \ZZ_p^2$. 
\item[ii)] We let $K^\cyc \subset K_\infty$ denote the cyclotomic 
$\ZZ_p$-extension of $K$. We set $\Gamma_\cyc:=\Gal(K^\cyc/K)$.
\item[iii)] We let  $D_n/K$ denote the ring class extension of $K$ of conductor $p^n$ and let  $D_\infty$ denote the compositum of all $D_n$. Then $D_\infty/K$ contains a unique $\ZZ_p$-extension $K^\ac/K$, the anticyclotomic $\ZZ_p$-extension of $K$. We put $\Gamma_\ac:=\Gal(K^\ac/K)$.
\item[iv)] We let ${\rm Gr}(\Gamma_K)$  denote the set of torsion-free quotients $\Gamma$ of $\Gamma_K$. For $\Gamma\in{\rm Gr}(\Gamma_K)$, let $K_\Gamma$ denote the corresponding $\Zp$-power extension of $K$ with Galois group $\Gamma$. 
\item[v)] For any profinite abelian group $G$, we put $\LL(G)=\ZZ_p[[G]]:=\varprojlim_U \ZZ_p[G/U]$ where the projective limit is over all subgroups of $G$ with finite order. For any ring $R$ that contains $\ZZ_p$, we also set $\LL_{R}(G):=\LL(G)\otimes_{\ZZ_p}R$.
\item[vi)] We fix throughout a ray class character $\chi$ modulo $\ff$ over $K$, where $\ff$ is coprime to $p$.
\end{defn}
\begin{defn}
For each integer $m$, let us write $\QQ(m)$ to denote the maximal $p$-extension in $\QQ(\mu_m)/\QQ$.
\end{defn}
\begin{defn}[Eigenforms and $p$-stabilizations]
\item[i)]Let $f=\sum a_n(f)q^n$ be a cuspidal non-CM eigenform of weight $k_f+2$, level $N_f$ and nebentype $\epsilon_f$. We assume that $f$ is $p$-ordinary in the sense we make precise below. We also assume that the level $N_f$ of $f$ is coprime to $D_K\mathbf{N}\ff$.
\item[ii)] We let $L\subset \mathbb{C}_p$ denote a finite extension of $\QQ_p$ that contains the values of $j\circ \chi$ and also admits the Hecke field of $f$ as a subfield. We let $\cO$ denote its ring of integers, $\frak{m}_\cO$ its maximal ideal and $v_p$ the normalized valuation on $L$. Throughout, $\varpi$ is a fixed uniformizer of $L$. We assume that the Hecke polynomial $X^2-a_p(f)X+p^{k_f+1}\epsilon_f(p)$ splits in $L$ and has a root $\alpha_f$  such that $v_p(\alpha_f)=0$. We let $f^{\alpha}$ denote the (ordinary) $p$-stabilization of $f$ for which we have $U_pf^{\alpha}=\alpha_f f^{\alpha}$. We note that the choice of the field $L$ is rather unessential (so long as it verifies the properties we recorded above, which is always possible to ensure  so long as $a_p(f)$ is not divisible by all the primes of the Hecke field of $f$ lying above $p$) and we will assume  without notifying readers that it is enlarged as necessary to serve our purposes.
\end{defn}

\begin{defn}[The weight space and Hida's universal Hecke algebra]
\item[i)] We set $\LL_{\rm wt}:=\LL(\ZZ_p^\times)$ and put $[\cdot]: \ZZ_p^\times \hookrightarrow \LL(\ZZ_p^\times)^\times$ for the natural injection. We define the universal weight character $\bbkappa$ on setting $\bbkappa: G_\QQ\stackrel{\chi_\cyc}{\lra}\ZZ_p^\times\hookrightarrow \LL_{\rm wt}^\times$ (where $\chi_\cyc$ is the $p$-adic cyclotomic character). We say that a ring homomorphism $\kappa: \LL_{\rm wt} \lra \cO$ is an arithmetic specialization of weight $k+2 \in \ZZ$ to mean that the composition 
$$G_\QQ\stackrel{\bbkappa}{\lra} \LL_{\rm wt}^\times\stackrel{\kappa}{\lra}\cO$$
agrees with $\chi_\cyc^{k}$ on an open subgroup of $G_\QQ$. 
\item[ii)] For any integer $k$, we have set $\langle k \rangle:\LL(\ZZ_p^\times)\rightarrow \ZZ_p$ for the group homomorphism induced from the map $[x]\mapsto x^k$ on group-like elements $x$ of $\LL(\ZZ_p^\times)$.
Let us define $\LL_{\rm wt}^{(f)}\cong\LL(1+p\ZZ_p)$ as the component of $\LL_{\rm wt}$ that corresponds to the weight $k_f+2$, in the sense that $\LL_{\rm wt}^{(f)}$ is the smallest quotient of $\LL_{\rm wt}$ such that  $\langle k_f \rangle$ factors through $\LL_{\rm wt}^{(f)}$.
\item[iii)] We let $\f=\sum_{n=1}^{\infty} \mathbb{a}_{n}(\f)q^n \in \LL_\f[[q]]$ denote the branch of the primitive Hida family of tame conductor $N_f$, which admits $f^{\alpha}$ as a weight $k_f+2$ specialization. Here, $\LL_\f$ is the branch (i.e., the irreducible component) of the Hida's universal ordinary Hecke algebra determined by $f^{\alpha}$. It is finite flat over $\LL_{\rm wt}^{(f)}$ and the universal weight character $\bbkappa$ restricts to a character (also denoted by $\bbkappa$)
$$\bbkappa:  G_\QQ\lra \LL_{\rm wt}^\times \twoheadrightarrow \LL_{\rm wt}^{(f),\times}\lra \LL_\f^\times\,.$$
\item[iv)] For any $\Gamma\in {\rm Gr}(\Gamma_K)$, we set $\LL_{\f}(\Gamma):=\LL_\f\,\widehat{\otimes}\,\LL(\Gamma)$. If $R$ is a ring that contains $\Zp$, we write $\LL_{\f,R}(\Gamma)$ for the tensor product $\LL_\f(\Gamma)\otimes_{\Zp}R$.
\end{defn}

\begin{defn}[Galois representations attached to (families of) eigenforms]
\label{defn_twistedGalRep_Gamma}
\item[i)] Let $W_f\cong L^2$ denote Deligne's cohomological representation attached to $f$ and fix a $G_\QQ$-stable lattice $R_f\cong \cO^2$ inside $W_f$.
\item[ii)] When $\overline{\rho}_f$ is absolutely irreducible, there exists a free $\LL_\f$-module $R_\f^*$ of rank two, which is equipped with a continuous action of $G_{\QQ}$ unramified outside primes dividing $pN_f$ and which interpolates Deligne's representations $R_{\f(\kappa)}^*$ associated to arithmetic specializations $\f(\kappa)$ of the Hida family. 
\item[iii)] We have $\bigwedge^2 R_\f^*\stackrel{\sim}{\lra} \LL_\f(\bbkappa+1)\otimes\epsilon_f$ for the determinant of the $G_\QQ$-representation $R_\f^*$. 
\item[iv)] If we assume in addition that $f$ is $p$-distinguished, then the $G_{\QQ_p}$-representation $R_\f^*$ admits a $G_{\QQ_p}$-stable direct summand $F^+R_\f^*$ of rank one. 
\end{defn}

\begin{defn}[Self-dual twists]
\label{def_self_dual_twist_Hida}
\item[i)] The universal weight character $\bbkappa$ admits a ``square root''; namely, there exists a character $\bbkappa^{\frac{1}{2}}:  G_\QQ\lra \LL_\f^\times$ with the property that $\bbkappa=(\bbkappa^{\frac{1}{2}})^2$.  When $\epsilon_f=\mathds{1}$, we set $R_\f^\dagger:=R_{\f}^*(-\bbkappa^{\frac{1}{2}})$ and call it the self-dual twist of $R_{\f}^*$. 
\item[ii)] Notice that $R_\f^\dagger$ specializes in weight $k_f+2$ to $R_{f}^{\dagger}:=R_f^*(-k_f/2)$. We call $R_f^\dagger$ the self-dual twist of $R_f^*$.
\end{defn}
\begin{remark}
Instead of requiring that  $\epsilon_f=\mathds{1}$, we could equally work under the weaker hypothesis that $\epsilon_f$ admits a square-root, in the sense that there exists a Dirichlet character $\eta_f$ with $\epsilon_f=\eta_f^2$. In that scenario, we would  put $R_\f^\dagger:=R_{\f}^*(-\bbkappa^{\frac{1}{2}}\eta_f^{-1})$ so that $R_\f^\dagger$ is self-dual in the sense that there exists an isomorphism
$$R_\f^\dagger\stackrel{\sim}{\lra}\Hom_{\LL_\f}(R_\f^{\dagger},\LL_\f)(1)\,.$$
\end{remark}
We next define the Galois representations which will arise as the $p$-adic (\'etale) realizations of the Rankin--Selberg motives associated to $f \times \theta_{\chi\psi}$.

\begin{defn}[Galois representations attached to Rankin--Selberg convolutions $f_{/K}\otimes\chi$ and families]
\label{def_intro_Gal_rep_RS}
\item[i)] We define the $G_K$-representation $T_{f,\chi\psi}:=R_f^*\otimes\chi\psi$ for any Hecke character $\psi$ of $K$.  We similarly put $T_{\f,\chi\psi}:=R_\f^*\otimes\chi\psi$.
\item[ii)] When $\chi=\chi^c$ and $\epsilon_f=\mathds{1}$, we define $T_{f,\chi\xi}^{\dagger}:=R_f^\dagger\otimes\chi\xi$ for any character $\xi$ of $\Gamma_\ac$. We similarly put $T_{\f,\chi\xi}^\dagger:=R_\f^\dagger\otimes\chi\xi$.  

\item[iii)] For  $\Gamma\in\mathrm{Gr}(\Gamma_K)$, we write $\LL(\Gamma)^\sharp$ for the free $\LL(\Gamma)$-module of rank one equipped with the tautological action of $G_K$. For each such $\Gamma$ other than $\Gamma_\ac$, we define 
$$\TT_{f,\chi}^{(\Gamma)}:=T_{f,\chi}\otimes\LL(\Gamma)^\sharp$$
and we put $\TT_{f,\chi}^{(\Gamma_\ac)}:=T_{f,\chi}^\dagger\otimes\LL(\Gamma_\ac)^\sharp\,$. We also set $\TT_{f,\chi}^{\dagger}:=T_{f,\chi}^{\dagger}\otimes \LL(\Gamma_K)^{\sharp}$.
\item[iv)] When $\Gamma=\Gamma_K,\Gamma_\cyc$ or $\Gamma_\ac$, we shall write $\TT_{f,\chi}$, $\TT_{f,\chi}^{\cyc}$ or $\TT_{f,\chi}^{\ac}$ in place of $\TT_{f,\chi}^{(\Gamma_K)}$, $\TT_{f,\chi}^{(\Gamma_\cyc)}$ or $\TT_{f,\chi}^{(\Gamma_\ac)}$, respectively.
\item[v)] We similarly define the $G_K$-representations $\TT_{\f,\chi}^{(\Gamma)}$, $\TT_{\f,\chi}$, $\TT_{\f,\chi}^{\cyc}$,  $\TT_{\f,\chi}^{\ac}$ and $\TT_{\f,\chi}^{\dagger}$  attached to $\f$ and $\chi$.
\end{defn}

\begin{remark}
\label{remark_why_central_critical_twists}
\item[i)] We explain why we prefer to work with $T_{f,\chi\xi}^{\dagger}$ instead of $T_{f,\chi\xi}$ in the anticyclotomic setting. When $\chi^c=\chi^{-1}$ and $\varepsilon_f=\mathds{1}$, the representations $T_{f,\chi\xi}^\dagger$ are conjugate self-dual in the sense that there are $G_K$-isomorphisms 
$T_{f,\chi\xi}^{\dagger}\stackrel{\sim}{\lra} {\rm Hom}_{\ZZ_p}\left(T_{f,\chi^c\xi^c}^{\dagger},\ZZ_p(1) \right).$ 
Those Hecke characters $\psi$ such that the $G_K$-representation $T_{f,\chi\psi}^\dagger$ is conjugate self-dual are precisely the anticyclotomic Hecke characters.
\item[ii)] It follows from the discussion above that we have a natural conjugate self-duality isomorphism
$$\TT_{f,\chi}^{\ac}\stackrel{\sim}{\lra}\Hom_{\LL(\Gamma_\ac)}\left(\TT_{f,\chi^c}^{\ac,\iota},\LL(\Gamma_\ac)\right)(1)\,$$
where $\TT_{f,\chi^c}^{\ac,\iota}:=\TT_{f,\chi^c}^{\ac}\otimes_{\LL(\Gamma_\ac)}\LL(\Gamma_\ac)^{\iota}$ and $\LL(\Gamma_\ac)^\iota$ is the free $\LL(\Gamma_\ac)$-module of rank one on which $G_K$ acts via the composition $G_K\lra \Gamma_\ac\stackrel{\gamma\mapsto\gamma^{-1}}{\lra}\Gamma_\ac$.
\end{remark}
\begin{defn}
For any Hecke character $\psi$ of $K$, we define the $G_\QQ$-representation $X_{f,\chi\psi}^\circ:=R_f^{*}\otimes {\rm Ind}_{K/\QQ}\chi\psi$ and $X_{f,\chi\psi}=X_{f,\chi\psi}^\circ \otimes_{\ZZ_p}\QQ_p$. We similarly put $X_{\f,\chi\psi}^\circ:=R_\f^{*}\otimes {\rm Ind}_{K/\QQ}\chi\psi$. Finally,  for any $\Gamma\in {\rm Gr}(\Gamma_K)$, let us put $X_{\f,\chi,\Gamma}^{\circ}:={\rm Ind}_{K/\QQ}\, R_{\f}^*(\chi)\widehat{\otimes}_{\ZZ_p}\LL(\Gamma)^\sharp$.
\end{defn}

\subsection{Main results}
\label{subsec_intro_results}
Throughout \S\ref{subsec_intro_results}, we assume that 
\begin{enumerate}
   \item[\mylabel{item_RIa}{{\bf ($\hbox{RI}_a$)}}] $\chi\not\equiv \chi^c \mod \frak{m}_\cO^{a+1}$ for some integer $a\in \ZZ_{\geq 0}$.
\end{enumerate}
This hypothesis will be in effect in all our main results. Notice that  the existence of $a\in \ZZ_{\geq 0}$ is equivalent to the assumption that $\chi\neq \chi^c$. We introduce this hypothesis to highlight the dependence of our results on the integer $a$.

Consider the following condition.
\begin{enumerate}
   \item[\mylabel{item_tau}{{\bf ($\tau_{\f}$)}}] $p\geq 7$ and 
   that ${\rm SL}_2(\FF_p)\subset  \overline{\rho}_\f(G_{\QQ(\mu_{p^\infty})})$.
\end{enumerate}

\begin{remark}
 Thanks to \cite[Theorem C.2.4]{BLInertOrdMC}, when the condition \ref{item_tau} holds,  there  exists an element $\tau\in G_{\QQ(\mu_{p^\infty})}$ such that $X_{g,\chi\psi}^\circ/(\tau-1)X_{g,\chi\psi}^\circ$ is a free $\cO$-module of rank one, for any crystalline specialization $g$ of the Hida family $\f$ and Hecke character $\psi$.
\end{remark}

The discrete Bloch--Kato Selmer groups, denoted by $H^1_{\mathcal{F}^*_{{\rm BK}^{(i)}}}(K,A_{f,\chi\psi}^*(1))$ ($i=1,2$), that make an appearance in Theorems~\ref{intro_thm_Split_BK_Sigma12} are introduced in Definition~\ref{defn_aux_Selmer_groups}(ii) below. 
We also refer the reader to Definition~\ref{defn_criticality_1_2} for the definitions of $\Sigma^{(i)}_{\rm crit}$ ($i=1,2$).

\begin{theorem}[Theorem~\ref{thm_Split_BK_Sigma12}]
\label{intro_thm_Split_BK_Sigma12} 
We assume the validity of the condition \ref{item_tau}. 
\item[i)] Suppose that $\psi \in \Sigma^{(1)}_{\rm crit}$ is a $p$-crystalline character with infinity type $(\ell_1,\ell_2)$. We  have  
\begin{align*}
    {\ord_p}\left(H^1_{\mathcal{F}^*_{{\rm BK}^{(1)}}}(K,A_{f,\chi\psi}^*(1))\right)\leq \left[H^1(K_\p,F^-R_f^*(\chi\psi)):\cO\cdot \res_\p^{(1)}\left(\BF^{f\otimes\chi\psi}_{\alpha_f,\alpha_{\chi\psi}}\right)\right]+t\,.
\end{align*}
for some constant $t$ that depends only on $a$ and the residual representation $\overline{X}_{f,\chi\psi}$.

Moreover, one may take $t=0$ if $a=0$.

\item[ii)] Suppose that $\psi \in \Sigma^{(2)}_{\rm crit}$ is $p$-crystalline. We have  
\begin{align*}
{\ord_p}\left(H^1_{\mathcal{F}^*_{{\rm BK}^{(2)}}}(K,A_{f,\chi\psi}^*(1))\right)\leq \left[H^1(K_{\p^c},F^+R_f^*(\chi\psi)):\cO\cdot \res_{\p^c}^{(2)}\left(\BF^{f\otimes\chi\psi}_{\alpha_f,\alpha_{\chi\psi}}\right)\right]+t\,.
\end{align*}
for some constant $t$ that depends only on $a$ and the residual representation $\overline{X}_{f,\chi\psi}$. 

Once again, one may take $t=0$ if $a=0$.
\end{theorem}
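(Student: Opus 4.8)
The plan is to deduce Theorem~\ref{intro_thm_Split_BK_Sigma12} from the locally restricted Euler system machinery of Appendix~\ref{appendix_sec_ES_main}, applied to the Beilinson--Flach Euler system for the $G_\QQ$-representation $X_{f,\chi\psi}$ (equivalently, the $G_K$-representation $T_{f,\chi\psi}$ via Shapiro's lemma). The only reason the classical Kolyvagin-system argument of \cite{mr02, kbbesrankr} fails here is residual reducibility of $\overline{\rho}_f\otimes\mathrm{Ind}_{K/\QQ}\overline{\chi}$; the hypothesis \ref{item_RIa} measures, via the parameter $a$, how far the restriction of scalars $\mathrm{Ind}_{K/\QQ}\overline\chi$ is from being irreducible (when $a=0$ one has $\chi\not\equiv\chi^c\bmod\frak m_\cO$, which restores residual irreducibility of the induced character and hence the classical picture, explaining the clause ``one may take $t=0$ if $a=0$''). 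So the first step is to verify the running hypotheses of the appendix's main theorem for the present situation: the big-image condition supplied by \ref{item_tau} (via the element $\tau$ of the Remark following \ref{item_tau}, which makes $X_{g,\chi\psi}^\circ/(\tau-1)$ free of rank one over $\cO$), the Euler-system relations for the Beilinson--Flach classes, and the local conditions $\Delta^{(i)}$ that single out the $\Sigma^{(i)}_{\rm crit}$ cases --- here one uses that $\psi$ is $p$-crystalline so that the relevant local Bloch--Kato/Greenberg conditions at $\p$ and $\p^c$ behave as expected, and that $F^{\pm}R_f^*$ exist as $G_{\QQ_p}$-stable lines because $f$ (not $\chi$) is $p$-distinguished.

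Next I would run the modified Euler-to-Kolyvagin-system bound. The heart of Appendix~\ref{appendix_sec_ES_main} should be an estimate of the form: the characteristic ideal (or, after specialization, the order) of the dual Selmer group $\widetilde H^2_{\rm f}(G_{K,\Sigma},T_{f,\chi\psi};\Delta^{(i)})$ divides the ``index of the Euler system'' --- the length of the cokernel of the map $\cO\cdot\res_\p^{(i)}(\BF^{f\otimes\chi\psi}_{\alpha_f,\alpha_{\chi\psi}})\hookrightarrow H^1(K_\p,F^-R_f^*(\chi\psi))$ (resp.\ at $\p^c$ with $F^+$) --- up to an additive error $t$ in the $p$-adic valuation. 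The new feature over the irreducible case is precisely that $t$ need not vanish: reducibility forces one to work modulo $\varpi^{a+1}$ rather than modulo $\varpi$ when producing Kolyvagin derivative classes, and the propagation of the divisibility from the residual level to integral level loses a bounded amount controlled by $a$ and by the structure of the reducible residual representation $\overline X_{f,\chi\psi}$ --- hence ``$t$ depends only on $a$ and $\overline X_{f,\chi\psi}$''. The error terms $H^0(\QQ_p,\mathcal A_i)$ and $\mathrm{Tam}(f,\chi\psi)$ in the denominator are the standard local and Tamagawa discrepancies between the Selmer complex $\widetilde H^2_{\rm f}$ and the ``strict'' Selmer group that the Euler system directly controls, via the comparison of local conditions recalled in Remark~\ref{rem_generalities_about_selmer_complexes_comparison_etc}(ii) and Definition~\ref{defn_tamagawa_like_error}; these are handled by a local Euler-characteristic bookkeeping at the primes in $\Sigma$, and vanish for all but finitely many $\psi$.

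Concretely, the steps in order: (1) set up the $\Delta^{(i)}$-Selmer complex and identify $\widetilde H^2_{\rm f}$ with a Pontryagin dual of a Selmer group, absorbing the $\Sigma$-primes into the error factors; (2) import the Beilinson--Flach classes $\BF^{f\otimes\chi\psi}_{\alpha_f,\alpha_{\chi\psi}}$ and their image under the $\p$- (resp.\ $\p^c$-) localization and verify they form a locally restricted Euler system for the chosen local condition; (3) invoke the appendix's main bound to get ${\ord_p}$ of the (corrected) Selmer order $\le$ the stated local index $+\,t$; (4) specialize/descend to the single character $\psi$ and unwind the definition of the index on the right-hand side; (5) track the $a=0$ case to see that the Kolyvagin-derivative construction goes through modulo $\varpi$ exactly as in \cite{mr02}, giving $t=0$. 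The main obstacle I expect is step (2) together with the appendix's core estimate: controlling the Kolyvagin-derivative classes and the Chebotarev-type choices of auxiliary primes when $\overline X_{f,\chi\psi}$ is reducible, so that the bound degrades only by the explicit, $\psi$-uniform constant $t=t(a,\overline X_{f,\chi\psi})$ rather than by something that grows with the conductor of $\psi$ --- this uniformity is what makes the theorem useful in the Iwasawa-theoretic limit and is the technical payload of Appendix~\ref{appendix_sec_ES_main}.
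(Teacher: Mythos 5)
Your proposal matches the paper's argument essentially verbatim: one checks that the Beilinson--Flach classes form an $\mathcal{F}_+$-locally restricted Euler system (Proposition~\ref{prop_local_conditions_for_BF}, combining the $p$-local vanishing of $\res_p^{(--)}$ with Colmez's unramifiedness away from $p$), invokes Theorem~\ref{thm_appendix_ES_main}(ii) together with Remark~\ref{remark_appendix_simplify_defn_index} to bound $H^1_{\mathcal{F}_{+}^*}(\QQ,W_{f,\chi\psi}^*(1))$ by the Euler system index plus the constant $t=t(a,\overline{X}_{f,\chi\psi})$, then passes through Poitou--Tate duality and Lemma~\ref{lemma_NEK_global_duality_compare_BK} to convert into the stated inequality for $\widetilde{H}^2_{\rm f}$ with the $|H^0(\QQ_p,\mathcal{A}_i)|\cdot\mathrm{Tam}(f,\chi\psi)$ corrections; the $a=0$ case is indeed where \ref{item_HypXcirc} holds and $t$ can be taken to be $0$. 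The one small inaccuracy is that no ``descent'' is involved at this stage (the theorem is already pointwise in $\psi$), and $F^{\pm}R_f^*$ exists simply because $f$ is $p$-ordinary rather than $p$-distinguished, but these do not affect the substance of the argument.
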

 The proof of Theorem~\ref{intro_thm_Split_BK_Sigma12} relies on the locally restricted Euler system machinery, which we develop in Appendix~\ref{appendix_sec_ES_main} in the setting where the residual representation may not be irreducible. The general statement we prove in this vein is Theorem~\ref{thm_appendix_ES_main} and it may be of independent interest.   
 
 \begin{remark}
 \label{remark_why_nonEis}
 When $\chi=\chi^c$ (in other words, when \ref{item_RIa} fails for any $a$), Theorem~\ref{intro_thm_Split_BK_Sigma12} is still valid for all  $\psi \in \Sigma^{(i)}_{\rm crit}$ ($i=1,2$) such that $\psi\not\equiv \psi^c \mod \frak{m}_{\cO}^{b+1}$ for some natural number $b$ (with the error term $t$ determined by $b$). For Iwasawa theoretic applications (c.f. Theorem~\ref{intro_thm_ordinary_Deltai_MC_via_horizontal_ES}), we would like to vary $\psi$ over a large set of Hecke characters, in a manner to ensure that the error term $t$ can be chosen uniformly as $\psi$ varies. Since $t$ depends only on $b$ (and it potentially grows with $b$), one needs to fix a natural number $b$ and work with the collection of characters $\psi$ with $\psi\not\equiv \psi^c \mod \frak{m}_{\cO}^{b+1}$.
 
 However, the $p$-adic Galois characters associated to $\psi$ in this collection could not be dense in ${\rm Sp}(\LL(\Gamma_K))(\cO)$ (e.g., they cannot approximate the trivial character) and as a result, our patching argument would break down. This is the sole reason why we assume that $\chi\neq \chi^c$ throughout this work.
 \end{remark}

The conclusions of Theorem~\ref{intro_thm_Split_BK_Sigma12} and the relation of Beilinson--Flach elements to $L$-values combined together give the following result.
\begin{theorem}[Theorem~\ref{thm_BK_type_result}]
 \label{intro_thm_BK_type_result}
Suppose $f \in S_{k_f+2}(\Gamma_1(N_f),\epsilon_f)$ is a cuspidal eigen-newform and $\chi$ is a ray class character of $K$. Let $\psi \in \Sigma_{\rm crit}^{(i)}$ be an algebraic Hecke character of $p$-power conductor, which is critical for the eigenform $f$. Suppose either that $\chi\neq \chi^c$ or that $\psi\neq \psi^c$. Assume in addition that the hypothesis \ref{item_tau} holds true.  If $L(f_K\otimes\chi^{-1}\psi^{-1},0)\neq 0$, then the Selmer group $H^1_{\mathcal{F}^*_{{\rm BK}^{(i)}}}(K,A_{f,\chi\psi}^*(1))$ has finite cardinality.
\end{theorem}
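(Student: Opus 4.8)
The plan is to combine the Euler system bound of Theorem~\ref{intro_thm_Split_BK_Sigma12} with the explicit reciprocity law for Beilinson--Flach elements, which links the $\p$- (resp.\ $\p^c$-) localisation of the Euler system class to the Rankin--Selberg $L$-value of $f_K\otimes\chi^{-1}\psi^{-1}$. First, since $\psi$ has $p$-power conductor, the product $\chi\psi$ is again a ray class character of $K$, and the hypothesis that $\chi\neq\chi^c$ or $\psi\neq\psi^c$ is precisely the non-Eisenstein condition $\chi\psi\neq(\chi\psi)^c$; this guarantees that a version of \ref{item_RIa} holds for the residual representation $\overline{X}_{f,\chi\psi}$ with some finite $a\in\ZZ_{\geq0}$, which (together with \ref{item_tau}) is exactly what is needed to run the locally restricted Euler system machinery of Appendix~\ref{appendix_sec_ES_main} (cf.\ Theorem~\ref{thm_appendix_ES_main}) with the Beilinson--Flach Euler system. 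Feeding this in as in the proof of Theorem~\ref{intro_thm_Split_BK_Sigma12} yields, for the relevant $i\in\{1,2\}$, an inequality bounding ${\ord_p}|\widetilde{H}^2_{\rm f}(G_{K,\Sigma},T_{f,\chi\psi};\Delta^{(i)})|$, divided by the finite error terms $|H^0(\QQ_p,\mathcal A_i)|$ and ${\rm Tam}(f,\chi\psi)$, by $[\,H^1(K_\p,F^-R_f^*(\chi\psi)):\cO\cdot\res_\p^{(1)}(\BF^{f\otimes\chi\psi}_{\alpha_f,\alpha_{\chi\psi}})\,]+t$ when $i=1$, and by the analogous index at $\p^c$ (with $F^+$ and $\res_{\p^c}^{(2)}$) when $i=2$, for some finite constant $t$.

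It then suffices to show that this index is finite, equivalently that $\res_\p^{(1)}(\BF^{f\otimes\chi\psi}_{\alpha_f,\alpha_{\chi\psi}})$ (resp.\ $\res_{\p^c}^{(2)}(\BF^{f\otimes\chi\psi}_{\alpha_f,\alpha_{\chi\psi}})$) is a non-torsion element of the ($\cO$-rank one) local cohomology module in which it lives. For this I would invoke the explicit reciprocity law for Beilinson--Flach elements, in the form used in \cite{LLZ2,BLForum}: applying the appropriate Perrin-Riou/Coleman-type regulator map identifies this localisation with the value at $\psi$ of the geometric $p$-adic Rankin--Selberg $L$-function attached to $f$ and $K$, and since $\psi$ satisfies the hypotheses of Theorem~\ref{intro_thm_Split_BK_Sigma12} (it is $p$-crystalline, critical for $f$, and lies in $\Sigma^{(i)}_{\rm crit}$), this value equals $L(f_K\otimes\chi^{-1}\psi^{-1},0)$ up to an explicit nonzero period, $\Gamma$-factor, and $p$-Euler factor. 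The hypothesis $L(f_K\otimes\chi^{-1}\psi^{-1},0)\neq0$ therefore forces the localised Beilinson--Flach class to be nonzero, hence the index is finite. As the two error terms are finite groups, the inequality above bounds ${\ord_p}|\widetilde{H}^2_{\rm f}(G_{K,\Sigma},T_{f,\chi\psi};\Delta^{(i)})|$ by a finite quantity; since this cohomology group is a finitely generated $\cO$-module on which a positive $\ZZ_p$-corank would force an infinite order, we conclude that it is finite, which is the assertion of the theorem.

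The main obstacle is the second step. One must verify that $\psi$ genuinely lies in the range of interpolation of the Beilinson--Flach $p$-adic $L$-function \emph{in the variable that is being specialised}, and that the reciprocity law applies with precisely the local condition $\Delta^{(i)}$ appearing in Theorem~\ref{intro_thm_Split_BK_Sigma12} --- concretely, that $\BF^{f\otimes\chi\psi}_{\alpha_f,\alpha_{\chi\psi}}$ is the $\psi$-specialisation of a big Beilinson--Flach class whose localisation at $\p$ (resp.\ $\p^c$) computes that $p$-adic $L$-function, so that the Euler system class is compatible with the local condition cutting out $\widetilde{H}^2_{\rm f}(-;\Delta^{(i)})$. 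One then has to check that none of the period, $\Gamma$-, and Euler factors produced by the reciprocity law vanishes at $\psi$; this is where the assumptions that $\psi$ is critical for $f$, $p$-crystalline, and $\chi\psi\neq(\chi\psi)^c$ enter, the last of these ruling out the forced zero that would occur in the Eisenstein situation $\chi=\chi^c,\ \psi=\psi^c$ which the theorem explicitly excludes.
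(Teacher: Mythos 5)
Your proposal follows the same overall strategy as the paper: run the locally restricted Euler system machinery of Appendix~\ref{appendix_sec_ES_main} on the Beilinson--Flach Euler system to reduce finiteness of $\widetilde{H}^2_{\rm f}(G_{K,\Sigma},T_{f,\chi\psi};\Delta^{(i)})$ to non-torsion-ness of the localised Beilinson--Flach class $\res_p^{+/(i)}(\BF^{f\otimes\chi\psi}_{\alpha_f,\alpha_{\chi\psi}})$, and then deduce that non-torsion-ness from the explicit reciprocity law of \cite{KLZ2} together with the non-vanishing of $L(f_K\otimes\chi^{-1}\psi^{-1},0)$. Two points diverge from the paper.

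First, you route through the quantitative inequality of Theorem~\ref{intro_thm_Split_BK_Sigma12}; but that theorem (and the parts of Theorem~\ref{thm_appendix_ES_main} it rests on, namely (ii)--(iii)) assumes $\psi$ is $p$-crystalline, and your write-up even records $p$-crystallinity as a hypothesis satisfied by $\psi$. Theorem~\ref{thm_BK_type_result} makes no such assumption: $\psi$ is only required to have $p$-adic Galois character factoring through $\Gamma_K$ (equivalently, $p$-power conductor), so it may be ramified at $p$. The paper instead invokes the qualitative statement Theorem~\ref{thm_appendix_ES_main}(i), which needs only \ref{item_HypX} (no crystallinity, no quantitative constant $a$), so your detour silently imports an extra hypothesis and, as written, fails to prove the theorem for $p$-ramified $\psi$. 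The fix is to cite Theorem~\ref{thm_appendix_ES_main}(i) directly, exactly as the paper does, checking \ref{item_HypX} for $X=X_{f,\chi\psi}$ via Proposition~\ref{prop_appendix_example} and \ref{item_tau}.

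Second, your closing paragraph attributes the non-vanishing of the interpolation factors in the reciprocity law to the non-Eisenstein condition $\chi\psi\neq(\chi\psi)^c$. That condition is what secures \ref{item_HypX}(i) (irreducibility of $X_{f,\chi\psi}$ and vanishing of $G_\QQ$-invariants) for the Euler system argument; it plays no role in controlling the Euler factors. The correct reason, given in the paper, is that for $\psi\in\Sigma^{(i)}_{\rm crit}$ the weight $\ell+1$ of $\theta_{\chi\psi}$ is never $k_f+2$, so no degenerate Euler factor of the type appearing in Hida's interpolation formula (Theorem~\ref{thm:sigma1interpolationformula} and \cite[Theorem~2.3]{BLForum}) can vanish, and non-vanishing of the complex $L$-value at $0$ therefore forces non-torsion-ness of the localised class via \cite[Theorem~10.2.2]{KLZ2}.
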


In the statement of Theorem~\ref{intro_thm_Split_BK_Sigma12}, one may vary $\psi$ among critical Hecke characters and $f^{\alpha}$ in the Hida family $\f$ to prove results towards Iwasawa Main Conjectures. This is achieved using the patching argument \cite[Proposition A.0.1]{BLInertOrdMC}. The extended Selmer groups, denoted by $\widetilde{H}^2_{\rm f}(-)$ and that make an appearance in Theorems~\ref{intro_thm_ordinary_Deltai_MC_via_horizontal_ES}--\ref{intro_thm_ordinary_Delta1_Selmer_torsion_via_horizontal_ES_indefinite}, are introduced in Definitions~\ref{defn_Selmer_complex_split_ord} and~\ref{def_Nekovar_for_families} below. The comparison of the Iwasawa theoretic extended Selmer group and their classical counterparts is given in Lemma~\ref{lemma_compare_Iwasawa_Nek_Iwasawa_Greenberg_K_infty} (see also Lemma~\ref{lemma_NEK_global_duality_compare_BK} for a comparison for Selmer groups over $K$). The reason why we prefer to state our Iwasawa theoretic results in terms of Nekov\'a\v{r}'s Selmer complexes is because we would like appeal (in \S\ref{subsec_descent_to_Gamma} and onward) to the descent formalism developed in \cite{nekovar06}.

\begin{theorem}[Theorem~\ref{thm_ordinary_Deltai_MC_via_horizontal_ES}]
\label{intro_thm_ordinary_Deltai_MC_via_horizontal_ES}
Suppose $f \in S_{k_f+2}(\Gamma_1(N_f),\epsilon_f)$ is a cuspidal eigen-newform and $\chi$ is a ray class character of $K$ verifying \ref{item_RIa}  with $a=0$. We let $\f$ denote the unique branch of the Hida family that admits $f$ as a specialization in weight $k_f+2$. Suppose that \ref{item_tau} holds and $\overline{\rho}_{\f}$ is $p$-distinguished. 
\item[i)] Let $L_p(f_{/K}\otimes\chi,\Sigma^{(i)}_{\rm crit})$ $(i=1,2)$ denote Hida's pair of $p$-adic $L$-functions given as in Definition~\ref{defn_Hidas_padic_Lfunctions} below. For some $a_1\in \ZZ$ which only depends on $\overline{\rho}_\f$ and $\chi$ we have
$$\Char_{\LL_{\cO}(\Gamma_K)}\left(\widetilde{H}^2_{\rm f}(G_{K,\Sigma},\TT_{f,\chi};\Delta^{(1)})\right)\,\Big{|}\,\,  {\varpi}^{a_1}\LL_{\cO}(\Gamma_K) L_p(f_{/K}\otimes\chi,\Sigma^{(1)}_{\rm crit}){\big \vert}_{\Gamma_K}
\,.$$
Moreover, for some $a_2\in \ZZ$ which only depends on $\overline{\rho}_\f$ and $\chi$ we have
$$\Char_{\LL_{\cO}(\Gamma_K)}\left(\widetilde{H}^2_{\rm f}(G_{K,\Sigma},\TT_{f,\chi};\Delta^{(2)})\right) {\otimes_\cO\cO_\Phi}\,\Big{|}\,\,  {\varpi}^{a_2}\LL_{\cO{_\Phi}}(\Gamma_K)H_\chi L_p(f_{/K}\otimes\chi,\Sigma^{(2)}_{\rm crit}){\big \vert}_{\Gamma_K}
\,.$$
Here, $H_\chi \in {\LL_{\cO{_\Phi}}(\Gamma_\ac)}
$ is a generator of the congruence module associated the CM form $\theta_\chi$, where $\Phi=L\widehat{\QQ_p^{\rm ur}}$ and $\cO_\Phi\subset \Phi$ is the ring of integers.

\item[ii)] Suppose in addition that $\LL_\f$ is regular. Then,
$$\Char_{\LL_\f(\Gamma_K)}\left(\widetilde{H}^2_{\rm f}(G_{K,\Sigma},\TT_{\f,\chi};\Delta^{(1)})\right)\,\Big{|}\,\,  {\varpi}^{b_1}\LL_\f(\Gamma_K)H_{\f}L_p(\f_{/K}\otimes\chi,{\mathbb \Sigma}^{(1)}){\big \vert}_{\Gamma_K}
$$
for some $b_1\in \ZZ$, and
$$\Char_{{\LL_{\f,\cO_\Phi}}(\Gamma_K)}\left(\widetilde{H}^2_{\rm f}(G_{K,\Sigma},\TT_{\f,\chi};\Delta^{(2)})\right)\,\Big{|}\,\,  {\varpi}^{b_2}{\LL_{\f,\cO_\Phi}}(\Gamma_K)H_\chi L_p(\f_{/K}\otimes\chi,{\mathbb \Sigma}^{(2)}){\big \vert}_{\Gamma_K}
$$
for some $b_2\in \ZZ$. Here, $H_\f \in \LL_{\f}$ is a generator of Hida's congruence ideal and $L_p(\f_{/K}\otimes\chi,{\mathbb \Sigma}^{(i)})$, $(i=1,2)$ are Hida's pair of $p$-adic $L$-functions given as in Theorem~\ref{thm_Hida_3var_ord_split} below.
\end{theorem}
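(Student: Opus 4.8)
The plan is to deduce Theorem~\ref{intro_thm_ordinary_Deltai_MC_via_horizontal_ES} from the ``pointwise'' divisibilities of Theorem~\ref{intro_thm_Split_BK_Sigma12} by a two-variable (and then three-variable) interpolation/patching argument, following the template of \cite[Proposition A.0.1]{BLInertOrdMC}. First I would fix the Hida family $\f$ (regular in part (ii), or just work at $f$ in part (i)) and record that, by the theory of Hida's $p$-adic $L$-functions, the Beilinson--Flach classes $\BF^{f\otimes\chi\psi}_{\alpha_f,\alpha_{\chi\psi}}$ interpolate $p$-adically as $\psi$ varies and that their $\p$- (resp.\ $\p^c$-)localizations, pushed through the Perrin-Riou/Coleman map appearing in $\res^{(i)}_{\p^{(c)}}$, recover the relevant branch $L_p(f_{/K}\otimes\chi,\Sigma^{(i)}_{\rm crit})$ up to the congruence factor $H_\chi$ (this is where the CM congruence module enters in the $i=2$ case, because the local condition $\Delta^{(2)}$ sees the CM side). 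The index appearing on the right-hand side of Theorem~\ref{intro_thm_Split_BK_Sigma12} therefore becomes, after specialization, the $\psi$-value of the quotient of the characteristic ideal of the big local cohomology by the ideal generated by the big Beilinson--Flach class, and the correction factor $\mathscr C(u)$ collects exactly the discrepancy between the Coleman-map image and the $p$-adic $L$-function along the $\Gamma_\p$-direction (this is spelled out in \S\ref{subsubsec_local_properties_CMgg}).

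Next I would run the control/patching step. The point is that $\widetilde H^2_{\rm f}(G_{K,\Sigma},\TT_{f,\chi};\Delta^{(i)})$ is a finitely generated torsion $\LL_\cO(\Gamma_K)$-module whose characteristic ideal can be detected after specializing at the (Zariski-dense) set of height-one primes coming from crystalline Hecke characters $\psi\in\Sigma^{(i)}_{\rm crit}$; for each such $\psi$ a control theorem (exact-control for Nekovář's extended Selmer complexes, up to the finite ``error terms'' $H^0(\QQ_p,\mathcal A_i)$ and ${\rm Tam}(f,\chi\psi)$) identifies the specialization with $\widetilde H^2_{\rm f}(G_{K,\Sigma},T_{f,\chi\psi};\Delta^{(i)})$. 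Feeding the bound of Theorem~\ref{intro_thm_Split_BK_Sigma12}(i)--(ii) into this, one obtains that $\Char_{\LL_\cO(\Gamma_K)}\widetilde H^2_{\rm f}$ divides, prime-by-prime, the ideal generated by ${\varpi}^{a_i}$ times the characteristic ideal of the big local $H^1$ divided out by the big Beilinson--Flach class — which by the previous paragraph is $\varpi^{a_i}L_p(f_{/K}\otimes\chi,\Sigma^{(i)}_{\rm crit})\big|_{\Gamma_K}\cdot\mathscr C(u)$ (with the extra $H_\chi$ for $i=2$). The uniform constant $t=t(a,\overline X)$ from Theorem~\ref{intro_thm_Split_BK_Sigma12} is absorbed into the single power ${\varpi}^{a_i}$, which is legitimate precisely because $t$ depends only on $a$ and the residual representation, not on $\psi$; this uniformity is the reason the residually-reducible Euler system input had to be made effective. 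Part (ii) is then obtained by one further interpolation in the weight variable: with $\LL_\f$ regular, the same control argument along $\LL_\f(\Gamma_K)\to\LL_\cO(\Gamma_K)$ (specializing $\f$ to crystalline classical points $g$) plus the three-variable $p$-adic $L$-function of Theorem~\ref{thm_Hida_3var_ord_split} and Hida's congruence ideal generator $H_\f$ upgrades the statement to the family, with $b_i$ again a finite constant coming from the error terms and the $\varpi$-powers $a_i$.

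The $p$-distinguished hypothesis on $\overline\rho_\f$ is used only in part (i)'s formulation (and throughout part (ii)) to have the rank-one $G_{\QQ_p}$-stable summand $F^+R_\f^*$ of Definition~\ref{defn_twistedGalRep_Gamma}(iv), hence a well-defined $\Delta^{(1)}$-ordinary local condition in families and a well-defined big Perrin-Riou map; note it is \emph{not} assumed on $\chi$, which is the whole point of the paper. I would be careful that the divisibility is stated as an ideal containment ``$\mid$'' rather than an equality, so no lower bound / second divisibility is needed — only the Euler system divisibility of Theorem~\ref{intro_thm_Split_BK_Sigma12} together with (a) the interpolation of Beilinson--Flach classes into Hida's $p$-adic $L$-functions and (b) exact control for the Selmer complexes.

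The main obstacle I expect is the bookkeeping in (a)--(b): getting the specializations of the \emph{big} objects (big local $H^1$, big Beilinson--Flach class, big Selmer complex) to match the pointwise objects of Theorem~\ref{intro_thm_Split_BK_Sigma12} \emph{on the nose} at a Zariski-dense set of height-one primes, with the error terms $H^0(\QQ_p,\mathcal A_i)$, ${\rm Tam}(f,\chi\psi)$ controlled uniformly (they vanish outside a thin set, so they contribute only a bounded $\varpi$-power), and pinning down the correction factor $\mathscr C(u)$ as exactly the cokernel of the relevant Coleman/Perrin-Riou map along $\Gamma_\p$ so that the $L_p$ on the right-hand side is the honest Hida $p$-adic $L$-function rather than a Coleman-map avatar of it. The congruence-module factor $H_\chi$ in the $i=2$ case — arising because the $\Delta^{(2)}$ local condition is governed by the CM form $\theta_\chi$ — is the other delicate point, and matching it to a generator of the congruence module of $\theta_\chi$ (resp.\ $H_\f$ for Hida's congruence ideal in the family) is where I would expect to spend the most care.
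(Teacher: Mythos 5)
Your high-level plan — feed the pointwise bound of Theorem~\ref{intro_thm_Split_BK_Sigma12} through a control theorem at a Zariski-dense set of height-one primes, identify the resulting specialized index with the value of the big Beilinson--Flach index (hence, via the Perrin-Riou reciprocity law of Proposition~\ref{prop_reciprocity_law}, with the value of Hida's $p$-adic $L$-function times $\mathscr C(u)$), and patch via \cite[Proposition~A.0.1]{BLInertOrdMC} — is exactly the strategy the paper pursues, and you correctly identify why the uniformity of $t$ in $a$ and $\overline X$ is what makes the $\varpi^{a_i}$ finite. Part (ii) then proceeds by one extra interpolation in the weight variable, again as you say.

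There are, however, two technical steps you gloss over that the paper handles explicitly and that are genuinely load-bearing. First, in order for the patching criterion to apply you need to control the specialization of the characteristic ideal of the \emph{local} module $H^1(K_{\p^c},F^+R_f^*(\chi)\otimes\LL(\Gamma_K)^\sharp)/\LL_\cO(\Gamma_K)\cdot\res_{\p^c}^{(2)}({}^u\mathbb{BF}_{f^{\alpha},\chi})$ on the nose, not merely up to pseudo-null error. The paper arranges this by verifying (using the local cohomology structure results of Nekov\'a\v{r} and the elementary Lemma~\ref{lemma_pseduo_null_quotients}) that this quotient has no nonzero pseudo-null submodule — crucially, this rests on showing that $H^2(K_{\p^c},F^+R_f^*(\chi)\otimes\LL(\Gamma_K)^\sharp)$ is finite, which comes from the fact that $\alpha_f\chi(\p^c)\neq 1$. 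Without this, the equality $\cO\cdot\psi(b)={\rm Fitt}_\cO(\cdots)$ degrades to a divisibility with an uncontrolled error, and the patching argument does not close. Second, your proposal treats $\Delta^{(1)}$ and $\Delta^{(2)}$ symmetrically, but the paper proves only one of the two divisibilities directly by this method (the one where the relevant local $H^1$ is free or has finite $H^2$) and then deduces the other from the Poitou--Tate exchange argument of \cite[Theorem~3.8]{castellawanGL2families2018}, adjusted to carry the factor $\mathscr C(u)$. Which direction is ``direct'' is dictated by which local cohomology module is well-behaved, and this flips between parts (i) and (ii). You should make both of these points explicit, since without them the patching via \cite[Proposition~A.0.1]{BLInertOrdMC} is not actually licensed.
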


 We actually prove a more general version of Theorem~\ref{intro_thm_ordinary_Deltai_MC_via_horizontal_ES} in the main body of the present article (c.f. Theorem~\ref{thm_ordinary_Deltai_MC_via_horizontal_ES}), where we do not assume $a=0$. In order to study the general case where $a>0$, an error term $\mathscr{C}(u)$ (c.f. \S\ref{subsubsec_local_properties_CMgg} for its definition) appears on the right-hand side of the four divisibilities in the statement of the theorem. This error term may be dropped if an extra hypothesis concerning the CM branches of Hida families (labelled \ref{item_Ind}  in the main text) holds. We note that the main motivation of this article was to establish this general version, which required us to develop, as in Appendix~\ref{appendix_sec_ES_main}, a locally restricted Euler system machinery in the residually reducible scenario.


We expect that the divisibilities in Theorem~\ref{intro_thm_ordinary_Deltai_MC_via_horizontal_ES} can be upgraded in the near future to equalities, as suitable extensions of the results in \cite{xinwanwanrankinselberg} become available. 

In \S\ref{subsec_split_indefinite_definite_ord}, we explain how to apply the general descent formalism Nekov\'a\v{r} has developed in \cite{nekovar06} combined with our results here to obtain the following   $\LL_{\cO}(\Gamma_\ac)$-adic and $\LL_{\f}(\Gamma_\ac)$-adic Birch and Swinnerton-Dyer type formulae. We assume until the end of \S\ref{subsec_intro_results} that $\epsilon_f=\mathds{1}$ and $\chi^c=\chi^{-1}$. 

Suppose $R$ is a complete local Krull domain and $F\in R[[\gamma_\cyc-1]]$ (resp., $J\subset R[[\gamma_\cyc-1]]$ is an ideal). The definition of the \emph{mock leading term} $\partial_\cyc^*F \in R$ (resp., $\partial_\cyc^*J \in R$, which is defined only up to units of $R$) that make an appearance below is given in Definition~\ref{def_mock_leading_term} (resp., in  Definition~\ref{def_mock_leading_term_2}).

\begin{theorem}[Theorem~\ref{thm_big_BSD_step1}]
\label{intro_thm_big_BSD_step1} Suppose $N_f^-$ is square-free, $\chi^2\neq \mathds{1}$ and also that $\overline{\rho}_{\f}$ is $p$-distinguished. Assume that the hypothesis \ref{item_tau}  holds.
\item[i)]  We have the disivibility
$$\partial_{\cyc}^{*}\Char_{\LL_\cO(\Gamma_K)}\left(\widetilde{H}^2_{\rm f}(G_{K,\Sigma},\TT_{f,\chi}^\dagger;\Delta^{(1)})\right)={\rm Reg}_{\TT_{f,\chi}^\ac}\cdot \Char_{\LL_\cO(\Gamma_\ac)}\left(\widetilde{H}^2_{\rm f}(G_{K,\Sigma},\TT_{f,\chi}^\ac;\Delta^{(1)})_{{\rm tor}}\right)\,.$$
\item[ii)] Assume in addition that $\LL_\f$ is regular. Then,
\begin{align*}\partial_{\cyc}^{*}\Char_{\LL_\f(\Gamma_K)}\left(\widetilde{H}^2_{\rm f}(G_{K,\Sigma},\TT_{\f,\chi}^\dagger;\Delta^{(1)})\right)={\rm Reg}_{\TT_{\f,\chi}^\ac}\cdot \Char_{\LL_\f(\Gamma_\ac)}\left( \widetilde{H}^2_{\rm f}(G_{K,\Sigma},\TT_{\f,\chi}^\ac;\Delta^{(1)})_{{\rm tor}}\right)\,.
    \end{align*}
\end{theorem}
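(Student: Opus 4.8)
The plan is to realize both identities as instances of Nekov\'a\v{r}'s descent formalism for Selmer complexes, applied to the self-dual (cyclotomic-by-anticyclotomic) deformation $\TT_{f,\chi}^{\dagger}$ (resp.\ $\TT_{\f,\chi}^{\dagger}$) along the quotient map $\Gamma_K \twoheadrightarrow \Gamma_\ac$ with kernel $\Gamma_\cyc$. The key point is that, because $\epsilon_f=\mathds{1}$ and $\chi^c=\chi^{-1}$, the representation $\TT_{f,\chi}^{\ac}$ is conjugate self-dual (Remark~\ref{remark_why_central_critical_twists}), so the associated Selmer complex $\widetilde{H}^\bullet_{\rm f}(G_{K,\Sigma},\TT_{f,\chi}^{\ac};\Delta^{(1)})$ satisfies the Poitou--Tate/Nekov\'a\v{r} duality needed to make sense of the regulator ${\rm Reg}_{\TT_{f,\chi}^{\ac}}$ and of the torsion submodule in degree $2$. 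First I would record that, under \ref{item_tau} (together with $N_f^-$ square-free and $\chi^2\neq\mathds{1}$), the Selmer complex $\widetilde{H}^\bullet_{\rm f}(G_{K,\Sigma},\TT_{f,\chi}^{\dagger};\Delta^{(1)})$ is perfect, concentrated in degrees $1,2$, with $\widetilde{H}^1_{\rm f}=0$; this is the input that guarantees the characteristic ideal on the left is well-defined and that the mock leading term $\partial_\cyc^*$ of Definition~\ref{def_mock_leading_term_2} is the right object to extract.

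The core of the argument is the control/descent exact triangle
$$
\widetilde{C}^\bullet_{\rm f}(\TT_{f,\chi}^{\dagger})\otim^{\mathbf L}_{\LL_\cO(\Gamma_K)}\LL_\cO(\Gamma_\ac)\;\lra\; \widetilde{C}^\bullet_{\rm f}(\TT_{f,\chi}^{\ac})
$$
whose cone is controlled by the local conditions at $\p,\p^c$ and the specialization behaviour of $\Delta^{(1)}$. I would first check that specializing $\Delta^{(1)}$ along $\Gamma_K\to\Gamma_\ac$ recovers the anticyclotomic local condition, so that the cone is acyclic (or has controlled, finite-length cohomology absorbed into the error already suppressed from the statement). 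Taking the long exact cohomology sequence, $\widetilde{H}^1_{\rm f}(\TT^\ac)$ and the cokernel of the specialization on $\widetilde{H}^2_{\rm f}$ are what produce the regulator term: here ${\rm Reg}_{\TT_{f,\chi}^{\ac}}$ is by definition (Definition to which Remark~\ref{rem_generalities_about_selmer_complexes_comparison_etc} refers) the determinant of the height/cup-product pairing on the free part of $\widetilde{H}^1_{\rm f}(\TT^\ac)\times \widetilde{H}^1_{\rm f}(\TT^{\ac,\iota})$, and Nekov\'a\v{r}'s formula equates $\partial_\cyc^*$ of the characteristic ideal upstairs with $({\rm Reg})\cdot\Char(\widetilde{H}^2_{\rm f}(\TT^\ac)_{\rm tor})$. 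Concretely I would invoke the abstract leading-term formula of \cite{nekovar06} (the ``Bockstein'' computation of $\partial$ on a perfect complex over $R[[\gamma_\cyc-1]]$): if $\widetilde{H}^1_{\rm f}(\TT^\dagger)=0$ then $\widetilde{H}^2_{\rm f}(\TT^\dagger)$ is $R[[\gamma_\cyc-1]]$-torsion, its characteristic ideal specializes with multiplicity governed by the rank of $\widetilde{H}^1_{\rm f}(\TT^\ac)$, and the mock leading coefficient is exactly the regulator of the Bockstein pairing times the char.\ ideal of the residual torsion.

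For part (ii) the same machine runs over $\LL_\f(\Gamma_K)$ once $\LL_\f$ is assumed regular: regularity ensures $\LL_\f(\Gamma_K)\cong\LL_\f[[\Gamma_K]]$ is again a regular (hence Krull, factorial enough for characteristic ideals) domain, so the perfectness and the determinant-of-cohomology formalism apply verbatim, and $\TT_{\f,\chi}^{\dagger}$ specializes to $\TT_{f,\chi}^{\dagger}$ recovering (i) on the special fibre. The main obstacle, and the step I would spend the most care on, is showing the descent triangle's \emph{cone is genuinely acyclic} (equivalently, that the local conditions $\Delta^{(1)}$ at the two primes above $p$ behave well under $\Gamma_\cyc$-specialization and contribute no spurious factor): this is where the choice of $\Delta^{(1)}$ (ordinary at $\p$, strict/Greenberg-type at $\p^c$) and the residual hypotheses enter, and one must rule out exceptional-zero phenomena at $\p^c$ that would otherwise force an extra $L$-invariant term into $\partial_\cyc^*$. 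A secondary technical point is the sign/torsion bookkeeping identifying the parity of the $\Gamma_\cyc$-order of vanishing with $\rk_{\LL_\cO(\Gamma_\ac)}\widetilde{H}^1_{\rm f}(\TT^\ac)$, which I would handle by Nekov\'a\v{r}'s generic-rank and duality results rather than by hand.
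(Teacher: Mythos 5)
Your proposal correctly locates the right machinery — Nekov\'a\v{r}'s descent/Bockstein formalism for the Selmer complex of $\TT_{f,\chi}^\dagger$ along $\Gamma_K\twoheadrightarrow\Gamma_\ac$ — which is indeed what the paper uses (Proposition~\ref{prop_big_BSD}, drawn from \cite[11.7.11]{nekovar06}). But there is a genuine gap in what you identify as the substantive content. You assert as an ``input'' that $\widetilde{H}^1_{\rm f}(G_{K,\Sigma},\TT_{f,\chi}^\dagger;\Delta^{(1)})=0$, and you locate the main difficulty in checking that the cone of the descent triangle is acyclic and that there are no exceptional-zero $\mathcal{L}$-invariants at $\p^c$. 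Neither is the real issue. The vanishing of $\widetilde{H}^1_{\rm f}$ over $\Gamma_K$ is not an easy consequence of the hypotheses; what the descent formalism actually needs is that $\widetilde{H}^1_{\rm f}$ and $\widetilde{H}^2_{\rm f}$ of $\TT_{f,\chi}^\dagger$ are $\LL(\Gamma_K)$-torsion, and this is precisely the twisted variant \eqref{eqn_twisted_torsion_Gamma_K_1} of Theorem~\ref{thm_ordinary_Delta1_Selmer_torsion_via_horizontal_ES} — a nontrivial Euler-system theorem, not a formal observation. Similarly, the Tamagawa factors ${\rm Tam}_v(\TT_{f,\chi}^\ac,P)$ must be shown to vanish (via \cite[Corollary 8.9.7.4]{nekovar06}); you omit this altogether.

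The step that actually carries the weight, and which your proposal does not see at all, is the equivalence
\[
r\bigl(\widetilde{H}^2_{\rm f}(G_{K,\Sigma},\TT_{f,\chi}^\dagger;\Delta^{(1)})\bigr)>1 \;\Longleftrightarrow\; {\rm Reg}_{\TT_{f,\chi}^\ac}=0\,,
\]
which is needed because $\partial_\cyc^*$ is defined so as to vanish exactly when the order of vanishing in $(\gamma_\cyc-1)$ exceeds $1$. Nekov\'a\v{r}'s \cite[Proposition 11.7.6(vii)]{nekovar06} gives $r\geq \operatorname{rank}_{\LL(\Gamma_\ac)}\widetilde{H}^1_{\rm f}(\TT_{f,\chi}^\ac;\Delta^{(1)})$ with equality iff the regulator is nonzero, so to conclude one needs two arithmetic inputs external to the formalism: (a) the upper bound $\operatorname{rank}\leq 1$, which is Corollary~\ref{cor_thm_ordinary_Delta1_Delta2_anticyclo_Selmer_torsion_via_horizontal_ES} and rests on the Chida--Hsieh nonvanishing of the anticyclotomic $p$-adic $L$-function together with the Euler system bound; and (b) the lower bound $\operatorname{rank}\geq 1$ in the relevant (indefinite) case, which the paper derives from the Bertolini--Darmon--Prasanna formula in families. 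Your appeal to ``Nekov\'a\v{r}'s generic-rank and duality results rather than by hand'' does not supply these bounds — they are not formal consequences of duality — so as written your argument cannot close the equivalence and the proof would not go through.
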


Combining this algebraic variants of $\LL$-adic BSD formulae with Theorem~\ref{intro_thm_ordinary_Deltai_MC_via_horizontal_ES}, we deduce the following statements towards anticyclotomic Iwasawa Main Conjectures.

\begin{theorem}[Theorem~\ref{thm_ordinary_definite_ac}, Main Conjectures of Definite Anticyclotomic Iwasawa Theory]
\label{intro_thm_ordinary_definite_ac} 
Suppose that $N_f^-$ is a square-free product of an even number of primes, 
\ref{item_RIa} holds with $a=0$, and that $\overline{\rho}_{\f}$ is $p$-distinguished. Assume that the hypothesis \ref{item_tau} holds.
\item[i)]  We have the divisibility $$\Char_{\LL_{\cO}(\Gamma_\ac)}\left( \widetilde{H}^2_{\rm f}(G_{K,\Sigma},\TT_{f,\chi}^\ac;\Delta^{(1)})\right)\otimes\QQ_p\,\, \Big{|}\,\,\, \LL_{L}(\Gamma_\ac)\cdot L_p(f_{/K}\otimes\chi,  {\Sigma}^{(1)}_{\rm cc})\big{\vert}_{\Gamma_\ac}
\,.$$
\item[ii)] Assume in addition that $\LL_\f$ is regular. Then,
$$\Char_{\LL_{\f}(\Gamma_\ac)}\left( \widetilde{H}^2_{\rm f}(G_{K,\Sigma},\TT_{\f,\chi}^\ac;\Delta^{(1)})\right)\otimes\QQ_p\,\, \Big{|}\,\,\, \LL_{L}(\Gamma_\ac)\cdot H_{\f}L_p(\f_{/K}\otimes\chi,  \mathbb{\Sigma}^{(1)}_{\rm cc})\big{\vert}_{\Gamma_\ac} 
\,.$$
\end{theorem}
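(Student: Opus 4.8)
The plan is to deduce both divisibilities by combining the three-variable (over $\Gamma_K$) results of Theorem~\ref{intro_thm_ordinary_Deltai_MC_via_horizontal_ES}(i)--(ii) with the algebraic $\LL$-adic Birch--Swinnerton-Dyer formula of Theorem~\ref{intro_thm_big_BSD_step1}, via Nekov\'a\v{r}'s descent formalism from \cite{nekovar06}. First I would set up the specialization from the $\Gamma_K$-tower to the anticyclotomic tower $\Gamma_\ac$: since $\epsilon_f=\mathds{1}$ and $\chi^c=\chi^{-1}$, the self-dual twist $\TT_{f,\chi}^\dagger$ is defined, and the projection $\Gamma_K\twoheadrightarrow\Gamma_\ac$ induces a map on the extended Selmer complexes $\widetilde{H}^2_{\rm f}(G_{K,\Sigma},\TT_{f,\chi}^\dagger;\Delta^{(1)})\rightsquigarrow \widetilde{H}^2_{\rm f}(G_{K,\Sigma},\TT_{f,\chi}^\ac;\Delta^{(1)})$. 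On the analytic side, the restriction $L_p(f_{/K}\otimes\chi,\Sigma^{(1)}_{\rm crit})\big\vert_{\Gamma_K}$ specializes, under the cyclotomic "mock leading term'' operation $\partial_\cyc^*$, to the anticyclotomic $p$-adic $L$-function $L_p(f_{/K}\otimes\chi,\Sigma^{(1)}_{\rm cc})\big\vert_{\Gamma_\ac}$ up to the regulator/period factors controlled by Theorem~\ref{intro_thm_big_BSD_step1}; this uses that $N_f^-$ is a square-free product of an \emph{even} number of primes, so we are in the definite case where the anticyclotomic $L$-function is an honest element (not forced to vanish) and the relevant Selmer module is torsion.

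The key steps, in order, are: (1) apply $\partial_\cyc^*$ to the characteristic-ideal divisibility of Theorem~\ref{intro_thm_ordinary_Deltai_MC_via_horizontal_ES}(i), obtaining a divisibility of mock leading terms in $\LL_\cO(\Gamma_\ac)$ (after tensoring with $\QQ_p$ to absorb the $\varpi^{a_1}$ and the $\mu$-type ambiguities); (2) rewrite the left-hand side using Theorem~\ref{intro_thm_big_BSD_step1}(i), which identifies $\partial_\cyc^*\Char(\widetilde{H}^2_{\rm f}(\TT_{f,\chi}^\dagger;\Delta^{(1)}))$ with ${\rm Reg}_{\TT_{f,\chi}^\ac}\cdot\Char_{\LL_\cO(\Gamma_\ac)}(\widetilde{H}^2_{\rm f}(\TT_{f,\chi}^\ac;\Delta^{(1)})_{\rm tor})$; (3) rewrite the right-hand side: $\partial_\cyc^*$ of the cyclotomic restriction of Hida's $p$-adic $L$-function produces, by the interpolation/factorization properties of these $p$-adic $L$-functions, the anticyclotomic $L_p(f_{/K}\otimes\chi,\Sigma^{(1)}_{\rm cc})\big\vert_{\Gamma_\ac}$ times exactly the same regulator term ${\rm Reg}_{\TT_{f,\chi}^\ac}$ (this matching of regulator factors on the two sides is the heart of Nekov\'a\v{r}'s descent), together with the correction ideal $\mathscr{C}^{\rm ac}(u)$; (4) cancel the common regulator factor — here one must know ${\rm Reg}_{\TT_{f,\chi}^\ac}$ is nonzero, which in the definite case follows since the relevant $H^1$ is torsion so the regulator lives in the fraction field and is a unit up to the torsion contributions — to arrive at the asserted divisibility for $\widetilde{H}^2_{\rm f}(\TT_{f,\chi}^\ac;\Delta^{(1)})_{\rm tor}$, and finally note that in the definite case the full module equals its torsion submodule, giving the statement as phrased. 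Part (ii) is obtained by running the identical argument with $\f$ in place of $f$, using the regularity of $\LL_\f$ to ensure $\LL_\f(\Gamma_K)$ is a well-behaved coefficient ring and invoking Theorem~\ref{intro_thm_ordinary_Deltai_MC_via_horizontal_ES}(ii) and Theorem~\ref{intro_thm_big_BSD_step1}(ii). The last sentence of the theorem follows from the observation, recorded before Theorem~\ref{intro_thm_big_BSD_step1}, that $\mathscr{C}^{\rm ac}(u)=(1)$ whenever the lattice hypothesis \ref{item_Ind} holds, together with Proposition~\ref{prop_uniqueness_of_the_lattice_in_residually_irred_case}, which gives \ref{item_Ind} when $\chi^2\not\equiv\mathds{1}\bmod\m_\cO$ (the case $a=0$ for the character $\chi^2$).

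The main obstacle I anticipate is step (3): precisely matching the regulator factor ${\rm Reg}_{\TT_{f,\chi}^\ac}$ that emerges from the algebraic side (Theorem~\ref{intro_thm_big_BSD_step1}) against the one that emerges from applying $\partial_\cyc^*$ to the cyclotomic restriction of the three-variable $p$-adic $L$-function. This requires a compatibility between Nekov\'a\v{r}'s height-pairing/regulator machinery and the explicit reciprocity laws relating Beilinson--Flach classes to Hida's $p$-adic $L$-functions along the degenerate (cyclotomic) direction, and it is where the residually-reducible subtleties — encoded in the exponent $a$ of \ref{item_RIa} and absorbed into $\mathscr{C}^{\rm ac}(u)$ — genuinely enter. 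A secondary technical point is ensuring that the passage to $\QQ_p$-coefficients is harmless, i.e., that no information beyond $\mu$-invariants (which the theorem explicitly disclaims by the $\otimes\,\QQ_p$) is lost when clearing the powers of $\varpi$ coming from $a_1,a_2,b_1,b_2$.
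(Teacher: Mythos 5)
Your overall architecture — descend the three-variable divisibility of Theorem~\ref{intro_thm_ordinary_Deltai_MC_via_horizontal_ES} from $\Gamma_K$ to $\Gamma_\ac$ via Nekov\'a\v{r}'s BSD formalism, exploit torsionness of the anticyclotomic Selmer modules in the definite sign, and handle the $\mathscr{C}^{\rm ac}(u)=(1)$ clause via Proposition~\ref{prop_uniqueness_of_the_lattice_in_residually_irred_case} — matches the paper's strategy. The paper executes this in one stroke by invoking Corollary~\ref{cor_main_conj_along_Gamma}(i) with $\Gamma=\Gamma_\ac$ (whose proof is exactly the Proposition~\ref{prop_big_BSD}-style descent you outline), preceded by the two opening observations that ${\rm Reg}_{\TT_{\f,\chi}^\ac}=\LL_\cO(\Gamma_\ac)$ and that $\partial_\cyc^*({\rm Tw}(L_p(f_{/K}\otimes\chi,\Sigma^{(1)})))=L_p(f_{/K}\otimes\chi,\Sigma^{(1)}_{\rm cc})\vert_{\Gamma_\ac}$; cases (B) and (C) in the body statement are quoted from prior work. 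So the logical dependencies you list are the right ones, and your treatment of the final sentence (anticyclotomic self-duality makes \ref{item_RIa} with $a=0$ the same as $\chi^2\not\equiv\mathds{1}\bmod\m_\cO$) is correct.

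However, there is a genuine conceptual misstep in your steps (3) and (4), and in the ``main obstacle'' you anticipate. You claim that $\partial_\cyc^*$ of the cyclotomic restriction of Hida's $p$-adic $L$-function produces $L_p(f_{/K}\otimes\chi,\Sigma^{(1)}_{\rm cc})\vert_{\Gamma_\ac}$ ``times exactly the same regulator term ${\rm Reg}_{\TT_{f,\chi}^\ac}$,'' and describe the ``matching of regulator factors on the two sides'' as the heart of Nekov\'a\v{r}'s descent. This is not how the formalism works: $\partial_\cyc^*$ is a purely formal operation on elements of $\LL_\cO(\Gamma_K)$ (extract the constant or first coefficient in $\gamma_\cyc-1$), and it does not manufacture a regulator out of an analytic $L$-function. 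The regulator appears only on the algebraic side, as the factor relating $\partial_\cyc^*\Char_{\LL_\cO(\Gamma_K)}$ of the big Selmer module to $\Char_{\LL_\cO(\Gamma_\ac)}$ of the torsion part (that is Theorem~\ref{intro_thm_big_BSD_step1} / Proposition~\ref{prop_big_BSD}). Consequently there is no matching to be done and no compatibility between the height pairing and the reciprocity law is required here — the ``obstacle'' you anticipate does not exist. In the definite case, the paper simply notes at the outset that both degree-one Selmer groups entering the height pairing vanish, so ${\rm Reg}_{\TT_{\f,\chi}^\ac}$ is the unit ideal and $\widetilde{H}^2_{\rm f}$ equals its own torsion submodule; the BSD formula then collapses to an honest equality between $\partial_\cyc^*$ of the $\Gamma_K$-characteristic ideal and the $\Gamma_\ac$-characteristic ideal, and the divisibility descends directly. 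Your argument arrives at the correct conclusion because the regulator happens to be a unit, but not for the reason you give: there is no regulator on the analytic side to cancel against.
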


The twisting morphisms ${\rm Tw}:\LL(H_{p^\infty})\lra \LL(H_{p^\infty})$ and ${\rm Tw}_\f: \LL_\f(H_{p^\infty})\lra \LL_\f(H_{p^\infty})$ which appear in Theorem~\ref{intro_thm_ordinary_Delta1_Selmer_torsion_via_horizontal_ES_indefinite} below are introduced in Definitions~\ref{def_twisting_morphism} and \ref{def_twisting_morphism_Hida}.

\begin{theorem}[Theorem~\ref{thm_ordinary_Delta1_Selmer_torsion_via_horizontal_ES_indefinite}, Main Conjectures of Indefinite Anticyclotomic Iwasawa Theory]
\label{intro_thm_ordinary_Delta1_Selmer_torsion_via_horizontal_ES_indefinite} Suppose that $N_f^-$ is a square-free product of an even number of primes, 
\ref{item_RIa} holds with $a=0$,  and that $\overline{\rho}_{\f}$ is $p$-distinguished. Assume that the hypothesis \ref{item_tau} holds.
\item[i)]  Both $\LL_{\cO}(\Gamma_\ac)$-modules $\widetilde{H}^1_{\rm f}(G_{K,\Sigma},\TT_{f,\chi}^\ac;\Delta^{(1)})$ and $\widetilde{H}^2_{\rm f}(G_{K,\Sigma},\TT_{f,\chi}^\ac;\Delta^{(1)})$ have rank one. Moreover, we have a containment
    $$ 
    \partial_\cyc^* \left({\rm Tw}\left(L_p(f_{/K}\otimes\chi,  {\Sigma}^{(1)})\right){\big{\vert}_{\Gamma_K}}\right)\,\, \subset \,\,{\rm Reg}_{\TT_{f,\chi}^\ac}\cdot\Char_{\LL_{\cO}(\Gamma_\ac)}\left( \widetilde{H}^2_{\rm f}(G_{K,\Sigma},\TT_{f,\chi}^\ac;\Delta^{(1)})_{\rm tor}\right)\otimes\QQ_p\,.$$
\item[ii)] Assume in addition that $\LL_\f$ is regular. Then, $\widetilde{H}^1_{\rm f}(G_{K,\Sigma},\TT_{\f,\chi}^\ac;\Delta^{(1)})$ and $\widetilde{H}^2_{\rm f}(G_{K,\Sigma},\TT_{\f,\chi}^\ac;\Delta^{(1)})$ have rank one as $\LL_{\f}(\Gamma_\ac)$-modules. Moreover, we have a containment
    $$(\LL_{\f}(\Gamma_\ac)\otimes_{\ZZ_p}\QQ_p) H_{\f}
    \partial_\cyc^* \left({\rm Tw}_\f\left(L_p(\f_{/K}\otimes\chi,  \mathbb{\Sigma}^{(1)})\right){\big{\vert}_{\Gamma_K}}\right)\,\subset\, {\rm Reg}_{\TT_{\f,\chi}^\ac}\cdot\Char_{\LL_{\f}(\Gamma_\ac)}\left( \widetilde{H}^2_{\rm f}(G_{K,\Sigma},\TT_{\f,\chi}^\ac;\Delta^{(1)})_{\rm tor}\right)\otimes\QQ_p\,.$$
    

\end{theorem}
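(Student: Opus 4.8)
The plan is to deduce this indefinite anticyclotomic statement by combining the $\LL$-adic algebraic Birch--Swinnerton-Dyer formula of Theorem~\ref{intro_thm_big_BSD_step1} with the divisibility in the Iwasawa Main Conjecture over $\Gamma_K$ established in Theorem~\ref{intro_thm_ordinary_Deltai_MC_via_horizontal_ES}, and then running Nekov\'a\v{r}'s descent machinery from \cite{nekovar06} along the cyclotomic direction. First I would establish the rank statements: since $N_f^-$ is a square-free product of an \emph{even} number of primes and $\chi^2\neq\mathds{1}$, the relevant global root number for the conjugate self-dual family $\TT_{f,\chi}^\ac$ is $-1$, so Nekov\'a\v{r}'s generic non-vanishing of the height pairing (together with the conjugate self-duality isomorphism recorded in Remark~\ref{remark_why_central_critical_twists}(ii)) forces $\widetilde{H}^1_{\rm f}(G_{K,\Sigma},\TT_{f,\chi}^\ac;\Delta^{(1)})$ and $\widetilde{H}^2_{\rm f}(G_{K,\Sigma},\TT_{f,\chi}^\ac;\Delta^{(1)})$ to each have generic rank one over $\LL_{\cO}(\Gamma_\ac)$; one checks that no nonzero pseudo-null submodule intervenes using the control theorems and the hypothesis \ref{item_tau}. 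The Hida-family version in part (ii) follows the same template, where the regularity of $\LL_\f$ guarantees that $\LL_\f(\Gamma_\ac)$ is a regular (hence UFD, locally) ring so that the formation of characteristic ideals and torsion submodules behaves well under the relevant specializations.

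Next I would feed in the divisibility $\Char_{\LL_{\cO}(\Gamma_K)}(\widetilde{H}^2_{\rm f}(\TT_{f,\chi};\Delta^{(1)})) \mid \varpi^{a_1}L_p(f_{/K}\otimes\chi,\Sigma^{(1)}_{\rm crit})\vert_{\Gamma_K}\cdot\mathscr{C}(u)$ from Theorem~\ref{intro_thm_ordinary_Deltai_MC_via_horizontal_ES}(i), applied to the self-dual twist $\TT_{f,\chi}^\dagger$ (so that $L_p$ is replaced by its twist ${\rm Tw}(L_p(f_{/K}\otimes\chi,\Sigma^{(1)}))$ via the twisting morphism of Definition~\ref{def_twisting_morphism}). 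Applying the \emph{mock leading term} operator $\partial_\cyc^*$ to both sides of this divisibility and invoking Theorem~\ref{intro_thm_big_BSD_step1}(i), namely
\[
\partial_{\cyc}^{*}\Char_{\LL_\cO(\Gamma_K)}\left(\widetilde{H}^2_{\rm f}(G_{K,\Sigma},\TT_{f,\chi}^\dagger;\Delta^{(1)})\right)={\rm Reg}_{\TT_{f,\chi}^\ac}\cdot \Char_{\LL_\cO(\Gamma_\ac)}\left(\widetilde{H}^2_{\rm f}(G_{K,\Sigma},\TT_{f,\chi}^\ac;\Delta^{(1)})_{{\rm tor}}\right),
\]
converts the $\Gamma_K$-divisibility into exactly the desired containment
\[
\mathscr{C}^{\rm ac}(u)\cdot\partial_\cyc^*\left({\rm Tw}(L_p(f_{/K}\otimes\chi,\Sigma^{(1)}))\vert_{\Gamma_K}\right)\subset {\rm Reg}_{\TT_{f,\chi}^\ac}\cdot\Char_{\LL_{\cO}(\Gamma_\ac)}\left(\widetilde{H}^2_{\rm f}(G_{K,\Sigma},\TT_{f,\chi}^\ac;\Delta^{(1)})_{\rm tor}\right)\otimes\QQ_p,
\]
after rationalizing to absorb the power of $\varpi$ coming from $\varpi^{a_1}$ and matching the correction ideal $\mathscr{C}(u)$ specializing to $\mathscr{C}^{\rm ac}(u)$ along the cyclotomic projection (Definition~\ref{defn_C_ac_u}). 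Part (ii) is identical with $\LL_\f(\Gamma_\ac)$ in place of $\LL_\cO(\Gamma_\ac)$, carrying the extra congruence factor $H_\f$ through, and using Theorem~\ref{intro_thm_big_BSD_step1}(ii). Finally, the clause about $\mathscr{C}^{\rm ac}(u)=(1)$ is immediate from Proposition~\ref{prop_uniqueness_of_the_lattice_in_residually_irred_case}: if $\chi^2\not\equiv\mathds{1}\bmod\m_\cO$, or $\chi^2\equiv\mathds{1}\bmod\m_\cO$ with \ref{item_Ind} in force, then the CM branch carries a unique Galois-stable lattice, which is precisely what forces the correction ideal to be trivial.

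The main obstacle I anticipate is the compatibility of the two inputs along the cyclotomic variable: Theorem~\ref{intro_thm_ordinary_Deltai_MC_via_horizontal_ES} produces a divisibility of characteristic ideals over $\LL_\cO(\Gamma_K)$, whereas Theorem~\ref{intro_thm_big_BSD_step1} is an \emph{equality} of mock leading terms, and one must verify that $\partial_\cyc^*$ is compatible with divisibility (it is monotone on principal ideals but one must check the non-principal/torsion pieces do not obstruct this) and that the error exponents $a_1,b_1$ together with $\mathscr{C}(u)$ genuinely die after the descent, which is why the statement is only up to $\QQ_p$-rationalization and only a containment rather than an equality. A secondary subtlety is ensuring that the extended Selmer group $\widetilde{H}^2_{\rm f}$ has no pseudo-null submodule so that its characteristic ideal controls it faithfully under $\partial_\cyc^*$; this is where one needs \ref{item_tau} and the surjectivity of the relevant localization maps, exactly as in Nekov\'a\v{r}'s formalism. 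Once these bookkeeping points are settled, the proof is a formal assembly of the cited results.
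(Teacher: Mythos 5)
Your argument for the containment in part (i) and (ii) is essentially the paper's route: apply the $\Gamma_K$-divisibility of Theorem~\ref{intro_thm_ordinary_Deltai_MC_via_horizontal_ES} to the self-dual twist, then pass through the algebraic $\LL$-adic BSD formula of Theorem~\ref{intro_thm_big_BSD_step1} (packaged as Corollary~\ref{cor_big_BSD_step2} in the text) to land on $\Gamma_\ac$. Your worry about monotonicity of $\partial_\cyc^*$ under divisibility is legitimate, and the paper handles it precisely by encapsulating the descent inside Corollary~\ref{cor_big_BSD_step2}, so no separate argument is needed. The triviality of $\mathscr{C}^{\rm ac}(u)$ under the stated hypotheses also follows from Proposition~\ref{prop_uniqueness_of_the_lattice_in_residually_irred_case} as you say.

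However, your argument for the rank statements has a genuine gap. You assert that ``Nekov\'a\v{r}'s generic non-vanishing of the height pairing, together with conjugate self-duality, forces'' the ranks to be one — but non-degeneracy of the anticyclotomic height pairing is not a theorem one may invoke (it is essentially a conjecture, equivalent to a strong form of what you are trying to prove), and even if it were available, it would control the \emph{torsion} structure rather than certify a lower bound of $1$ on the rank. The actual mechanism in the paper has two halves. The upper bound $\leq 1$ comes from Corollary~\ref{cor_thm_ordinary_Delta1_Delta2_anticyclo_Selmer_torsion_via_horizontal_ES}: one shows $\widetilde{H}^1_{\rm f}(G_{K,\Sigma},\TT_{f,\chi}^\ac;\Delta^{(2)})$ is torsion (using Chida--Hsieh and Hsieh's non-vanishing for $L_p(\Sigma^{(2)}_{\rm cc})$ when $N_f^-$ has an even number of prime factors, Theorem~\ref{thm_Chida_Hsieh}(ii), together with Theorem~\ref{thm_BK_type_result}) and then exploits the Poitou--Tate sequence to bound the rank of $H^1_{\calF_+}$ by one. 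The lower bound $\geq 1$ requires exhibiting a concrete non-torsion class in $\widetilde{H}^1_{\rm f}(G_{K,\Sigma},\TT_{f,\chi}^\ac;\Delta^{(1)})$: this is the image ${}^u\mathbb{BF}^{\ac}_{f^\alpha,\chi}$ of the Beilinson--Flach element, which is non-trivial by the reciprocity law (Proposition~\ref{prop_reciprocity_law}) and lies in the $\Delta^{(1)}$-Selmer group because $\res_{\p}^{(1)}$ of it vanishes (injectivity of the relevant Coleman map from \cite{BLForum}). No amount of height-pairing formalism can substitute for producing this explicit class; you need the Euler system to construct it.
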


Similar to Theorem~\ref{intro_thm_ordinary_Deltai_MC_via_horizontal_ES}, more general versions of Theorems~\ref{intro_thm_ordinary_definite_ac} and \ref{intro_thm_ordinary_Delta1_Selmer_torsion_via_horizontal_ES_indefinite} without assuming $a=0$ are proved below (c.f. Theorems~\ref{thm_ordinary_definite_ac} and \ref{thm_ordinary_Delta1_Selmer_torsion_via_horizontal_ES_indefinite}), at the expense of introducing an error term $\mathscr{C}^{\rm ac}(u)$  (see Definition~\ref{defn_C_ac_u} for its definition) on the right-hand side of the divisibilities in Theorem~\ref{intro_thm_ordinary_definite_ac} and on the left-hand side of the inclusions of Theorem~\ref{intro_thm_ordinary_Delta1_Selmer_torsion_via_horizontal_ES_indefinite} respectively.

These results simultaneously extend the previous works by Bertolini--Darmon, Castella, Castella--Wan, Castella--Hsieh Chida--Hsieh, Howard, Fouquet, Skinner--Urban and the present authors that we have mentioned above. In the main body of our article, we have included variants of these results that were obtained in earlier works, so as to both allow a comparison with known results and also to offer a picture that is as complete as possible describing the progress so far towards the Main Conjectures for ${\rm GL}_2\times {\rm Res}_{K/\QQ}{\rm GL}_1$.

Another benefit of the current approach is that it allows the study of the far more challenging case when the prime $p$ remains inert in $K/\QQ$. This is the subject of the {sequel~\cite{BLInertOrdMC}.}
\subsection{Overview} We summarize the contents of the current article. 
\begin{itemize}
    \item We introduce various collections of Hecke characters which are \emph{critical} in a suitable sense as well as theta-series which come attached to them in \S\ref{sec_critical_Hecke_chars}.  We review Beilinson--Flach elements for Rankin--Selberg convolutions of cuspidal eigenforms with the theta-series of these Hecke characters in \S\ref{sec_BF_elements}. 
    \item After introducing the Selmer complexes and Hida's $p$-adic $L$-functions which we shall repeatedly use in this article in \S\ref{sec_Selmer_complexes_padic_L_fucntions}, we formulate the Iwasawa Main Conjectures for $\GL_2\times{\rm Res}_{K/\QQ}\GL_1$ in \S\ref{subsec_IMC_fchi} and discuss previous work in certain cases (for most part, due to Castella, Skinner--Urban, Wan and the present authors).
    \item  In \S\ref{subsubsec_IwThe1_ord_families}, we allow variation in the non-CM factor in the Rankin--Selberg convolutions in question and formulate the Main Conjectures in that set up. We review earlier works for families (due to Skinner--Urban, Wan and Castella--Wan) in \S\ref{subsubsec_IwThe1_ord_families}  and move on to prove the the main results of the current article in \S\ref{sec_results_split}.
    \item In \S\ref{subsec_BK_conjectures}, we present our results towards Bloch--Kato conjectures.  Our argument therein relies on the general machinery we develop in Appendix~\ref{appendix_sec_ES_main}, which might be of independent interest. 
    \item We combine these results together with the criterion for divisibility we prove in \cite[Appendix~A]{BLInertOrdMC} to prove our Iwasawa theoretic results over the $\ZZ_p^2$-extension of $K$ in \S\ref{subsec_split_ord}. 
    \item After reviewing Nekov\'a\v{r}'s general descent formalism in \S\ref{subsubsec_Nek_descent}, we apply his results in \S\ref{subsubsec_descent_to_Gamma} to obtain our Iwasawa theoretic results for any $\ZZ_p$-extension of $K$.
    \item In the particular case where the $\ZZ_p$-extension in question is the anticyclotomic $\ZZ_p$-extension of $K$, we combine Nekov\'a\v{r}'s general descent formalism and our results in \S\ref{subsec_split_ord} with the work of Chida--Hsieh and Hsieh to the anticyclotomic main conjectures. Our results that concern the definite anticyclotomic case are proved in \S\ref{subsubsec_split_definite_ord}, and those that indefinite anticyclotomic case (where we obtain a $\LL_\f(\Gamma_\ac)$-adic Birch and Swinnerton-Dyer type formula) are in \S\ref{subsubsec_split_indefinite_ord}. 
    \item We finish our paper with Appendix~\ref{appendix:corrige} where we correct an inaccuracy in~\cite{BLForum} pertaining to the interpolation formulae for Hida's $p$-adic $L$-functions presented in op. cit. 
\end{itemize}

\subsubsection*{Acknowledgements} The authors thank Ashay Burungale, Robert Pollack, Christopher Skinner and Preston Wake for enlightening conversations on various technical subtleties. They would also like to thank the anonymous referee for his/her very constructive and valuable comments and suggestions on an earlier version of the article.


\section{Critical Hecke characters, theta series and CM branches Hida families}
\label{sec_critical_Hecke_chars}
\subsection{Critical Hecke characters}
In this section where we introduce certain sets of Hecke characters, our discussion  follows \cite[\S4]{bertolinidarmonprasanna13} very closely. As in the introduction, let us fix a cuspidal eigenform $f$ of weight $k_f+2$ as well as a ray class character $\chi$ modulo $\ff$ and of finite order. We also assume that the level $N_f$ of $f$ is coprime to $pD_K\mathbf{N}\ff$. Recall that we have assumed $\chi\neq \chi^c$, in particular $\chi$ verifies \ref{item_RIa} for some natural number $a$. Fix forever such an $a$.
\begin{defn}
\label{defn_criticality}
\item[i)] We say that a Hecke character $\psi$ of $K$ with $p$-power conductor is \textbf{critical} (relative to the eigenform $f$ and the character $\chi$ we have fixed) if $s=1$ is a critical value for the $L$-function $L(f_{/K}\otimes\chi^{-1}\psi^{-1},s)$. We shall denote the set of critical Hecke characters by $\Sigma_{\rm crit}$. 
\item[ii)] Given $\psi\in \Sigma_{\rm crit}$ with infinity-type $(\ell_1,\ell_2)$, let us put $\ell:=|\ell_1-\ell_2|$ and $\ell_0:=\min(\ell_1,\ell_2)$.
\end{defn}
We note that $\psi\in\Sigma_{\rm crit}$ if and only if $\psi|\cdot|$ belongs to $\Sigma$ of \cite[\S4.1]{bertolinidarmonprasanna13}. We prefer to work with this shift since we will work with the homological (rather than cohomological) Galois representations attached to modular forms.

It follows from the discussion in \cite[\S4.1]{bertolinidarmonprasanna13} that a Hecke character $\psi$ with $p$-power conductor and infinity-type $(\ell_1,\ell_2)$ is critical if and only if one of the following holds:
\begin{itemize}
    \item $0\leq \ell_1,\ell_2\leq k_f$.
    \item $\ell_1\geq k_f+1$ and $\ell_2\leq -1$ or $\ell_2\geq k_f+1$ and $\ell_1\leq -1$.
\end{itemize}

\begin{defn}
\label{defn_criticality_1_2}
We define $\Sigma_{\rm crit}^{(1)}\subset \Sigma_{\rm crit}$ as the set of Hecke characters $\psi$ whose infinity-type $(\ell_1,\ell_2)$ verifies
$$0\leq \ell_1,\ell_2\leq k_f\,.$$
We define $\Sigma_{\rm crit}^{(2)}\subset \Sigma_{\rm crit}$ as the set of Hecke characters $\psi$ whose infinity-type $(\ell_1,\ell_2)$ verifies
$$\ell_1\geq k_f+1 \hbox{ and } \ell_2\leq -1; \hbox{ or } \ell_2\geq k_f+1 \hbox{ and } \ell_1\leq -1.$$
\end{defn}

\begin{remark}
The set $\Sigma_{\rm crit}$ is determined by the weight $k_f+2$ of the eigenform $f$. When we need to emphasize this dependence, we shall write $\Sigma_{\rm crit}(k_f)$ in place of $\Sigma_{\rm crit}$. We similarly use the notation  $\Sigma_{\rm crit}^{(1)}(k_f)$, $\Sigma_{\rm crit}^{(2)}(k_f)$ wherever appropriate. 
\end{remark}

\begin{defn}
\label{defn_central_criticality_1_2}
We say that $\psi \in \Sigma_{\rm crit}$ is central critical if $\ell_1+\ell_2=k_f$ and $\chi\psi\big{\vert}_{\mathbb{A}_\QQ^\times}\cdot |\cdot|^{-k_f}=\epsilon_f$. For each $i\in \{1,2\}$, we let $\Sigma^{(i)}_{\rm cc}\subset \Sigma^{(i)}_{\rm crit}$ denote the collection of central critical characters and we put $\Sigma_{\rm cc}=\Sigma^{(1)}_{\rm cc} \bigsqcup \Sigma^{(2)}_{\rm cc}$.
\end{defn}
\begin{remark}
\item[i)] Since we assume that $\ff$ is coprime to $pN_f$, the requirement that $\chi\psi\big{\vert}_{\mathbb{A}_\QQ^\times}\cdot |\cdot|^{-k_f}=\epsilon_f$ implies that 
$\chi\big{\vert}_{\mathbb{A}_\QQ^\times}=\mathds{1}$. In other words, the set $\Sigma_{\rm cc}$ is non-empty only when the character $\chi$ is anticyclotomic. When this is the case, $\psi\in \Sigma_{\rm cc}$ if and only if $\psi\big{\vert}_{\mathbb{A}_\QQ^\times}\cdot |\cdot|^{-k_f}=\epsilon_f$. In particular, the set $\Sigma_{\rm cc}$ is determined by the weight $k_f+2$ of $f$ and its nebentype $\epsilon_f$. We shall still write $\Sigma_{\rm cc}(k_f)$, $\Sigma_{\rm cc}^{(1)}(k_f)$ and $\Sigma_{\rm cc}^{(2)}(k_f)$ whenever we would like to emphasize this dependence (but will not record the dependence on $\epsilon_f$).
\item[ii)] Suppose $c \mid N_f$ is the conductor $\epsilon_f$ and $\psi\in \Sigma_{\rm cc}(k_f)$, so that  $\psi\big{\vert}_{\mathbb{A}_\QQ^\times}= |\cdot|^{k_f}\epsilon_f$. In particular, the conductor of $\psi$ is divisible by $c$. 

In what follows, we will be interested in the $G_K$-representation $T_{f,\chi\psi}:=R_f^*\otimes\chi\psi$ and for technical reasons (which are relevant to the use of Beilinson--Flach elements), we will need to assume that the conductor of $\chi\psi$ is coprime to $N_f$. The discussion above tells us that $\psi\in \Sigma_{\rm cc}(k_f)$ with this additional requirement exists if $c=1$; namely, only if $\epsilon_f=\mathds{1}$ and $f\in S_{k_f+2}(\Gamma_0(N_f))$. 

From now on, whenever $\Sigma_{\rm cc}(k_f)$ is mentioned, we will always assume that $\epsilon_f$ is the trivial character.
\end{remark}

\begin{defn}
\label{def_twisting_morphism}
\item[i)] We let ${\rm Tw}: \LL(H_{p^\infty})\rightarrow \LL(H_{p^\infty})$ denote the twisting morphism induced by $\gamma \mapsto \chi_\cyc^{-k_f/2}(\gamma)\gamma$, where we have defined (by slight abuse) $\chi_\cyc$ to denote the composition of the arrows $$\chi_\cyc: H_{p^\infty}\twoheadrightarrow \Gal(K(\mu_{p^\infty})/K)\stackrel{\chi_\cyc}{\lra} \ZZ_p^\times.$$
Given a $p$-adic character $\Xi: H_{p^\infty}\rightarrow \mathbb{C}_p$, we let ${\rm Tw}(\Xi)$ denote the character given as the composition
$${\rm Tw}(\Xi): H_{p^\infty}\hookrightarrow \LL(H_{p^\infty}) \stackrel{{\rm Tw}}{\lra}\LL(H_{p^\infty}) \stackrel{\Xi}{\lra}\mathbb{C_p}\,.$$
More explicitly, ${\rm Tw}(\Xi)=\Xi\,\chi_\cyc^{-k_f/2}$. 
\item[ii)] If $\xi$ is a Hecke character of $K$, we define the Hecke character ${\rm Tw}(\xi)=\xi|\cdot|^{k_f/2}$. The $p$-adic avatar $\xi_p$ of $\xi$ verifies $\left({\rm Tw}(\xi)\right)_p=\xi_p\,\chi_\cyc^{-k_f/2}={\rm Tw}(\xi_p)$. The map ${\rm Tw}$ is a bijection on the set of Hecke characters of $K$; we denote its inverse by ${\rm Tw}^{-1}$. It is clear that ${\rm Tw}^{-1}(\xi)=\xi|\cdot|^{-k_f/2}$.
\end{defn}

\begin{remark} 
\label{rem_cc_vs_Galois}
 Suppose that $\epsilon_f=\mathds{1}$ and let $\psi \in \Sigma_{\rm cc}(k_f)$ be a Hecke characters with $p$-power conductor. The Galois character associated to the $p$-adic avatar $\psi_p\chi_\cyc^{k_f/2}$ of the Hecke character ${\rm Tw}^{-1}(\psi)=\psi|\cdot|^{-k_f/2}$ factors through $G_{p^\infty}:=\Gal(D_\infty/K)$, where we recall that $D_\infty:=\varinjlim D_n$ is the compositum of ring class extension of $K$ of conductor a power of $p$. 

Conversely, suppose $\xi_p$ is a continuous character of $G_{p^\infty}$, then the associated $p$-adic Hecke character (which we still denote by $\xi_p$) of $K$ has $p$-adic type $(a,-a)$. We say that $\xi_p$ is algebraic if $a\in \ZZ$. When that is the case, $\xi_p$ appears as the $p$-adic avatar of a Hecke character $\xi$ of infinity type $(a,-a)$, for which we have  $\xi\vert_{\mathbb{A}_\QQ^\times}=\mathds{1}$. Moreover, we have ${\rm Tw}(\xi)=\xi|\cdot|^{k_f/2} \in \Sigma_{\rm cc}(k_f)$ with $p$-power conductor.

In summary, there is a natural bijection between $\Sigma_{\rm cc}(k_f)$ and continuous algebraic characters of $G_{p^\infty}$.\end{remark}

\begin{remark}
  The compositum $D_{\infty}$  is a finite extension of the anticyclotomic $\ZZ_p$-extension $K^{\ac}$ of $K$. Recall that $H_{p^\infty}$ denotes the ray class group of $K$ modulo $p^\infty$. We have natural surjections $H_{p^\infty}\twoheadrightarrow G_{p^\infty}\twoheadrightarrow \Gamma_\ac$.
\end{remark}

\begin{remark}
\label{rem_Sigma_Here_BDP_Forum}
{Our $\Sigma^{(i)}_{\rm crit}$ agrees with the set denoted by $\Sigma^{(i)}$ in \cite[Definition 4.1]{bertolinidarmonprasanna13}. Moreover, $\xi \in \Sigma^{(i)}_{\rm crit}$ if and only if $\xi^{-1}|\cdot|^{k_f/2}$ belongs to the set $\Sigma^{(i)}$ in \cite{BLForum}.} 
\end{remark}
\subsubsection{Theta series}
\begin{defn}
\label{defn_theta_series}
Let $\xi$ be a Hecke character of $K$ of conductor $\ff$ and infinity-type $(\ell_1,\ell_2)$.
\item[i)]  We put $\xi_0:=\xi^{-1}|\cdot|^{\ell_1}$ so that $\xi_0$ is a Hecke character of infinity type $(0,\ell_1-\ell_2)$. 

\item[ii)] We define the theta series $\theta_{\xi_0}$ of $\xi_0$ on setting
\begin{align*}
    \theta_{\xi_0}&=\sum_{(\frak{a},\ff)=1}\xi_0(\frak{a})q^{\mathbf{N}(\frak{a})}\\
    &=\sum_{(\frak{a},\ff^c)=1}\xi_0^c(\frak{a})q^{\mathbf{N}(\frak{a})}\in M_{\ell+1}(\Gamma_1(N_\xi),\epsilon_\xi)
\end{align*}
is an eigenform of weight $\ell+1$, level $N_{\xi}:=|D_K|\,\mathbf{N}\ff$ and nebentype $\epsilon_\xi:=|\cdot|^{\ell_2-\ell_1}\xi_0\vert_{\mathbb{A}_\QQ^\times}\,\epsilon_K$. 
\end{defn}
Unless $\ell=0$ and $\xi=\Xi\circ \mathbb{N}_{K/\QQ}$ for some character of $\mathbb{A}_{\QQ}^\times$, the eigenform $\theta_{\xi_0}$ is cuspidal (see \cite[Theorem 4.8.2]{miyake06}). When $\ell\neq 0$, we let $R_{\theta_{\xi_0}}^*$ denote the cohomological lattice in Deligne's repesentation associated to $\theta_{\xi_0}$. We define the canonical lattice $R_{\theta_{\xi_0}}^*$ inside the Deligne--Serre representation when $\ell=0$ as the specialization of the big Galois representation $R_{\theta_{\mathbb{g}_{\xi_0}}}^*$ associated to the canonical Hida family $\mathbb{g}_{\xi_0}$ (c.f. \S\ref{subsec_CMHida} for details).

\begin{lemma}
\label{lemma_lattices_theta_pointwise}
Suppose $\ell\neq 0$. We have an isomorphism
\begin{equation}
\label{eqn_lattice_pointwise_p_inverted}
    R_{\theta_{\xi_0}}^*\otimes_{\Zp}\Qp\stackrel{\sim}{\lra}\left({\rm Ind}_{K/\QQ}\,\xi_{0,p}^{-1}\right)\otimes_{\Zp}\Qp=\left({\rm Ind}_{K/\QQ}\,\xi_p\right)(\ell_1)\otimes_{\Zp}\Qp\,.
\end{equation}
where we write $\psi_p$ to denote the $p$-adic avatar of the Hecke character $\psi$ and abusively denote the free $\ZZ_p$-module of finite rank on which $G_K$ acts via this character by the same symbol. If $R_{\theta_{\xi_0}}^*$ is residually irreducible (i.e. whenever the Hecke character ${\xi}_{0,p}$ verifies \ref{item_RIa} with $a=0$), then \eqref{eqn_lattice_pointwise_p_inverted} is induced from an isomorphism 
\begin{equation}
\label{eqn_lattice_pointwise_integral_res_irred}
    R_{\theta_{\xi_0}}^*\stackrel{\sim}{\lra}{\rm Ind}_{K/\QQ}\,\xi_{0,p}^{-1}\,.
\end{equation}
More generally, if the Hecke character ${\xi}_{0,p}$ verifies \ref{item_RIa}, then the Deligne-lattice $R_{\theta_{\xi_0}}^*$ is contained in an isomorphic copy of ${\rm Ind}_{K/\QQ}\,\xi_{0,p}^{-1}$ with index bounded only in terms of $a$.
\end{lemma}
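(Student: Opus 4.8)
The plan is to leverage the two-dimensionality of Deligne's representation attached to $\theta_{\xi_0}$ (when $\ell\neq 0$) together with the fact that, after inverting $p$, this representation is known to be induced from the Hecke character. The isomorphism \eqref{eqn_lattice_pointwise_p_inverted} over $\Qp$ is standard: Deligne's representation attached to a theta series of a Hecke character $\xi_0$ of infinity type $(0,\ell_1-\ell_2)$ is, up to the normalization recorded in Definition~\ref{defn_theta_series}, the induction ${\rm Ind}_{K/\QQ}\,\xi_{0,p}^{-1}$; the Tate twist by $\ell_1$ converts $\xi_{0,p}^{-1}$ into $\xi_p\,\chi_\cyc^{\ell_1}$ restricted to $G_K$, which matches the right-hand side. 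So the content is entirely about lattices. First I would fix a $G_K$-stable lattice $M\cong\cO^2$ inside ${\rm Ind}_{K/\QQ}\,\xi_{0,p}^{-1}$, namely the one induced from the rank-one $\cO$-lattice carrying $\xi_{0,p}^{-1}$, and compare it to the Deligne lattice $R_{\theta_{\xi_0}}^*$ inside the same $\Qp$-vector space.

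For the residually irreducible case \eqref{eqn_lattice_pointwise_integral_res_irred}: when $\xi_{0,p}$ verifies \ref{item_RIa} with $a=0$, the reduction $\overline{{\rm Ind}_{K/\QQ}\,\xi_{0,p}^{-1}}={\rm Ind}_{K/\QQ}\,\overline{\xi}_{0,p}^{-1}$ is irreducible over $\FF$ (an induced representation of a character of $G_K$ is reducible over $G_\QQ$ precisely when the character is $c$-invariant modulo the relevant modulus, which is exactly the failure of \ref{item_RIa} with $a=0$). A standard Brauer--Nesbitt / lattice-uniqueness argument then shows that any two $G_\QQ$-stable lattices in an absolutely irreducible representation differ by a scalar in $L^\times$; hence $R_{\theta_{\xi_0}}^*$ and $M$ are homothetic, which upgrades \eqref{eqn_lattice_pointwise_p_inverted} to the claimed integral isomorphism.

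For the general case under \ref{item_RIa} with arbitrary $a$: the residual representation ${\rm Ind}_{K/\QQ}\,\overline{\xi}_{0,p}^{-1}$ need no longer be irreducible, so lattice uniqueness fails. The strategy here is to bound the ``distance'' between $R_{\theta_{\xi_0}}^*$ and the induced lattice $M$ using the hypothesis that $\overline{\xi}_{0,p}\not\equiv\overline{\xi}_{0,p}^{\,c}\bmod\m_\cO^{a+1}$. Concretely, one can consider the element $\sigma:=c\in G_\QQ$ (a fixed lift of the generator of $\Gal(K/\QQ)$): on ${\rm Ind}_{K/\QQ}\,\xi_{0,p}^{-1}$, after choosing the standard basis adapted to the induction, $\sigma$ swaps the two $G_K$-eigenlines, and the difference of the two $G_K$-characters controls how far from $\sigma$-stable any $\cO$-lattice can be. One should argue that a $G_\QQ$-stable lattice, being in particular stable under both $G_K$ (forcing it to be built from fractional ideals in the two eigenlines) and under $\sigma$, must satisfy a constraint forcing the product of the two ``scaling exponents'' to be bounded by $v_p$ of the relevant difference, hence by $a$ (after absorbing the index of the image of $\chi-\chi^c$ into $\m_\cO^{a}\setminus\m_\cO^{a+1}$). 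Equivalently: scale $M$ so that it is contained in $R_{\theta_{\xi_0}}^*$ with minimal index; the quotient $R_{\theta_{\xi_0}}^*/M$ is a cyclic $\cO$-module killed by some $\varpi^n$, and one shows $n\leq a$ by examining the action of $G_K$ and $\sigma$ on $R_{\theta_{\xi_0}}^*/\varpi M$. The main obstacle I anticipate is precisely this bookkeeping in the residually reducible case: making the bound ``only in terms of $a$'' genuinely uniform requires carefully tracking how the index depends on $v_p(\overline\chi(\p)-\overline\chi(\p^c))$ versus the analogous quantity for $\xi_{0,p}$, and one must be careful that \ref{item_RIa} is stated for $\chi$ while the lemma concerns $\xi_{0,p}$ — so one first needs to relate the two, noting that $\xi_0$ differs from a twist of $\chi$ by a character factoring through $\Gamma_K$ (which does not affect the residual $c$-invariance obstruction), after which the index bound transfers. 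The rest is a routine comparison of lattices in a two-dimensional induced representation.
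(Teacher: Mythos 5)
Your handling of \eqref{eqn_lattice_pointwise_p_inverted} and of the residually irreducible case \eqref{eqn_lattice_pointwise_integral_res_irred} matches the paper exactly: the first is standard, and the second is uniqueness of a $G_\QQ$-stable lattice in an absolutely irreducible two-dimensional representation, noting that ${\rm Ind}_{K/\QQ}\,\overline{\xi}_{0,p}^{-1}$ is irreducible precisely when $\overline{\xi}_{0,p}\neq\overline{\xi}_{0,p}^{\,c}$.

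For the third assertion (the index bound in terms of $a$) the paper takes a slightly different and more modular route than you do. Rather than working directly with the two $G_K$-eigenlines and the involution $\sigma$, the paper isolates the following abstract observation: if $X$ is a free $\cO$-module of rank $d$ on which $G_\QQ$ acts irreducibly, with the property that every $G_\QQ$-stable submodule $W_r\subset\varpi^{-r}X/X$ (for any $r$) satisfies that $\varpi^aW_r$ is free of rank $d$ over $\cO/\varpi^{r-a}\cO$, then, up to homothety, every $G_\QQ$-stable lattice in $X\otimes_{\ZZ_p}\Qp$ is the preimage of a $G_\QQ$-stable submodule of $\varpi^{-a}X/X$. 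Applying this with $X={\rm Ind}_{K/\QQ}\,\xi_{0,p}^{-1}$ gives the bound directly. Your explicit computation with the two eigenlines swapped by $\sigma$ is the concrete verification that $X$ satisfies this hypothesis when $\xi_{0,p}\not\equiv\xi_{0,p}^{\,c}\pmod{\m_\cO^{a+1}}$, so the two approaches are two halves of the same argument; the paper's phrasing has the advantage of being reusable (it is stated for general rank $d$) while yours is closer to the actual structure of an induced two-dimensional representation. In either case, a fully written-out proof would still need to do the eigenline bookkeeping you describe — the paper leaves that step as implicit as you do — so your proposal is essentially at the same level of rigor as the published one.

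One small point: your concern that \ref{item_RIa} is ``stated for $\chi$'' and needs to be transferred to $\xi_{0,p}$ is unnecessary for this lemma. The statement of the lemma applies \ref{item_RIa} directly to $\xi_{0,p}$ (``whenever the Hecke character $\xi_{0,p}$ verifies \ref{item_RIa}''); the displayed condition is being treated as a template hypothesis on a ray class character, not a fixed property of $\chi$. Your resolution (that $\xi_0$ differs from a twist of $\chi$ by a character factoring through a $p$-power conductor extension, which does not affect the residual $c$-invariance) is the relevant observation when the lemma is applied elsewhere in the paper to $\chi\psi$ with $\psi$ of $p$-power conductor, but it is not needed to prove the lemma itself.
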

\begin{proof}
The isomorphism in \eqref{eqn_lattice_pointwise_p_inverted} is well-known and the asserted equality is clear from definitions. When $R_{\theta_{\xi_0}}^*$ is residually irreducible (which is precisely the case if ${\xi}_{0,p}$ verifies \ref{item_RIa} with $a=0$), then the lattice $R_{\theta_{\xi_0}}^*$ is the unique $G_\QQ$-invariant lattice in $R_{\theta_{\xi_0}}^*\otimes_{\Zp}\Qp$, up to homothety and similarly the lattice ${\rm Ind}_{K/\QQ}\,\xi_{0,p}^{-1}$ inside $\left({\rm Ind}_{K/\QQ}\,\xi_{0,p}^{-1}\right)\otimes_{\Zp}\Qp$. This proves the existence of an isomorphism \eqref{eqn_lattice_pointwise_integral_res_irred} in this case. The remaining case follows from the following observation. Suppose $X$ is a free ${\cO}$-module of rank $d$ on which $G_\QQ$ acts irreducibly with the property that if $W_r$ is a $G_\QQ$-stable submodule of ${\varpi}^{-r}X/X$ ($r\in \mathbb{N}$) then ${\varpi}^aW_r$ is a free ${\cO/\varpi^{r-a}\cO}$-module of rank  $d$. Then  up to homothety, all $G_\QQ$-stable lattices inside $X\otimes_{\Zp}\Qp$ arise as the inverse images of $G_\QQ$-stable submodules of ${\varpi}^{-a}X/ X$ (under the map $X\otimes\Qp\to X\otimes \Qp/\Zp$).
\end{proof}

\begin{defn}
We say that a Hecke character $\xi$ is $p$-crystalline if the $G_K$-representation associated to $\xi_p$ is crystalline at both primes $\p$ and $\p^c$ above $p$.
\end{defn}

\begin{defn}\label{def_Gal_rep_xi_general}
Given a Hecke character $\xi$ with infinity type $(\ell_1,\ell_2)$ we define $R_{\theta_{\xi}}^*:=R_{\theta_{\xi_0}}^*(-\ell_1)$.
\end{defn}


\subsection{CM Hida families}
\label{subsec_CMHida}
For each $\q \in \{\p,\p^c\}$, we recall that $K_\infty^{(\q)}$ stands for the composite of all subfields of $K_\infty$ that are unramified at $\q^c$. We also recall that $\Gamma_\q:=\Gal(K_\infty^{(\q)}/K)\cong \ZZ_p$. For a fractional ideal $\frak{a}\subset \cO_K$ coprime to $\ff \p$, we shall write $\frak{Art}_{\p}([\frak{a}]) \in \Gamma_{\p}$ for its image under the geometrically-normalised Artin map composed with the canonical surjection $H_{\ff \p^\infty}\twoheadrightarrow \Gamma_\p$.
\begin{defn}
\label{defn_theta_series_in_families}
We set ${\displaystyle \mathbb{a}_n(\mathbb{g}_\chi):=\sum_{\substack{ (\frak{a},\ff\p)=1, \mathbf{N}\frak{a}=n}} \chi^{-1}(\frak{a})\,\frak{Art}_{\p}([\frak{a}])}$
and let 
$${\mathbb{g}}_\chi:=\sum_{n=1}^\infty \mathbb{a}_n(\mathbb{g}_\chi)q^n\in \LL_{\cO}(\Gamma_{\p})[[q]]$$
denote the canonical branch (determined by $\chi$) of the CM Hida family of tame level $|D_K|\,\mathbf{N}\ff$.
\end{defn}
\begin{remark}
Suppose $\xi$ is a $p$-crystalline Hecke character  modulo $\p^\infty$ with infinity-type $(0,1-\ell)$ and $2(p-1)\mid \ell-1$. Then
$$\sum_{(\frak{a},\ff \p)=1} \chi^{-1}\xi^{-1}(\frak{a})q^{\mathbf{N}\frak{a}} \in S_{\ell}(\Gamma_1(N_\chi p),\epsilon_\chi)$$
is the unique crystalline weight-$\ell$ specialization of the Hida family $\mathbb{g}_\chi$.
\end{remark}
Let $\LL_{\chi}$ denote the (cuspidal and new) branch of the Hida's universal Hecke algebra, in the sense of \cite[\S7.5]{KLZ2}, corresponding to $\mathbb{g}_\chi$. We define a $\ZZ_p$-algebra homomorphism
$$\phi_\chi:\LL_\chi\lra \LL_{\cO}(\Gamma_\p)$$ 
which is given by 
$$T_q \longmapsto \sum_{\substack{\mathbf{N}\q=q}}\chi^{-1}(\q)\,\frak{Art}_{\p}([\q]) \quad \forall q\neq p \quad;\quad U_p\longmapsto  \chi^{-1}(\p^c)\,\frak{Art}_{\p}([\p^c]) $$
$$\langle d \rangle \longmapsto \epsilon_\chi(d)\frak{Art}_{\p}([d])\qquad \forall d\in \ZZ \hbox{ with } (d,p|D_K|\mathbf{N}\ff)=1\,.$$

We let $V_{{\mathbb{g}}_\chi}^*\cong {\rm Frac}(\LL_\chi)^{\oplus 2}$ denote Hida's big Galois representation attached to ${\mathbb{g}}_\chi$ with coefficients in the field of fractions of $\LL_\chi$. There is a natural finitely generated $\LL_\chi$-submodule $R_{{\mathbb{g}}_\chi}^*\subset V_{{\mathbb{g}}_\chi}^*$ (which we shall refer to as the Hida--Ohta lattice; c.f. \cite{KLZ2} \S7.2 and \S7.5 for its definition\footnote{We note that our $\LL_\chi$ is denoted by $\LL_\mathbf{a}$ in op. cit. where $\mathbf{a}$ is the minimal prime ideal that is determined by $\mathbb{g}_\chi$, of the algebra $\LL_{\mathbb{g}_{\overline{\chi}}}$ (that corresponds to a Hida family $\mathbb{g}_\chi$ in the sense of \cite{KLZ2}, \S7.2), which is the localization of Hida's universal ordinary Hecke algebra at the maximal ideal which contains $\mathbf{a}$. In the notation of op. cit., our $R_{{\mathbb{g}}_\chi}^*$ is simply given as $M(\mathbb{g}_{\overline{\chi}})^*\otimes_{\LL_{\mathbb{g}_{\overline{\chi}}}} \LL_\mathbf{a}$.}) that verifies $R_{{\mathbb{g}}_\chi}^*\otimes_{\LL_\chi} {\rm Frac}(\LL_\chi)\xrightarrow{\sim} V_{{\mathbb{g}}_\chi}^*$. 

\begin{defn}
\label{defn_Hida_family_to_universal_rep}
We set ${\bf R}_\chi^*:=R_{{\mathbb{g}}_\chi}^*\otimes_{\phi_\chi}\LL_{\cO}(\Gamma_\p)$.
\end{defn}

\subsubsection{Induced representations}
\label{subsubsec_Ind}
Let $\LL_\cO(\Gamma_\p)^{\sharp}$ denote the free $\LL_\cO(\Gamma_\p)$-module of rank one on which $G_K$ acts via the canonical character $\mathbf{k}:\,G_K\twoheadrightarrow \Gamma_\p \hookrightarrow \LL_{\cO}(\Gamma_\p)^\times$. As in the proof of \cite[Proposition 6.1.3]{HT94}, we obtain an isomorphism
\begin{equation}
\label{eqn_HidaOhtavsInduce_overFrac}
    {\bf R}_\chi^*\otimes_{\LL_\cO(\Gamma_\p)} {\rm Frac}(\LL_\cO(\Gamma_\p))\xrightarrow[v_\chi]{\sim} \left({\rm Ind}_{K/\QQ}\, \LL_\cO(\Gamma_\p)^{\sharp}\otimes \chi\right)\otimes_{\LL_\cO(\Gamma_\p)} {\rm Frac}(\LL_\cO(\Gamma_\p))
\end{equation}
on comparing the traces of Frobenii at primes coprime to $p|D_K|\mathbf{N}\ff$. We note that we have again denoted abusively by $\chi$ the free $\cO$-module of rank one on which $G_K$ acts by $\chi$. 

It follows from \eqref{eqn_HidaOhtavsInduce_overFrac} that there exists a submodule $\mathbb{I}_{\mathbf{k}\otimes\chi}\subset  \left({\rm Ind}_{K/\QQ}\, \LL_\cO(\Gamma_\p)^{\sharp}\otimes \chi\right)\otimes_{\LL_\cO(\Gamma_\p)} {\rm Frac}(\LL_\cO(\Gamma_\p))$ such that $\mathbb{I}_{\mathbf{k}\otimes\chi}\cong {\rm Ind}_{K/\QQ}\, \LL_\cO(\Gamma_\p)^{\sharp}\otimes \chi$ as $\LL_\cO(\Gamma_\p)[[G_\QQ]]$-modules and which admits a $G_\QQ$-equivariant morphism 
\begin{equation}
\label{eqn_HidaOhtavsInduce_vague}
    u(\mathbb{I}_{\mathbf{k}\otimes\chi}): {\bf R}_\chi^*\lra \mathbb{I}_{\mathbf{k}\otimes\chi} \qquad\qquad\qquad\hbox{ verifying } \qquad u(\mathbb{I}_{\mathbf{k}\otimes\chi})\otimes_{\LL_\cO(\Gamma_\p)} {\rm Frac}(\LL_\cO(\Gamma_\p))=v_\chi.
\end{equation}

\begin{remark}
\label{rem_Ind_mod_P}
For all but finitely many height-one primes $\cP$ of $\LL_{\cO}(\Gamma_\p)$, the induced morphism $u(\mathbb{I}_{\mathbf{k}\otimes\chi})\otimes \kappa(\cP)$ is an isomorphism (where $\kappa(\cP)={\rm Frac}(\LL_{\cO}(\Gamma_\p)/\cP)$ is the residue field at $\cP$). Moreover, if $\cP$ is such an arithmetic prime corresponding to $\theta_{\xi_0}$ for a crystalline Hecke character $\xi_0$ whose $p$-adic avatar $\xi_{0,p}$ takes values in $\cO$, then $u(\mathbb{I}_{\mathbf{k}\otimes\chi})\otimes \kappa(\cP)$ recovers the isomorphism \eqref{eqn_lattice_pointwise_p_inverted}. Furthermore, the map $u(\mathbb{I}_{\mathbf{k}\otimes\chi})\otimes \LL_{\cO}(\Gamma_\p)/\cP$ gives rise to a morphism $R_{\theta_{\xi_0}}^*\to {\rm Ind}_{K/\QQ}\,\xi_{0,p}^{-1}\otimes \chi$, which identifies $R_{\theta_{\xi_0}}^*$ with a submodule of a homothetic copy of ${\rm Ind}_{K/\QQ}\,\xi_{0,p}^{-1}\otimes \chi$ with index bounded only in terms of $a$.
\end{remark}

Consider the following condition:
\begin{enumerate}
   \item[\mylabel{item_Ind}{{\bf (Ind)}}] There exists a choice of a pair  $(\mathbb{I}_{\mathbf{k}\otimes\chi}, u(\mathbb{I}_{\mathbf{k}\otimes\chi}))$ such that the cokernel of $u(\mathbb{I}_{\mathbf{k}\otimes\chi})$ is pseudo-null over $\Lambda_\cO(\Gamma_\p)$.
\end{enumerate}

\begin{remark}
\label{rem_Ind_lattice}
One expects that the condition \ref{item_Ind} (or its suitable variant, where one would consider the Ohta lattice associated to the closed modular curve) always holds true. The forthcoming work of Burungale--Skinner--Tian treats this problem in the extreme case when the branch character $\chi$ is trivial. We are grateful to A. Burungale for cautioning us about \ref{item_Ind} and informing us about his joint work in progress with Skinner and Tian.

We show in Proposition~\ref{prop_uniqueness_of_the_lattice_in_residually_irred_case} below that the hypothesis \ref{item_Ind} holds true if we assume that the CM family $\mathbb{g}_\chi$ is residually non-Eisenstein. In explicit terms, this assumption on $\mathbb{g}_\chi$ amounts to the requirement that \ref{item_RIa} holds true with $a=0$.
\end{remark}

\begin{proposition}
\label{prop_uniqueness_of_the_lattice_in_residually_irred_case}
Suppose that the hypothesis \ref{item_RIa} holds with $a=0$. Then the Ohta lattice $\mathbf{R}_\chi^*$ is induced: It is isomorphic to $\Ind_{K/\QQ}\Lambda_\cO(\Gamma_\p)^\sharp\otimes\chi$ as a $\Lambda_\cO(\Gamma_\p)$-module. 
\end{proposition}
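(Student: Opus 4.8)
The plan is to bootstrap from the residual irreducibility assumption \ref{item_RIa} with $a=0$ to rigidify the Ohta lattice. First I would recall that \eqref{eqn_HidaOhtavsInduce_overFrac} already gives an isomorphism after inverting $p$ (or rather, after tensoring with $\mathrm{Frac}(\Lambda_\cO(\Gamma_\p))$), so the content is purely integral: one must show that the two lattices $\mathbf{R}_\chi^*$ and $\Ind_{K/\QQ}\Lambda_\cO(\Gamma_\p)^\sharp\otimes\chi$ inside the common ambient $\mathrm{Frac}(\Lambda_\cO(\Gamma_\p))$-representation coincide up to $G_\QQ$-equivariant scaling. The key input is that under \ref{item_RIa} with $a=0$, the residual representation $\overline{\Ind_{K/\QQ}\Lambda_\cO(\Gamma_\p)^\sharp\otimes\chi}\cong \overline{\rho}_f$-analogue — more precisely the mod-$\m$ reduction $\overline{\chi}\oplus\overline{\chi}^c$ of the induced character — is \emph{not} a direct sum of two copies of the same character, i.e. $\overline{\chi}\neq\overline{\chi}^c$; hence the induced residual representation of $G_\QQ$ is irreducible (it is the induction of a character that is not $G_\QQ$-invariant mod $\m$). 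This is exactly the ``residually non-Eisenstein'' reformulation stated in Remark~\ref{rem_Ind_lattice}.

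Next I would invoke the standard lattice-uniqueness principle: if $W$ is a finite free $\mathrm{Frac}(R)$-representation of a group $G$ whose residual representation (over the residue field of the local ring $R$, here $R=\Lambda_\cO(\Gamma_\p)$, which is a regular local — indeed a power series — ring, so in particular a UFD/normal domain) is absolutely irreducible, then any two $G$-stable $R$-lattices in $W$ that are finitely generated differ by multiplication by an element of $\mathrm{Frac}(R)^\times$; in particular one is a scalar multiple of the other. I would apply this with $G=G_\QQ$, $W = \left({\rm Ind}_{K/\QQ}\, \LL_\cO(\Gamma_\p)^{\sharp}\otimes \chi\right)\otimes \mathrm{Frac}(\LL_\cO(\Gamma_\p))$, the lattice $\mathbb{I}_{\mathbf{k}\otimes\chi}$ on one side, and the image (or, after clearing denominators, an honest $G_\QQ$-stable lattice contained in $W$) of the Ohta lattice $\mathbf{R}_\chi^*$ under $v_\chi$ on the other. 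The one subtlety is that $\Lambda_\cO(\Gamma_\p)$ is not a DVR, so ``lattice'' must be interpreted as reflexive/finitely-generated torsion-free; but since $\Lambda_\cO(\Gamma_\p)$ is a two-dimensional regular local ring, a finitely generated torsion-free module need not be free, so I would need the stronger statement that a $G_\QQ$-stable lattice with absolutely irreducible residual representation is \emph{reflexive} and determined up to homothety — this follows from localizing at all height-one primes (where $\Lambda$ is a DVR and the classical Brauer–Nesbitt/lattice argument applies to show the two lattices agree up to a scalar that is a unit in that localization) and then taking the intersection over all height-one primes, using that $\mathbf{R}_\chi^*$ (being the Hida–Ohta lattice, cut out of the cohomology of a curve) is reflexive, hence equal to said intersection.

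Concretely the steps are: (1) check $\overline{\chi}\neq\overline{\chi}^c$ forces $\Ind_{K/\QQ}(\Lambda_\cO(\Gamma_\p)^\sharp\otimes\chi)$ to have absolutely irreducible residual $G_\QQ$-representation (here one uses Mackey's criterion modulo $\m_\cO$; since $p$ splits in $K$ and $\chi\not\equiv\chi^c$, the restriction to $G_K$ has two distinct Jordan–Hölder characters interchanged by $c$, so no $G_\QQ$-stable line survives); (2) show $\mathbf{R}_\chi^*$ is a reflexive $\Lambda_\cO(\Gamma_\p)$-module — this is where I would cite \cite{KLZ2} \S7.2, \S7.5, the Hida–Ohta lattice being obtained from the étale cohomology of a modular curve and hence free over the relevant Hecke algebra after localizing, and base-changed along $\phi_\chi$ preserves reflexivity over a regular base; (3) for each height-one prime $\cP$ of $\Lambda_\cO(\Gamma_\p)$, the localization $(\Lambda_\cO(\Gamma_\p))_\cP$ is a DVR and Remark~\ref{rem_Ind_mod_P} (for cofinitely many $\cP$) together with residual irreducibility gives that $u(\mathbb{I}_{\mathbf{k}\otimes\chi})$ is an isomorphism after localizing at $\cP$; for the finitely many bad $\cP$, use instead that both lattices have the same residual representation mod $\cP$ which is still absolutely irreducible (the mod-$\m_\cO$ irreducibility propagates), forcing the cokernel to be supported in codimension $\geq 2$; (4) conclude by reflexivity of both $\mathbf{R}_\chi^*$ and $\mathbb{I}_{\mathbf{k}\otimes\chi}$: a map between reflexive modules over a normal domain that is an isomorphism in codimension one is an isomorphism, so $u(\mathbb{I}_{\mathbf{k}\otimes\chi})$ is an isomorphism and $\mathbf{R}_\chi^*\cong \Ind_{K/\QQ}\Lambda_\cO(\Gamma_\p)^\sharp\otimes\chi$. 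The main obstacle, I expect, is step (2)–(3): pinning down reflexivity of the Hida–Ohta lattice $\mathbf{R}_\chi^*$ after the base change along $\phi_\chi$, and handling the finitely many height-one primes where the comparison map of Remark~\ref{rem_Ind_mod_P} is not literally an isomorphism — one must argue there that residual irreducibility over $\kappa(\cP)$ (not merely over the residue field of $\Lambda$) still holds, which uses that $\overline{\chi}\not\equiv\overline{\chi}^c$ is a mod-$\m_\cO$ statement and so persists along any specialization.
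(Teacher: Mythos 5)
Your proposal is in the same general spirit as the paper's argument (both reduce to height-one primes and exploit residual absolute irreducibility), but the paper's route is cleaner and your step~(3) at the ``bad'' primes has a genuine gap.

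The paper's opening (and crucial) move is that, under \ref{item_RIa} with $a=0$, residual absolute irreducibility already forces $\mathbf{R}_\chi^*$ to be \emph{free} of rank two over $\Lambda_\cO(\Gamma_\p)$ — the standard Nakayama/control argument — so both lattices are free and the comparison map $u(\Ik)$ is an honest $2\times 2$ matrix $\cM$. The paper then normalizes $\cM$ (using that $\Lambda_\cO(\Gamma_\p)$ is a UFD) so that its entries share no common irreducible factor, and shows $\det\cM$ lies in no height-one prime $\cP$: otherwise, working with the exact sequence $0\to\mathbf{R}_\chi^*\xrightarrow{u}\Ik\to C\to 0$ reduced modulo $\cP$, and using that $\cM\bmod\cP\neq 0$ (this is exactly where the normalization is used), one manufactures a rank-one Galois-stable $A_\cP$-submodule of $\Ik/\cP\Ik$, which contradicts absolute irreducibility of the residual representation of $\Ik$. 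You go instead for reflexivity of $\mathbf{R}_\chi^*$, which is weaker than freeness and (as you note) harder to certify after base change along $\phi_\chi$; the paper avoids this entirely.

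The more serious problem is your handling of the finitely many bad height-one primes. You assert that residual irreducibility over $\kappa(\cP)$ forces $u_\cP$ to be an isomorphism and hence the cokernel to be supported in codimension $\geq 2$. This is false without a normalization: $u_\cP$ could be multiplication by the uniformizer of $\Lambda_\cP$, in which case the cokernel at $\cP$ is the full two-dimensional $\kappa(\cP)$-representation, which is irreducible — so irreducibility rules out nothing, and your codimension-$\geq 2$ claim fails. ``Clearing denominators'' only arranges $\mathbf{R}_\chi^*\subseteq\Ik$; it does not make the inclusion primitive in codimension one. The paper's ``no common factor among the matrix entries'' normalization is precisely the additional step that excludes this scenario (it guarantees $\cM\bmod\cP\neq 0$, whence the cokernel at $\cP$, if nonzero, is a \emph{proper} Galois-stable submodule), and it is indispensable to make the height-one analysis close.
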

\begin{proof}
Our hypothesis  on \ref{item_RIa} guarantees that $\bR_\chi^*$ is a free $\Lambda_\cO(\Gamma_\p)$-module of rank two. Without loss of generality,  we may assume that $\bR_\chi^*$ is contained inside $\Ik$; note that this can be achieved after passing to a $\Lambda_\cO(\Gamma_\p)$-homothetic copy of $\Ik$. On fixing $\LL_\cO(\Gamma_\p)$-bases of $\bR_\chi^*$ and $\Ik$, we may consider $\uIk$ as a $2\times 2$ matrix $\cM=\begin{pmatrix}x&y\\z&t\end{pmatrix}$, with $x,y,z,t\in \LL_\cO(\Gamma_\p)$.

Moreover, since $\LL_\cO(\Gamma_\p)$ is a unique factorization domain, after multiplying by scalars if necessary, we may assume  that $x,y,z,t$ do not share a common factor. With this arrangement in place, we contend to show that $\det(\cM)\in\LL_\cO(\Gamma_\p)^\times$. This would prove that $\uIk$ is an isomorphism and conclude the proof of the proposition.

Suppose the contrary. Then there exists a height-one prime $\cP$ such that $\det(\cM)\in \cP$. Let us write $A_\cP$ for the discrete valuation ring $\Lambda_\cO(\Gamma_\p)/\cP$. We also denote
by $\uIk_\cP$ the map 
$$\uIk\otimes A_\cP:\mathbf{R}_\chi^*/\cP\mathbf{R}_\chi^*\longrightarrow \Ik/\cP\Ik\,.$$ 
Let us consider the short exact sequence of $G_\QQ$-representations
\begin{equation}
\label{eqn_app_u_coker}
    0\lra \bR_\chi^*\stackrel{u}{\lra} \Ik\lra C\lra0,
\end{equation}
where $C\xrightarrow{\sim} \LL_\cO(\Gamma_\p)^{\oplus 2}/\cM \LL_\cO(\Gamma_\p)^{\oplus 2}$ as $\LL_{\cO}(\Gamma_\p)$-modules (and the isomorphism is induced by the chosen basis of $\Ik$). On applying the right exact functor $-\otimes_{\LL_\cO(\Gamma_\p)}A_\cP$ and recalling that $\Ik$ is $\LL_\cO(\Gamma_\p)$-torsion free, \eqref{eqn_app_u_coker} yields the exact sequence
\begin{equation}
\label{eqn_app_u_coker_2}
0\lra C[\cP]\stackrel{\frak{d}}{\lra} \bR_\chi^*/\cP\bR_\chi^*\lra \Ik/\cP\Ik\lra C/\cP C \lra 0\,.
\end{equation}
Since $\cM$ modulo $\cP$ is not the zero matrix and $\det(\cM)\in \cP$ by assumption, it follows that $C/\cP C$ an $\cA_\cP$-module of rank 1. Furthermore, recalling that $\bR_\chi^*$ and $\Ik$ are free of rank two over $\Lambda_\cO(\Gamma_\p)$, we infer that $\bR_\chi^*/\cP\bR_\chi^*$ and $\Ik/\cP \Ik$ are both free of rank two over $\cA_\cP$, and therefore, by the exactness of the sequence \eqref{eqn_app_u_coker_2}, we conclude that that $C[\cP]$ is an $\cA_\cP$-module of rank 1. In particular, the quotient
\[
\left(\bR_\chi^*/\cP\bR_\chi^*\right)\big{/}\frak{d}\left(C[\cP]\right)
\]
 is a Galois-stable $\cA_\cP$-submodule of rank one inside $\Ik/\cP\Ik$. This is impossible, since the residual representation of $\Ik$ is absolutely irreducible (thanks to our running hypothesis that \ref{item_RIa} holds with $a=0$) and has dimension $2$.
\end{proof}

\subsubsection{} \label{subsubsec_local_properties_CMgg} We fix a lattice $\mathbb{I}_{\mathbf{k}\otimes\chi}$ and a morphism $u(\mathbb{I}_{\mathbf{k}\otimes\chi})$ verifying \eqref{eqn_HidaOhtavsInduce_vague}, and identify $\mathbb{I}_{\mathbf{k}\otimes\chi}$ with ${\rm Ind}_{K/\QQ}\, \LL_\cO(\Gamma_\p)^{\sharp}\otimes \chi$. Put $u:=u(\mathbb{I}_{\mathbf{k}\otimes\chi})$ and $\mathscr{C}(u):=\Char_{\LL_\cO(\Gamma_\p)}({\rm coker}(u))$. Note that whenever the hypothesis \ref{item_Ind} holds for the pair, $\mathscr{C}(u)$ can be taken to be $(1)$.

We record basic properties of the Galois representations ${\bf R}_{\chi}^*$ and $\mathbb{I}_{\mathbf{k}\otimes\chi}={\rm Ind}_{K/\QQ}\, \LL(\Gamma_\p)^{\sharp}\otimes \chi$ which will be utilized in subsequent sections.
\begin{enumerate}[(a)]
\item We have a natural decomposition of $G_K$-representations
\begin{equation}
\label{eqn_CM_Hida_Rep}
\mathbb{I}_{\mathbf{k}\otimes\chi}\vert_{G_K}= (\LL_\cO(\Gamma_\p)^{\sharp}\otimes \chi) \oplus (\LL_\cO(\Gamma_{\p^c})^{\sharp}\otimes \chi^c).
\end{equation}
We also have the map and identification
\begin{equation}
\label{eqn_CM_Hida_Rep_extended}
R_f^*\otimes {\bf R}_{\chi}^*\,\widehat{\otimes}\,\LL_{\cO}(\Gamma_\cyc) \xrightarrow{u} R_f^*\otimes \mathbb{I}_{\mathbf{k}\otimes\chi}\,\widehat{\otimes}\,\LL_{\cO}(\Gamma_\cyc) = \left({\rm Ind}_{K/\QQ}\, R_f^*\otimes\chi \right)\otimes\LL_{\cO}(\Gamma_K)^\sharp\,.
\end{equation}
of $G_\QQ$-representations, where the identification $\LL_\cO(\Gamma_\p)^\sharp\widehat{\otimes}_{_\cO} \LL_\cO(\Gamma_\cyc)^\sharp=\LL_\cO(\Gamma_K)^\sharp$ is given by the Verschiebung maps. A similar statement holds if we replace $f$ with a Hida family $\f$.
\item  We let $F^+R_{\mathbb{g}_\chi}^*$ and $F^-R_{\mathbb{g}_\chi}^*$ denote the $G_{\QQ_p}$-stable sub-quotients of $R_{\mathbb{g}_\chi}^*$, arising from \cite[Theorem~7.2.3]{KLZ2}. Since $\mathbb{g}_\chi$ is cuspidal, both $F^+R_{\mathbb{g}_\chi}^*$ and $F^-R_{\mathbb{g}_\chi}^*$  are finite projective over the weight space and in fact, $F^+R_{\mathbb{g}_\chi}^*$ is free of rank one over $\LL_{\chi}$ (c.f. \cite{KLZ2}, Theorem 7.2.3(v)). We set $F^\pm {\bf R}_{\chi}^*:=F^\pm R_{\mathbb{g}_\chi}^*\otimes_{\phi_\chi} \LL_\cO(\Gamma_\p)$. Both $\LL_{\cO}(\Gamma_{\p})$-modules $F^+{\bf R}_{\chi}^*(-\mathbf{k})$ and $F^-{\bf R}_{\chi}^*$ carry an unramified action of $G_{\QQ_p}$ and the geometric Frobenius acts on $F^-{\bf R}_{\chi}^*$ by  multiplication by the image $\phi_\chi(\mathbb{a}_p(\mathbb{g}_\chi))$ of the $U_p$-eigenvalue.

\item For $\q=\p,\p^c$, let us identify $G_{\QQ_p}=G_{K_\q}$ with a decomposition group inside $G_K$. Through the decomposition \eqref{eqn_CM_Hida_Rep}, $u$ gives rise to $G_{K_\q}$-equivariant morphisms
\begin{equation}
\label{eqn_CM_Hida_Rep_extended_further}
F^+{\bf R}_{\chi}^*\xrightarrow{u_+^{(\q)}} \LL_\cO(\Gamma_\q)^{\sharp}\otimes \chi^{?}\qquad,\qquad F^-{\bf R}_{\chi}^*\xrightarrow{u_-^{(\q)}} \LL_\cO(\Gamma_{\q^c})^{\sharp}\otimes \chi^{\invques}\,.
\end{equation}
where $?=\begin{cases}
\emptyset& \q=\p\\
c& \q=\p^c
\end{cases}$ and $\invques=\begin{cases}
\emptyset& \q=\p^c\\
c& \q=\p
\end{cases}$, such that $u_\pm^{(\q)}\otimes_{\LL(\Gamma_\q)}{\rm Frac}(\LL(\Gamma_\q))$ are isomorphisms. Let us put $\mathscr{C}(u_\pm^{(\q)}):=\Char_{\LL_\cO(\Gamma_\q)}({\rm coker}(u_\pm^{(\q)}))$. 
\end{enumerate}
Note that the morphism $u_+^{(\q)}$ is always injective since $F^+{\bf R}_{\chi}^*$ is free of rank one. 

Consider the following conditions:
\begin{enumerate}
   \item[\mylabel{item_Indpm}{$\hbox{{\bf (Ind)}}_\pm^{\q}$}] There exists a choice of a pair  $(\mathbb{I}_{\mathbf{k}\otimes\chi}, u(\mathbb{I}_{\mathbf{k}\otimes\chi}))$ such that $\mathscr{C}(u_\pm^{(\q)})=(1)$.
\end{enumerate}
Observe that $\mathscr{C}(u_\pm^{(\q)})$ divides $\mathscr{C}(u)$. In particular, the condition \ref{item_Ind} implies \ref{item_Indpm}.

\subsubsection{Shapiro's lemma} 
We  fix a cuspidal Hida family $\f$ of tame level $N_f$ as before. Recall that $R_{\f}^*$ denotes Hida's $\Lambda_\f$-adic representation attached to $\f$ and $T_{\f,\chi}:=R_{\f}^*\otimes \chi$. Let $m$ be a positive integer coprime to $pN_f\,\mathbf{N}\ff\,|D_K|$.

\begin{defn}
\label{defn_+_BK_ffGamma_K}
We define $H^1_{\FFF_+}(\QQ(\mu_m)_p,R_{\f}^*\,\widehat{\otimes}\,{\bf R}_\chi^*\,\widehat{\otimes}\,\LL(\Gamma_\cyc)^\sharp)$ as the kernel of the natural morphism
\begin{equation}
\label{eqn_proj_ff_Gamma_K_minusminus}
    H^1(\QQ(\mu_m)_p,R_{\f}^*\,\widehat{\otimes}\,{\bf R}_\chi^*\,\widehat{\otimes}\,\LL(\Gamma_\cyc)^\sharp)\lra H^1(\QQ(\mu_m)_p,F^-R_{\f}^*\,\widehat{\otimes}\, F^-{\bf R}_\chi^*\,\widehat{\otimes}\,\LL(\Gamma_\cyc)^\sharp)\,.
\end{equation}
on semi-local cohomology groups at $p$. We also define the submodule  
\begin{align*}
    H^1_{\FFF_+}(K(\mu_m)_p,R_{\f}^*\otimes\chi\,\widehat{\otimes}\LL(\Gamma_K)^\sharp):=H^1(K(\mu_m)_\p,R_{\f}^*\otimes\chi\,\widehat{\otimes}\LL(\Gamma_K)^\sharp)\oplus H^1(K(\mu_m)_{\p^c},F^+R_{\f}^*\otimes\chi\,\widehat{\otimes}\LL(\Gamma_K)^\sharp)
    \end{align*}
    of the semi-local cohomology group $H^1(K_p,R_{\f}^*\otimes\chi\,\widehat{\otimes}\LL(\Gamma_K)^\sharp)$.
\end{defn}

Equivalently, $H^1_{\FFF_+}(\QQ(\mu_m)_p,R_{\f}^*\,\widehat{\otimes}\,{\bf R}_\chi^*\,\widehat{\otimes}\,\LL(\Gamma_\cyc)^\sharp)$ is the image of the injective map
\begin{equation}
\label{eqn_proj_ff_Gamma_K_minusminus_bis}
H^1\left(\QQ(\mu_m)_p,(F^+R_{\f}^*\,\widehat{\otimes}\, {\bf R}_\chi^*+ R_{\f}^*\,\widehat{\otimes}\, F^+{\bf R}_\chi^*)\,\widehat{\otimes}\,\LL(\Gamma_\cyc)^\sharp\right)\lra H^1(\QQ(\mu_m)_p,R_{\f}^*\,\widehat{\otimes}\,{\bf R}_\chi^*\,\widehat{\otimes}\,\LL(\Gamma_\cyc)^\sharp)
\end{equation}

Let $u$ be a morphism be as in \S\ref{subsubsec_local_properties_CMgg}. By definitions and semi-local Shapiro's lemma, we have the following natural commutative diagrams:
\begin{equation}
    \label{eqn_shapiro_local_pe_minus}
    \begin{aligned}
    \xymatrix{
    H^1_{\FFF_+}(\QQ(\mu_m)_p,R_{\f}^*\,\widehat{\otimes}\,{\bf R}_\chi^*\,\widehat{\otimes}\,\LL(\Gamma_\cyc)^\sharp)\ar[r]^(.45){r_{+}^{(-+)}}\ar[d]_{u}&  H^1(\QQ(\mu_m)_p,F^-R_{\f}^*\,\widehat{\otimes}\,F^+{\bf R}_\chi^*\,\widehat{\otimes}\,\LL(\Gamma_\cyc)^\sharp)\ar[d]^{u_+^{(\p)}}\\
    H^1_{\FFF_+}(K(\mu_m)_p,R_{\f}^*\otimes\chi\,\widehat{\otimes}\LL(\Gamma_K)^\sharp)\ar[r]^(.5){r_{\p}^{(-)}}&  H^1(K(\mu_m)_\p,F^-R_{\f}^*\otimes\chi\,\widehat{\otimes}\,\LL(\Gamma_K)^\sharp),
    }
    \end{aligned}
\end{equation}

\begin{equation}
    \label{eqn_shapiro_local_pe_c_minus}
    \begin{aligned}
    \xymatrix{
    H^1_{\FFF_+}(\QQ(\mu_m)_p,R_{\f}^*\,\widehat{\otimes}\,{\bf R}_\chi^*\,\widehat{\otimes}\,\LL(\Gamma_\cyc)^\sharp)\ar[r]^(.45){r_{+}^{(+-)}}\ar[d]_{u}&  H^1(\QQ(\mu_m)_p,F^+R_{\f}^*\,\widehat{\otimes}\,F^-{\bf R}_\chi^*\,\widehat{\otimes}\,\LL(\Gamma_\cyc)^\sharp)\ar[d]^{u_-^{(\p^c)}}\\
    H^1_{\FFF_+}(K(\mu_m)_p,R_{\f}^*\otimes\chi\,\widehat{\otimes}\LL(\Gamma_K)^\sharp)\ar[r]^(.5){r_{\p^c}^{(-)}}&  H^1(K(\mu_m)_{\p^c},F^+R_{\f}^*\otimes\chi\,\widehat{\otimes}\,\LL(\Gamma_K)^\sharp).
    }
    \end{aligned}
\end{equation}


\section{Beilinson--Flach elements}
\label{sec_BF_elements}

We let $\chi$ denote a ray class character of $K$ with conductor $\ff$. As above, we fix a cuspidal Hida family $\f$ of tame level $N_f$. We recall that $R_{\f}^*$ denotes Hida's $\Lambda_\f$-adic representation attached to $\f$ and $T_{\f,\chi}:=R_{\f}^*\otimes \chi$. For each positive integer $m$ coprime $pN_f\,\mathbf{N}\ff\,|D_K|$, we let
\[
\mathbb{BF}^{\f,\chi}_{m}\in H^1(\QQ(\mu_m),R_{\f}^*\,\widehat{\otimes}\,{\bf R}_{\chi}^*\,\widehat{\otimes}_{\ZZ_p}\,\LL(\Gamma_\cyc)^{\sharp})
\]
denote the Beilinson--Flach elements given as the image of the element in \cite[Definition 8.1.1]{KLZ2} (after canceling out the smoothing factor involving the auxiliary parameter $c$) under the morphism induced from $\phi_\chi$. We also put
$${}^u\mathbb{BF}^{\f,\chi}_{m}\in H^1(K(\mu_m),T_{\f,\chi}\,\widehat{\otimes}_{\ZZ_p}\,\LL(\Gamma_K)^{\sharp})$$
for each choice of $u$ as in \S\ref{subsubsec_local_properties_CMgg}, as the image of $\mathbb{BF}^{\f,\chi}_{m}$ under the morphism induced from $u$ and Shapiro's lemma. When $m=1$, in place of $\mathbb{BF}^{\f,\chi}_{1}$ and ${}^u\mathbb{BF}^{\f,\chi}_{1}$, we shall write $\mathbb{BF}_{\f,\chi}$ and ${}^u\mathbb{BF}_{\f,\chi}$ respectively.

Suppose $f^{\alpha}\in S_{k_f+2}(\Gamma_1(N_fp),\epsilon_f)$ be a specialization of the Hida family $\f$, where $f$ is a cuspidal eigenform of level $N_f$ with $f^{\alpha}$ its $p$-ordinary stabilization. Recall that $R_{f}^*$ stands for Deligne's representation attached to $f$ and $T_{f,\chi}:=R_{f}^*\otimes\chi$. For each positive integer $m$ coprime $pN_f\,\mathbf{N}\ff\,|D_K|$, we let
\[
\mathbb{BF}^{f^{\alpha},\chi}_{m}\in H^1(\QQ(\mu_m),R_{f}^*\,\widehat{\otimes}\,{\bf R}_{\chi}^*\,\widehat{\otimes}_{\ZZ_p}\,\LL(\Gamma_\cyc)^{\sharp})
\]
denote the image of $\mathbb{BF}^{\f,\chi}_{m}$ under the natural specialization map determined by $f^{\alpha}$. Similarly, we define ${}^u\mathbb{BF}^{f^{\alpha},\chi}_{m}\in H^1(K(\mu_m),T_{f,\chi}\,\widehat{\otimes}_{\ZZ_p}\,\LL(\Gamma_K)^{\sharp})$ denote the image of ${}^u\mathbb{BF}^{\f,\chi}_{m}$ for each $u$ as above. When $m=1$, in place of $\mathbb{BF}^{f^{\alpha},\chi}_{1}$ and ${}^u\mathbb{BF}^{f^{\alpha},\chi}_{1}$, we shall write $\mathbb{BF}_{f^{\alpha},\chi}$ and ${}^u\mathbb{BF}_{f^{\alpha},\chi}$ respectively.

\subsection{Self-dual twists}
\label{subsec_central_critical_twists}
We assume in this subsection that $\chi^c=\chi^{-1}$ and $\varepsilon_f=\mathds{1}$. When we are studying anticyclotomic variation, we will work with the twisted variants 
$${}^u\mathbb{BF}^{\f,\chi,\dagger}_{m}\in H^1(K(\mu_m),T_{\f,\chi}^\dagger\,\widehat{\otimes}_{\ZZ_p}\,\LL(\Gamma_K)^{\sharp})
$$
$${}^u\mathbb{BF}^{f^{\alpha},\chi,
\dagger}_{m}\in H^1(K(\mu_m),T_{f,\chi}^\dagger\,\widehat{\otimes}_{\ZZ_p}\,\LL(\Gamma_K)^{\sharp})\,.$$
We will also put ${}^u\mathbb{BF}_{\f,\chi}^\dagger:={}^u\mathbb{BF}^{\f,\chi,\dagger}_{1}$ and ${}^u\mathbb{BF}_{f^{\alpha},\chi}^{\dagger}:={}^u\mathbb{BF}^{f^{\alpha},\chi,
\dagger}_{1}$ to ease notation.

\subsection{$p$-local properties of Beilinson--Flach elements}

\begin{defn}
\label{defn_res_singular_quotients}
For $?=f,\f$ and any positive integer $m$ coprime to $p\frak{f}N_f|D_K|$, we let $\res_p^{(--)}$ denote the composition
$$ H^1(\QQ(\mu_m),R_{?}^*\,\widehat{\otimes}\,{\bf R}_{\chi}^*\,\widehat{\otimes}_{\ZZ_p}\,\LL(\Gamma_\cyc)^{\sharp})\xrightarrow{\Xi\,\circ\,\res_p} H^1(\QQ(\mu_m)_p, F^-R_{?}^*\,\widehat{\otimes}\,F^-{\bf R}_{\chi}^*\,\widehat{\otimes}_{\ZZ_p}\,\LL(\Gamma_\cyc)^{\sharp})\,,$$
where $\Xi$ is as given in Definition~\ref{defn_+_BK_ffGamma_K}.

We also define
$$\res_{\p^c}^-:  H^1(K(\mu_m),R_{?}^*\otimes\chi\,\widehat{\otimes}\LL(\Gamma_K)^\sharp)\lra H^1(K(\mu_m)_{\p^c},F^-R_{?}^*\otimes\chi\,\widehat{\otimes}\LL(\Gamma_K)^\sharp)\,$$
to be the composition of the localization map at $\p$ with the natural map induced by the projection $R_?^*\rightarrow F^-R_?^*$.
\end{defn}
\begin{proposition}
 \label{prop_local_conditions_for_BF_CMfamilies}
For every positive integer $m$ coprime to $p\frak{f}N_f|D_K|$, we have
$$\res_{\p^c}^{-}\left({}^u\mathbb{BF}^{?,\chi}_{m}\right) =0$$
for $?\in\{f^\alpha,\f\}$.
\end{proposition}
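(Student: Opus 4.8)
The plan is to reduce the vanishing of $\res_{\p^c}^{-}\left({}^u\mathbb{BF}^{?,\chi}_{m}\right)$ to a corresponding vanishing statement for the original Beilinson--Flach classes $\mathbb{BF}^{?,\chi}_{m}$ living over $\QQ(\mu_m)$, via the commutative diagrams \eqref{eqn_shapiro_local_pe_minus} and \eqref{eqn_shapiro_local_pe_c_minus} encoding semi-local Shapiro's lemma. Concretely, unwinding Definition~\ref{defn_res_singular_quotients}, the class $\res_{\p^c}^{-}\left({}^u\mathbb{BF}^{?,\chi}_{m}\right)$ is the image of $\mathbb{BF}^{?,\chi}_{m}$ under the composition: apply $u$ and Shapiro to land in $H^1(K(\mu_m),T_{?,\chi}\,\widehat{\otimes}\,\LL(\Gamma_K)^\sharp)$, localize at $\p^c$, then project to the $F^-$-quotient. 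By the commutativity of \eqref{eqn_shapiro_local_pe_c_minus} (with the roles of $\p$ and $\p^c$ interchanged, so that $F^-R_{?}^*\otimes\chi$ at $\p^c$ receives $F^-R_{?}^*\,\widehat{\otimes}\,F^+{\bf R}_\chi^*$ through $u_+^{(\p^c)}$), this image factors through the class $r_{+}^{(-+)}\left(\mathbb{BF}^{?,\chi}_{m}\right)$, i.e. through the image of $\mathbb{BF}^{?,\chi}_{m}$ in $H^1(\QQ(\mu_m)_p,F^-R_{?}^*\,\widehat{\otimes}\,F^+{\bf R}_\chi^*\,\widehat{\otimes}\,\LL(\Gamma_\cyc)^\sharp)$. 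So it suffices to show this latter class vanishes.

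\textbf{Key steps.} First I would record precisely, using the decomposition \eqref{eqn_CM_Hida_Rep} of $\mathbb{I}_{\mathbf{k}\otimes\chi}|_{G_K}$ and the compatibility in Remark~\ref{rem_Ind_mod_P}, that the localization-at-$\p^c$-then-project-to-$F^-$ map on the $K$-side corresponds under Shapiro to the $F^+$-part of ${\bf R}_\chi^*$ on the $\QQ$-side; this is exactly the content of diagram \eqref{eqn_shapiro_local_pe_c_minus} and amounts to matching up which direct summand of the induced representation is being picked out. Second, I would invoke the known $p$-local behaviour of Beilinson--Flach classes established in \cite{KLZ2}: the class $\mathbb{BF}^{\f,\chi}_{m}$ (equivalently, after specialization, $\mathbb{BF}^{f^\alpha,\chi}_{m}$) is ``$+$-unramified'' in the sense recorded in Definition~\ref{defn_+_BK_ffGamma_K}, namely it lies in $H^1_{\FFF_+}$, and by the equivalent description \eqref{eqn_proj_ff_Gamma_K_minusminus_bis} its image in the $F^-\,\widehat\otimes\,F^-$-quotient vanishes. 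The point specific to our situation is that we need more: vanishing already in the $F^-R_?^*\,\widehat\otimes\,F^+{\bf R}_\chi^*$-quotient. This stronger statement is precisely the finer local property of Beilinson--Flach classes along the CM direction --- since ${\bf R}_\chi^*$ is the Hida--Ohta lattice of the CM family $\mathbb{g}_\chi$, and $\mathbb{BF}^{\f,\chi}_m$ is obtained from the three-variable Beilinson--Flach class of \cite{KLZ2} via $\phi_\chi$, the class $r_{+}^{(-+)}\left(\mathbb{BF}^{?,\chi}_{m}\right)$ is the image of the corresponding term for the pair $(\f,\mathbb{g}_\chi)$. The relevant input from \cite[\S8]{KLZ2} is that the Beilinson--Flach class for a pair of Hida families, localized at $p$ and projected onto the quotient $F^-\,\widehat\otimes\,F^+$ (i.e. the quotient corresponding to $(\f^-,\mathbb{g}^+)$), vanishes. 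I would then transport this along $\phi_\chi$ and $u$ using functoriality of the cohomological operations.

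\textbf{Main obstacle.} The main subtlety is the bookkeeping of sub/quotient structures and direction of maps: \cite{KLZ2} works with the full three-variable class and its various $p$-adic regulator/projection images, and one must be careful that the quotient onto which we project (the summand $\LL_\cO(\Gamma_{\p^c})^\sharp\otimes\chi^c$ inside $\mathbb{I}_{\mathbf{k}\otimes\chi}|_{G_K}$, composed with $F^-$ of $R_?^*$) is exactly the one for which the Beilinson--Flach class is known to be trivial, as opposed to the one (corresponding to $F^+{\bf R}_\chi^*$ at $\p$, treated in \eqref{eqn_shapiro_local_pe_minus}) for which it records the $p$-adic $L$-function and is generically nonzero. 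I expect the argument to come down to: (1) by \cite[Theorem 7.2.3]{KLZ2} the line $F^+{\bf R}_\chi^*$ is the one on which $G_{\QQ_p}$ acts (up to the unramified twist by $\mathbf{k}$) through $\chi$ at $\p$ and $\chi^c$ at $\p^c$, so via the decomposition \eqref{eqn_CM_Hida_Rep} the map $u_-^{(\p^c)}:F^-{\bf R}_\chi^*\to \LL_\cO(\Gamma_\p)^\sharp\otimes\chi^c$ lands in the summand that localizes at $\p$, not $\p^c$; and (2) the remaining term $r_{+}^{(-+)}$ vanishes because the Beilinson--Flach Euler system class is, by construction in \cite[\S8]{KLZ2}, unramified (hence in the image of the $+$-part) away from the single ``$(-,+)$ = $p$-adic $L$-function'' direction --- and here the direction we hit is the complementary ``$(-,-)$-type'' one which is genuinely trivial, not merely $\LL$-torsion. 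Establishing that last point cleanly --- i.e. that we are truly in the vanishing regime and not merely the torsion regime --- is where I would spend the most care, likely by appealing directly to the Euler system norm relations together with the fact that $F^-R_?^*\,\widehat\otimes\,F^+{\bf R}_\chi^*$ has the ``wrong'' Hodge--Tate weights for a nonzero Bloch--Kato-type class to survive, exactly as in the analogous arguments of \cite{KLZ2} and \cite{BLForum}.
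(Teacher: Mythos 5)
There is a genuine gap, and it is exactly the one you flag at the end as the main risk. You claim that under $u$ and Shapiro, $\res_{\p^c}^{-}\bigl({}^u\mathbb{BF}^{?,\chi}_m\bigr)$ is computed by $r_+^{(-+)}\bigl(\mathbb{BF}^{?,\chi}_m\bigr)$, i.e.\ by the image of the Beilinson--Flach class in $H^1(\QQ(\mu_m)_p, F^-R_?^*\,\widehat{\otimes}\, F^+{\bf R}_\chi^*\,\widehat{\otimes}\,\LL(\Gamma_\cyc)^\sharp)$, and that this piece vanishes by \cite[\S8]{KLZ2}. Neither assertion is correct. With the decomposition group at $\p^c$ identified with $G_{\QQ_p}$, it is $F^-{\bf R}_\chi^*$ (not $F^+$) that matches, via $u_-^{(\p^c)}$, the summand $\LL_\cO(\Gamma_\p)^\sharp\otimes\chi$ of \eqref{eqn_CM_Hida_Rep} --- the unramified-at-$\p^c$ piece, which is the one contributing $H^1(K_{\p^c},\cdot)$ under semi-local Shapiro --- while $F^+{\bf R}_\chi^*$ (through $u_+^{(\p^c)}$) feeds $\LL_\cO(\Gamma_{\p^c})^\sharp\otimes\chi^c$, i.e.\ the $H^1(K_\p,\cdot)$-summand. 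No ``interchange of $\p$ and $\p^c$'' in \eqref{eqn_shapiro_local_pe_c_minus} is needed or legitimate: that diagram already localizes at $\p^c$ and has $u_-^{(\p^c)}$ as the vertical, confirming that the $F^-{\bf R}_\chi^*$-direction pairs with $\p^c$. Consequently $\res_{\p^c}^{-}$ corresponds to the projection $\res_p^{(--)}$ onto $F^-R_?^*\,\widehat{\otimes}\, F^-{\bf R}_\chi^*$, not the $(-,+)$-quadrant.

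Compounding this, your stated input ``the Beilinson--Flach class projected onto $F^-\,\widehat{\otimes}\,F^+$ vanishes'' is false and not in \cite{KLZ2}: that component is precisely the one controlled by the explicit reciprocity law and is proportional to Hida's $p$-adic $L$-function $L_p(\Sigma^{(1)}_{\rm crit})$ (compare $\res_\p^{(1)}$ in Proposition~\ref{prop_reciprocity_law}), hence is typically nonzero. What \cite[Proposition~8.1.7]{KLZ2} gives is vanishing of the $(-,-)$-projection, i.e.\ $\mathbb{BF}^{?,\chi}_m\in H^1_{\FFF_+}$. Once the quadrant bookkeeping is corrected, the argument becomes a one-line diagram chase: $\res_{\p^c}^{-}\bigl({}^u\mathbb{BF}^{?,\chi}_m\bigr) = u_-^{(\p^c)}\circ\res_p^{(--)}\bigl(\mathbb{BF}^{?,\chi}_m\bigr)$ by functoriality of $u$ and Shapiro, and $\res_p^{(--)}\bigl(\mathbb{BF}^{?,\chi}_m\bigr)=0$ by \cite[Proposition~8.1.7]{KLZ2}. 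Your closing caution about not confusing the $p$-adic $L$-function direction with the trivial direction is thus well-taken, but is in fact the error committed in the body of your argument.
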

\begin{proof}
For $?=f^\alpha,\f$, we infer from \cite[Proposition 8.1.7]{KLZ2} that $\res_p^{(--)}\left(\mathbb{BF}^{?,\chi}_{m}\right)=0$. The proof follows from the commutativity of the following diagram 
$$\xymatrix{
H^1(\QQ(\mu_m),R_{?}^*\,\widehat{\otimes}\,{\bf R}_{\chi}^*\,\widehat{\otimes}_{\ZZ_p}\,\LL(\Gamma_\cyc)^{\sharp})\ar[rr]^(.47){\res_p^{(--)}}\ar[d]_u&& H^1(\QQ(\mu_m)_p, F^-R_{?}^*\,\widehat{\otimes}\,F^-{\bf R}_{\chi}^*\,\widehat{\otimes}_{\ZZ_p}\,\LL(\Gamma_\cyc)^{\sharp})\ar[d]^{u_{-}^{(\p^c)}}\\
H^1(K(\mu_m),R_{?}^*\otimes{\chi}\,\widehat{\otimes}_{\ZZ_p}\,\LL(\Gamma_\cyc)^{\sharp})\ar[rr]^(.5){\res_{\p^c}^{-}}&& H^1(K(\mu_m)_{\p^c}, F^-R_{?}^*\otimes{\chi}\,\widehat{\otimes}_{\ZZ_p}\,\LL(\Gamma_\cyc)^{\sharp})
}
$$
where the vertical arrows are induced from Shapiro's lemma and the morphism $u$.
\end{proof}

\begin{defn}
For $?=f,\f$ we denote by $\res_\p^{(1)}$ the composition of the arrows
$$\ker\left(\res_{\p^c}^{-}\right)\xrightarrow{\res_p} H^1_{\FFF_+}(K_p,R_{?}^*\otimes{\chi}\,\widehat{\otimes}_{\ZZ_p}\,\LL(\Gamma_\cyc)^{\sharp})\xrightarrow{r_\p^{(-)}} H^1(K_\p,F^-R_{?}^*\otimes{\chi}\,\widehat{\otimes}_{\ZZ_p}\,\LL(\Gamma_\cyc)^{\sharp})\,.$$

\end{defn}

\section{Selmer complexes and $p$-adic $L$-functions}
\label{sec_Selmer_complexes_padic_L_fucntions}
\subsection{Selmer complexes}
\label{subsec_Selmer_complexes}
We start with a very general definition of a Selmer complex.
\begin{defn}
\label{defn_selmer_complex_general_ord}
Consider any complete local Noetherian ring $R$ and a free $R$-module $X$ of finite rank which is endowed with a continuous action of $G_{K,\Sigma}$. Assume that for each prime $\q$ of $K$ above $p$, we are given a free $R$-direct summand $F^+_{\q}X$ of $X$, which is stable under the action of $G_{K_\q}$.

\item[i)] The Selmer complex 
$$\widetilde{{\bf R}\Gamma}_{\rm f}(G_{K,\Sigma},X;\Delta_X)\in D_{\rm ft} (_{R}{\rm Mod})$$
with local conditions $\Delta_X$ is given as in \cite[\S6.1]{nekovar06}. We denote its cohomology by $\widetilde{H}^\bullet_{\rm f}(G_{K,\Sigma},X;\Delta_X)$.

\item[ii)]We shall write $\Delta^{(1)}$ for local conditions which are unramified for all primes in $\Sigma$ that are coprime to $p$ $($see \cite[\S8]{nekovar06} for details$)$ and which are given by the Greenberg conditions $($see \cite[\S6.7]{nekovar06}$)$ with the choice
$$j_{\q}^+:F^+_{\q}X\stackrel{}{\longrightarrow} X$$
for each prime $\q$ of $K$ above $p$.

\item[iii)]Suppose $(p)=\p\p^c$ splits in $K$. We shall write $\Delta^{(2)}$ for local conditions which are unramified for all primes in $\Sigma$ that are coprime to $p$ $($see \cite[\S8]{nekovar06} for details$)$ and which are given by the Greenberg conditions $($see \cite[\S6.7]{nekovar06}$)$ with the choices
$$j_{\p}^+:X\stackrel{=}{\longrightarrow} X$$
$$j_{\p^c}^+:\{0\}\hookrightarrow X$$
at the primes $\p$ and $\p^c$, respectively.

\end{defn}

We recall the Galois representations we have introduced in Definition~\ref{def_intro_Gal_rep_RS}. We shall plug these in place of $X$ in Definition~\ref{defn_selmer_complex_general_ord} to define the Selmer groups relevant to our discussion. 

Since we assumed that $f$ is $p$-ordinary, the $G_{\QQ_p}$-representation $R_f^*$ admits a $G_{\QQ_p}$-stable direct summand $F^+R_f^*$ of rank one (with the additional property that the quotient $F^-R_f^*:=R_f^*/F^+R_f^*$ is unramified). This gives rise to a rank-one direct summand $F^+\TT_{f,\chi}^{(\Gamma)}$  of $\TT_{f,\chi}^{(\Gamma)}$ (resp., a rank-one direct summand $F^+\TT_{f,\chi}^{\dagger}\subset\TT_{f,\chi}^{\dagger}$).

\begin{defn}
\label{defn_Selmer_complex_split_ord}
Let $\Sigma$ denote the set of places of $K$ which divide $p\ff N_f\infty$.
\item[i)] The complex 
$$\widetilde{{\bf R}\Gamma}_{\rm f}(G_{K,\Sigma},\TT_{f,\chi}^{(\Gamma)};\Delta^{(1)})\in D_{\rm ft} (_{\LL_\cO(\Gamma)}{\rm Mod})$$
will be called the Greenberg Selmer complex corresponding $  \Sigma^{(1)}_{\rm crit}$. Here, $\Delta^{(1)}$ stands for the local conditions which are unramified for all primes in $\Sigma$ that are coprime to $p$ and which are given by the Greenberg conditions with the choice
$$j_{\q}^+:\,F^+\TT_{f,\chi}^{(\Gamma)} \stackrel{}{\longrightarrow} \TT_{f,\chi}^{(\Gamma)}$$
for each $\q\in \{\p,\p^c\}$.

\item[ii)] The complex 
$$\widetilde{{\bf R}\Gamma}_{\rm f}(G_{K,\Sigma},\TT_{f,\chi}^{(\Gamma)};\Delta^{(2)})\in D_{\rm ft} (_{\LL_\cO(\Gamma)}{\rm Mod})$$
will be called the Greenberg Selmer complex corresponding $ \Sigma^{(2)}_{\rm crit}$. 

We similarly define $\widetilde{{\bf R}\Gamma}_{\rm f}(G_{K,\Sigma},\TT_{f,\chi}^{\dagger};\Delta^{(i)})\in D_{\rm ft} (_{\LL_\cO(\Gamma_K)}{\rm Mod})$ for $i=1,2$.
\end{defn}

\begin{remark}
\label{remark_shapiro}
For each $i\in\{1,2\}$, we have a natural isomorphism
$$ \varprojlim_{\rm cor}\widetilde{H}_{\rm f}^\bullet(G_{K^\prime,\Sigma^\prime},T_{f,\chi};\Delta^{(i)})\stackrel{\sim}{\lra}\widetilde{H}_{\rm f}^\bullet(G_{K,\Sigma},\TT_{f,\chi}^{(\Gamma)};\Delta^{(i)})$$
by \cite[Proposition 8.8.6]{nekovar06} (induced by Shapiro's Lemma). Here $K^\prime/K$ runs through finite subextensions of $K_\Gamma/K$ and we denote by $\Sigma^\prime$ the set of primes of $K^\prime$ that lie above the primes in $\Sigma$. 
\end{remark}

\subsection{Hida's $p$-adic Rankin--Selberg $L$-functions}
\label{subsec_Hida_fGammaK}
In this section, we review the $p$-adic $L$-functions of Hida attached to families of motives of ${\rm GL}_2\times {\rm Res}_{K/\QQ}{\rm GL}_1$-type. Recall that  $H_{p^\infty}$ denotes the ray class group of $K$ modulo $p^\infty$. 
\subsubsection{$p$-adic $L$-functions for $f_{/K}\otimes \chi$}
\begin{defn}
\label{defn_Hidas_padic_Lfunctions}
\item[i)] We let $L_p(f_{/K}\otimes\chi,  \Sigma^{(1)}_{\rm crit})\in \LL_L(H_{p^\infty})$ denote the branch of Hida's $p$-adic $L$-function defined by
\[
\psi_p\mapsto L_p(f/K,\Sigma^{(1)})(\chi^{-1}\psi_p^{-1}\chi_\cyc^{-k_f/2}),
\]
where $L_p(f/K,\Sigma^{(1)})$ is defined as in Appendix~\ref{appendix:corrige}.

\item[ii)] For any torsion-free quotient $\Gamma$ of $\Gamma_K$, we write $L_p(f_{/K}\otimes\chi,  \Sigma^{(1)}_{\rm crit})\big{\vert}_\Gamma\in \LL_L(\Gamma)$ for the image of Hida's $p$-adic $L$-function $L_p(f_{/K}\otimes\chi,  \Sigma^{(1)}_{\rm crit})$ under the canonical projection $\LL_{L}(\Gamma_K)\twoheadrightarrow \LL_{L}(\Gamma)\,.$

\item[iii)] For $\chi$ as above, let us write $H_\chi \in \LL_{\Phi}(\Gamma_\ac)$ for any generator of the CM congruence module of $\theta_\chi$ $($in the sense of \cite{Hida88AJM, HT91}$)$. We let 
$$L_p(f_{/K}\otimes\chi,  \Sigma^{(2)}_{\rm crit})\in \frac{1}{H_\chi}\LL_{\Phi}(H_{p^\infty})$$ 
(where $\Phi=L\widehat{\QQ_p^{\rm ur}}$) denote the branch of Hida's $p$-adic $L$-function corresponding to $\chi$, defined by
\[
\psi_p\mapsto L_p(f/K,\Sigma^{(2)})(\chi^{-1}\psi_p^{-1}\chi_\cyc^{-k_f/2}),
\]
where $L_p(f/K,\Sigma^{(2)})$ is defined as \cite[Theorem~2.3]{BLForum}.

\item[iv)] For each $\Gamma\in {\rm Gr}(\Gamma_K)$, we let $L_p(f_{/K}\otimes\chi, \Sigma^{(2)}_{\rm crit})\big{\vert}_\Gamma\in\frac{1}{H_\chi} \LL_\Phi(\Gamma)$ denote the image of Hida's $p$-adic $L$-function $L_p(f_{/K}\otimes\chi,  \Sigma^{(2)}_{\rm crit})$ under the map induced from the natural surjection $H_{p^\infty}\twoheadrightarrow \Gamma$.

\item[v)] Suppose $\chi^c=\chi^{-1}$ and $\varepsilon_f=\mathds{1}$. We define $L_p(f_{/K}\otimes\chi, \Sigma^{(1)}_{\rm cc})\in \LL_\Phi(G_{p^\infty})$ as the image of the twisted $p$-adic $L$-function ${\rm Tw}\left( L_p(f_{/K}\otimes\chi, \Sigma^{(1)}_{\rm crit})\right)$ under the natural map induced from $H_{p^\infty}\twoheadrightarrow G_{p^\infty}$. We also put $L_p(f_{/K}\otimes\chi,\Sigma^{(1)}_{\rm cc}){\vert}_{\Gamma_\ac}\in \LL_\Phi(\Gamma_\ac)$ for the image of $L_p(f_{/K}\otimes\chi,\Sigma^{(1)}_{\rm cc})$ and similarly define $L_p(f_{/K}\otimes\chi,\Sigma^{(2)}_{\rm cc}){\vert}_{\Gamma_\ac}\in \frac{1}{H_\chi}\LL_{\Phi}(\Gamma_\ac)$. We note that the generator $H_\chi$ of Hida's CM congruence ideal is a function on anticyclotomic characters and its restriction to $\Gamma_\ac$ is also denoted here by $H_\chi$ by slight abuse.
\end{defn}

\begin{remark}
Note that we have
$${\rm Tw}\left( L_p(f_{/K}\otimes\chi, \Sigma^{(i)}_{\rm crit})\right)(\Xi)=L_p(f_{/K}\otimes\chi,  \Sigma^{(i)}_{\rm crit})({\rm Tw}(\Xi))$$
for each character $\Xi$ of $H_{p^\infty}$. In particular, if $\psi_p=\xi_p\chi_{\cyc}^{-k_f/2}={\rm Tw}(\xi_p)$ is the Galois character associated to the $p$-adic avatar of $\psi \in \Sigma^{(i)}_{\rm cc}$ with $p$-power conductor (so that $\xi_p$ is Galois character associated to the $p$-adic avatar of an anticyclotomic Hecke character $\xi$ such that $\psi=\xi|\cdot|^{k_f/2}={\rm Tw}(\xi)$; see the discussion in Remark~\ref{rem_cc_vs_Galois}), we have
$$L_p(f_{/K}\otimes\chi, \Sigma^{(i)}_{\rm cc})(\xi_p)\,\dot{=}\,L(f_{/K},\chi^{-1}\xi^{-1},k_f/2+1)\,,$$
where ``$\dot{=}$'' stands for equality up to explicit factors. We remind the readers that when $\psi \in \Sigma^{(i)}_{\rm cc}(k_f)$, the value $L(f_{/K},\chi^{-1}\xi^{-1},k_f/2+1)=L(f_{/K},\chi^{-1}\psi^{-1},1)$ is the central critical value for the Rankin--Selberg motive in consideration. While the values that are interpolated here are, a priori, the values of an imprimitive $L$-series, they actually agree with the values of the primitive (``motivic'') $L$-series 
thanks to our running assumption that $N_f$ is coprime $D_K\mathbf{N}\ff$, c.f. \cite[Remark~2.2]{loeffler18}.
\end{remark}

\begin{remark}
\label{rem_compare_with_Forum}
 As explained in the first paragraph of the proof of \cite[Theorem 3.19]{BLForum}, $H_\chi L_p(f_{/K}\otimes\chi,  \Sigma^{(2)}_{\rm crit})$ is always non-zero. 

Similarly, if either $k_f>0$ or else $k_f=0$ and there is no prime $v\mid\ff$ such that $v^c\mid\ff$, then it follows that $L_p(f_{/K}\otimes\chi,  \Sigma^{(1)}_{\rm crit})$ is non-trivial. Indeed, in the former scenario, one may choose $\psi\in \Sigma^{(1)}_{\rm crit}$ such that $s=0$ falls within the region of absolute convergence for the $L$-series $L(f_{/K}, \chi\psi,s)$. In the latter case, the asserted non-triviality is a consequence of Rohrlich's generic non-vanishing result in \cite{rohrlich88Annalen} and the interpolation formulae.
\end{remark}

\subsubsection{}\label{subsubsec_Hida_padicL_for_ff} Let $\f$ denote a primitive Hida family of tame conductor $N_f$, which admits $f^\alpha$ as a weight $k_f+2$ crystalline specialization.  We let $\LL_\f$ denote the corresponding branch of the Hida's universal ordinary Hecke algebra. When $\overline{\rho}_\f=\overline{\rho}_f$ is absolutely irreducible, there exists a free $\LL_\f$-module $R_\f^*$ of rank two, which is equipped with a continuous action of $G_{\QQ}$ unramified outside primes dividing $pN_f$ and which interpolates Deligne's representations $R_{\f(\kappa)}^*$ associated to arithmetic specializations $\f(\kappa)$ of the Hida family. If we assume in addition that $f$ is $p$-distinguished, then the $G_{\QQ_p}$-representation $R_\f^*$ admits a $G_{\QQ_p}$-stable direct summand $F^+R_\f^*$ of rank one. 

\begin{defn}
\item[i)] Let us denote by $S_\f^{\rm crys}$ the collection of arithmetic specializations $\kappa: \LL_\f \rightarrow L$ which are crystalline (in the sense that the eigenform $\f(\kappa)$ is $p$-old). 
\item[ii)] Suppose $\kappa\in S_\f^{\rm crys}$. Let us denote by $\Sigma^{(1)}_{\rm crit}(\kappa)$ the set of Hecke characters of $p$-power conductor and infinity type $(\ell_1,\ell_2)$ with $0\leq \ell_1,\ell_2\leq w(\kappa)$, where $w(\kappa)+2$ is the weight of the classical eigenform $\f(\kappa)$. Similarly, we write $\Sigma^{(2)}_{\rm crit}(\kappa)$ for the set of Hecke characters of $p$-power conductor and infinity type $(\ell_1,\ell_2)$ with $\ell_1\geq w(\kappa)+1$ and $\ell_2\leq -1$.
\end{defn}

The following is a reformulation of the main theorem of \cite{hida88}, as are the $p$-adic $L$-functions we have recalled in \S\ref{subsec_Hida_fGammaK} for individual members of the family $\f$.
\begin{theorem}[Hida]
\label{thm_Hida_3var_ord_split}
Let $H_\f \in \LL_\f$ denote a generator of Hida's congruence ideal, given as in \cite[\S4]{hida88}. There exist a pair of $p$-adic $L$-functions
$$L_p(\f_{/K}\otimes\chi,\mathbb{\Sigma}^{(1)}) \in \frac{1}{H_\f}\LL_\f\,\widehat{\otimes}_{\Zp}\LL_{L}(H_{p^\infty})$$
$$L_p(\f_{/K}\otimes\chi,\mathbb{\Sigma}^{(2)}) \in \frac{1}{H_\chi}\LL_\f\,\widehat{\otimes}_{\Zp}\LL_{\Phi}(H_{p^\infty})$$
which are characterised by the interpolative properties 
$$L_p(\f_{/K}\otimes\chi,\mathbb{\Sigma}^{(1)})(\kappa)=L_p(\f(\kappa)_{/K}\otimes\chi,{\Sigma}^{(1)}_{\rm crit}(\kappa))$$
$$L_p(\f_{/K}\otimes\chi,\mathbb{\Sigma}^{(2)})(\kappa)=L_p(\f(\kappa)_{/K}\otimes\chi,{\Sigma}^{(2)}_{\rm crit}(\kappa))$$
where $\kappa$ runs through the set $S_\f^{\rm crys}$.
\end{theorem}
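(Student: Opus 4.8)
The plan is to obtain both $p$-adic $L$-functions from Hida's $\LL_\f$-adic Rankin--Selberg construction in \cite{hida88}, arranged so that the third variable records the variation of the auxiliary Hecke character. First I would recall the shape of that construction. One forms the $\LL$-adic convolution of the Hida family $\f$ against the CM Hida family $\mathbb{g}_\chi$ of Definition~\ref{defn_theta_series_in_families}, whose arithmetic specializations are (up to normalization) the theta-series $\theta_{\chi\psi}$ as $\psi$ ranges over Hecke characters of $p$-power conductor; the Iwasawa variable of $\mathbb{g}_\chi$ is precisely a variable over the ray class group $H_{p^\infty}$. For the range $\mathbb{\Sigma}^{(1)}$ --- where the weight of the theta-series does not exceed that of $\f(\kappa)$, so that after unfolding the Rankin--Selberg integral one is in the region of convergence --- applying the ordinary projector together with the $\LL_\f$-adic Petersson product against $\f$ and dividing by a generator $H_\f$ of Hida's congruence ideal produces directly an element of $\tfrac1{H_\f}\LL_\f\,\widehat{\otimes}_{\Zp}\LL_{L}(H_{p^\infty})$. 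For the range $\mathbb{\Sigma}^{(2)}$ one reverses the roles of the two factors, viewing the CM family as the dominant member: holomorphic projection and the Petersson product are taken relative to $\mathbb{g}_\chi$ (equivalently to $\theta_{\chi\psi}$), so that the denominator which appears is a generator $H_\chi$ of the CM congruence module of $\theta_\chi$ (in the sense of \cite{Hida88AJM, HT91}), and, because of the transcendental periods of the CM form, the coefficients must be enlarged to $\Phi = L\widehat{\QQ_p^{\rm ur}}$; this lands in $\tfrac1{H_\chi}\LL_\f\,\widehat{\otimes}_{\Zp}\LL_{\Phi}(H_{p^\infty})$.

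Next I would verify the interpolation. Specializing at $\kappa\in S_\f^{\rm crys}$ replaces $\f$ by the $p$-old eigenform $\f(\kappa)$ and $\LL_\f$ by $L$; by the compatibility of $\LL$-adic Petersson products and ordinary projectors with classical specialization, which is the content of \cite[\S4]{hida88}, the resulting two-variable distribution on $H_{p^\infty}$ has the interpolation property characterising $L_p(\f(\kappa)_{/K}\otimes\chi,\Sigma^{(i)}_{\rm crit}(\kappa))$ as in Definition~\ref{defn_Hidas_padic_Lfunctions} --- that is, it agrees with the two-variable function of Appendix~\ref{appendix:corrige} (for $i=1$) and \cite[Theorem~2.3]{BLForum} (for $i=2$), precomposed with the twist $\psi_p\mapsto\chi^{-1}\psi_p^{-1}\chi_\cyc^{-k_f/2}$. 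Since the pairs $(\kappa,\psi)$ with $\kappa\in S_\f^{\rm crys}$ and $\psi$ a $p$-crystalline Hecke character in $\Sigma^{(i)}_{\rm crit}(\kappa)$ form a Zariski-dense set of points of $\mathrm{Spec}\big(\LL_\f\,\widehat{\otimes}\,\LL(H_{p^\infty})\big)$, these interpolation formulae determine $L_p(\f_{/K}\otimes\chi,\mathbb{\Sigma}^{(i)})$ uniquely.

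The hard part is purely a matter of normalisation bookkeeping rather than of new ideas: one must check that the periods, the Euler-type factors $\cE(\cdot)$, the powers of $\pi$ and the Gauss sums built into Hida's $\LL_\f$-adic construction specialise to exactly those appearing in the interpolation formulae of Definition~\ref{defn_Hidas_padic_Lfunctions}, incorporating the correction of Appendix~\ref{appendix:corrige}; and, in the $\mathbb{\Sigma}^{(2)}$ range, that passing to holomorphic projection against the CM family introduces only the single congruence denominator $H_\chi$ and no further poles. Granting the analyses of \cite{hida88} and \cite{BLForum} on these points, the theorem follows.
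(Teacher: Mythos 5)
Your proposal is correct and matches the paper's approach, which is simply to cite the main theorem of Hida \cite{hida88} (together with the CM-family construction of \S\ref{subsec_CMHida} and the normalisation corrections of Appendix~\ref{appendix:corrige}) and observe that the pair of $p$-adic $L$-functions is obtained by interpolating the one-variable specializations along the weight variable. The paper does not spell out the mechanics you describe, but they are exactly what the citation to Hida is standing in for, so there is no genuine divergence.
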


For each $\Gamma\in {\rm Gr}(\Gamma_K)$, we define $L_p(\f_{/K}\otimes\chi,\mathbb{\Sigma}^{(i)})|_\Gamma$ as in Definition~\ref{defn_Hidas_padic_Lfunctions}(ii) and (iv).

We remark that the expression $L_p(\f_{/K}\otimes\chi,\mathbb{\Sigma}^{(1)})(\kappa)$ makes sense when $\kappa\in S_\f^{\rm crys}$ since the meromorphic function $L_p(\f_{/K}\otimes\chi,\mathbb{\Sigma}^{(1)})$ is regular at each arithmetic prime thanks to \cite[Theorem 4.2]{hida88}.


\subsection{Reciprocity laws for Beilinson--Flach elements}
\label{subsec_local_prop_and_reciprocity_BF}
We shall now review the reciprocity laws in \cite{KLZ2}, which we shall recast in the framework of the present article. We assume in \S\ref{subsec_local_prop_and_reciprocity_BF} that $\overline{\rho}_\f$ is absolutely irreducible and $p$-distinguished, so that $R_\f^*$ is a free $\LL_\f$-module of rank two and its restriction to $G_{\QQ_p}$ admits a $G_{\QQ_p}$-stable direct summand $F^+R_\f^*$. We put $F^-R_\f^*:=R_\f^*/F^+R_\f^*$. Let $\chi$ be a ray class character of conductor $\ff$ verifying the hypothesis \ref{item_RIa} for some natural number $a$.

\begin{defn}
Let $\f$ be a branch of the Hida family with tame level $N_f$ and let $\chi$ be a ray class character of $K$. We let $\res_{\p}^{(1)}$ denote the composition of the arrows
$$H^1(G_{K,\Sigma},\TT_{\f,\chi})\stackrel{\res_{\p}}{\lra} H^1(K_{\p},\TT_{\f,\chi})\lra H^1(K_{\p},F^-R_{\f}^*(\chi)\otimes\LL(\Gamma_K)^\sharp)$$
and $\res_{\p}^{(2)}$ denote the composition
$$H^1(G_{K,\Sigma},\TT_{\f,\chi})\stackrel{\res_{\p^c}}{\lra} H^1(K_{\p^c},\TT_{\f,\chi})\lra H^1(K_{\p^c},F^+R_{\f}^*(\chi)\otimes\LL(\Gamma_K)^\sharp)\,.$$

We also define $\res_\p^{(i)}:\, H^1(G_{K,\Sigma},\TT_{f,\chi})\to H^1(K_{\p^c},F^\pm R_{\f}^*(\chi)\otimes\LL(\Gamma_K)^\sharp)$  ($i=1,2$)  in a similar fashion.
\end{defn}

Until the end of \S\ref{subsec_local_prop_and_reciprocity_BF}, we shall assume that $\LL_\f$ is a Krull domain\footnote{One can always achieve by passing to a normalization of $\LL_\f$ if necessary. See \cite{TrevorArnold2011} for the effect of this alteration.}; then $\LL_\f(\Gamma):=\LL_\f\,\widehat{\otimes}\LL_{\cO}(\Gamma)$ is also a Krull domain for all $\Gamma\in {\rm Gr}(\Gamma_K)$. This allows us to define the characteristic ideals of finitely generated $\LL_\f$-modules, following \cite[\S3.1.5]{skinnerurbanmainconj}. If in addition $\LL_\f$ is regular (so that $\LL_\f(\Gamma)$ is a unique factorization domain; we note that $\LL_\f(\Gamma)$ needs not be a unique factorisation domain assuming only that $\LL_\f$ is) and $M$ is a torsion $\LL_\f(\Gamma)$-module, then we have 
\begin{equation}
\label{eqn_char_ideals}
\Char_{\LL_\f(\Gamma)}(M)=\prod_{P} P^{{\rm length}_{\LL_\f(\Gamma)_P}(M_P)}
\end{equation}
 for its characteristic ideal, where the product is over all all height-one primes of $\LL_\f(\Gamma)$. If $M$ is not torsion, we set $\Char_{\LL_\f(\Gamma)}(M):=0$.

\begin{proposition}
\label{prop_reciprocity_law} Fix $u$ as in \S\ref{subsubsec_local_properties_CMgg}.
\item[i)] Let $f \in S_{k_f+2}(\Gamma_1(N_f),\epsilon_f)$ is a cuspidal eigenform such that $\mathbf{N}\ff\,|D_K|$ and $pN_f$ are coprime. Then,
$$\Char_{\LL_{\cO}(\Gamma_K)}\left(H^1(K_{\p},F^-R_{f}^*(\chi)\otimes_{\ZZ_p}\LL(\Gamma_K)^\sharp)\Big{/}\LL_{\cO}(\Gamma_K)\cdot\res_{\p}^{(1)}\left({}^u\mathbb{BF}_{f,\chi}\right) \right)\,\Big|\,\mathscr{C}(u)H_f L_p(f_{/K}\otimes\chi,\Sigma {^{(1)}_{\rm crit}}) {\big|_{\Gamma_K}}\cdot\LL_\cO(\Gamma_K)\,,$$
where $H_f$ is the specialization of Hida's congruence ideal $H_{\f}$ to weight $f^{\alpha}$ and
$$\Char_{\LL_{\cO}(\Gamma_K)}\left(H^1(K_{\p^c},F^+R_{f}^*(\chi)\otimes_{\ZZ_p}\LL(\Gamma_K)^\sharp)\Big{/}\LL_{\cO_\Phi}(\Gamma_K)\cdot\res {_{\p}^{(2)}}\left({}^u\mathbb{BF}_{f,\chi}\right) \right){\otimes_\cO \cO_\Phi}\,\Big|\,\mathscr{C}(u)H_\chi L_p(f_{/K}\otimes\chi,{\Sigma} {^{(2)}_{\rm crit}}) {\big|_{\Gamma_K}}\cdot\LL_{{\cO_\Phi}}(\Gamma_K)\,.$$
\item[ii)] Let $\f$ be a branch of the Hida family with tame level $N_f$. Suppose that $\LL_\f$ is a Krull domain. Then,
$$\Char_{\LL_\f(\Gamma_K)}\left(H^1(K_{\p},F^-R_{\f}^*(\chi)\otimes_{\ZZ_p}\LL(\Gamma_K)^\sharp)\Big{/}\LL_\f(\Gamma_K)\cdot\res_{\p}^{(1)}\left({}^u\mathbb{BF}_{\f,\chi}\right) \right)\,\Big|\,\mathscr{C}(u)H_\f L_p(\f_{/K}\otimes\chi,\mathbb{\Sigma} {^{(1)}}) {\big|_{\Gamma_K}}\cdot \LL_\f(\Gamma_K)\,,$$
$$\Char_{\LL_\f(\Gamma_K)}\left(H^1(K_{\p^c},F^+R_{\f}^*(\chi)\otimes\LL(\Gamma_K)^\sharp)\Big{/}\LL_\f(\Gamma_K)\cdot\res {_{\p}^{(2)}}\left({}^u\mathbb{BF}_{\f,\chi}\right) \right){\otimes_\cO\cO_\Phi}\,\Big|\,\mathscr{C}(u)H_\chi L_p(\f_{/K}\otimes\chi,\mathbb{\Sigma} {^{(2)}}) {\big|_{\Gamma_K}}\cdot\LL_{{\f,\cO_\Phi}}(\Gamma_K)\,.$$
\end{proposition}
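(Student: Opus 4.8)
The plan is to deduce both divisibilities from the explicit reciprocity laws of Kings--Loeffler--Zerbes \cite{KLZ2} for Beilinson--Flach classes attached to a pair of Hida families, taking the CM family $\mathbb{g}_{\overline{\chi}}$ as the second input, and then to transport the resulting statements from $\QQ$ down to $K$ via semi-local Shapiro's lemma together with the morphism $u$ of \S\ref{subsubsec_local_properties_CMgg}; the failure of $u$ to be an isomorphism is precisely what the correction ideal $\mathscr{C}(u)$ measures. I would prove the family version (ii) first and then obtain (i) by specialisation.

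\emph{Step 1: the $\QQ$-side.} By Proposition~\ref{prop_local_conditions_for_BF_CMfamilies} (a repackaging of \cite[Proposition 8.1.7]{KLZ2}), $\res_p(\mathbb{BF}_{\f,\chi})$ lies in $H^1_{\FFF_+}(\QQ_p,R_\f^*\,\widehat{\otimes}\,\bR_\chi^*\,\widehat{\otimes}\,\LL(\Gamma_\cyc)^\sharp)$, so we may form
\[
\mathbf{z}^{(1)}:=r_+^{(-+)}\big(\res_p\mathbb{BF}_{\f,\chi}\big)\in H^1\big(\QQ_p,F^-R_\f^*\,\widehat{\otimes}\,F^+\bR_\chi^*\,\widehat{\otimes}\,\LL(\Gamma_\cyc)^\sharp\big),
\]
\[
\mathbf{z}^{(2)}:=r_+^{(+-)}\big(\res_p\mathbb{BF}_{\f,\chi}\big)\in H^1\big(\QQ_p,F^+R_\f^*\,\widehat{\otimes}\,F^-\bR_\chi^*\,\widehat{\otimes}\,\LL(\Gamma_\cyc)^\sharp\big).
\]
Since $F^-R_\f^*$ and $F^+\bR_\chi^*(-\mathbf{k})$ are unramified with unit geometric Frobenius (\S\ref{subsubsec_local_properties_CMgg}(b)), standard local Iwasawa theory shows that after inverting $\varpi$ each of these target groups is free of rank one over $\LL_\f(\Gamma_K)$, with the ordinary big logarithm (a Perrin-Riou--Coleman map) an isomorphism onto it. Pushing the two explicit reciprocity laws of \cite{KLZ2} forward along $\phi_\chi$ and invoking the Artin factorisation $L(\f\times\theta_{\chi\psi},s)=L(\f_{/K}\otimes\chi\psi,s)$, which identifies the Kings--Loeffler--Zerbes Rankin--Selberg $p$-adic $L$-function with Hida's $L_p(\f_{/K}\otimes\chi,{\mathbb \Sigma}^{(i)})$ (as in \cite{BLForum} and Appendix~\ref{appendix:corrige}), one then obtains
\[
\Char_{\LL_\f(\Gamma_K)}\Big(H^1\big(\QQ_p,F^-R_\f^*\widehat{\otimes}F^+\bR_\chi^*\widehat{\otimes}\LL(\Gamma_\cyc)^\sharp\big)\big/\LL_\f(\Gamma_K)\,\mathbf{z}^{(1)}\Big)=\big(H_\f\,L_p(\f_{/K}\otimes\chi,{\mathbb \Sigma}^{(1)})\big|_{\Gamma_K}\big),
\]
and the analogous identity with $F^+R_\f^*\widehat{\otimes}F^-\bR_\chi^*$, $\mathbf{z}^{(2)}$ and $H_\chi\,L_p(\f_{/K}\otimes\chi,{\mathbb \Sigma}^{(2)})\big|_{\Gamma_K}$. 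The congruence generators $H_\f$, $H_\chi$ enter because Hida's two $p$-adic $L$-functions are normalised by the Petersson norm of the dominant member of the relevant degenerate Rankin--Selberg pair.

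\emph{Step 2: descent to $K$.} By the definition of ${}^u\mathbb{BF}_{\f,\chi}$ and the commutativity of the diagrams \eqref{eqn_shapiro_local_pe_minus} and \eqref{eqn_shapiro_local_pe_c_minus}, the Shapiro identifications give $\res_\p^{(1)}({}^u\mathbb{BF}_{\f,\chi})=u_+^{(\p)}(\mathbf{z}^{(1)})$ and $\res_\p^{(2)}({}^u\mathbb{BF}_{\f,\chi})=u_-^{(\p^c)}(\mathbf{z}^{(2)})$. Tensoring the defining exact sequences of \eqref{eqn_CM_Hida_Rep_extended_further} for $u_+^{(\p)}$ and $u_-^{(\p^c)}$ (whose cokernels have characteristic ideals $\mathscr{C}(u_+^{(\p)})$, $\mathscr{C}(u_-^{(\p^c)})$, each a divisor of $\mathscr{C}(u)$) with the $\LL_\f$-flat modules $F^{\mp}R_\f^*\,\widehat{\otimes}\,\LL(\Gamma_\cyc)^\sharp$, and passing to the long exact local cohomology sequences at $\p$, resp.\ $\p^c$, shows that the cokernel of $u_+^{(\p)}$, resp.\ $u_-^{(\p^c)}$, on the relevant $H^1$ is torsion with characteristic ideal dividing $\mathscr{C}(u)$. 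A diagram chase then produces a surjection from the quotient computed in Step 1 onto a submodule of $H^1(K_\p$ or $K_{\p^c},F^{\mp}R_\f^*(\chi)\otimes\LL(\Gamma_K)^\sharp)\big/\LL_\f(\Gamma_K)\cdot\res_\p^{(i)}({}^u\mathbb{BF}_{\f,\chi})$ with cokernel annihilated by a divisor of $\mathscr{C}(u)$; multiplying the identity of Step 1 by $\mathscr{C}(u)$ yields the two divisibilities in (ii). Part (i) follows by specialising (ii) at the crystalline arithmetic point $\kappa$ of $\LL_\f$ carrying $f^\alpha$: the cohomology groups, Beilinson--Flach classes, the congruence generator $H_f$ and the $p$-adic $L$-functions all specialise as needed (the last by Theorem~\ref{thm_Hida_3var_ord_split}), and the behaviour of characteristic ideals under this specialisation is controlled by a routine base-change argument.

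\emph{Main obstacle.} The delicate point is Step~2: one must show that the \emph{entire} discrepancy between the clean Kings--Loeffler--Zerbes reciprocity law over $\QQ$ (formulated with the Ohta lattice $\bR_\chi^*$) and the statement over $K$ (formulated with $R_\f^*(\chi)$ together with the induced structure) is absorbed into the single ideal $\mathscr{C}(u)$. Since $\bR_\chi^*$ need not itself be induced unless \ref{item_RIa} holds with $a=0$ (Proposition~\ref{prop_uniqueness_of_the_lattice_in_residually_irred_case}), one cannot simply invoke that $u$ is an isomorphism as in the residually irreducible case, and the cokernels of the $u_\pm^{(\q)}$ have to be pushed through local cohomology carefully, controlling the local Euler--Poincar\'e contributions. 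A secondary and more routine point --- already essentially contained in \cite{KLZ2} and \cite{BLForum} --- is to match the two reciprocity laws of op.\ cit.\ with $\Sigma^{(1)}_{\rm crit}$ and $\Sigma^{(2)}_{\rm crit}$ and to pin down the periods so that precisely $H_\f$, resp.\ $H_\chi$, appears.
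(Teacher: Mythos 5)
Your outline coincides with the paper's strategy: invoke the Kings--Loeffler--Zerbes explicit reciprocity law for $\mathbb{BF}_{\f,\chi}$ with the CM Hida family $\mathbb{g}_\chi$ as second input, and transport to $K$ via Shapiro's lemma and the morphism $u$, with $\mathscr{C}(u)$ absorbing the failure of $u$ to be an isomorphism. The place where the two arguments diverge is precisely the ``main obstacle'' you flag in Step 2, and the paper's route is a bit different from (and cleaner than) yours. Rather than pushing the cokernel of $u_\pm^{(\q)}$ through the long exact sequence in local Galois cohomology, the paper constructs two injective Perrin-Riou/Coleman big-logarithm maps with pseudo-null cokernel, $\cL_{f,\chi}^{(-,+)}$ (on the $\QQ_p$ side, landing in a Dieudonné module that is free of rank one) and $\cL_{f_{/K},\chi}^{(-,+)}$ (on the $K_\p$ side), and observes the exact compatibility $\cL_{f_{/K},\chi}^{(-,+)}\circ u_+^{(\p)}=\cL_{f,\chi}^{(-,+)}$. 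The correction $\mathscr{C}(u_+^{(\p)})\mid\mathscr{C}(u)$ then enters purely as the characteristic ideal of the cokernel of $u_+^{(\p)}$ between free rank-one Dieudonné modules --- a determinantal statement immune to the $H^0/H^2$ contributions that arise in your version. Your argument, as written, relies on the claim that the cokernel of $u_\pm^{(\q)}$ \emph{on $H^1$} has characteristic ideal dividing $\mathscr{C}(u)$; this does not follow merely by tensoring with a flat module and taking the long exact sequence, since $H^1(\QQ_p,F^-R_\f^*\otimes Q\,\widehat{\otimes}\,\Lambda(\Gamma_\cyc)^\sharp)$ for $Q=\mathrm{coker}(u_+^{(\p)})$ can be strictly larger than $Q$ (local Euler--Poincar\'e), so the asserted bound needs an extra argument. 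A further, more minor, organisational difference: you prove (ii) and specialise to (i), which then requires a base-change argument for characteristic ideals, whereas the paper proves each of the four divisibilities directly (writing out the first and noting the others are analogous), avoiding any control-theorem input at this stage.
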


\begin{proof}
We first recall the construction of the Perrin-Riou map in \cite[Theorems~8.2.3 and 8.2.8]{KLZ2}.
Note that $F^+{\bf R}_\chi^*(-\mathbf{k})$ is an unramified $G_{\Qp}$-module. On realizing it as an inverse limit of finitely generated  $\cO$-linear $G_{\Qp}$-representations, say $\varprojlim_{i\in I} M_i$, we have an injective Perrin-Riou map
\[
\cL_{F^-R_f^*\otimes F^+{\bf R}_\chi^*(-\mathbf{k})}:H^1(\Qp,F^-R_f^*\otimes F^+{\bf R}_\chi^*(-\mathbf{k})\,\widehat{\otimes}\,\LL(\Gamma_\cyc)^\sharp)\lra \mathbf{D}( F^-R_f^*\otimes F^+{\bf R}_\chi^*(-\mathbf{k}))\,\widehat{\otimes}\,\Lambda_\cO(\Gamma_\cyc)
\]
by taking inverse limit of the Perrin-Riou maps
\[
H^1(\Qp,F^-R_f^*\otimes M_i\,{\otimes}\,\LL(\Gamma_\cyc)^\sharp)\lra \mathbf{D}( F^-R_f^*\otimes M_i)\,\widehat{\otimes}\,\Lambda_\cO(\Gamma_\cyc)=\mathbf{D}( F^-R_f^*)\otimes\mathbf{D}( M_i)\,\widehat{\otimes}\,\Lambda_\cO(\Gamma_\cyc)
\]
as $i$ runs through $I$. Here, $\mathbf{D}(-)$ is defined as in \cite[Definitions~8.2.1 and 8.2.7]{KLZ2}. By an argument similar to \cite[proof of Lemma~2.6]{BFSuper} and \cite[proof of Lemma~3.3.3]{BLInertOrdMC}, we see that 
\[
\image(\cL_{F^-R_f^*\otimes F^+{\bf R}_\chi^*(-\mathbf{k})})=\image(\cL_{F^-R_f^*})\,\widehat{\otimes}\,\mathbf{D}({\bf R}_\chi^*(-\mathbf{k})),
\]
where 
\[
\cL_{F^-R_f^*}:H^1(\Qp,F^-R_f^*\,{\otimes}\,\LL(\Gamma_\cyc)^\sharp)\lra \mathbf{D}( F^-R_f^*)\,{\otimes}\,\Lambda_\cO(\Gamma_\cyc)
\]
is the Perrin-Riou map attached to $F^-R_f^*$. Note that $\cL_{F^-R_f^*}$ is pseudo-surjective since $F^-R_f^*$ is an ordinary $G_{\Qp}$-representation with $H^0(\Qp,F^-R_f^*)=0$. Therefore,  after twisting by $\mathbf{k}$, we have the injective map
\[
H^1(\Qp,F^-R_f^*\otimes F^+{\bf R}_\chi^*\,\widehat{\otimes}\,\LL(\Gamma_\cyc)^\sharp)\lra \mathbf{D}( F^-R_f^*\otimes F^+{\bf R}_\chi^*)\,\widehat{\otimes}\,\Lambda_\cO(\Gamma_\cyc)
\]
with pseudo-null cokernel.  The congruence number $H_f$ determines a basis of $\mathbf{D}(F^-R_f^*)$. Combined this with  $u^{(\p)}_+$, we obtain an injective map
\[
\mathbf{D}( F^-R_f^*\otimes\otimes F^+{\bf R}_\chi^*)\,\widehat{\otimes}\,\Lambda_\cO(\Gamma_\cyc)\lra \mathbf{D}( \Lambda(\Gamma_\p)\otimes \chi)\,\widehat{\otimes}\,\Lambda_\cO(\Gamma_\cyc)\cong\Lambda_\cO(\Gamma_K),
\]
whose cokernel is killed by $\mathscr{C}(u)$. This results in the following injective Perrin-Riou map
\[
\cL_{f,\chi}^{(-,+)}:H^1(\Qp,F^-R_f^*\otimes F^+{\bf R}_\chi^*\,\widehat{\otimes}\,\LL(\Gamma_\cyc)^\sharp)\lra\Lambda_\cO(\Gamma_K)
\]
whose cokernel is killed by $\mathscr{C}(u)$.
Similarly, we may construct an injective Perrin-Riou map
\[
\cL_{f_{/K},\chi}^{(-,+)}:H^1(K_\p,F^-R_f^*\otimes \Lambda(\Gamma_K)^\sharp)\lra \Lambda_\cO(\Gamma_K)
\]
with pseudo-null cokernel. 
Via \eqref{eqn_shapiro_local_pe_minus}, these two maps are compatible in the sense that 
\[
\cL_{f_{/K},\chi}^{(-,+)}\circ u_+^{(\p)}=\cL_{f,\chi}^{(-,+)}.
\]

It follows from \cite[Theorem~10.2.2]{KLZ2} that we have the equality of $\LL_\cO(\Gamma_K)$ of ideals
\[
\left(\cL_{f_{/K},\chi}^{(-,+)}\circ\res_\p^{(-,+)}(^{u}\mathbb{BF}_{\f,\chi})\right)=\mathscr{C}(u_+^{(\p)})H_fL_p(f_{/K}\otimes \chi,\Sigma_{\rm crit}^{(1)})\big|_{\Gamma_K},
\]
where $\mathscr{C}(u_+^{(\p)})$ is the characteristic ideal of the cokernel of $u_+^{(\p)}$, which divides $\mathscr{C}(u)$.
In particular, this gives the isomorphism
\[
H^1(K_{\p},F^-R_{f}^*(\chi)\otimes_{\ZZ_p}\LL(\Gamma_K)^\sharp)\Big{/}\LL_{\cO}(\Gamma_K)\cdot\res_{\p}^{(1)}\left({}^u\mathbb{BF}_{f,\chi}\right) \cong \image \cL_{f_{/K},\chi}^{(-,+)}/\left(\mathscr{C}(u_+^{(\p)})H_fL_p(f_{/K}\otimes \chi,\Sigma_{\rm crit}^{(1)})\big|_{\Gamma_K}\right).
\]
Hence, the first divisibility in part (i) follows. The other three divisibilities can be proved similarly.
\end{proof}

\section{Iwasawa theory for $\GL(2)\times\GL(1)_{/K}$: Prior work}
\label{sec_IwThe1}

\subsection{Main Conjectures for $f_{/K}\otimes\chi$}
\label{subsec_IMC_fchi}
The following assertions are the Iwasawa Main Conjectures in the present setting.
\begin{conj}
\label{IMC_split_definite_indefinite_ord}
For any $\Gamma_\ac\neq\Gamma \in {\rm Gr}(\Gamma_K)$,
\item[i)]$\LL_L(\Gamma)\cdot L_p(f_{/K}\otimes\chi,  \Sigma^{(1)}_{\rm crit}){\big \vert}_\Gamma={\Char}_{\LL_{\cO}(\Gamma)}\left(\widetilde{H}_{\rm f}^2(G_{K,\Sigma},\TT_{f,\chi}^{(\Gamma)};\Delta^{(1)})\right)\otimes_{\Zp}\Qp\,,$ 
\item[ii)] $\LL_\Phi(\Gamma)\cdot H_\chi L_p(f_{/K}\otimes\chi,  \Sigma^{(2)}_{\rm crit}){\big \vert}_\Gamma={\Char}_{\LL_{\cO}(\Gamma)}\left(\widetilde{H}_{\rm f}^2(G_{K,\Sigma},\TT_{f,\chi}^{(\Gamma)};\Delta^{(2)})\right)\otimes_{\cO}\Phi\,.$
\end{conj}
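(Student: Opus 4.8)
The plan is to establish Conjecture~\ref{IMC_split_definite_indefinite_ord} through the two inclusions separately, since they require genuinely different inputs. The divisibility in which the characteristic ideal of the Selmer group divides the $p$-adic $L$-function — up to the congruence factors $H_\chi$, $H_f$, a controlled power of $\varpi$, and the correction ideal $\mathscr{C}(u)$, which is exactly the content of Theorem~\ref{intro_thm_ordinary_Deltai_MC_via_horizontal_ES} — is the Euler-system half, and I would prove it first over the full two-variable tower $\Gamma_K$ and then descend to an arbitrary $\Gamma\in{\rm Gr}(\Gamma_K)$ with $\Gamma\neq\Gamma_\ac$ by combining Nekov\'a\v{r}'s descent formalism (reviewed in \S\ref{subsubsec_Nek_descent}) with the patching criterion \cite[Proposition A.0.1]{BLInertOrdMC}. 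The opposite divisibility is the analytic half: it amounts to a \emph{lower} bound on $\widetilde{H}^2_{\rm f}$, for which the only known route is the Eisenstein-congruence method of Skinner--Urban as developed by Wan \cite{xinwanwanrankinselberg}; in the (possibly residually reducible) generality considered here this would need a suitable extension of op.\ cit., so pending that the best one can extract is the equality up to $\mu$-invariants. What follows therefore concerns the Euler-system divisibility.

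For that divisibility I would run the locally restricted Euler system argument on the Beilinson--Flach class ${}^u\mathbb{BF}_{f,\chi}\in H^1(G_{K,\Sigma},\TT_{f,\chi})$ (and, for the family version, ${}^u\mathbb{BF}_{\f,\chi}$, which requires $\LL_\f$ regular and the individual-member results patched together). The structural inputs are in place: by Proposition~\ref{prop_local_conditions_for_BF_CMfamilies} the class is locally trivial in the $F^-$-direction at $\p^c$, hence an admissible input for the Selmer bound, and the bound itself — Theorem~\ref{thm_appendix_ES_main} — controls $\widetilde{H}^2_{\rm f}(G_{K,\Sigma},\TT_{f,\chi};\Delta^{(i)})$ in terms of the index of $\res_\p^{(i)}\!\left({}^u\mathbb{BF}_{f,\chi}\right)$ in the relevant local cohomology at $\p$. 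I would then invoke the explicit reciprocity law, Proposition~\ref{prop_reciprocity_law}: through the Perrin-Riou regulator assembled from $\cL_{F^-R_f^*}$, twisted along the CM variable via the maps $u^{(\q)}_\pm$, that local index is identified with the ideal generated by $H_f\,L_p(f_{/K}\otimes\chi,\Sigma^{(1)}_{\rm crit})|_{\Gamma_K}$ (respectively $H_\chi\,L_p(f_{/K}\otimes\chi,\Sigma^{(2)}_{\rm crit})|_{\Gamma_K}$), up to the cokernel characteristic ideal $\mathscr{C}(u)$. Feeding the reciprocity law into the Euler-system bound gives the divisibility over $\Gamma_K$; its first-order, Bloch--Kato shadow is already visible in Theorem~\ref{intro_thm_Split_BK_Sigma12}.

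The main obstacle — and the whole reason the $p$-distinguished hypothesis must be avoided — is that $\overline{\rho}_f\otimes{\rm Ind}_{K/\QQ}\overline{\chi}$ may be reducible, so the Kolyvagin-system input of \cite{mr02} underpinning the classical locally restricted Euler system machinery is unavailable. The approach I would take is to rerun the Euler-system descent directly, quantifying the failure of irreducibility by the integer $a$ of hypothesis \ref{item_RIa}: one should be able to push the Chebotarev/Kolyvagin combinatorics through modulo a bounded power $\varpi^{a_i}$ of the uniformizer (the constants $a_1,a_2$ of Theorem~\ref{intro_thm_ordinary_Deltai_MC_via_horizontal_ES}), using the element $\tau\in G_{\QQ(\mup)}$ supplied by \ref{item_tau} to manufacture a free rank-one quotient $X^\circ/(\tau-1)X^\circ$ on which to build the derivative classes; isolating this as a general statement (Theorem~\ref{thm_appendix_ES_main}) is the technical heart of the argument. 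Finally, the ideal $\mathscr{C}(u)$ measures the failure of the Hida--Ohta lattice to be genuinely induced, and I would remove it — together with the spurious powers of $\varpi$ — exactly when \ref{item_Ind} holds; by Proposition~\ref{prop_uniqueness_of_the_lattice_in_residually_irred_case} this is automatic when \ref{item_RIa} holds with $a=0$, recovering the clean divisibility predicted by Conjecture~\ref{IMC_split_definite_indefinite_ord}, the reverse inclusion still being conditional on the Eisenstein-side input.
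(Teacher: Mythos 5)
Your proposal correctly identifies the paper's strategy, but it is worth stating plainly that the statement in question is labelled a \emph{Conjecture} in the paper precisely because it is not proved there in either direction unconditionally: what the paper actually establishes toward it is Corollary~\ref{cor_main_conj_along_Gamma}, which gives the Euler-system divisibility after tensoring with $\QQ_p$ and up to the correction ideal $\mathscr{C}(u)$, with the reverse inclusion left conditional on an extension of Wan's work — exactly as you concede. Your outline of the provable half (run the locally restricted Euler system machinery of Theorem~\ref{thm_appendix_ES_main} on the specializations $T_{f,\chi\psi}$ with the Beilinson--Flach input and the local condition checks of Proposition~\ref{prop_local_conditions_for_BF_CMfamilies}, feed the reciprocity law of Proposition~\ref{prop_reciprocity_law}, then descend) matches the paper's argument in substance. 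Two organizational points are misplaced, though. First, the patching criterion \cite[Proposition A.0.1]{BLInertOrdMC} is not used to descend from $\Gamma_K$ to $\Gamma$: it is used in the opposite direction, to promote the pointwise bounds at specializations $\psi\in\mathcal{S}'_f$ supplied by Theorem~\ref{thm_Split_BK_Sigma12} up to the $\Gamma_K$-adic divisibility of Theorem~\ref{thm_ordinary_Deltai_MC_via_horizontal_ES}; the descent from $\Gamma_K$ to a given $\Gamma\in{\rm Gr}(\Gamma_K)$ in Corollary~\ref{cor_main_conj_along_Gamma} is carried out purely through Nekov\'a\v{r}'s formalism (Proposition~\ref{prop_big_BSD}), after verifying that the relevant Selmer modules are torsion and that the prime-to-$p$ Tamagawa factors vanish. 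Second, the hypothesis \ref{item_Ind} only removes the correction ideal $\mathscr{C}(u)$; the ``spurious powers of $\varpi$'' coming from the residual-reducibility constant $t(a,b,\chi(\overline{X}))$ are not removed by \ref{item_Ind}, nor do they need to be — they disappear automatically because the formulation of Conjecture~\ref{IMC_split_definite_indefinite_ord} and of Corollary~\ref{cor_main_conj_along_Gamma} involve $\otimes_{\Zp}\Qp$ and $\otimes_{\cO}\Phi$, which invert $\varpi$ from the outset.
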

When $\Gamma=\Gamma_\ac$, the following is the form of main conjectures we will study: 
\begin{conj}
\label{IMC_split_definite_indefinite_ord_anticyclo}
Suppose $\chi=\chi^c$ and $\epsilon_f=\mathds{1}$.
\item[i)]$\LL_L(\Gamma_\ac)\cdot L_p(f_{/K}\otimes\chi,  \Sigma^{(1)}_{\rm cc}){\big \vert}_{\Gamma_\ac}={\Char}_{\LL_{\cO}(\Gamma_\ac)}\left(\widetilde{H}_{\rm f}^2(G_{K,\Sigma},\TT_{f,\chi}^{\ac};\Delta^{(1)})\right)\otimes_{\Zp}\Qp\,,$ 
\item[ii)] $\LL_\Phi(\Gamma_\ac)\cdot H_\chi L_p(f_{/K}\otimes\chi,  \Sigma^{(2)}_{\rm cc}){\big \vert}_{\Gamma_\ac}={\Char}_{\LL_{\cO}(\Gamma_\ac)}\left(\widetilde{H}_{\rm f}^2(G_{K,\Sigma},\TT_{f,\chi}^{\ac};\Delta^{(2)})\right)\otimes_{\cO}\Phi\,.$
\end{conj}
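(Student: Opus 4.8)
The plan is to obtain each of the two equalities in Conjecture~\ref{IMC_split_definite_indefinite_ord_anticyclo} by pinning down the characteristic ideal between two opposite divisibilities: a ``$\mid$'' bound supplied by the Beilinson--Flach Euler system (which is, after descent, exactly Theorem~\ref{intro_thm_ordinary_definite_ac}), and the reverse divisibility imported from an Eisenstein-congruence input. So the proposal is to recall how the Euler-system divisibility is produced and then isolate the extra ingredient needed to close the gap.

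\emph{Step 1: Euler-system divisibility over the $\Zp^2$-tower.} First I would feed the Beilinson--Flach classes ${}^u\mathbb{BF}_{f,\chi}$, together with their self-dual twists ${}^u\mathbb{BF}_{f,\chi}^\dagger$, into the locally restricted Euler system machinery of Appendix~\ref{appendix_sec_ES_main} (Theorem~\ref{thm_appendix_ES_main}); its purpose is precisely that it applies even when the residual representation $\overline{\rho}_f\otimes\Ind_{K/\QQ}\overline{\chi}$ is reducible, assuming only \ref{item_RIa} and \ref{item_tau}. Together with the explicit reciprocity law of Proposition~\ref{prop_reciprocity_law}, which relates $\res_\p^{(i)}$ of the Beilinson--Flach class to Hida's $p$-adic $L$-function $H_\chi L_p(f_{/K}\otimes\chi,\Sigma^{(i)}_{\rm crit})$ up to the lattice-defect factor $\mathscr{C}(u)$, and with the patching argument of \cite[Proposition~A.0.1]{BLInertOrdMC} that converts the pointwise bounds of Theorem~\ref{intro_thm_Split_BK_Sigma12} (valid for individual crystalline $\psi$) into an $\LL_\cO(\Gamma_K)$-adic statement, this yields the divisibility of Theorem~\ref{intro_thm_ordinary_Deltai_MC_via_horizontal_ES}: $\Char_{\LL_\cO(\Gamma_K)}(\widetilde{H}^2_{\rm f}(G_{K,\Sigma},\TT_{f,\chi};\Delta^{(i)}))$ divides $L_p(f_{/K}\otimes\chi,\Sigma^{(i)}_{\rm crit})|_{\Gamma_K}\cdot\mathscr{C}(u)$ up to a power of $\varpi$ (with the extra factor $H_\chi$ present when $i=2$).

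\emph{Step 2: descent to the anticyclotomic line.} Next I would apply Nekov\'a\v{r}'s descent formalism from \cite{nekovar06} to the self-dual module $\TT_{f,\chi}^\dagger$ and its conjugate self-duality (Remark~\ref{remark_why_central_critical_twists}), together with the $\LL$-adic Birch and Swinnerton-Dyer formula of Theorem~\ref{intro_thm_big_BSD_step1}, which expresses the mock leading term $\partial_\cyc^*$ of the $\Gamma_K$-characteristic ideal as the anticyclotomic regulator ${\rm Reg}_{\TT_{f,\chi}^\ac}$ times the $\Gamma_\ac$-characteristic ideal of the torsion submodule. Applying $\partial_\cyc^*$ to the ${\rm Tw}$-twist of the divisibility from Step~1, and using that $L_p(f_{/K}\otimes\chi,\Sigma^{(1)}_{\rm cc})$ is \emph{defined} as the image of ${\rm Tw}(L_p(f_{/K}\otimes\chi,\Sigma^{(1)}_{\rm crit}))$ under $H_{p^\infty}\twoheadrightarrow\Gamma_\ac$ (Definition~\ref{defn_Hidas_padic_Lfunctions}(v)), gives the divisibility recorded in Theorem~\ref{intro_thm_ordinary_definite_ac}; the correction factor $\mathscr{C}^{\rm ac}(u)$ is removed whenever $\chi^2\not\equiv\mathds{1}\bmod\m_\cO$ via Proposition~\ref{prop_uniqueness_of_the_lattice_in_residually_irred_case}, or whenever \ref{item_Ind} holds. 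The $\Delta^{(2)}$-case of part~(ii) proceeds in the same way, the generator $H_\chi$ of the CM congruence module of $\theta_\chi$ entering through the interpolation of the second $p$-adic $L$-function.

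\emph{Step 3: reverse divisibility and the main obstacle.} To upgrade these divisibilities to the equalities asserted in Conjecture~\ref{IMC_split_definite_indefinite_ord_anticyclo} one must supply the opposite containment from the ``Eisenstein side''. In the definite case ($N_f^-$ a square-free product of an even number of primes, $\chi^2\neq\mathds{1}$) the natural input is the definite anticyclotomic main conjecture of Chida--Hsieh \cite{chidahsiehanticyclomainconjformodformscomposito} for the base change $f_{/K}$, twisted by the CM congruence data of $\theta_\chi$ to accommodate a general branch character $\chi$; alternatively, in the split-prime setting, one can invoke Wan's Rankin--Selberg Eisenstein-congruence divisibility \cite{xinwanwanrankinselberg}. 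The principal obstacle---and the reason the present paper records only ``$\mid$''---is twofold: these external inputs currently carry their own residual-irreducibility, $p$-distinguished, or root-number hypotheses, and matching their normalizations to ours (the $|\cdot|$-shift, the ${\rm Tw}$-twist, and the interpolation formulae as corrected in Appendix~\ref{appendix:corrige}) is delicate but unavoidable; and in the residually reducible regime $\chi^2\equiv\mathds{1}\bmod\m_\cO$ the Euler-system side itself still carries the lattice defect $\mathscr{C}^{\rm ac}(u)$ and a power of $\varpi$, whose disappearance rests on the conjectural hypothesis \ref{item_Ind}. Accordingly, what I would expect to prove unconditionally is the single divisibility ``$\mid$'' in (i) and (ii), up to $\varpi$-powers and these correction factors; the full equalities would then follow conditionally on the cited Eisenstein-congruence results and on \ref{item_Ind}.
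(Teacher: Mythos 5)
The statement you were given is Conjecture~\ref{IMC_split_definite_indefinite_ord_anticyclo}; it is a conjecture, and the paper does not prove it. There is therefore no ``paper's proof'' to compare against, and you correctly recognised this by presenting a strategy and pinpointing exactly where it stops short of the asserted equality. What the paper actually establishes towards the conjecture is the single divisibility of Theorem~\ref{thm_ordinary_definite_ac} (definite case) and the rank statement plus containment of Theorem~\ref{thm_ordinary_Delta1_Selmer_torsion_via_horizontal_ES_indefinite} (indefinite case), in each case under additional hypotheses (A), (B) or (C) and up to the correction factor $\mathscr{C}^{\rm ac}(u)$ and a possible power of $\varpi$. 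One notational remark: the conjecture as printed assumes $\chi=\chi^c$, but the objects $\Sigma^{(i)}_{\rm cc}$ and $L_p(f_{/K}\otimes\chi,\Sigma^{(i)}_{\rm cc})$ it refers to are only defined under $\chi^c=\chi^{-1}$ (Definition~\ref{defn_Hidas_padic_Lfunctions}(v)), and the whole of \S\ref{subsec_split_indefinite_definite_ord} works under $\chi^c=\chi^{-1}$; you silently adopt the $\chi^c=\chi^{-1}$ reading, which is the consistent one, but it is worth flagging the discrepancy, since the printed hypothesis $\chi=\chi^c$ lands one in the ``Eisenstein case'' that the introduction explicitly excludes from the paper's new results.

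Your Steps 1 and 2 are a faithful reconstruction of how the divisibility is obtained: the $\calF_+$-locally restricted system $\{\BF^{f\otimes\chi\psi}_{\alpha_f,\alpha_{\chi\psi}}(m)\}$ run through Theorem~\ref{thm_appendix_ES_main} gives the pointwise bounds of Theorem~\ref{thm_Split_BK_Sigma12}; the reciprocity law of Proposition~\ref{prop_reciprocity_law} plus the patching criterion \cite[Proposition~A.0.1]{BLInertOrdMC} yield Theorem~\ref{thm_ordinary_Deltai_MC_via_horizontal_ES} over $\Gamma_K$; Nekov\'a\v{r}'s descent (Proposition~\ref{prop_big_BSD}) together with the $\Lambda$-adic BSD formula (Theorem~\ref{thm_big_BSD_step1}) then produce Theorem~\ref{thm_ordinary_definite_ac}. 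On Step 3, however, I would push back on one phrase: ``the definite anticyclotomic main conjecture of Chida--Hsieh for $f_{/K}$, twisted by the CM congruence data of $\theta_\chi$ to accommodate a general branch character $\chi$'' is not an available theorem and is not invoked anywhere in the paper. The Eisenstein-congruence input is used only in the cases where it is actually known (Skinner--Urban for $\chi=\mathds{1}$ in hypothesis~(C), and Wan's one-sided divisibility as recalled in Remark~\ref{rem_what_is_known_split_def_indef_ord}(c)). You already note these caveats, so this is not a gap in your reasoning so much as an optimistic phrasing; the upshot is the same as what you state at the end, namely that only the Euler-system divisibility is within reach and the reverse containment, and hence the equality of the conjecture, remains open for general $\chi$.
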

We have the following ``twisted'' variant of Conjecture~\ref{IMC_split_definite_indefinite_ord}, which is more convenient when studying the anticyclotomic Iwasawa main conjectures:
\begin{conj}
\label{IMC_split_definite_indefinite_ord_twisted}
\item[i)]$\LL_L(\Gamma_K)\cdot {\rm Tw} \left(L_p(f_{/K}\otimes\chi, \Sigma^{(1)}_{\rm crit})\right){\big \vert}_{\Gamma_K}={\Char}_{\LL_{\cO}(\Gamma_K)}\left(\widetilde{H}_{\rm f}^2(G_{K,\Sigma},\TT_{f,\chi}^{\dagger};\Delta^{(1)})\right)\otimes_{\Zp}\Qp\,,$ 
\item[ii)] $\LL_\Phi(\Gamma_K)\cdot H_\chi {\rm Tw} \left(L_p(f_{/K}\otimes\chi,  \Sigma^{(2)}_{\rm crit})\right){\big \vert}_{\Gamma_K}={\Char}_{\LL_{\cO}(\Gamma_K)}\left(\widetilde{H}_{\rm f}^2(G_{K,\Sigma},\TT_{f,\chi}^{\dagger};\Delta^{(2)})\right)\otimes_{\cO}\Phi\,.$
\end{conj}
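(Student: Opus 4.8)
The statement is a two-variable Iwasawa Main Conjecture, so I do not expect to establish the full equality; the realistic target is the divisibility ``$\Char\mid L$-function''. The plan is to obtain it by rerunning the argument that proves Theorem~\ref{intro_thm_ordinary_Deltai_MC_via_horizontal_ES}(i), but feeding the locally restricted Euler system machinery of Appendix~\ref{appendix_sec_ES_main} with the self-dual twisted Beilinson--Flach classes ${}^u\mathbb{BF}^{f^{\alpha},\chi,\dagger}_{m}$ of \S\ref{subsec_central_critical_twists} in place of the classes ${}^u\mathbb{BF}^{f^{\alpha},\chi}_{m}$. What makes this legitimate is that $T_{f,\chi}^\dagger=T_{f,\chi}(-k_f/2)$ is obtained from $T_{f,\chi}$ by a single fixed cyclotomic twist (we are in the setting $\epsilon_f=\mathds{1}$, so $k_f$ is even and $\chi_\cyc^{-k_f/2}$ makes sense), and every ingredient of that proof commutes with such a twist: the norm relations satisfied by the $\mathbb{BF}$-classes, the $p$-local vanishing $\res_{\p^c}^{-}({}^u\mathbb{BF})=0$ of Proposition~\ref{prop_local_conditions_for_BF_CMfamilies}, the explicit reciprocity law of Proposition~\ref{prop_reciprocity_law}, the Euler system machinery of Appendix~\ref{appendix_sec_ES_main} (which is precisely designed to dispense with residual irreducibility), and the ``patching'' descent of \cite[Proposition A.0.1]{BLInertOrdMC}.

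Granting this, the remaining point is to identify what lands on the $L$-side of the resulting divisibility. The twist by $\chi_\cyc^{-k_f/2}$ transports the Perrin--Riou regulator used in Proposition~\ref{prop_reciprocity_law} into its self-dual-twisted analogue, and on the analytic side the matching operation is exactly the automorphism ${\rm Tw}$ of Definition~\ref{def_twisting_morphism}, via the identity ${\rm Tw}(L_p)(\Xi)=L_p({\rm Tw}(\Xi))$ recorded in the Remark following Definition~\ref{defn_Hidas_padic_Lfunctions}; note also that ``${\rm Tw}$ then restrict to $\Gamma_K$'' is a well-defined map $\LL_L(H_{p^\infty})\to\LL_L(\Gamma_K)$, even though ${\rm Tw}$ by itself does not descend to $\LL_L(\Gamma_K)$. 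Thus, under the hypotheses of Theorem~\ref{intro_thm_ordinary_Deltai_MC_via_horizontal_ES} (with $\epsilon_f=\mathds{1}$, so that $\TT_{f,\chi}^\dagger$ is defined), one gets
$$\Char_{\LL_\cO(\Gamma_K)}\big(\widetilde{H}^2_{\rm f}(G_{K,\Sigma},\TT_{f,\chi}^\dagger;\Delta^{(1)})\big)\ \Big|\ \varpi^{a_1}\,{\rm Tw}\big(L_p(f_{/K}\otimes\chi,\Sigma^{(1)}_{\rm crit})\big)\big|_{\Gamma_K}\cdot\mathscr{C}(u)\,,$$
together with the analogous divisibility for $\Delta^{(2)}$, now with ${\rm Tw}\big(L_p(f_{/K}\otimes\chi,\Sigma^{(2)}_{\rm crit})\big)\big|_{\Gamma_K}$ and the (unchanged) CM congruence factor $H_\chi$. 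Since Conjecture~\ref{IMC_split_definite_indefinite_ord_twisted} is phrased after $\otimes_{\ZZ_p}\QQ_p$ (resp.\ $\otimes_\cO\Phi$), the power $\varpi^{a_1}$ is inverted and drops out, while $\mathscr{C}(u)$ disappears as soon as \ref{item_Ind} holds --- hence, by Proposition~\ref{prop_uniqueness_of_the_lattice_in_residually_irred_case}, whenever \ref{item_RIa} holds with $a=0$. This yields the divisibility asserted by Conjecture~\ref{IMC_split_definite_indefinite_ord_twisted}.

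The hard part, which I do not see how to reach by these methods, is the opposite divisibility ``$L$-function $\mid\Char$'' needed for the full equality: it lies outside the scope of the Euler-system/Kolyvagin-system input behind Theorem~\ref{intro_thm_ordinary_Deltai_MC_via_horizontal_ES}, and would have to be supplied by the Eisenstein-congruence (Iwasawa--Greenberg) circle of ideas of Skinner--Urban and Wan, presently known only up to $\mu$-invariants and under additional hypotheses (see \S\ref{subsec_IMC_fchi}); I would expect the full Conjecture~\ref{IMC_split_definite_indefinite_ord_twisted} to follow once those results are extended to the generality considered here. A secondary, genuinely removable issue is the factor $\mathscr{C}(u)$ when \ref{item_RIa} holds only with $a>0$; it would be cleared by the hypothesis \ref{item_Ind}, which is expected to hold in general (cf.\ the forthcoming work of Burungale--Skinner--Tian).
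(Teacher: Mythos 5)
The statement you were asked to prove is a \emph{Conjecture}; the paper does not prove it, so there is no ``paper's own proof'' to compare against. Your proposal correctly targets only the one-sided divisibility that the Euler-system methods can reach, and your mechanism---twist the Beilinson--Flach Euler system by $\chi_\cyc^{-k_f/2}$, rerun Theorem~\ref{thm_ordinary_Deltai_MC_via_horizontal_ES}(i), and translate the $L$-side via ${\rm Tw}$---is exactly what the authors rely on implicitly in the proof of Theorem~\ref{thm_ordinary_Delta1_Selmer_torsion_via_horizontal_ES_indefinite}(i.2), where they combine Theorem~\ref{thm_ordinary_Deltai_MC_via_horizontal_ES}(i) with Corollary~\ref{cor_big_BSD_step2}(i) without stating the twisted divisibility that makes that step legitimate; this is the step the authors only gesture at in the remark containing \eqref{eqn_twisted_torsion_Gamma_K_1} (``the argument \ldots may be altered slightly to prove the twisted versions''). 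Your account of what remains open---the opposite containment, pending Eisenstein-congruence input \`a la Skinner--Urban and Wan, and the removal of $\mathscr{C}(u)$ when $a>0$ pending \ref{item_Ind}---is accurate and matches the paper's own discussion.
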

\begin{remark}
\label{rem_what_is_known_split_def_indef_ord}
\item[a)] Twisted versions (which are obtained via the twisting map ${\rm Tw}$) of the containments 
\begin{equation}\label{eqn_what_is_known_split_def_indef_ord_1}
    \LL_L(\Gamma)\cdot L_p(f_{/K}\otimes\chi,  \Sigma^{(1)}_{\rm crit}){\big \vert}_\Gamma\subset {\Char}_{\LL_{\cO}(\Gamma)}\left(\widetilde{H}_{\rm f}^2(G_{K,\Sigma},\TT_{f,\chi}^{(\Gamma)};\Delta^{(1)})\right)\otimes_{\Zp}\Qp\,,
    \end{equation}
    \begin{equation}\label{eqn_what_is_known_split_def_indef_ord_2}
      \LL_\Phi(\Gamma)\cdot H_\chi L_p(f_{/K}\otimes\chi,  \Sigma^{(2)}_{\rm crit}){\big \vert}_\Gamma \subset {\Char}_{\LL_{\cO}(\Gamma)}\left(\widetilde{H}_{\rm f}^2(G_{K,\Sigma},\TT_{f,\chi}^{(\Gamma)};\Delta^{(2)})\right)\otimes_{\cO}\Phi\,.
\end{equation}
were essentially\footnote{The results in op. cit. are stated in the case when $\Gamma=\Gamma_K$. However, the techniques  therein also prove the stated containments above without any additional difficulties.} proved in \cite[Theorem 3.19]{BLForum} under the following hypotheses: 
\begin{enumerate}
   \item[\mylabel{item_HIm}{{\bf (Im)}}] \ref{item_tau} holds.
\item[\mylabel{item_HSS}{{\bf (SS)}}] The order of ray class field of $K$ modulo $\ff$ is prime to $p$.
  \item[\mylabel{item_HnEZ}{{\bf (nEZ)}}]  If $k_f\equiv 0 \mod 2(p-1)$, then $v_p(\alpha-\chi(\p^c))=0$.
   \item[\mylabel{item_HDist}{{\bf (Dist)}}] $v_p(\chi(\p)- \chi(\p^c))=0$.
\end{enumerate}
Let us factor $N_f=N^+_fN^-_f$ where $N^+_f$ (resp., $N^-_f$) is only divisible by primes that are split (resp., inert) in $K/\QQ$. Suppose, in addition to the hypotheses listed above, that $N^-$ is square-free. In this setting and when $\Gamma=\Gamma_K$, the validity of Conjecture~\ref{IMC_split_definite_indefinite_ord}(i) is equivalent to the validity of Conjecture~\ref{IMC_split_definite_indefinite_ord}(ii); this is explained in \cite[Theorem 3.19(i)]{BLForum}.

\item[b)] Let $N^+_f$ and $N^-_f$ be as in the previous paragraph. Suppose that 

\begin{enumerate}
\item[\mylabel{item_SU1}{{\bf (SU1)}}] the residual representation $\overline{\rho}_f$ is  irreducible and $p$-distinguished;
    \item[\mylabel{item_SU2plus}{{\bf (SU2$+$)}}] $N^-_f$ is square-free and a product of odd number of primes;
 \item[\mylabel{item_SU3}{{\bf (SU3)}}] $\overline{\rho}_f$ is ramified at all primes dividing $N^-_f$; 
    \item[\mylabel{item_SU4plus}{{\bf (SU4$+$)}}] $k_f \equiv 0 \mod p-1$ and $\varepsilon_f=\mathds{1}$.
\end{enumerate}

Then main results of \cite{skinnerurbanmainconj} show that Conjecture~\ref{IMC_split_definite_indefinite_ord}(i) holds true with $\Gamma=\Gamma_K$ and $\chi=\mathds{1}$. 
\item[c)] Assume that $N_f$ is square-free, that there exists a prime $q\mid N^-_f$ such that $\overline{\rho}_f$ is ramified at $q$, as well as that the following condition holds:
\begin{itemize}
    \item[(wt-2)] $k_f \equiv 0 \mod p-1$ and the $U_p$-eigenvalue on the unique crystalline weight-$2$ specialization of the unique Hida family through $f^{\alpha}$ is not $\pm1$. 
\end{itemize}
In this scenario, Wan in \cite{xinwanwanrankinselberg} proved the containment
$$\LL_\Phi(\Gamma_K)\cdot H_\chi L_p(f_{/K},  \Sigma^{(2)}_{\rm crit}) \supset {\Char}_{\LL_{\cO}(\Gamma)}\left(\widetilde{H}_{\rm f}^2(G_{K,\Sigma},\TT_{f,\mathds{1}};\Delta^{(2)})\right)\otimes_{\cO}\Phi\,.$$
\end{remark}


\subsection{Main conjectures for $f_{/K}\otimes\chi$ as $f$ varies}
\label{subsubsec_IwThe1_ord_families}

Let us now consider the branch $\f$ of a primitive Hida family of tame conductor $N_f$ as in \S\ref{subsubsec_Hida_padicL_for_ff}.

In this set up, for each $\Gamma\in {\rm Gr}(\Gamma_K)$, Definition~\ref{defn_selmer_complex_general_ord} applies with $R=\LL_\f\,\widehat{\otimes}\LL(\Gamma)=:\LL_\f(\Gamma)$ and  $X=\TT_{\f,\chi}^{(\Gamma)}$ given as in Definition~\ref{def_intro_Gal_rep_RS} to obtain the following Greenberg Selmer complexes.
\begin{defn}\label{def_Nekovar_for_families}
\item[i)]Let $\Sigma$ denote the places of $K$ given as in the statement of Definition~\ref{defn_Selmer_complex_split_ord}, we have the Greenberg Selmer complexes in the derived category of continuous $\LL_\f(\Gamma)$-modules
$$\widetilde{{\bf R}\Gamma}_{\rm f}(G_{K,\Sigma},\TT_{\f,\chi}^{(\Gamma)};\Delta^{(i)})$$
for each $i=1,2$, which are defined in a manner identical to Definition~\ref{defn_Selmer_complex_split_ord}. 
\item[ii)]As before, we shall denote by $\widetilde{H}_{\rm f}^\bullet (G_{K,\Sigma},\TT_{\f,\chi}^{(\Gamma)};\Delta^{(i)})$ the cohomology of the Greenberg Selmer complex. 
\end{defn}
We remark that the derived complex $\widetilde{{\bf R}\Gamma}_{\rm f}(G_{K,\Sigma},\TT_{\f,\chi}^{(\Gamma)};\Delta^{(i)})$ can be represented by a perfect complex with degrees concentrated in the range $[0,3]$.

We shall assume until the end of \S\ref{sec_IwThe1} that $\LL_\f$ is a Krull domain. Recall from \eqref{eqn_char_ideals} that one can  define the characteristic ideals of the Selmer groups we have introduced above.  The Iwasawa Main Conjectures in this set up reads:
\begin{conj}
\label{IMC_ord_split_families}
For any $\Gamma_\ac\neq\Gamma\in {\rm Gr}(\Gamma_K)$ we have:
\item[i)] $\left(\LL_\f(\Gamma)\otimes_{\ZZ_p}\QQ_p\right)\cdot H_\f \ L_p(\f_{/K}\otimes\chi,  \mathbb{\Sigma}^{(1)}){\big \vert}_\Gamma=\Char_{\LL_\f(\Gamma)}\left(\widetilde{H}_{\rm f}^2(G_{K,\Sigma},\TT_{\f,\chi}^{(\Gamma)};\Delta^{(1)}) \right)\otimes_{\Zp}\Qp$\,.
\item[ii)] $\left(\LL_\f(\Gamma)\otimes_{\cO}\Phi\right)\cdot H_\chi  L_p(\f_{/K}\otimes\chi,  \mathbb{\Sigma}^{(2)}){\big \vert}_\Gamma={\Char}_{\LL_\f(\Gamma)}\left(\widetilde{H}_{\rm f}^2(G_{K,\Sigma},\TT_{\f,\chi}^{(\Gamma)};\Delta^{(2)})\right)\otimes_{\cO}\Phi\,.$
\end{conj}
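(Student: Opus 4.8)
The plan is to deduce Conjecture~\ref{IMC_ord_split_families} from the two opposite divisibilities between the characteristic ideal of $\widetilde{H}^2_{\rm f}(G_{K,\Sigma},\TT_{\f,\chi}^{(\Gamma)};\Delta^{(i)})$ and the relevant $p$-adic $L$-function, together with the non-vanishing of the latter. I would first dispose of the case $\Gamma=\Gamma_K$ and then obtain the statement for a general $\Gamma_\ac\neq\Gamma\in{\rm Gr}(\Gamma_K)$ by specialising along the projection $\LL_\f(\Gamma_K)\twoheadrightarrow\LL_\f(\Gamma)$: Nekov\'a\v{r}'s control theorems for Selmer complexes (\cite{nekovar06}) relate the base change of $\widetilde{H}^2_{\rm f}(G_{K,\Sigma},\TT_{\f,\chi}^{(\Gamma_K)};\Delta^{(i)})$ to $\widetilde{H}^2_{\rm f}(G_{K,\Sigma},\TT_{\f,\chi}^{(\Gamma)};\Delta^{(i)})$, while $L_p(\f_{/K}\otimes\chi,\mathbb{\Sigma}^{(i)})\big|_\Gamma$ is by construction the image of $L_p(\f_{/K}\otimes\chi,\mathbb{\Sigma}^{(i)})\big|_{\Gamma_K}$; the one subtlety is that the descent is exact only up to pseudo-null modules, so one must check --- as is generic, and can be arranged under \ref{item_tau} --- that this error is trivial. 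Throughout I keep the standing assumption that $\LL_\f$ is a Krull domain, and, for the characteristic-ideal formalism of \eqref{eqn_char_ideals} to apply cleanly, that $\LL_\f$ is regular.

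The first divisibility,
$$\Char_{\LL_\f(\Gamma_K)}\left(\widetilde{H}^2_{\rm f}(G_{K,\Sigma},\TT_{\f,\chi}^{(\Gamma_K)};\Delta^{(1)})\right)\,\Big|\,\left(\LL_\f(\Gamma_K)\otimes_{\ZZ_p}\QQ_p\right)\cdot H_\f\, L_p(\f_{/K}\otimes\chi,\mathbb{\Sigma}^{(1)})\big|_{\Gamma_K}$$
and its $\Delta^{(2)}$-analogue (with $H_\f$ replaced by $H_\chi$), is essentially within reach via the present paper. I would feed the Beilinson--Flach classes $\mathbb{BF}^{\f,\chi}_{m}$ into the locally restricted Euler system machinery of Theorem~\ref{thm_appendix_ES_main}, whose whole point is that it operates without residual irreducibility, to bound $\widetilde{H}^2_{\rm f}$ by the $\LL_\f(\Gamma_K)$-index of the bottom class ${}^u\mathbb{BF}_{\f,\chi}$; the explicit reciprocity law of Proposition~\ref{prop_reciprocity_law} then identifies that index --- up to the correction factor $\mathscr{C}(u)$ and a power of $\varpi$ bounded in terms of $a$ --- with $H_\f L_p$ (resp. $H_\chi L_p$). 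To clear the spurious $\varpi$-power and the factor $\mathscr{C}(u)$ one invokes either the case $a=0$ of \ref{item_RIa} (Proposition~\ref{prop_uniqueness_of_the_lattice_in_residually_irred_case}) or the hypothesis \ref{item_Ind}. This is in substance Theorem~\ref{intro_thm_ordinary_Deltai_MC_via_horizontal_ES}, and the only genuinely new task is to verify that the $\varpi$-powers can indeed be eliminated under the stated hypotheses.

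The reverse divisibility $H_\f\, L_p \mid \Char_{\LL_\f(\Gamma_K)}\left(\widetilde{H}^2_{\rm f}(G_{K,\Sigma},\TT_{\f,\chi}^{(\Gamma_K)};\Delta^{(i)})\right)$ (resp. with $H_\chi$) is the Eisenstein-ideal half, and here lies the main obstacle. Following Skinner--Urban and Wan (\cite{skinnerurbanmainconj,xinwanwanrankinselberg}), I would construct a Hida family of Klingen--Eisenstein series on a suitable unitary group attached to $K$ whose constant terms are, up to units and a congruence factor, $H_\f L_p(\f_{/K}\otimes\chi,\mathbb{\Sigma}^{(i)})$; produce a congruence modulo this $L$-function to a family of genuine cusp forms; and extract from the attached Galois representation enough classes in the Pontryagin dual of $\widetilde{H}^2_{\rm f}(G_{K,\Sigma},\TT_{\f,\chi}^{(\Gamma_K)};\Delta^{(i)})$ to force $H_\f L_p$ (resp. $H_\chi L_p$) to divide its characteristic ideal. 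The hard part will be to run the lattice construction and the subsequent Galois-theoretic argument without the $p$-distinguished hypothesis on $\f$ and --- more seriously --- without residual irreducibility of $\overline{\rho}_\f\otimes{\rm Ind}_{K/\QQ}\overline{\chi}$, thereby mirroring on the Eisenstein side exactly the extension that Appendix~\ref{appendix_sec_ES_main} performs on the Euler system side. This appears to demand a substantially new idea --- e.g. a careful analysis of the reducible locus of the relevant pseudo-representation, or a deformation-theoretic refinement of the class-extraction step --- and in its absence one can only obtain this divisibility under the extra hypotheses and square-freeness conditions on $N_f^-$ imposed in \cite{skinnerurbanmainconj,xinwanwanrankinselberg}, hence the conjectured equality only in that restricted range.

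The two divisibilities combine to the asserted equality of characteristic ideals precisely when the $p$-adic $L$-function in question is non-zero. For $i=2$ this is automatic, since $H_\chi L_p(\f_{/K}\otimes\chi,\mathbb{\Sigma}^{(2)})$ is always non-zero by the argument recalled in Remark~\ref{rem_compare_with_Forum} (cf. \cite[Theorem 3.19]{BLForum}); for $i=1$ it holds under the mild conditions in that same remark (an absolute-convergence argument when $k_f>0$, Rohrlich's non-vanishing theorem otherwise). Moreover, when $N_f^-$ is square-free, Poitou--Tate global duality together with the functional equation --- exactly as in \cite[Theorem 3.19(i)]{BLForum} --- shows that parts (i) and (ii) of Conjecture~\ref{IMC_ord_split_families} are equivalent, so it suffices to carry out the above for a single $i$. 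Specialising back to a general $\Gamma$ as in the first paragraph then completes the argument.
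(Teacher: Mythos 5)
The statement you are asked to prove is labelled a \emph{conjecture} in the paper (Conjecture~\ref{IMC_ord_split_families}), and the paper does not prove it; it only establishes one of the two divisibilities, under additional hypotheses. Your proposal honestly reflects this: the step you flag as requiring ``a substantially new idea'' --- the divisibility $H_\f L_p \mid \Char_{\LL_\f(\Gamma_K)}(\widetilde{H}^2_{\rm f})$ coming from Eisenstein congruences without the $p$-distinguished hypothesis and without residual irreducibility of $\overline{\rho}_\f\otimes{\rm Ind}_{K/\QQ}\overline{\chi}$ --- is a genuine gap, and it is exactly the reason the statement remains conjectural. In the paper this half is known only for $\chi=\mathds{1}$ under the Skinner--Urban hypotheses \textbf{(SU1)}--\textbf{(SU4)} (Theorem~\ref{thm_IMC_ord_split_families}), and for general $\chi$ only partially via Wan's work. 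So what you have written is a correct programme, not a proof.

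Two further points on the half you describe as ``within reach''. First, your description of the method is not quite how the paper runs it: the locally restricted Euler system machinery of Appendix~\ref{appendix_sec_ES_main} is formulated for $\cO$-lattices in finite-dimensional representations, not for big Galois modules over $\LL_\f(\Gamma_K)$, so one cannot ``feed $\mathbb{BF}^{\f,\chi}_m$ into Theorem~\ref{thm_appendix_ES_main}'' over the Iwasawa algebra directly. The paper instead applies the machinery at each specialization $(\f(\kappa),\chi\psi)$ ranging over a dense set $\mathcal{S}^{(i)}(a)$ of arithmetic points, controls the error terms (Tamagawa factors, $H^0(\QQ_p,\cA_i)$, the cokernel of $u$) uniformly, and then patches the pointwise bounds into a divisibility of characteristic ideals via \cite[Proposition A.0.1]{BLInertOrdMC} --- this is the ``horizontal Euler system argument'' of \S\ref{subsec_split_ord}, and Nekov\'a\v{r}'s control theorems enter at every specialization, not only in the final descent from $\Gamma_K$ to $\Gamma$. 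Second, your worry about ``clearing the spurious $\varpi$-power'' is moot: the conjecture is stated after tensoring with $\QQ_p$ (resp.\ $\Phi$), where $\varpi^{a_i}$ is a unit; the residual obstruction that genuinely survives is the ideal $\mathscr{C}(u)$, which is not a power of $\varpi$ and which the paper can only remove under \ref{item_Ind} or when \ref{item_RIa} holds with $a=0$ (Proposition~\ref{prop_uniqueness_of_the_lattice_in_residually_irred_case}). Even for the accessible divisibility, then, the paper's unconditional statement (Corollary~\ref{cor_main_conj_along_Gamma}) carries the extra factor $\mathscr{C}(u)$ that the conjecture does not.
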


When $\Gamma=\Gamma_\ac$, we will need to work with twisted variants of these $p$-adic $L$-functions. Recall the universal weight character $\bbkappa: G_\QQ\lra \LL_\f^\times$ and its fixed square-root $\bbkappa^{\frac{1}{2}}$.
\begin{defn}
\label{def_twisting_morphism_Hida}
\item[i)] We let ${\rm Tw}_{\f}: \LL_\f\,\widehat{\otimes}_{\Zp}\LL(H_{p^\infty}) \lra \LL_\f\,\widehat{\otimes}_{\Zp}\LL(H_{p^\infty})$ denote the $\LL_\f$-linear morphism induced by $\gamma\mapsto \bbkappa^{-\frac{1}{2}}(\gamma)\gamma$ for each $\gamma\in H_{p^\infty}$.
\item[ii)] Suppose $\chi=\chi^c$ and $\epsilon_f=\mathds{1}$. We set 
$$L_p(\f_{/K}\otimes\chi, \mathbb{\Sigma}^{(1)}_{\rm cc}){\big \vert}_{\Gamma_\ac}:={\rm Tw}_{\f}\left( L_p(\f_{/K}\otimes\chi, \mathbb{\Sigma}^{(1)})\right){\big \vert}_{\Gamma_\ac}\in \frac{1}{H_\f}\LL_\f(\Gamma_\ac)\otimes_{\ZZ_p}\QQ_p,$$
$$L_p(\f_{/K}\otimes\chi, \mathbb{\Sigma}^{(2)}_{\rm cc}){\big \vert}_{\Gamma_\ac}:={\rm Tw}_{\f}\left( L_p(\f_{/K}\otimes\chi, \mathbb{\Sigma}^{(2)})\right){\big \vert}_{\Gamma_\ac} \in \frac{1}{H_\chi}\LL_\f(\Gamma_\ac)\otimes_{\cO}\Phi\,.$$
\end{defn}

The following is the form of anticyclotomic main conjectures we will study. 
\begin{conj}
\label{IMC_split_definite_indefinite_ord_anticyclo_families}
Suppose $\chi=\chi^c$ and $\epsilon_f=\mathds{1}$.
\item[i)]$\left(\LL_\f(\Gamma_\ac)\otimes_{\ZZ_p}\QQ_p\right)\cdot H_\f L_p(\f_{/K}\otimes\chi,  \mathbb{\Sigma}^{(1)}_{\rm cc}){\big \vert}_{\Gamma_\ac}={\Char}_{\LL_\f(\Gamma_\ac)}\left(\widetilde{H}_{\rm f}^2(G_{K,\Sigma},\TT_{\f,\chi}^{\ac};\Delta^{(1)})\right)\otimes_{\Zp}\Qp\,,$ 
\item[ii)] $\left(\LL_\f(\Gamma_\ac)\otimes_{\cO}\Phi\right)\cdot H_\chi L_p(\f_{/K}\otimes\chi,  \Sigma^{(2)}_{\rm cc}){\big \vert}_{\Gamma_\ac}={\Char}_{\LL_\f(\Gamma_\ac)}\left(\widetilde{H}_{\rm f}^2(G_{K,\Sigma},\TT_{\f,\chi}^{\ac};\Delta^{(2)})\right)\otimes_{\cO}\Phi\,.$
\end{conj}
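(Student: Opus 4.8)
The plan is to prove the divisibility half of the conjecture in the case where its central critical formulation carries content, namely the \emph{definite} case --- when the factor $N_f^-$ of $N_f$ built from inert primes is a square-free product of an even number of primes, so that the anticyclotomic root number is $+1$; the complementary divisibility, hence the stated equality, would then follow by feeding in the level-raising and explicit-reciprocity results of Bertolini--Darmon \cite{BertoliniDarmon2005} and Chida--Hsieh \cite{chidahsiehanticyclomainconjformodformscomposito} together with Hsieh's non-vanishing theorems, or more generally Eisenstein-congruence divisibilities in the style of \cite{skinnerurbanmainconj, xinwanwanrankinselberg}. The strategy I would follow runs in three stages: (i) prove the analogous divisibility over the full $\ZZ_p^2$-tower $\Gamma_K$ using the Beilinson--Flach Euler system; (ii) descend from $\Gamma_K$ to the anticyclotomic line via Nekov\'a\v{r}'s formalism, which after taking mock leading terms in the cyclotomic direction trades the $\Gamma_K$-characteristic ideal for a regulator times the $\Gamma_\ac$-characteristic ideal of the torsion submodule; (iii) exploit the definite hypothesis to make the regulator trivial and the torsion-subscript vacuous, leaving precisely the inclusion predicted by the conjecture.

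For stage (i) I would start from the Beilinson--Flach classes ${}^u\mathbb{BF}_{\f,\chi}$ of \S\ref{sec_BF_elements}, which satisfy $\res_{\p^c}^{-}({}^u\mathbb{BF}_{\f,\chi})=0$ by Proposition~\ref{prop_local_conditions_for_BF_CMfamilies} and so are compatible with the local condition $\Delta^{(1)}$ at $\p^c$. Running the locally restricted Euler system machinery of Appendix~\ref{appendix_sec_ES_main} --- its main output Theorem~\ref{thm_appendix_ES_main}, applied pointwise as in Theorem~\ref{intro_thm_Split_BK_Sigma12} --- bounds the Selmer group $\widetilde{H}^2_{\rm f}(G_{K,\Sigma},T_{f,\chi\psi};\Delta^{(1)})$ in terms of the index of the image of the class under $\res_\p^{(1)}$, up to a bounded power of $\varpi$ that absorbs the failure of residual irreducibility (this uses the hypothesis \ref{item_tau}, which via the Galois element $\tau$ provides a free rank-one quotient $X^\circ_{g,\chi\psi}/(\tau-1)X^\circ_{g,\chi\psi}$). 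The reciprocity law of Proposition~\ref{prop_reciprocity_law}(ii) identifies the relevant image, over $\Gamma_K$, with $H_\f\,L_p(\f_{/K}\otimes\chi,\mathbb{\Sigma}^{(1)})|_{\Gamma_K}$ up to the correction ideal $\mathscr{C}(u)$, and the patching argument \cite[Proposition A.0.1]{BLInertOrdMC} assembles the pointwise bounds (over a Zariski-dense set of crystalline specializations $\kappa$ and critical $\psi$) into
$$\Char_{\LL_\f(\Gamma_K)}\widetilde{H}^2_{\rm f}(G_{K,\Sigma},\TT_{\f,\chi};\Delta^{(1)})\ \big|\ \varpi^{b_1}\,\LL_\f(\Gamma_K)\,H_\f\,L_p(\f_{/K}\otimes\chi,\mathbb{\Sigma}^{(1)})|_{\Gamma_K}\cdot\mathscr{C}(u),$$
which is Theorem~\ref{intro_thm_ordinary_Deltai_MC_via_horizontal_ES}(ii); it is essential here that $b_1$ be bounded independently of the specialization.

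For stages (ii) and (iii) I would twist by $\bbkappa^{-\frac{1}{2}}$ to pass to the self-dual $\TT_{\f,\chi}^{\dagger}$, apply the mock leading term $\partial_\cyc^*$ (which respects divisibility of ideals in $\LL_\f(\Gamma_K)=\LL_\f(\Gamma_\ac)[[\gamma_\cyc-1]]$), evaluate the left-hand side by Theorem~\ref{intro_thm_big_BSD_step1}(ii), and identify $\partial_\cyc^*\big({\rm Tw}_\f(L_p(\f_{/K}\otimes\chi,\mathbb{\Sigma}^{(1)}))|_{\Gamma_K}\big)$ with $L_p(\f_{/K}\otimes\chi,\mathbb{\Sigma}^{(1)}_{\rm cc})|_{\Gamma_\ac}$ up to a unit, using Definition~\ref{def_twisting_morphism_Hida} and the interpolation formula for Hida's $L$-function; after inverting $p$ the power $\varpi^{b_1}$ drops out, and I am left with
$${\rm Reg}_{\TT_{\f,\chi}^{\ac}}\cdot\Char_{\LL_\f(\Gamma_\ac)}\widetilde{H}^2_{\rm f}(G_{K,\Sigma},\TT_{\f,\chi}^{\ac};\Delta^{(1)})_{\rm tor}\otimes\QQ_p\ \big|\ \LL_L(\Gamma_\ac)\,H_\f\,L_p(\f_{/K}\otimes\chi,\mathbb{\Sigma}^{(1)}_{\rm cc})|_{\Gamma_\ac}\cdot\mathscr{C}^{\rm ac}(u).$$
In the definite case the central critical value $L(f_{/K}\otimes\chi^{-1}\psi^{-1},1)$ is non-zero for a Zariski-dense set of $\psi\in\Sigma^{(1)}_{\rm cc}$, so Theorem~\ref{intro_thm_BK_type_result} makes the specialized Selmer groups finite, and a control argument then forces $\widetilde{H}^2_{\rm f}(G_{K,\Sigma},\TT_{\f,\chi}^{\ac};\Delta^{(1)})$ to be $\LL_\f(\Gamma_\ac)$-torsion; hence ${\rm Reg}_{\TT_{\f,\chi}^{\ac}}$ is trivial, the subscript ${\rm tor}$ is vacuous, and the display becomes Theorem~\ref{intro_thm_ordinary_definite_ac}(ii). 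Under \ref{item_Ind} --- automatic when \ref{item_RIa} holds with $a=0$, by Proposition~\ref{prop_uniqueness_of_the_lattice_in_residually_irred_case} --- the factor $\mathscr{C}^{\rm ac}(u)$ can be dropped, and this is one inclusion of Conjecture~\ref{IMC_split_definite_indefinite_ord_anticyclo_families}(i); part (ii) I would obtain from part (i) by the Poitou--Tate comparison of the $\Delta^{(1)}$- and $\Delta^{(2)}$-Greenberg Selmer complexes available when $N_f^-$ is square-free (cf.\ Remark~\ref{rem_what_is_known_split_def_indef_ord}(a) and \cite[Theorem 3.19(i)]{BLForum}), which preserves divisibility. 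In the indefinite case ($N_f^-$ a square-free product of an odd number of primes) both sides of the conjecture vanish identically, and its true content there is the $\LL_\f(\Gamma_\ac)$-adic Birch--Swinnerton-Dyer formula of Theorem~\ref{intro_thm_ordinary_Delta1_Selmer_torsion_via_horizontal_ES_indefinite}.

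The step I expect to be the main obstacle is stage (i) in the residually reducible range. Every earlier locally restricted Euler system result (\cite{kbbesrankr, kbbstick, kbbCMabvar, KBOchiai_1}) relies on the Kolyvagin-system formalism of \cite{mr02}, which requires $\overline{\rho}_f\otimes\mathrm{Ind}_{K/\QQ}\overline{\chi}$ to be absolutely irreducible; once $\chi\equiv\chi^c$ is controlled only modulo $\m_\cO^{a+1}$ this fails, and the machinery of Appendix~\ref{appendix_sec_ES_main} must build Kolyvagin-type derivative classes and bound the relevant Selmer groups while retaining information only up to $\varpi^a$. The genuinely delicate point is to carry this out \emph{uniformly} across the Hida family --- for all crystalline $\kappa$ and all critical $\psi$ at once, with a single bound on the $\varpi$-exponents and a uniformly chosen $\tau$ --- so that the patching argument can be applied; this is what underlies Theorem~\ref{intro_thm_ordinary_Deltai_MC_via_horizontal_ES}(ii). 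A secondary, essentially bookkeeping, difficulty is tracking the several multiplicative corrections ($H_\f$, $\mathscr{C}(u)$, the $\varpi$-powers, and the regulator) through Nekov\'a\v{r}'s descent so that, in the definite case, they collapse to the clean statement of the conjecture.
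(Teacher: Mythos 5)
The statement you are addressing is a \emph{Conjecture} in the paper, not a theorem, and the paper does not prove it. It is stated precisely in order to record what one expects to be true; the paper's actual contributions toward it are Theorems~\ref{thm_ordinary_definite_ac} and \ref{thm_ordinary_Delta1_Selmer_torsion_via_horizontal_ES_indefinite}, which establish a single divisibility of characteristic ideals by $p$-adic $L$-functions, up to the correction factors $\mathscr{C}^{\rm ac}(u)$ and the congruence ideals, under the hypotheses \ref{item_tau}, $p$-distinguishedness of $\overline{\rho}_\f$, regularity of $\LL_\f$, $\chi^2\neq \mathds{1}$, and $N_f^-$ square-free. Your stages (i)--(iii) track this part of the paper faithfully: Euler system bound via Appendix~\ref{appendix_sec_ES_main}, reciprocity law Proposition~\ref{prop_reciprocity_law}, horizontal patching, Nekov\'a\v{r} descent through mock leading terms to $\Gamma_\ac$. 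So as a summary of how the one-sided result is obtained, this is accurate.

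The gap is your claim that ``the complementary divisibility, hence the stated equality, would then follow by feeding in the level-raising and explicit-reciprocity results of Bertolini--Darmon and Chida--Hsieh... or more generally Eisenstein-congruence divisibilities in the style of Skinner--Urban, Wan.'' The paper explicitly does \emph{not} make this deduction, because the required converse divisibilities are not available for a general ring class character $\chi\neq\chi^c$ with $\chi$ possibly not $p$-distinguished: Skinner--Urban and Wan's results are for $\chi=\mathds{1}$ (cf.\ Remark~\ref{rem_what_is_known_split_def_indef_ord}(b)(c) and Theorem~\ref{thm_IMC_ord_split_families}), and the Bertolini--Darmon / Chida--Hsieh work concerns $\chi$ abelian over $\QQ$, which is precisely the setting excluded throughout. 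The paper even records the current state of knowledge as an expectation (``We expect that the divisibilities ... can be upgraded ... as suitable extensions of the results in \cite{xinwanwanrankinselberg} become available''). Asserting the equality under the paper's hypotheses would require new two-variable Eisenstein-ideal input beyond the cited literature, which neither the paper nor your proposal supplies.

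A secondary point: you have the parity of $N_f^-$ inverted relative to the body of the paper. Section~\ref{subsubsec_split_definite_ord} (definite case, $\varepsilon=+1$, $\Sigma^{(1)}_{\rm cc}$ $p$-adic $L$-function nonvanishing by Theorem~\ref{thm_Chida_Hsieh}(i)) assumes $N_f^-$ is a square-free product of an \emph{odd} number of primes, and \S\ref{subsubsec_split_indefinite_ord} assumes an \emph{even} number, so your identification ``definite $=$ even'' reverses the paper's convention and would misroute the argument --- it is in the odd case that the regulator becomes a unit and the torsion subscript drops out, not the even one.
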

We have the following ``twisted'' variant of Conjecture~\ref{IMC_ord_split_families}, which is more convenient when studying the anticyclotomic Iwasawa main conjectures.
\begin{conj}
\label{IMC_split_definite_indefinite_ord_twisted_families}
\item[i)] $\left(\LL_\f(\Gamma)\otimes_{\ZZ_p}\QQ_p\right)\cdot H_\f \ {\rm Tw}_{\f}\left( L_p(\f_{/K}\otimes\chi, \mathbb{\Sigma}^{(1)})\right){\big \vert}_{\Gamma_K}=\Char_{\LL_\f(\Gamma)}\left(\widetilde{H}_{\rm f}^2(G_{K,\Sigma},\TT_{\f,\chi}^{\dagger};\Delta^{(1)}) \right)\otimes_{\Zp}\Qp$\,.
\item[ii)] $\left(\LL_\f(\Gamma)\otimes_{\cO}\Phi\right)\cdot H_\chi  {\rm Tw}_{\f}\left(L_p(\f_{/K}\otimes\chi,  \mathbb{\Sigma}^{(2)})\right){\big \vert}_{\Gamma_K}={\Char}_{\LL_\f(\Gamma)}\left(\widetilde{H}_{\rm f}^2(G_{K,\Sigma},\TT_{\f,\chi}^\dagger;\Delta^{(2)})\right)\otimes_{\cO}\Phi\,.$
\end{conj}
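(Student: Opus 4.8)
The plan is to deduce the conjectured equality from two complementary divisibilities of characteristic ideals, after first removing the self-dual twist. The morphism ${\rm Tw}_\f$ of Definition~\ref{def_twisting_morphism_Hida} is an $\LL_\f$-linear automorphism of $\LL_\f\,\widehat{\otimes}_{\ZZ_p}\LL(H_{p^\infty})$ carrying $\TT_{\f,\chi}^{(\Gamma_K)}$ onto $\TT_{\f,\chi}^\dagger$ and preserving the Greenberg local conditions $\Delta^{(i)}$; since a base-ring automorphism preserves characteristic ideals and transports $L_p(\f_{/K}\otimes\chi,\mathbb{\Sigma}^{(i)})\big|_{\Gamma_K}$ to ${\rm Tw}_\f(L_p(\f_{/K}\otimes\chi,\mathbb{\Sigma}^{(i)}))\big|_{\Gamma_K}$, Conjecture~\ref{IMC_split_definite_indefinite_ord_twisted_families} is equivalent to Conjecture~\ref{IMC_ord_split_families} with $\Gamma=\Gamma_K$. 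So I would work with $\TT_{\f,\chi}^{(\Gamma_K)}$ throughout and prove both divisibilities inside $\LL_\f(\Gamma_K)\otimes_{\ZZ_p}\QQ_p$ (resp.\ $\otimes_\cO\Phi$ for $i=2$).

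For the divisibility $\Char_{\LL_\f(\Gamma_K)}(\widetilde{H}^2_{\rm f}(G_{K,\Sigma},\TT_{\f,\chi}^{(\Gamma_K)};\Delta^{(i)}))\mid H_\f\,L_p(\f_{/K}\otimes\chi,\mathbb{\Sigma}^{(i)})\big|_{\Gamma_K}$ (with $H_\chi$ replacing $H_\f$ when $i=2$), I would use the Beilinson--Flach Euler system of \S\ref{sec_BF_elements}: one feeds the class ${}^u\mathbb{BF}_{\f,\chi}$ together with the explicit reciprocity law Proposition~\ref{prop_reciprocity_law}(ii), which matches its localizations at $\p$ and $\p^c$ with $H_\f L_p(\f_{/K}\otimes\chi,\mathbb{\Sigma}^{(i)})\big|_{\Gamma_K}$ up to the factor $\mathscr C(u)$, into the locally restricted Euler system machinery of Appendix~\ref{appendix_sec_ES_main} (Theorem~\ref{thm_appendix_ES_main}), which is designed to tolerate residually reducible coefficients. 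This yields precisely Theorem~\ref{thm_ordinary_Deltai_MC_via_horizontal_ES}(ii): the divisibility holds up to a power of $\varpi$ and up to $\mathscr C(u)$. Inverting $\varpi$ absorbs the $\varpi$-power, and under \ref{item_RIa} with $a=0$ one has $\mathscr C(u)=(1)$ by Proposition~\ref{prop_uniqueness_of_the_lattice_in_residually_irred_case}; in general $\mathscr C(u)$ is eliminated under hypothesis \ref{item_Ind}, the subject of forthcoming work of Burungale--Skinner--Tian.

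The reverse divisibility $H_\f L_p(\f_{/K}\otimes\chi,\mathbb{\Sigma}^{(i)})\big|_{\Gamma_K}\mid\Char_{\LL_\f(\Gamma_K)}(\widetilde{H}^2_{\rm f}(G_{K,\Sigma},\TT_{\f,\chi}^{(\Gamma_K)};\Delta^{(i)}))$ lies beyond the techniques of the present article, and this is where genuinely new input is required. For $\chi=\mathds{1}$ it is provided by Skinner--Urban \cite{skinnerurbanmainconj} (whose Eisenstein-congruence construction already allows variation in $\f$) under \ref{item_SU1}--\ref{item_SU4plus}, and on the $\Delta^{(2)}$-side by Wan \cite{xinwanwanrankinselberg} under the hypotheses recorded in Remark~\ref{rem_what_is_known_split_def_indef_ord}(c). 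For a general ray class character $\chi$ I would run the same strategy: build a Klingen--Eisenstein family on ${\rm GU}(2,2)$ whose constant term along the relevant Siegel section computes $H_\chi\cdot H_\f\,L_p(\f_{/K}\otimes\chi,\mathbb{\Sigma}^{(i)})\big|_{\Gamma_K}$, apply the Skinner--Urban lattice-construction lemma to the resulting congruence to manufacture classes in $H^1(G_{K,\Sigma},\TT_{\f,\chi}^{(\Gamma_K)})$, and thereby bound $\widetilde{H}^2_{\rm f}$ from below. When $N_f^-$ is square-free, the equivalence of the $\Delta^{(1)}$- and $\Delta^{(2)}$-statements recorded in Remark~\ref{rem_what_is_known_split_def_indef_ord}(a) propagates one divisibility to the other, so it suffices to carry this out for a single $i$. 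Combining this with the Euler system bound and \ref{item_Ind} then gives the equality after inverting $\varpi$, which is what the conjecture asserts.

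The decisive obstacle is this reverse divisibility in the residually reducible setting, i.e.\ when $\overline{\rho}_f\otimes{\rm Ind}_{K/\QQ}\overline{\chi}$ is reducible. The Eisenstein-congruence method relies on residual irreducibility both to split the relevant Selmer groups and to apply the lattice-construction lemma, and adapting it requires controlling the contributions of the reducible sub- and quotient-representations — precisely the difficulty that motivated the Euler-system improvement of this paper, but with no congruence-side analogue currently available. Secondary difficulties are establishing hypothesis \ref{item_Ind} (equivalently, controlling the cokernel $\mathscr C(u)$) for general $\chi$, and checking that the Eisenstein-side construction genuinely yields the full characteristic ideal rather than a proper divisor of it.
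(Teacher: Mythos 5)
The statement you are asked to prove is labelled as a \emph{conjecture} in the paper, and the paper offers no proof of it: it is the ``twisted'' form of the Iwasawa Main Conjecture for the family $\f_{/K}\otimes\chi$, recorded so that the anticyclotomic descent in \S\ref{subsec_split_indefinite_definite_ord} can be phrased cleanly. What the paper actually establishes is only one of the two divisibilities, and only up to a power of $\varpi$ and the correction ideal $\mathscr{C}(u)$: this is Theorem~\ref{thm_ordinary_Deltai_MC_via_horizontal_ES}(ii) (together with its descended form, Corollary~\ref{cor_main_conj_along_Gamma}(ii), and the twisted variants \eqref{eqn_twisted_torsion_families_Gamma_K_2}), obtained exactly as you describe, by feeding ${}^u\mathbb{BF}_{\f,\chi}$ and the reciprocity law of Proposition~\ref{prop_reciprocity_law}(ii) into the locally restricted Euler system machinery of Appendix~\ref{appendix_sec_ES_main} and then patching over a dense set of specializations via \cite[Proposition A.0.1]{BLInertOrdMC}. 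Your reconstruction of that half is faithful to the paper's argument.

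The genuine gap is the one you yourself flag: the opposite divisibility. It is not proved in the paper for any $\chi\neq\mathds{1}$ (the paper explicitly defers to hoped-for extensions of \cite{xinwanwanrankinselberg}), and your sketch of a Klingen--Eisenstein congruence on ${\rm GU}(2,2)$ for a general ray class character $\chi$ --- in particular when $\overline{\rho}_f\otimes{\rm Ind}_{K/\QQ}\overline{\chi}$ is reducible --- is a research programme rather than an argument: the lattice-construction step of Skinner--Urban requires residual irreducibility in an essential way, and no congruence-side substitute for it exists at present. Two further unresolved points in your write-up: (a) the elimination of $\mathscr{C}(u)$ in general rests on \ref{item_Ind}, which is itself open outside the case $a=0$; and (b) the transfer between the $\Delta^{(1)}$- and $\Delta^{(2)}$-statements that you invoke is only recorded in the paper at the level of individual crystalline specializations (via \cite[Theorem 3.19(i)]{BLForum}), not for the full family over $\LL_\f(\Gamma_K)$, so even propagating a hypothetical one-sided result from $i=2$ to $i=1$ would need justification. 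In short: your proposal correctly reproduces everything the paper proves towards this statement, but the statement itself remains a conjecture, and your proposal does not close it.
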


\begin{remark}
Theorem 3.19(i) of \cite{BLForum} shows that the specializations of Conjecture~\ref{IMC_split_definite_indefinite_ord_twisted_families}(i) and Conjecture~\ref{IMC_split_definite_indefinite_ord_twisted_families}(ii) to an $L$-valued specialization $\kappa\in S^{\rm crys}_\f$ with $\Gamma=\Gamma_K$
\begin{equation}
    \label{eqn_IMC_ord_split_families_kappa_1}
    \LL_L(\Gamma_K)\cdot H_{\f}(\kappa) {\rm Tw}_{\f(\kappa)}\left(L_p(\f_{/K}\otimes\chi, {\Sigma}^{(1)}_{\rm crit}(\kappa)\right)\big\vert_{\Gamma_K}={\Char}_{\LL_{\cO}(\Gamma_K)}\left(\widetilde{H}_{\rm f}^2(G_{K,\Sigma},\TT_{\f(\kappa),\chi}^\dagger;\Delta^{(1)})\right)\otimes_{\Zp}\Qp
\end{equation}
\begin{equation}
    \label{eqn_IMC_ord_split_families_kappa_2}
    \LL_\Phi(\Gamma_K)\cdot H_\chi  {\rm Tw}_{\f(\kappa)}\left(L_p(\f(\kappa)_{/K}\otimes\chi,  \Sigma^{(2)}_{\rm crit}(\kappa))\right)\big\vert_{\Gamma_K}={\Char}_{\LL_{\cO}(\Gamma_K)}\left(\widetilde{H}_{\rm f}^2(G_{K,\Sigma},\TT_{\f(\kappa),\chi}^\dagger;\Delta^{(2)})\right)\otimes_{\cO}\Phi
\end{equation}
 are equivalent. This equivalence heavily relies on the non-triviality of a certain Beilinson--Flach element, interpolated along $\Gamma_K$. 
\end{remark}

\begin{theorem}[Skinner--Urban]
\label{thm_IMC_ord_split_families} 
Let us write $N_f=N^+_fN^-_f$ as in Remark~\ref{rem_what_is_known_split_def_indef_ord}.
 Suppose that 
\begin{enumerate}
      \item[\ref{item_SU1}] the residual representation $\overline{\rho}_\f$ is  irreducible and $p$-distinguished;
    \item[\mylabel{item_SU2}{{\bf (SU2)}}] $N^-_f$ is a product of odd number of primes;
      \item[\ref{item_SU3}] $\overline{\rho}_\f$ is ramified at all primes dividing $N^-_f$; 
    \item[\mylabel{item_SU4}{{\bf (SU4)}}] $\varepsilon_f=\mathds{1}$.
\end{enumerate}  Then Conjecture~\ref{IMC_ord_split_families} holds when $\chi=\mathds{1}$ and $\Gamma=\Gamma_K$.
\end{theorem}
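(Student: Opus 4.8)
The plan is to deduce the theorem from the main results of Skinner--Urban \cite{skinnerurbanmainconj} combined with Kato's Euler system \cite{kato04}; since $\chi=\mathds{1}$, the statement amounts to those results after a translation of conventions, and no new ingredient is required. First I would record that for $\chi=\mathds{1}$ the $G_K$-representation $\TT_{\f,\mathds{1}}^{(\Gamma_K)}$ is simply $R_\f^*\,\widehat{\otimes}_{\ZZ_p}\,\LL(\Gamma_K)^\sharp$, the deformation of the base change $R_\f^*|_{G_K}$ over the full $\ZZ_p^2$-tower $K_\infty/K$, and that $\Delta^{(1)}$ is the balanced Greenberg local condition given by $F^+R_\f^*$ at both $\p$ and $\p^c$. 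Via the semilocal Shapiro isomorphism of Remark~\ref{remark_shapiro} and Nekov\'a\v{r}'s Poitou--Tate duality, $\widetilde{H}^2_{\rm f}(G_{K,\Sigma},\TT_{\f,\mathds{1}}^{(\Gamma_K)};\Delta^{(1)})$ is pseudo-isomorphic to the Pontryagin dual of the Greenberg--Selmer group of the base change $f_{/K}$ over $K_\infty$, interpolated over the Hida family $\f$ --- precisely the object whose characteristic ideal is computed in \cite{skinnerurbanmainconj}. Likewise $H_\f\,L_p(\f_{/K}\otimes\mathds{1},\mathbb{\Sigma}^{(1)})\big|_{\Gamma_K}$ is their three-variable Rankin--Selberg $p$-adic $L$-function attached to $L(f_{/K},s)=L(f,s)L(f\otimes\epsilon_K,s)$, the congruence factor $H_\f$ absorbing the discrepancy between Hida's period normalization and the one used there.

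With this dictionary in place, Conjecture~\ref{IMC_ord_split_families}(i) (with $\chi=\mathds{1}$, $\Gamma=\Gamma_K$) is exactly the main theorem of \cite{skinnerurbanmainconj} in its Hida-family form. The hypotheses \ref{item_SU1}--\ref{item_SU4} are those imposed in loc. cit.: residual irreducibility together with $p$-distinguishedness provide the $\LL_\f$-lattice $R_\f^*$ and feed the Ribet-type lattice construction; \ref{item_SU2} forces the global root number of $f_{/K}$ to be $+1$, so that the Klingen--Eisenstein family on $\mathrm{GU}(2,2)$ is non-degenerate; and \ref{item_SU3}--\ref{item_SU4} control the ramified local components at the primes dividing $N_f^-$. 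One divisibility is then obtained from the Eisenstein-congruence construction, which produces enough classes in the Selmer group of $f_{/K}$ over the full $\ZZ_p^2$-tower, and the reverse divisibility is the bound on that Selmer group coming from Kato's Euler system for $f$ and its $\epsilon_K$-twist, propagated to $K_\infty$ as in \cite{skinnerurbanmainconj}; the two together give the asserted equality of ideals of $\LL_\f(\Gamma_K)\otimes\QQ_p$.

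Finally, part (ii) of Conjecture~\ref{IMC_ord_split_families} (the $\Delta^{(2)}$-formulation) follows formally. Since $N_f^-$ is square-free under the present hypotheses, the equivalence of Conjecture~\ref{IMC_split_definite_indefinite_ord}(i) and (ii) for $\Gamma=\Gamma_K$ established in \cite[Theorem~3.19(i)]{BLForum} applies; its proof uses only a duality between the $\Delta^{(1)}$- and $\Delta^{(2)}$-Selmer complexes and the interpolation formulae for the two $p$-adic $L$-functions, both of which are available over $\LL_\f(\Gamma_K)$, so it carries over verbatim to the family setting and yields Conjecture~\ref{IMC_ord_split_families}(ii) from part (i).

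The step demanding the most care is the first one, the reconciliation of conventions: one must verify that the Selmer group of \cite{skinnerurbanmainconj}, defined with their local conditions at $p$ and at the primes dividing $N_fD_K$, agrees up to pseudo-isomorphism with $\widetilde{H}^2_{\rm f}(G_{K,\Sigma},\TT_{\f,\mathds{1}}^{(\Gamma_K)};\Delta^{(1)})$ --- a bookkeeping of the extra archimedean and bad-prime terms in Nekov\'a\v{r}'s complex --- and that the period normalization of their $p$-adic $L$-function matches $H_\f\,L_p(\f_{/K}\otimes\mathds{1},\mathbb{\Sigma}^{(1)})$. I do not expect this to present a genuine difficulty, since all the substantive analytic and Galois-cohomological content is already carried out in \cite{skinnerurbanmainconj} and \cite{kato04}.
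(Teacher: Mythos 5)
The paper's own proof is literally a one-line citation to \cite[Theorem~3.26]{skinnerurbanmainconj}; your proposal is an elaboration of what lies behind that citation, and its overall outline (matching Nekov\'a\v{r}'s Selmer complex with the Skinner--Urban Selmer group, matching $H_\f L_p(\f_{/K}\otimes\mathds{1},\mathbb{\Sigma}^{(1)})$ with their three-variable $p$-adic $L$-function, then invoking their Eisenstein-congruence divisibility together with Kato's Euler-system bound) is the same route.

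However, your final step---deriving Conjecture~\ref{IMC_ord_split_families}(ii) from part (i)---has a gap as written. You say ``since $N_f^-$ is square-free under the present hypotheses,'' but \ref{item_SU2} only asks that $N_f^-$ is a product of an \emph{odd number} of primes; it does not ask for square-freeness (compare with \ref{item_SU2plus}, which does). Moreover, the equivalence \cite[Theorem~3.19(i)]{BLForum} you invoke is stated (see Remark~\ref{rem_what_is_known_split_def_indef_ord}(a)) under the hypotheses \ref{item_HIm}, \ref{item_HSS}, \ref{item_HnEZ}, \ref{item_HDist} and with $N_f^-$ square-free, none of which are assumed in the present theorem; in particular \ref{item_HDist} ($p$-distinguishedness of $\chi$) is genuinely additional. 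Since the paper here simply cites Skinner--Urban's theorem for the conjecture as stated, the theorem itself is unaffected, but your proposed route to part~(ii) via the $\Delta^{(1)}$/$\Delta^{(2)}$ comparison needs the extra hypotheses spelled out or a different argument (e.g.\ reading off part~(ii) directly from the form in which the main conjecture is proved in \cite{skinnerurbanmainconj}).
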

\begin{proof}
 This  is \cite[Theorem 3.26]{skinnerurbanmainconj}.
\end{proof}


\section{Horizontal Euler system argument and results}
\label{sec_results_split}

 Let us fix a cuspidal eigen-newform $f$ of level $\Gamma_1(N_f)$ as before and suppose that it admits a $p$-ordinary stabilization $f^\alpha$ with $U_p$-eigenvalue $\alpha_f$. Let $\chi$ denote a ray class character of prime-to-$p$ conductor $\ff$, verifying the condition \ref{item_RIa} for some natural number $a$. 

Let $\psi\in \Sigma_{\rm crit}$ be a crystalline critical Hecke character for the eigenform $f$ in the sense of Definition~\ref{defn_criticality} and let $\theta_{\chi\psi}$ denote the eigenform defined as in Definition~\ref{defn_theta_series}(ii). The eigenform $\theta_{\chi\psi}$ is $p$-ordinary and $\alpha_{\chi\psi}=\chi^{-1}\psi_0(\p^c)$, where $\psi_0$ is given as in Definition~\ref{defn_theta_series}(i). We assume\footnote{This assumption can be eliminated in \S\ref{sec_results_split} using slightly different Beilinson--Flach elements, but since our sights are set on Iwasawa theoretic results, we will adopt it at this stage already so as not to extend the length of our article even further.} that the $p$-adic avatar $\psi_p$ of $\psi$ factors through $\Gamma_K$. 

Recall the Galois representation $X_{f,\chi\psi}^{\circ}:=R_f^*\otimes R^*_{\theta_{\chi\psi}}$, where the second factor is given as in Definition~\ref{def_Gal_rep_xi_general}. We set $X_{f,\chi\psi}:=X_{f,\chi\psi}^{\circ}\otimes \QQ_p$. We also put $W_{f,\chi\psi}:=X_{f,\chi\psi}^{\circ}\otimes \QQ_p/\ZZ_p$ and $W_{f,\chi\psi}^*(1):=\Hom(X_{f,\chi\psi}^\circ,\mu_{p^\infty})$. We define the $G_K$-representations $A_{f,\chi\psi}:=T_{f,\chi\psi}\otimes\QQ_p/\ZZ_p$ and $A_{f,\chi\psi}^*(1):=\Hom(T_{f,\chi\psi},\mu_{p^\infty})$. 

Recall also that we may identify $X_{f,\chi\psi}^{\circ}$ as a submodule of a homothetic copy of $T_{f,\chi\psi}$, with index that only depends on $a$ (c.f. Lemma~\ref{lemma_lattices_theta_pointwise}). Then the natural maps $W_{f,\chi\psi}\to A_{f,\chi\psi}$ and $W_{f,\chi\psi}^*\to A_{f,\chi\psi}^*$ are then necessarily isomorphisms.


\subsection{Bloch--Kato Conjectures for $f_{/K}\otimes \chi\psi$}
\label{subsec_BK_conjectures}
In this section, we explain a proof of the Bloch--Kato conjectures for $f_{/K}\otimes \chi\psi$ for a generic choice of a Hecke character $\psi$ as above. The main tool, together with the Beilinson--Flach Euler system, is the extension of the locally restricted Euler system machinery we develop in Appendix~\ref{appendix_sec_ES_main} to allow the treatment of residually reducible Galois representations.

\begin{defn}
\item[i)] We let $\Sigma$ denote the set of places of $K$ which divide $p\ff N_f\infty$. For each integer $m$ coprime to $p\ff N_f$, let us write $\Sigma_m$ for the set of  places of $K\QQ(m)$ that lie above $\Sigma$. We define $\Sigma_{\QQ}$ to be the set of places of $\QQ$ that lie below those in $K$ and with a slight abuse of notation, we also write $\Sigma_m$ for the set of places of $\QQ(m)$ that lie above the primes of $\Sigma_{\QQ}$. 
\item[ii)] We let $\BF^{f\otimes\chi\psi}_{\alpha_f,\alpha_{\chi\psi}}(m)\in H^1(G_{\QQ(m),\Sigma_m},X_{f,\chi\psi}^\circ)$ denote the image of $\mathbb{BF}^{f^{\alpha},\chi}_{m}$ under the natural morphism induced from ${\bf R}_{\chi}^*\,\widehat{\otimes}\,\LL(\Gamma_\cyc)\to R_{\theta_\chi\psi}^*$. When $m=1$, we set $\BF^{f\otimes\chi\psi}_{\alpha_f,\alpha_{\chi\psi}}:=\BF^{f\otimes\chi\psi}_{\alpha_f,\alpha_{\chi\psi}}(1) \in H^1(G_{\QQ,\Sigma},X_{f,\chi\psi}^\circ).$
\item[iii)] Given a morphism $u$ as in \S\ref{subsubsec_local_properties_CMgg}, let us put ${}^u\BF^{f\otimes\chi\psi}_{\alpha_f,\alpha_{\chi\psi}}\in H^1(G_{\QQ,\Sigma},T_{f,\chi\psi})$ denote the image of ${}^u\mathbb{BF}^{f^{\alpha},\chi}$.
\end{defn}

\begin{defn}
\label{defn_cokernel_u_specialized_chipsi}
For a choice of the morphism $u$ as in \S\ref{subsubsec_local_properties_CMgg}, let us define $\mathscr{C}(\psi):={\rm length}_{\cO}\left(\mathscr{C}(u)\otimes_{\chi\psi}\cO\right)$.
\end{defn}

\begin{defn}
\label{defn_aux_Selmer_groups}

\item[i)] We define the Selmer structures $\mathcal{F}_{{\rm BK}^{(1)}}$, $\mathcal{F}_{{\rm BK}^{(2)}}$ and  $\mathcal{F}_+$ on $X^\circ_{f,\chi\psi}$ with the local conditions
$$H^1_{\mathcal{F}_{?}}(\QQ(m)_v,X^\circ_{f,\chi\psi})=H^1_{\mathcal{F}_{+}}(\QQ(m)_v,X^\circ_{f,\chi\psi})=\ker\left(H^1(\QQ(m)_v,X^\circ_{f,\chi\psi}) \lra H^1(\QQ(m)_v^{\rm ur},X_{f,\chi\psi}) \right),\hbox{\,\,\,\,\,\,\,\,if  } v\nmid p\infty$$
if $v\mid p$,
\begin{align*}
   H^1_{\mathcal{F}_{{\rm BK}^{(1)}}}(\QQ(m)_v,X^\circ_{f,\chi\psi})&={\rm im}\left( H^1(\QQ(m)_v,F^+R_{f}^*\,\otimes\,R_{\theta_{\chi\psi}}^*)\stackrel{j_{v}^+}{\lra}H^1(\QQ(m)_v,X^\circ_{f,\chi\psi})\right),\\
H^1_{\mathcal{F}_{{\rm BK}^{(2)}}}(\QQ(m)_v,X^\circ_{f,\chi\psi})&={\rm im}\left( H^1(\QQ(m)_v,R_{f}^*\otimes F^+R_{\theta_{\chi\psi}}^*)\stackrel{j_{v}^+}{\lra}H^1(\QQ(m)_v,X^\circ_{f,\chi\psi})\right),\\
 H^1_{\mathcal{F}_{+}}(\QQ(m)_v,X^\circ_{f,\chi\psi})&=H^1_{\mathcal{F}_{{\rm BK}^{(1)}}}(\QQ(m)_v,X^\circ_{f,\chi\psi})+H^1_{\mathcal{F}_{{\rm BK}^{(2)}}}(\QQ(m)_v,X^\circ_{f,\chi\psi})\,.
\end{align*}

The dual Selmer structure on $W_{f,\chi\psi}^*(1)$ induced by local Tate duality will be denoted by $\calF^*_?$. We shall write $H^1_{\mathcal{F}_?}(\QQ(m),X^\circ_{f,\chi\psi})$ and $H^1_{\mathcal{F}_{?}^*}(\QQ(m),W_{f,\chi\psi}^*(1))$ for the associated Selmer groups for $?=+,{\rm BK}^{(1)}, {\rm BK}^{(2)}$.

\item[ii)] We similarly define the Selmer structures $\mathcal{F}_{?}$ on $T_{f,\chi\psi}$ with the local conditions
$$H^1_{\mathcal{F}_{?}}(K_v,T_{f,\chi\psi})=\ker\left(H^1(K_v,T_{f,\chi\psi}) \lra H^1(K_v^{\rm ur},T_{f,\chi\psi}\otimes\QQ_p) \right),\hbox{\,\,\,\,\,\,\,\,if  } v\nmid p\infty\qquad \hbox{ and }$$
\begin{align*}
    H^1_{\mathcal{F}_{{\rm BK}^{(1)}}}(K_v,T_{f,\chi\psi})&={\rm im}\left( H^1(K_v,F^+R_{f}^*\,\otimes\,\chi\psi)\stackrel{j_{v}^+}{\lra}H^1(K_v,T_{f,\chi\psi})\right), \hbox{ if  } v\mid p\,,\\
H^1_{\mathcal{F}_{{\rm BK}^{(2)}}}(K_\p,T_{f,\chi\psi})&=H^1(K_\p,T_{f,\chi\psi})\,,\,\,\,\,
H^1_{\mathcal{F}_{{\rm BK}^{(2)}}}(K_{\p^c},T_{f,\chi\psi})=0\,,\\
H^1_{\mathcal{F}_{+}}(K_\p,T_{f,\chi\psi})&=H^1(K_\p,T_{f,\chi\psi})\,,\,\,\,\,
H^1_{\mathcal{F}_{+}}(K_{\p^c},T_{f,\chi\psi})=H^1_{\mathcal{F}_{{\rm BK}^{(1)}}}(K_{\p^c},T_{f,\chi\psi})\,.
\end{align*}

The dual Selmer structures on $A_{f,\chi\psi}^*(1)$ and corresponding Selmer groups will be notated also  analogously.

\item[iii)] For $i=1,2$, let us put $H^1_{\mathcal{F}_{+/(i)}}(\QQ_p,X^\circ_{f,\chi\psi}):=H^1_{\mathcal{F}_{+}}(\QQ_p,X^\circ_{f,\chi\psi})/H^1_{\mathcal{F}_{{\rm BK}^{(i)}}}(\QQ_p,X^\circ_{f,\chi\psi})$ and define $\res_p^{+/(i)}$ as the composite map
 $$H^1_{\mathcal{F}_{+}}(\QQ,X^\circ_{f,\chi\psi})\stackrel{\res_p}{\lra} H^1_{\mathcal{F}_{+}}(\QQ_p,X^\circ_{f,\chi\psi})\lra H^1_{\mathcal{F}_{+/(i)}}(\QQ_p,X^\circ_{f,\chi\psi})\,.$$
 Let us write $\res_p^{(1)}$ for the composition of the arrows
 $$H^1_{\mathcal{F}_{+}}(\QQ,X^\circ_{f,\chi\psi})\xrightarrow{\res_p^{+/(1)}} H^1_{\mathcal{F}_{+/(1)}}(\QQ_p,X^\circ_{f,\chi\psi})\hookrightarrow H^1(\QQ_p,F^-R_f^*\otimes F^+R_{\theta_{\chi\psi}}^*)\,,$$
 and similarly define $\res_p^{(2)}$ as the composite map
 $$H^1_{\mathcal{F}_{+}}(\QQ,X^\circ_{f,\chi\psi})\xrightarrow{\res_p^{+/(2)}} H^1_{\mathcal{F}_{+/(2)}}(\QQ_p,X^\circ_{f,\chi\psi})\hookrightarrow H^1(\QQ_p,F^+R_f^*\otimes F^-R_{\theta_{\chi\psi}}^*)\,.$$
\end{defn}

Note that the Definition~\ref{defn_aux_Selmer_groups} crucially relies on the fact that $p$ splits in $K/\QQ$.

\begin{defn}
\label{defn_6_4_2021_09_22_11_39}
For each $i\in \{1,2\}$, we define strict Bloch-Kato Selmer structure $\mathcal{F}_{{\rm sBK}^{(i)}}$ on $T_{f,\chi\psi}$ by replacing the local conditions at $v\mid \mathbf{N}\ff N_f$ determined by $\mathcal{F}_{{\rm BK}^{(i)}}$ with the local conditions
$$H^1_{\mathcal{F}_{{\rm sBK}^{(i)}}}(K_v,T_{f,\chi\psi})=\ker\left(H^1(K_v,T_{f,\chi\psi}) \lra H^1(K_v^{\rm ur},T_{f,\chi\psi}) \right)\,.$$
We similarly define a pair of Selmer structures ${\mathcal{F}_{{\rm sBK}^{(i)}}}$ on $X_{f,\chi\psi}^\circ$. The dual Selmer structures ${\mathcal{F}_{{\rm sBK}^{(i)}}^*}$ on $A_{f,\chi\psi}^*(1)$ and $W_{f,\chi\psi}^*(1)$ are defined as above.

We finally set $\frak{X}^{(i)}_{\rm BK}(K,T_{f,\chi\psi}):=H^1_{\mathcal{F}_{{\rm BK}^{(i)}}}(K,A_{f,\chi\psi}^*(1))^\vee$ and $\frak{X}^{(i)}_{\rm sBK}(K,T_{f,\chi\psi}):=H^1_{\mathcal{F}_{{\rm sBK}^{(i)}}}(K,A_{f,\chi\psi}^*(1))^\vee$.
\end{defn}

\begin{remark}
\label{rem_generalities_about_selmer_complexes_comparison_etc}
\item[i)]
Let us consider the local conditions $\Delta^{(i)}$ (i=1,2) on $X_{f,\chi\psi}^\circ$ (in the sense of Nekov\'a\v{r}), which are given by unramified conditions for all primes in $\Sigma_m$ that are coprime to $p$ and by the Greenberg conditions with the choices of $G_{\QQ_p}$-morphisms
$$j_{\q}^+:\,F^+R_{f}^*\,\otimes\,R_{\theta_{\chi\psi}}^*  \stackrel{}{\longrightarrow} X_{f,\chi\psi}^\circ \qquad(i=1)$$
$$j_{\q}^+:\,R_{f}^*\,\otimes\,F^+R_{\theta_{\chi\psi}}^* \stackrel{}{\longrightarrow} X_{f,\chi\psi}^\circ\qquad (i=2)$$
for primes $\q \in \Sigma_m$ lying above $p$.  Shapiro's Lemma combined with the Lemma~\ref{lemma_lattices_theta_pointwise} give rise to a morphism
$$\mathscr{S}: \widetilde{{\bf R}\Gamma}_{\rm f}(G_{\QQ(m),\Sigma_m},X_{f,\chi\psi}^\circ;\Delta^{(i)})\lra \widetilde{{\bf R}\Gamma}_{\rm f}(G_{K\QQ(m),\Sigma_m},T_{f,\chi\psi};\Delta^{(i)})\,.$$
This in turn induces a map
\begin{equation}
\label{eqn_Shapiro_for_F_plus}
\mathscr{S}: H^1_{\calF_+}(\QQ,X_{f,\chi\psi}^{\circ}) \lra H^1_{\calF_+}(K,T_{f,\chi\psi})
\end{equation}
whose cokernel is annihilated by $p^a$, as well as commutative diagrams
\begin{equation}
\label{eqn_resp1_shapiro_chipsi}
\begin{aligned}
\xymatrix{H^1_{\calF_+}(K,T_{f,\chi\psi}) \ar[r]^(.47){\res_\p^{(1)}} & H^1(K_\p,F^-R_f^*(\chi\psi))\\
H^1_{\calF_+}(\QQ,X_{f,\chi\psi}^{\circ})\ar[r]^(.4){\res_p^{(1)}}\ar[u]^{\eqref{eqn_Shapiro_for_F_plus}}& H^1(\QQ_p,F^-R_f^*\otimes F^+R_{\theta_{\chi\psi}}^*)\ar[u]_{u_+^{(\p)}},
}
\end{aligned}
\end{equation}
\begin{equation}
\begin{aligned}
\label{eqn_resp2_shapiro_chipsi}
\xymatrix{H^1_{\calF_+}(K,T_{f,\chi\psi}) \ar[r]^(.47){\res_{\p^c}^{(2)}} & H^1(K_{\p^c},F^-R_f^*(\chi\psi))\\
H^1_{\calF_+}(\QQ,X_{f,\chi\psi}^{\circ})\ar[u]^{\eqref{eqn_Shapiro_for_F_plus}}\ar[r]^(.4){\res_p^{(2)}}& H^1(\QQ_p,F^+R_f^*\otimes F^-R_{\theta_{\chi\psi}}^*)\ar[u]_{u_-^{(\p^c)}}\,.
}
\end{aligned}
\end{equation}
where $\res_\p^{(1)}$ and $\res_{\p^c}^{(2)}$ are defined in the obvious manner, whereas the definition of the maps $u_+^{(\p)}$ and $u_-^{(\p^c)}$ are similar to those in \eqref{eqn_CM_Hida_Rep_extended_further}.
\item[ii)] Let us put

\begin{equation}
     \label{eqn_Nek_to_classical_discrete_selmer_A_1}
     \cA_1:=\left(F^-R_f\otimes R_{\theta_{\chi\psi}}\right)\otimes\mu_{p^\infty} \cong \left(F^-R_f\otimes {\rm Ind}_{K/\QQ}\,\chi^{-1}\psi^{-1}\right)\otimes\mu_{p^\infty}
  \end{equation}
 \begin{equation}
     \label{eqn_Nek_to_classical_discrete_selmer_A_2}
     \cA_2:=\left(R_f\otimes F^-R_{\theta_{\chi\psi}}\right)\otimes\mu_{p^\infty}\cong \left(R_f\otimes (\chi^c\psi^c)^{-1}\right)\otimes\mu_{p^\infty}\,.
 \end{equation}
By \cite[Lemma 9.6.3]{nekovar06}, we have the exact sequences

\begin{equation}
\label{eqn_nekovarsmainlongecactseq_3}
    0 \lra H^0(\QQ_p,\mathcal{A}_1)\lra \widetilde{H}^1_{\rm f}(G_{\QQ,\Sigma},W_{f,\chi\psi}^*(1);\Delta^{(1),\perp})\stackrel{}{\lra}H^1_{\mathcal{F}_{{\rm sBK}^{(1)}}}(\QQ,W_{f,\chi\psi}^*(1))\lra 0\,.
\end{equation}
\begin{equation}
\label{eqn_nekovarsmainlongecactseq_4}
    0 \lra H^0(\QQ_p,\mathcal{A}_2)\lra \widetilde{H}^1_{\rm f}(G_{\QQ,\Sigma},W_{f,\chi\psi}^*(1);\Delta^{(2),\perp})\stackrel{}{\lra}H^1_{\mathcal{F}_{{\rm sBK}^{(2)}}}(\QQ,W_{f,\chi\psi}^*(1))\lra 0\,.
\end{equation}
\end{remark}

\begin{remark}
We record in this remark how the Bloch-Kato Selmer groups and their strict variants compare, observing the natural appearance of Tamagawa numbers in this context. This comparison can be achieved through the long exact sequence
\begin{align}
\begin{aligned}
   0\rightarrow H^1_{\mathcal{F}_{{\rm sBK}^{(i)}}}(K,T_{f,\chi\psi}) \rightarrow H^1_{\mathcal{F}_{{\rm BK}^{(i)}}}&(K,T_{f,\chi\psi})\rightarrow H^0({\rm Err})\\
\label{alignstar_strictvsususalBK_seq}& \rightarrow H^1_{\mathcal{F}_{{\rm sBK}^{(i)}}^*}(K,A_{f,\chi\psi}^*(1))^\vee \rightarrow H^1_{\mathcal{F}_{{\rm BK}^{(i)}}^*}(K,A_{f,\chi\psi}^*(1))^\vee \rightarrow 0\,.
\end{aligned}
\end{align}
Here, ${\rm Err}$ is the complex defined in \cite[(6.2.3)]{nekovar06} and by the discussion in (7.6.9) in op. cit., it is quasi-isomorphic to the complex ${\displaystyle \bigoplus_{v\mid \ff N_f} {\rm Err}_v}$, where
$${\rm Err}_v={\rm Err}_v(T_{f,\chi\psi}):= {\rm Cone}\left(Z_v\xrightarrow{{\rm Frob}_v-1} Z_v\right)[-1]$$
with 
$$Z_v={\rm coker}\left(V_{f,\chi\psi}^{I_v}/T_{f,\chi\psi}^{I_v}\lra A_{f,\chi\psi}^{I_v}\right)\,.$$
Note then that 
$$H^0({\rm Err}_v)\stackrel{\sim}{\lra} H^1(I_v,T_{f,\chi\psi})_{\rm tor}^{{\rm Frob}_v=1}$$
$$H^1({\rm Err}_v)\stackrel{\sim}{\lra} H^1(G_v/I_v,A_{f,\chi\psi}^{I_v}/{\rm div})$$
and hence the quantities
\begin{align*}
\notag |H^0({\rm Err}_v)|=|H^1(I_v,T_{f,\chi\psi})_{\rm tor}^{{\rm Frob}_v=1}|&=|H^1(G_v/I_v,A_{f,\chi\psi}^{I_v}/{\rm div})|=|H^1({\rm Err}_v)|
\\
&=|H^1(I_v,T_{f,\chi\psi}^*(1))_{\rm tor}|=|H^0({\rm Err}_v(T_{f,\chi\psi}^*(1)))|
\end{align*}
all agree with the $p$-part of the $v$-local Tamagawa factor for $T_{f,\chi\psi}$ (and $T_{f,\chi\psi}^*(1)$) introduced by Fontaine and Perrin-Riou  (c.f. \cite{FPR94}, \S5.3).
\end{remark}
\subsubsection{Comparison of extended Selmer groups and Bloch--Kato Selmer groups} 
We record below a precise comparison between the extended Selmer groups and their classical counterparts (c.f. Lemma~\ref{lemma_NEK_global_duality_compare_BK}). We will extend this discussion in Lemma~\ref{lemma_compare_Iwasawa_Nek_Iwasawa_Greenberg_K_infty} to cover Iwasawa theoretic Selmer groups; we note that the comparison in Lemma~\ref{lemma_compare_Iwasawa_Nek_Iwasawa_Greenberg_K_infty} does play a crucial role in what follows.

By Nekov\'a\v{r}'s global duality \cite[(6.3.5)]{nekovar06} and thanks to our assumption that $R_f$ is residually irreducible, we have an exact sequence 
\begin{align}
\label{eqn_nekovar_Grothendieck_duality_sequence}
\begin{aligned}
0 \lra H^0({\rm Err}) \lra \widetilde{H}^1_{\rm f}(G_{K,\Sigma},A_{f,\chi\psi}^*(1); \Delta^{(i),\perp}) \lra& \widetilde{H}^2_{\rm f}(G_{K,\Sigma},T_{f,\chi\psi};\Delta^{(i)})^\vee  \\
&\lra H^1({\rm Err}) \lra  \widetilde{H}^2_{\rm f}(G_{K,\Sigma},A_{f,\chi\psi}^*(1);\Delta^{(i),\perp})\,.
\end{aligned}
\end{align}
\begin{defn}
\label{defn_tamagawa_like_error}

Let us set 
$$E_1(f,\chi\psi):=\left|{\rm im}(H^1({\rm Err}) \rightarrow  \widetilde{H}^2_{\rm f}(G_{K,\Sigma},A_{f,\chi}^*(1);\Delta^{(i),\perp}))\right|\,.$$
 We also put 
 $$E_2(f,\chi\psi):=|{\rm im}(H^0({\rm Err}) \rightarrow  H^1_{\mathcal{F}_{{\rm sBK}^{(i)}}^*}(K,A_{f,\chi\psi}^*(1))^\vee |$$
 $${\rm{Tam}}(f,\chi\psi):=\frac{E_1(f,\chi\psi)}{E_2(f,\chi\psi)}\,.$$
\end{defn}

We are now ready to state the explicit comparison between the extended Selmer group and its classical counterpart.

\begin{lemma}
\label{lemma_NEK_global_duality_compare_BK}
For each $i\in \{1,2\}$ we have 
$${\ord_p}\left(\frac{|\widetilde{H}^2_{\rm f}(G_{K,\Sigma},T_{f,\chi\psi};\Delta^{(i)})|}{|H^0(\QQ_p,\mathcal{A}_i)|\cdot {\rm{Tam}}(f,\chi\psi)}\right)={\ord_p}\left|H^1_{\mathcal{F}_{{\rm BK}^{(i)}}^*}(K,A_{f,\chi\psi}^*(1))\right|\,.$$ 
which, in the scenario when either side equals infinity, reduces to the assertion that the other side is also infinity.   
\end{lemma}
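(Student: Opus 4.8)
The plan is to unwind the definitions of the two sides and match them term-by-term via Nekov\'a\v{r}'s global duality sequence for the Selmer complex $\widetilde{{\bf R}\Gamma}_{\rm f}(G_{K,\Sigma},T_{f,\chi\psi};\Delta^{(i)})$ displayed just above the statement. First I would record that, because $R_f$ is residually irreducible, all the ``divisible'' subtleties in Nekov\'a\v{r}'s formalism collapse; in particular $\widetilde{H}^2_{\rm f}(G_{K,\Sigma},A_{f,\chi\psi}^*(1);\Delta^{(i),\perp})$ contributes only through its finite part, and the exact sequence
$$0 \to H^0({\rm Err}(A_{f,\chi\psi}^*(1))) \to \widetilde{H}^1_{\rm f}(G_{K,\Sigma},A_{f,\chi\psi}^*(1);\Delta^{(i),\perp}) \to \widetilde{H}^2_{\rm f}(G_{K,\Sigma},T_{f,\chi\psi};\Delta^{(i)})^\vee \to H^1({\rm Err}(A_{f,\chi\psi}^*(1))) \to \cdots$$
lets me express $|\widetilde{H}^2_{\rm f}(G_{K,\Sigma},T_{f,\chi\psi};\Delta^{(i)})|$ in terms of $|\widetilde{H}^1_{\rm f}(G_{K,\Sigma},A_{f,\chi\psi}^*(1);\Delta^{(i),\perp})|$, the Tamagawa-type factor $|H^0({\rm Err}(A_{f,\chi\psi}^*(1)))|$, and the correction $E_1(f,\chi\psi)$ (which is precisely the image of the map to the next term in the sequence). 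Concretely, taking orders of cardinality, $\ord_p|\widetilde{H}^2_{\rm f}(G_{K,\Sigma},T_{f,\chi\psi};\Delta^{(i)})| = \ord_p|\widetilde{H}^1_{\rm f}(G_{K,\Sigma},A_{f,\chi\psi}^*(1);\Delta^{(i),\perp})| - \ord_p|H^0({\rm Err}(A_{f,\chi\psi}^*(1)))| + \ord_p E_1(f,\chi\psi)$, with the usual convention on infinities.

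Next I would pass from the ``extended'' Selmer group $\widetilde{H}^1_{\rm f}(G_{\QQ,\Sigma},W_{f,\chi\psi}^*(1);\Delta^{(i),\perp})$ over $\QQ$ to the classical strict Bloch--Kato Selmer group $H^1_{\mathcal{F}_{{\rm sBK}^{(i)}}}(\QQ,W_{f,\chi\psi}^*(1))$ using the exact sequences \eqref{eqn_nekovarsmainlongecactseq_3}--\eqref{eqn_nekovarsmainlongecactseq_4}, which introduce exactly the factor $|H^0(\QQ_p,\mathcal{A}_i)|$. I would then invoke Shapiro's lemma (Remark~\ref{remark_shapiro}) to identify $\widetilde{H}^1_{\rm f}(G_{\QQ,\Sigma},W_{f,\chi\psi}^*(1);\Delta^{(i),\perp})$ with $\widetilde{H}^1_{\rm f}(G_{K,\Sigma},A_{f,\chi\psi}^*(1);\Delta^{(i),\perp})$ up to the bounded $p$-power coming from the index $p^a$ in Lemma~\ref{lemma_lattices_theta_pointwise} — but since we are only counting orders, and since $W_{f,\chi\psi}^*\to A_{f,\chi\psi}^*$ is an isomorphism (as noted right after Lemma~\ref{lemma_lattices_theta_pointwise}, the discrete modules coincide even though the lattices differ), this identification is actually exact on the discrete side, so no $p^a$ error enters here. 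Finally I would compare $H^1_{\mathcal{F}_{{\rm sBK}^{(i)}}^*}(K,A_{f,\chi\psi}^*(1))$ with $H^1_{\mathcal{F}_{{\rm BK}^{(i)}}^*}(K,A_{f,\chi\psi}^*(1))$ via the long exact sequence \eqref{alignstar_strictvsususalBK_seq}, whose connecting maps are governed by $H^\bullet({\rm Err}_v)$ for $v\mid\ff N_f$; the image of $H^0({\rm Err})$ there is exactly $E_2(f,\chi\psi)$, and the identities \eqref{eqn_nekovar_Grothendieck_duality_sequence} show that every other contribution from ${\rm Err}_v$ cancels against the corresponding term on the $T$-side (these are the same local Tamagawa factors appearing in $H^0({\rm Err}(A_{f,\chi\psi}^*(1)))$). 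Assembling, the factor ${\rm{Tam}}(f,\chi\psi)=E_1(f,\chi\psi)/E_2(f,\chi\psi)$ appears precisely where claimed, and the $\ord_p$ identity follows.

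The main obstacle I anticipate is the careful bookkeeping of the various ${\rm Err}_v$-contributions: one has to check that the Tamagawa-type terms $H^\bullet({\rm Err}_v(T_{f,\chi\psi}))$ and $H^\bullet({\rm Err}_v(A_{f,\chi\psi}^*(1)))$ that appear on the $T$-side (through $\widetilde{H}^2_{\rm f}(G_{K,\Sigma},T_{f,\chi\psi};\Delta^{(i)})$ and its dual) match, up to exactly the residual factors $E_1, E_2$, those appearing when one replaces the strict Selmer condition by the full Bloch--Kato one over $K$. This is where \eqref{eqn_nekovar_Grothendieck_duality_sequence} — the self-duality of the $p$-part of the local Tamagawa factor under $T\leftrightarrow T^*(1)$ — does the real work, and one must be attentive that at the archimedean place and at $p$ there is nothing extra (the conditions at $p$ are built into $\Delta^{(i)}$ and $\mathcal{F}_{{\rm BK}^{(i)}}$ so they agree by construction, and $p$ being odd kills the real place). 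A secondary point to be careful about is the ``infinity'' convention: if $\widetilde{H}^2_{\rm f}(G_{K,\Sigma},T_{f,\chi\psi};\Delta^{(i)})$ is infinite this forces $T_{f,\chi\psi}$ to have positive Selmer rank, hence (by the duality sequence and finiteness of the error complexes) $H^1_{\mathcal{F}_{{\rm BK}^{(i)}}^*}(K,A_{f,\chi\psi}^*(1))$ is infinite, and conversely; this should be dispatched by the same exact sequences once one notes that all ${\rm Err}$-terms and $H^0(\QQ_p,\mathcal{A}_i)$ are finite.
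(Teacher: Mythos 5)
Your plan uses the same three exact sequences as the paper — the global-duality sequence displayed just above the lemma, the pair \eqref{eqn_nekovarsmainlongecactseq_3}--\eqref{eqn_nekovarsmainlongecactseq_4}, and \eqref{alignstar_strictvsususalBK_seq} — but your step-one identity is off in two ways and would give the wrong constant if followed literally. Truncating the global-duality sequence at the image of its last arrow (whose size is $E_1$ by Definition~\ref{defn_tamagawa_like_error}) and taking the multiplicative Euler characteristic gives
$$\ord_p\left|\widetilde{H}^2_{\rm f}(G_{K,\Sigma},T_{f,\chi\psi};\Delta^{(i)})\right|=\ord_p\left|\widetilde{H}^1_{\rm f}(G_{K,\Sigma},A_{f,\chi\psi}^*(1);\Delta^{(i),\perp})\right|+\ord_p\left|H^1({\rm Err})\right|-\ord_p\left|H^0({\rm Err})\right|-\ord_p E_1\,,$$
whereas you wrote $+\ord_p E_1$ with the wrong sign and dropped the $|H^1({\rm Err})|$ term. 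The role of \eqref{eqn_nekovar_Grothendieck_duality_sequence} is precisely to supply $|H^0({\rm Err})|=|H^1({\rm Err})|$, so these two terms cancel \emph{already inside step one}, yielding \eqref{eqn_global_duality_lemma_1}: $|\widetilde{H}^2_{\rm f}(T_{f,\chi\psi};\Delta^{(i)})|=|\widetilde{H}^1_{\rm f}(A_{f,\chi\psi}^*(1);\Delta^{(i),\perp})|/E_1$, with no residual ${\rm Err}$-factor left over.

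In particular, the cancelation you propose between a leftover $|H^0({\rm Err})|$ from step one and ``other contributions from ${\rm Err}_v$'' hiding inside \eqref{alignstar_strictvsususalBK_seq} does not occur: passing from the strict to the usual Bloch--Kato Selmer group only uses the surjection at the right end of \eqref{alignstar_strictvsususalBK_seq}, whose kernel has size exactly $E_2$; no full Euler characteristic of that sequence is taken, and no further ${\rm Err}_v$-terms enter. Fixing these two slips, your steps assemble precisely as the paper's \eqref{eqn_global_duality_lemma_1}, the Shapiro identification, and \eqref{eqn_global_duality_lemma_2}, and the factor ${\rm Tam}(f,\chi\psi)=E_1/E_2$ comes out as claimed.
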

\begin{proof}
 It follows from the  exact sequence \eqref{eqn_nekovar_Grothendieck_duality_sequence} that
\begin{align}
\label{eqn_global_duality_lemma_1} |\widetilde{H}^2_{\rm f}(G_{K,\Sigma},T_{f,\chi\psi};\Delta^{(i)})|= | \widetilde{H}^1_{\rm f}(G_{K,\Sigma},A_{f,\chi\psi}^*(1);\Delta^{(i),\perp})|/E_1(f,\chi\psi)\,.
\end{align}
Moreover, the exact sequence \eqref{alignstar_strictvsususalBK_seq} yields
\begin{align}
\label{eqn_global_duality_lemma_2} 
|H^1_{\mathcal{F}_{{\rm BK}^{(i)}}^*}(K,A_{f,\chi\psi}^*(1)) |= | H^1_{\mathcal{F}_{{\rm sBK}^{(i)}}^*}(K,A_{f,\chi\psi}^*(1))^\vee|/E_2(f,\chi\psi)\,.
\end{align}
 The asserted identity in the lemma follows on combining \eqref{eqn_nekovarsmainlongecactseq_3}, \eqref{eqn_nekovarsmainlongecactseq_4}, \eqref{eqn_global_duality_lemma_1} and \eqref{eqn_global_duality_lemma_2}. 
\end{proof}

\subsubsection{Locally restricted Euler system bound}
We resume our main task. In Theorem~\ref{thm_Split_BK_Sigma12} below, we employ the Beilinson--Flach element Euler system and the locally restricted Euler system argument we developed in \S\ref{appendix_sec_ES_main} in the residually reducible scenario.

Recall from Definition~\ref{defn_6_4_2021_09_22_11_39} the dual $\frak{X}^{(i)}_{\rm BK}(K,T_{f,\chi\psi})$ ($i=1,2$) of the discrete Bloch--Kato Selmer group.
\begin{theorem}
\label{thm_Split_BK_Sigma12} 
Assume that the hypothesis \ref{item_tau} holds true.
\item[i)] Suppose that $\psi \in \Sigma^{(1)}_{\rm crit}$ is a $p$-crystalline character with infinity type $(\ell_1,\ell_2)$, whose associated $p$-adic Galois character factors through $\Gamma_K$. We then have  
\begin{align*}
    {\ord_p}\left(\frak{X}^{(1)}_{\rm BK}(K,T_{f,\chi\psi})\right)\leq \left[H^1(K_\p,F^-R_f^*(\chi\psi)):\cO\cdot \res_\p^{(1)}\left({}^u\BF^{f\otimes\chi\psi}_{\alpha_f,\alpha_{\chi\psi}}\right)\right]+t\,.
\end{align*}
for some constant $t$ that depends only on $a$ and the residual representation $\overline{X}_{f,\chi\psi}$. Moreover, one may take $t=0$ if $a=0$.
\item[ii)] Suppose that $\psi \in \Sigma^{(2)}_{\rm crit}$ is $p$-crystalline. We then have  
\begin{align*}
{\ord_p}\left(\frak{X}^{(2)}_{\rm BK}(K,T_{f,\chi\psi})\right)\leq \left[H^1(K_{\p^c},F^+R_f^*(\chi\psi)):\cO\cdot \res_{\p^c}^{(2)}\left({}^u\BF^{f\otimes\chi\psi}_{\alpha_f,\alpha_{\chi\psi}}\right)\right]+t\,.
\end{align*}
for some constant $t$ that depends only on $a$ and the residual representation $\overline{X}_{f,\chi\psi}$. Moreover, one may take $t=0$ if $a=0$.
\end{theorem}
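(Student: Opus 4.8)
The plan is to derive the bound from the locally restricted Euler system machinery of Appendix~\ref{appendix_sec_ES_main} (namely Theorem~\ref{thm_appendix_ES_main}), fed with the Beilinson--Flach Euler system $\{\BF^{f\otimes\chi\psi}_{\alpha_f,\alpha_{\chi\psi}}(m)\}_m$; both parts are handled uniformly, so write $i\in\{1,2\}$, let $\q=\p$ if $i=1$ and $\q=\p^c$ if $i=2$, and abbreviate $\res_\q^{(i)}$ for $\res_\p^{(1)}$ when $i=1$ and $\res_{\p^c}^{(2)}$ when $i=2$. \emph{Step 1 (reduction to a dual Selmer bound).} By Lemma~\ref{lemma_NEK_global_duality_compare_BK}, the left-hand side of the asserted inequality equals $\ord_p\bigl|H^1_{\mathcal{F}_{{\rm BK}^{(i)}}^*}(K,A_{f,\chi\psi}^*(1))\bigr|$. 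Via the isomorphism $W_{f,\chi\psi}^*(1)\xrightarrow{\sim}A_{f,\chi\psi}^*(1)$, the Shapiro map \eqref{eqn_Shapiro_for_F_plus} (whose cokernel is killed by $p^a$ by Lemma~\ref{lemma_lattices_theta_pointwise}), and the comparison sequences \eqref{eqn_nekovarsmainlongecactseq_3}--\eqref{eqn_nekovarsmainlongecactseq_4}, bounding this quantity reduces — up to a bounded error depending only on $a$ and $\overline{X}_{f,\chi\psi}$, and up to replacing $\mathcal{F}_{{\rm BK}^{(i)}}$ by its strict variant $\mathcal{F}_{{\rm sBK}^{(i)}}$ — to bounding $\ord_p\bigl|H^1_{\mathcal{F}_{{\rm sBK}^{(i)}}^*}(\QQ,W_{f,\chi\psi}^*(1))\bigr|$, i.e.\ a dual Selmer group over $\QQ$ for the representation $X_{f,\chi\psi}^\circ$.

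\emph{Step 2 (inputs to the machinery).} Next I would apply Theorem~\ref{thm_appendix_ES_main} to $X_{f,\chi\psi}^\circ$, equipped with the Selmer structure $\mathcal{F}_+$ of Definition~\ref{defn_aux_Selmer_groups}, and with the Euler system $\{\BF^{f\otimes\chi\psi}_{\alpha_f,\alpha_{\chi\psi}}(m)\}_m$. The inputs required are: (a) the Euler system norm relations for the classes $\BF^{f\otimes\chi\psi}_{\alpha_f,\alpha_{\chi\psi}}(m)$, inherited from \cite[\S8]{KLZ2}; (b) an element $\tau\in G_{\QQ(\mu_{p^\infty})}$ with $X_{f,\chi\psi}^\circ/(\tau-1)X_{f,\chi\psi}^\circ$ free of rank one over $\cO$, provided by \ref{item_tau} via the remark following that hypothesis; and (c) the membership $\BF^{f\otimes\chi\psi}_{\alpha_f,\alpha_{\chi\psi}}\in H^1_{\mathcal{F}_+}(\QQ,X_{f,\chi\psi}^\circ)$, which (after Shapiro's lemma, \eqref{eqn_Shapiro_for_F_plus}) is precisely the local vanishing $\res_{\p^c}^-\bigl({}^u\mathbb{BF}^{f^\alpha,\chi}\bigr)=0$ of Proposition~\ref{prop_local_conditions_for_BF_CMfamilies}, the remaining conditions away from $p$ being the unramified ones, satisfied by construction. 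The machinery then yields an inequality essentially of the form
\[
\ord_p\bigl|H^1_{\mathcal{F}_{{\rm sBK}^{(i)}}^*}(\QQ,W_{f,\chi\psi}^*(1))\bigr|\ \le\ \ord_p\Bigl[\,H^1_{\mathcal{F}_{+/(i)}}(\QQ_p,X_{f,\chi\psi}^\circ):\cO\cdot\res_p^{(i)}\bigl(\BF^{f\otimes\chi\psi}_{\alpha_f,\alpha_{\chi\psi}}\bigr)\Bigr]+t_0,
\]
where $t_0$ depends only on $a$ and $\overline{X}_{f,\chi\psi}$ and is $0$ when $\overline{X}_{f,\chi\psi}$ is absolutely irreducible.

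\emph{Step 3 (translating the local term and collecting errors).} To finish, I would rewrite the local index in terms of the data over $K$. The commutative diagram \eqref{eqn_resp1_shapiro_chipsi} for $i=1$, respectively \eqref{eqn_resp2_shapiro_chipsi} for $i=2$, together with the map $u_+^{(\p)}$ (resp.\ $u_-^{(\p^c)}$) whose cokernel has finite length $\mathscr{C}(\psi)$ bounded in terms of $a$ (Definition~\ref{defn_cokernel_u_specialized_chipsi}), identifies $\res_p^{(i)}\bigl(\BF^{f\otimes\chi\psi}_{\alpha_f,\alpha_{\chi\psi}}\bigr)$ with $\res_\q^{(i)}\bigl({}^u\BF^{f\otimes\chi\psi}_{\alpha_f,\alpha_{\chi\psi}}\bigr)$ up to bounded index, so the local index over $\QQ$ agrees with the one appearing in the statement (inside $H^1(K_\p,F^-R_f^*(\chi\psi))$ when $i=1$, resp.\ $H^1(K_{\p^c},F^+R_f^*(\chi\psi))$ when $i=2$) up to a further bounded error. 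Folding all the errors into a single constant $t=t(a,\overline{X}_{f,\chi\psi})$, and using Lemma~\ref{lemma_lattices_theta_pointwise} once more to return from $\mathcal{F}_{{\rm sBK}^{(i)}}^*$ to $\mathcal{F}_{{\rm BK}^{(i)}}^*$, gives the claimed inequality. If $a=0$ then $X_{f,\chi\psi}^\circ$ is induced from a residually irreducible lattice and $u$ is an isomorphism, so every error term vanishes and one takes $t=0$.

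The hard part is exactly the input invoked in Step~2, i.e.\ Appendix~\ref{appendix_sec_ES_main}: upgrading the implication ``Euler system $\Rightarrow$ bounded dual Selmer group'' to a residual representation $\overline{X}_{f,\chi\psi}$ that may fail to be absolutely irreducible — a situation forced on us precisely because $\chi$ is not assumed $p$-distinguished, so that the classical Kolyvagin system formalism of \cite{mr02} (where residual irreducibility is essential, both to pin the core rank at one and to produce Kolyvagin primes via Chebotarev) does not apply directly. The remedy is to run the argument on an ambient irreducible lattice — here ${\rm Ind}_{K/\QQ}(R_f^*\otimes\chi\psi)$ — and to track precisely the error introduced by the index $p^a$ separating it from $X_{f,\chi\psi}^\circ$; this is what produces the constant $t$ and what makes the appendix the technical core of the proof.
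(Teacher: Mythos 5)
Your proposal matches the paper's proof in all essentials: you invoke the same three ingredients (the $\calF_+$-locally restricted Euler system property of the Beilinson--Flach classes via Proposition~\ref{prop_local_conditions_for_BF}, the bound from Theorem~\ref{thm_appendix_ES_main}(ii) with Remark~\ref{remark_appendix_simplify_defn_index}, and Poitou--Tate plus Lemma~\ref{lemma_NEK_global_duality_compare_BK} together with the Shapiro diagrams \eqref{eqn_resp1_shapiro_chipsi}--\eqref{eqn_resp2_shapiro_chipsi}) and assemble them in the same way, merely ordering the reduction to the dual Selmer group before, rather than after, the Euler system step. One small point of imprecision is in your closing commentary: the appendix machinery does not run the Kolyvagin argument on the ambient induced lattice ${\rm Ind}_{K/\QQ}(R_f^*\otimes\chi\psi)$ and transfer back; rather, Theorem~\ref{thm_appendix_ES_main} is formulated directly for a possibly residually reducible lattice under \ref{item_HypXa}/\ref{item_HypXab}, with the constant $a$ intrinsic to $X_{f,\chi\psi}^\circ$ (Proposition~\ref{prop_appendix_example}(iv)); the ambient lattice and the index bounded by $a$ enter separately, through Lemma~\ref{lemma_lattices_theta_pointwise} and the Shapiro comparison.
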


We will need the following fact about Beilinson--Flach elements in the proof of Theorem~\ref{thm_Split_BK_Sigma12}.

\begin{proposition}
 \label{prop_local_conditions_for_BF}
For every positive integer $m$ coprime to $p\frak{f}N_f|D_K|$, we have
$$\BF^{f\otimes\chi\psi}_{\alpha_f,\alpha_{\chi\psi}}(m) \in H^1_{\mathcal{F}_{+}}(\QQ(m),X^\circ_{f,\chi\psi})\,.$$
\end{proposition}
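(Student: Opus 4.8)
The plan is to verify the required local condition at every place $v \nmid p\infty$ and at the primes above $p$ separately, exploiting the fact that Beilinson--Flach elements satisfy known local properties that descend along the specialization maps we have set up.

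First I would treat the primes $v \nmid p\infty$. For such $v$, the local condition defining $H^1_{\calF_+}(\QQ(m)_v, X^\circ_{f,\chi\psi})$ is the unramified (strict) condition $\ker\left(H^1(\QQ(m)_v,X^\circ_{f,\chi\psi})\to H^1(\QQ(m)_v^{\rm ur},X_{f,\chi\psi})\right)$. The class $\BF^{f\otimes\chi\psi}_{\alpha_f,\alpha_{\chi\psi}}(m)$ is, by Definition, the image under the specialization map ${\bf R}_\chi^*\,\widehat{\otimes}\,\LL(\Gamma_\cyc)\to R^*_{\theta_{\chi\psi}}$ of the Beilinson--Flach class $\mathbb{BF}^{f^\alpha,\chi}_m$, which in turn comes from the KLZ class of \cite[Definition 8.1.1]{KLZ2} after removing the smoothing factor. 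The integrality and the unramified-outside-$pN_f\mathbf{N}\ff|D_K|m$ property of the KLZ classes is part of their construction (see \cite{KLZ2}), so the localization at any $v \nmid p\infty m$ lands in the unramified subspace; for $v \mid m$ one uses that the coefficient module $X^\circ_{f,\chi\psi}$ is unramified at $v$ together with the standard finiteness of $H^1$ of the unramified quotient, so the local condition is automatically satisfied (or one invokes the corresponding statement already recorded for $\mathbb{BF}^{f^\alpha,\chi}_m$ and transports it through the specialization, which commutes with restriction).

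Next I would handle the primes above $p$. Here the local condition $H^1_{\calF_+}(\QQ_p, X^\circ_{f,\chi\psi})$ is $H^1_{\calF_{{\rm BK}^{(1)}}} + H^1_{\calF_{{\rm BK}^{(2)}}}$, i.e.\ the image of $H^1(\QQ_p, (F^+R_f^*\otimes R^*_{\theta_{\chi\psi}} + R_f^*\otimes F^+R^*_{\theta_{\chi\psi}}))$. Equivalently — and this is the key reformulation, parallel to \eqref{eqn_proj_ff_Gamma_K_minusminus_bis} — a class lies in $H^1_{\calF_+}$ at $p$ iff its projection to $H^1(\QQ_p, F^-R_f^*\otimes F^-R^*_{\theta_{\chi\psi}})$ vanishes. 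Now $\res_p^{(--)}(\mathbb{BF}^{f^\alpha,\chi}_m)=0$ by \cite[Proposition 8.1.7]{KLZ2} (this is exactly the input used in the proof of Proposition~\ref{prop_local_conditions_for_BF_CMfamilies}), and the specialization map ${\bf R}_\chi^*\,\widehat\otimes\,\LL(\Gamma_\cyc)\to R^*_{\theta_{\chi\psi}}$ carries $F^-{\bf R}_\chi^*\,\widehat\otimes\,\LL(\Gamma_\cyc)$ into $F^-R^*_{\theta_{\chi\psi}}$ and commutes with restriction to $\QQ_p$; hence the vanishing descends to $\res_p^{(--)}(\BF^{f\otimes\chi\psi}_{\alpha_f,\alpha_{\chi\psi}}(m))=0$, which is precisely the condition placing the class in $H^1_{\calF_+}(\QQ_p, X^\circ_{f,\chi\psi})$. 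Combining the two cases shows $\BF^{f\otimes\chi\psi}_{\alpha_f,\alpha_{\chi\psi}}(m)\in H^1_{\calF_+}(\QQ(m),X^\circ_{f,\chi\psi})$.

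I expect the main obstacle to be bookkeeping rather than conceptual: one must make sure that the filtrations $F^\pm$ on $R^*_f$ and on $R^*_{\theta_{\chi\psi}}$ are the ones induced by the corresponding filtrations on the big representations (so that the specialization maps are filtered), and that the various identifications via Shapiro's lemma and $\phi_\chi$ are compatible with all the restriction-to-$p$ maps. Once the commuting squares analogous to those in the proof of Proposition~\ref{prop_local_conditions_for_BF_CMfamilies} are in place, the statement follows formally; so I would organize the proof around writing down that diagram and citing \cite[Propositions 8.1.7]{KLZ2} together with the construction of the KLZ classes for the unramified conditions away from $p$.
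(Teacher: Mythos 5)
Your $p$-local verification is essentially the paper's argument: you reformulate the $\calF_+$ condition at $p$ as vanishing of the $F^-\otimes F^-$ projection, use \cite[Proposition~8.1.7]{KLZ2} to get $\res_p^{(--)}(\mathbb{BF}^{f^\alpha,\chi}_m)=0$, and transport this through the filtered specialization ${\bf R}_\chi^*\,\widehat{\otimes}\,\LL(\Gamma_\cyc)\to R^*_{\theta_{\chi\psi}}$. That is exactly what the paper's citation of Proposition~\ref{prop_local_conditions_for_BF_CMfamilies} encapsulates, since the proof of that proposition is precisely the KLZ $(-,-)$-vanishing pushed through a commuting square.

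Your away-from-$p$ argument, however, does not close, and here the paper takes a genuinely different route by citing \cite[Proposition~II.1.1]{colmez98}. The local condition at a finite $v\nmid p$ is $\ker\bigl(H^1(\QQ(m)_v,X^\circ_{f,\chi\psi})\to H^1(\QQ(m)_v^{\rm ur},X_{f,\chi\psi})\bigr)$, and two of your claims do not stand up. First, for $v\mid m$ you assert the condition is automatic because $X^\circ_{f,\chi\psi}$ is unramified at $v$; but the singular quotient of $H^1(\QQ(m)_v,X^\circ_{f,\chi\psi})$ maps (after inverting $p$) into $X_{f,\chi\psi}(-1)^{\mathrm{Fr}_v=1}$, which need not vanish, and certainly does not vanish merely because the coefficients are unramified — one must argue either that this Frobenius eigenvalue does not occur or, more directly, that the class itself is unramified. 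Second, for $v$ dividing $N_f\mathbf{N}\ff|D_K|$ the representation $X^\circ_{f,\chi\psi}$ is itself ramified, and the assertion that the Beilinson--Flach classes are "unramified outside $pN_f\mathbf{N}\ff|D_K|m$ by construction" says nothing at these bad primes, which is exactly where the unramified condition is non-trivial. The Colmez reference supplies the uniform statement needed: the image of a class arising from motivic cohomology lands, at every $\ell\neq p$, in $H^1_{\rm ur}(\QQ_\ell,\,\cdot\otimes\QQ_p)$. You would need to invoke the motivic origin of the Beilinson--Flach elements together with a result of this type, rather than the unramifiedness of the coefficient module, to complete the argument at the non-$p$ places.
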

\begin{proof}
This is immediate from Proposition~\ref{prop_local_conditions_for_BF_CMfamilies} (which is used to verify the required local conditions at primes above $p$) and \cite[Proposition II.1.1]{colmez98} (which is utilized to verify the local conditions away from $p$).
\end{proof}

\begin{proof}[Proof of Theorem~\ref{thm_Split_BK_Sigma12}]
Let us put ${\bf BF}:=\{\BF^{f\otimes\chi\psi}_{\alpha_f,\alpha_{\chi\psi}}(m)\}_{m}$. In view of Proposition~\ref{prop_local_conditions_for_BF}, the collection ${\bf BF}$ is an $\calF_+$-locally restricted Euler system (in the sense of Definition~\ref{defn_appendix_locrestES}). It follows from Theorem~\ref{thm_appendix_ES_main}(ii) combined with Remark~\ref{remark_appendix_simplify_defn_index} that 
$$\ord_p|H^1_{\mathcal{F}_{+}^*}(\QQ,W_{f,\chi\psi}^*(1))|\leq \ord_p \left[H^1_{\mathcal{F}_{+}}(\QQ,X^\circ_{f,\chi\psi}):\cO\cdot \BF^{f\otimes\chi\psi}_{\alpha_f,\alpha_{\chi\psi}}\right]+t$$
(where the index on the right is allowed to be $+\infty$, in which we observe the convention that $\ord_p\infty=\infty$) for some natural number $t$ that depends only on $a$ and the residual representation $\overline{X}_{f,\chi\psi}$.

We then have for $i=1,2$\,, 
\begin{equation}
    \label{eqn_global_duality_bound_BK_step_1}
    \ord_p\left|H^1_{\mathcal{F}_{{\rm BK}^{(i)}}^*}(\QQ,W_{f,\chi\psi}^*(1))\right|\leq \left[H^1_{\mathcal{F}_{+/(i)}}(\QQ_p,X^\circ_{f,\chi\psi}):\cO\cdot \res_p^{+/(i)}\left(\BF^{f\otimes\chi\psi}_{\alpha_f,\alpha_{\chi\psi}}\right)\right]+t\,,
\end{equation}
 by Poitou-Tate global duality. Part (i) (respectively, (ii)) now follows from the definition of the map $\res_p^{(1)}$ (respectively,  $\res_p^{(2)}$), the diagram~\eqref{eqn_resp1_shapiro_chipsi} (respectively, \eqref{eqn_resp2_shapiro_chipsi}) and the definition of ${}^u\BF^{f\otimes\chi\psi}_{\alpha_f,\alpha_{\chi\psi}}$.
\end{proof}

\begin{theorem}
 \label{thm_BK_type_result}
Suppose $f \in S_{k_f+2}(\Gamma_1(N_f),\epsilon_f)$ is a cuspidal eigen-newform and $\chi$ is a ray class character of $K$. Let $\psi \in \Sigma_{\rm crit}^{(i)}$ be an algebraic Hecke character whose associated $p$-adic Galois character factors through $\Gamma_K$. Suppose that $\overline{\rho}_f$ is absolutely irreducible and either $\chi\neq \chi^c$ or $\psi\neq \psi^c$ and assume the validity of the condition \ref{item_tau}.  If $L(f_K\otimes\chi^{-1}\psi^{-1},0)\neq 0$, then the Selmer group $\widetilde{H}^2_{\rm f}(G_{K,\Sigma},T_{f,\chi\psi};\Delta^{(i)})$ has finite cardinality.
\end{theorem}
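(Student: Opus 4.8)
The plan is to deduce this non-vanishing result from Theorem~\ref{thm_Split_BK_Sigma12} together with the explicit reciprocity law relating the Beilinson--Flach element to the $L$-value $L(f_{/K}\otimes\chi^{-1}\psi^{-1},0)$. First I would reduce the statement to showing that the Beilinson--Flach class ${}^u\BF^{f\otimes\chi\psi}_{\alpha_f,\alpha_{\chi\psi}}$ has non-trivial image under the appropriate local map $\res_\p^{(i)}$ (i.e.\ under the Perrin-Riou/Coleman map into $H^1(K_\p,F^-R_f^*(\chi\psi))$ for $i=1$, and into $H^1(K_{\p^c},F^+R_f^*(\chi\psi))$ for $i=2$). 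Indeed, if that local image is non-zero, then the index $\left[H^1(K_\p,F^-R_f^*(\chi\psi)):\cO\cdot\res_\p^{(1)}\big({}^u\BF\big)\right]$ (resp.\ the analogous $\p^c$-index) is finite; since $H^0(\QQ_p,\cA_i)$ and ${\rm Tam}(f,\chi\psi)$ are always finite, Theorem~\ref{thm_Split_BK_Sigma12} then forces $\widetilde{H}^2_{\rm f}(G_{K,\Sigma},T_{f,\chi\psi};\Delta^{(i)})$ to be finite, as desired. (The hypothesis that $\overline{\rho}_f$ is absolutely irreducible is exactly what makes the Selmer-complex formalism of Definition~\ref{defn_aux_Selmer_groups} and the global duality sequences above applicable.)

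Second I would establish the required local non-vanishing. This is where the explicit reciprocity law of Kings--Loeffler--Zerbes (recalled in the proof of Proposition~\ref{prop_reciprocity_law}) enters: specializing the $\Lambda$-adic Perrin-Riou map $\cL^{(-,+)}_{f,\chi}$ of that proof at the arithmetic point corresponding to the crystalline character $\psi$ computes $\res_\p^{(1)}\big({}^u\BF^{f\otimes\chi\psi}_{\alpha_f,\alpha_{\chi\psi}}\big)$ (up to the explicit non-zero fudge factors $\cE(f,\chi\psi)/\cE(f)\cE^*(f)$ and the period/Gamma factors appearing in Definition~\ref{defn_Hidas_padic_Lfunctions}) in terms of the value $L_p(f_{/K}\otimes\chi,\Sigma^{(i)}_{\rm crit})(\psi_p)$, which by the interpolation formula is a non-zero multiple of $L(f_{/K}\otimes\chi^{-1}\psi^{-1},1)=L(f_{/K}\otimes\chi^{-1}\psi^{-1},0)$ in our normalization. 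Since we assumed this $L$-value is non-zero, and since the Euler-like factors $\cE(f,\chi\psi)$, $\cE(f)$, $\cE^*(f)$ and the archimedean factors are non-zero (here one uses that $\psi$ is critical and $p$-crystalline, and, when $k_f\equiv 0\bmod 2(p-1)$, a mild genericity on $\alpha_f-\chi(\p^c)$ can be absorbed into the hypotheses as in Remark~\ref{rem_what_is_known_split_def_indef_ord}(a), or one simply notes that vanishing of $\cE(f,\chi\psi)$ would already be reflected in the $L$-value side), the image $\res_\p^{(1)}\big({}^u\BF\big)$ is non-zero. The case $i=2$ is identical with $\p$ replaced by $\p^c$ and $F^-R_f^*$ by $F^+R_f^*$, invoking the second divisibility of Proposition~\ref{prop_reciprocity_law} and the $\Sigma^{(2)}_{\rm crit}$-interpolation formula; here the extra factor $H_\chi$ (a generator of the CM congruence module of $\theta_\chi$) is a non-zero element and so does not affect the argument.

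Finally I would record the bookkeeping. The hypothesis ``$\chi\neq\chi^c$ or $\psi\neq\psi^c$'' guarantees we are genuinely in the cuspidal (non-Eisenstein) regime where $X^\circ_{f,\chi\psi}$ has irreducible constituents of the right shape, so that the diagrams \eqref{eqn_resp1_shapiro_chipsi}--\eqref{eqn_resp2_shapiro_chipsi} relating the $\QQ$-picture to the $K$-picture are available and the maps $u^{(\p)}_+$, $u^{(\p^c)}_-$ have torsion (indeed $p^a$-bounded) kernel and cokernel, so non-vanishing of $\res_p^{(i)}\big(\BF^{f\otimes\chi\psi}_{\alpha_f,\alpha_{\chi\psi}}\big)$ over $\QQ$ transfers to non-vanishing of $\res_\p^{(i)}\big({}^u\BF\big)$ over $K$ with at most a bounded $p$-power discrepancy, which is harmless for finiteness. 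The main obstacle I anticipate is not the structure of the argument but the precise verification that the explicit reciprocity law of \cite{KLZ2}, as reformulated in Proposition~\ref{prop_reciprocity_law}, specializes cleanly at the non-crystalline-in-general point $\psi$ with \emph{all} the local Euler factors genuinely non-zero; checking that none of $\cE(f,\chi\psi)$, $\cE(f)$, $\cE^*(f)$ (and, for $i=2$, the analogous $\cE(\chi\psi,f)$, $\cE(\chi\psi)$, $\cE^*(\chi\psi)$) degenerate under our running hypotheses — or, where they might, showing that such degeneration is detected on the $L$-value side — is the delicate point, and is essentially the content of the ``non-triviality'' discussion in Remark~\ref{rem_compare_with_Forum}.
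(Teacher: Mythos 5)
Your overall strategy is the same as the paper's: combine the locally restricted Euler system machinery of the appendix with the Kings--Loeffler--Zerbes reciprocity law and the interpolation formulae for Hida's $p$-adic $L$-function, so that non-vanishing of $L(f_{/K}\otimes\chi^{-1}\psi^{-1},0)$ forces non-vanishing of the local image of the Beilinson--Flach element, which in turn forces finiteness of the Selmer group. Two points deserve comment, one structural and one about the delicate step you flag at the end.

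First, you route the argument through Theorem~\ref{thm_Split_BK_Sigma12}, which is the \emph{quantitative} Euler system bound. That theorem carries the ambient hypothesis of \S\ref{sec_results_split} that $\chi$ itself satisfies \ref{item_RIa}, and internally it invokes Theorem~\ref{thm_appendix_ES_main}(ii), which needs \ref{item_HypXab}. Theorem~\ref{thm_BK_type_result}, however, is stated to allow $\chi=\chi^c$ provided $\psi\neq\psi^c$, and in that case \ref{item_RIa} for $\chi$ fails. The paper sidesteps this by applying Theorem~\ref{thm_appendix_ES_main}(i) \emph{directly}, which only needs the qualitative hypothesis \ref{item_HypX}, and one checks via Proposition~\ref{prop_appendix_example} that \ref{item_HypX} holds for $X=X_{f,\chi\psi}$ under exactly the hypotheses of Theorem~\ref{thm_BK_type_result} (the point being that the condition ``$\chi\neq\chi^c$ or $\psi\neq\psi^c$'' together with the coprimality of conductors forces $\chi\psi\neq\chi^c\psi^c$, which is what feeds into \ref{item_HypX}(i) and (iv)); then one transfers finiteness from $H^1_{\cF^*_{{\rm BK}^{(i)}}}(\QQ,W^*_{f,\chi\psi}(1))$ to $\widetilde H^2_{\rm f}$ via Lemma~\ref{lemma_NEK_global_duality_compare_BK}. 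Your argument can be repaired by replacing $(\hbox{RI}_a)$ for $\chi$ with the corresponding condition for $\chi\psi$, but as written it formally does not cover the case $\chi=\chi^c$, $\psi\neq\psi^c$. Using part~(i) rather than part~(ii) of Theorem~\ref{thm_appendix_ES_main} is both cleaner and strictly more general here.

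Second, your worry about degeneration of the Euler factors $\cE(f,\chi\psi)$, $\cE(f)$, $\cE^*(f)$ and the genericity condition \ref{item_HnEZ} is unnecessary. The paper disposes of this point in one line: the interpolation factors in the KLZ reciprocity law can only degenerate when the weights of the two eigenforms coincide, and for $\psi\in\Sigma^{(1)}_{\rm crit}$ the weight $\ell+1$ of $\theta_{\chi\psi}$ satisfies $\ell+1\le k_f+1<k_f+2$, while for $\psi\in\Sigma^{(2)}_{\rm crit}$ one has $\ell+1\ge k_f+3>k_f+2$. So the weight of $\theta_{\chi\psi}$ is never $k_f+2$, the interpolation factors are non-zero, and the non-vanishing of the $L$-value transfers directly to non-vanishing (indeed non-torsion-ness) of $\res_p^{+/(i)}\bigl(\BF^{f\otimes\chi\psi}_{\alpha_f,\alpha_{\chi\psi}}\bigr)$. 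The condition \ref{item_HnEZ} is not needed here --- it is relevant to a different piece of the apparatus in \cite{BLForum}, not to the reciprocity law at the specialization $\psi$.
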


\begin{proof}
Under the running hypotheses, \ref{item_HypX} holds true with $X=X_{f,\chi\psi}$ and therefore, Theorem~\ref{thm_appendix_ES_main}(i) applies with $X=X_{f,\chi\psi}$, $\calF=\calF_+$ and $\mathbf{c}=\mathbf{BF}$ as in the proof of Theorem~\ref{thm_Split_BK_Sigma12} to show that the dual Selmer group $H^1_{\mathcal{F}_{{\rm BK}^{(i)}}^*}(\QQ,W_{f,\chi\psi}^*(1))$ has finite cardinality so long as $\res_p^{+/(i)}\left(\BF^{f\otimes\chi\psi}_{\alpha_f,\alpha_{\chi\psi}}\right)$ is non-torsion. In that case, it follows from the proof of Lemma~\ref{lemma_NEK_global_duality_compare_BK} that $\widetilde{H}^2_{\rm f}(G_{K,\Sigma},T_{f,\chi\psi};\Delta^{(i)})$ has finite cardinality as well.

If $L(f_K\otimes\chi^{-1}\psi^{-1},0)\neq 0$, it follows from \cite[Theorem 10.2.2]{KLZ2} (reciprocity laws for Beilinson--Flach elements) and the interpolation formulae for Hida's $p$-adic $L$-function (as given by Theorem~\ref{thm:sigma1interpolationformula} in Appendix~\ref{appendix:corrige} and \cite[Theorem~2.3]{BLForum}) that $\res_p^{+/(i)}\left(\BF^{f\otimes\chi\psi}_{\alpha_f,\alpha_{\chi\psi}}\right)$ is non-torsion. Note that the interpolation factors in the interpolation formulae do not vanish when $\psi \in \Sigma_{\rm crit}^{(i)}$, since in that case the weight of $\theta_{\chi\psi}$ is never equal to $k_f+2$.
\end{proof}

\begin{remark}
One expects that depending on the global root number for $f_K\otimes \chi$, either for all but finitely many $\psi \in \Sigma_{\rm crit}^{(1)}$, or else all but finitely many $\psi \in \Sigma_{\rm crit}^{(2)}$, we have $L(f_K\otimes\chi^{-1}\psi^{-1},0)\neq 0$.
\end{remark}


\subsection{Iwasawa Theory}
\label{subsec_split_ord}
Let us fix a cuspidal eigen-newform $f\in S_{k_f+2}(\Gamma_1(N_f),\epsilon_f)$ (with $k_f>0$) without CM and let $\f$ denote the unique branch of the Hida family of tame level $N_f$ that admits $f^{\alpha}$ as a specialization at weight $k_f+2$. As above, we have also fixed a ray class character $\chi$ of the imaginary quadratic field $K$ with conductor $\ff$ and such that $\chi^c\neq \chi$, where ${\mathbf N}\ff$, $N_f$ and $p$ are pairwise coprime. We assume that $p=\p\p^c$ splits in $K/\QQ$ and that $|D_K|$ is coprime to $N_f$.

\begin{defn}
\label{defn_good_twists_for_hyp}
Let $a\in \ZZ_{\geq 0}$ be the least non-negative integer such that $\chi$ verifies \ref{item_RIa}. For each $i=1,2$, we define a subset $\mathcal{S}^{(i)}_f(a)$ of the space ${\rm Sp}(\LL(\Gamma_K))(\cO)$ on setting 
$$\mathcal{S}^{(i)}_f(a):=\{\psi_p: \psi\in \Sigma^{(2)}_{\rm crit} \hbox{ is } \cO\hbox{-valued, }  \psi_p \hbox{ factors through } \Gamma_K \hbox{ and } X_{f,\chi\psi}  \hbox{ verifies \ref{item_HypXa}}\}.$$ 

We similarly define a subset $\mathcal{S}^{(i)}(a)$ of the space ${\rm Sp}(\LL_{\rm wt}\,\widehat{\otimes}\, \LL(\Gamma_K))(\cO)$ on setting 
$$\mathcal{S}^{(i)}(a):=\{(\kappa,\psi_p): \psi\in \Sigma^{(i)}_{\rm crit}(\kappa) \hbox{ is } \cO\hbox{-valued, }  \psi_p \hbox{ factors through } \Gamma_K \hbox{ and } X_{\f(\kappa),\chi\psi}  \hbox{ verifies \ref{item_HypXa}}\}.$$ 
\end{defn}

\begin{theorem}
\label{thm_good_twists_are_dense} Suppose that \ref{item_tau} holds true and $\overline{\rho}_{\f}$ is $p$-distinguished. 
\item[i)]The set $\mathcal{S}^{(2)}_f(a,0)$ is dense in ${\rm Sp}(\LL(\Gamma_K))(\cO)$.
\item[ii)] The set $\mathcal{S}^{(i)}(a,0)$ is dense in ${\rm Sp}(\LL_{\rm wt}\,\widehat{\otimes}\, \LL(\Gamma_K))(\cO)$. 

The conclusions of this theorem hold with $a=0$ if $\chi^c\not\equiv \chi \mod \frak{m}$.
\end{theorem}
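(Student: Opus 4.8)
The plan is to prove Theorem~\ref{thm_good_twists_are_dense} by reducing the density statement to a Zariski-density argument for the locus of Hecke characters $\psi$ whose twist $X_{f,\chi\psi}$ (resp. $X_{\f(\kappa),\chi\psi}$) satisfies the technical hypothesis \ref{item_HypXa}. First I would recall precisely what \ref{item_HypXa} demands: it is (presumably) the requirement that there exists $\tau\in G_{\QQ(\mu_{p^\infty})}$ with $X/(\tau-1)X$ free of rank one over $\cO$, together with the companion residual conditions needed to run the residually-reducible Euler system machinery of Appendix~\ref{appendix_sec_ES_main} with error term governed by $a$. The key observation is that, by the remark following \ref{item_tau}, the hypothesis \ref{item_tau} already guarantees the existence of such a $\tau$ for \emph{every} crystalline specialization $g$ of $\f$ and \emph{every} Hecke character $\psi$; so the content of \ref{item_HypXa} beyond \ref{item_tau} is the control of the residual image/ramification of $X_{f,\chi\psi}$, which should only fail on a proper Zariski-closed (in fact, a measure-zero, congruence-type) subset of the parameter space.

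The key steps, in order, would be: \textbf{(1)} Fix the branch $\chi$ and let $a$ be minimal with \ref{item_RIa}. Identify the residual representation $\overline{X}_{f,\chi\psi}=\overline{\rho}_f\otimes\mathrm{Ind}_{K/\QQ}\overline{\chi\psi}$ and note that modulo $\m_\cO$ it depends only on $\overline{\psi}$, hence only on the reduction of the $p$-adic avatar $\psi_p$; since $\psi_p$ factors through $\Gamma_K\cong\ZZ_p^2$, the reduction $\overline{\psi}$ ranges over a finite set. \textbf{(2)} Show that for all $\psi$ in a fixed residue class mod $\m_\cO$ the relevant invariant (the one packaged in \ref{item_HypXa}) is constant, so that the ``bad'' set, if nonempty, is a union of finitely many congruence classes, and in particular its complement — if nonempty in each component of weight space — is dense. \textbf{(3)} Produce at least one good $\psi$ in each relevant congruence class: here I would invoke \ref{item_tau} to get the splitting element $\tau$, and then an explicit congruence/choice of infinity type to arrange the residual ramification condition; this uses that $\Sigma^{(2)}_{\rm crit}$ (resp. $\Sigma^{(i)}_{\rm crit}(\kappa)$) contains infinitely many $\cO$-valued characters with $p$-adic avatar factoring through $\Gamma_K$ in \emph{every} open subset of the character space, by a standard approximation argument (the infinity-type constraint defining $\Sigma^{(i)}_{\rm crit}$ is an inequality on the archimedean component and imposes no $p$-adic obstruction). \textbf{(4)} For part (ii), run the same argument fibered over $\kappa\in S_\f^{\rm crys}$, using that $S_\f^{\rm crys}$ is dense in ${\rm Sp}(\LL_{\rm wt})(\cO)$ and that $\overline{\rho}_{\f(\kappa)}=\overline{\rho}_\f$ is independent of $\kappa$, so the residual analysis is uniform; then combine densities in the two factors. \textbf{(5)} For the last sentence, observe that when $\chi^c\not\equiv\chi\bmod\m_\cO$ we have $a=0$, so \ref{item_RIa} holds with $a=0$, the residual representation $\overline{X}_{f,\chi\psi}$ is absolutely irreducible (Lemma~\ref{lemma_lattices_theta_pointwise} / Proposition~\ref{prop_uniqueness_of_the_lattice_in_residually_irred_case}), and \ref{item_HypXa} with $a=0$ reduces to conditions automatically satisfied under \ref{item_tau}; hence $\mathcal{S}^{(i)}_f(a)=\mathcal{S}^{(i)}_f(0)$ is the entire set of admissible $\psi_p$, which is manifestly dense.

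The main obstacle I anticipate is \textbf{Step (3)} — actually exhibiting a good $\psi$ in each residue class, i.e.\ verifying that the residual ramification hypothesis inside \ref{item_HypXa} is non-vacuous. One has to check that $\overline{\rho}_f\otimes\mathrm{Ind}_{K/\QQ}\overline{\chi\psi}$ has the right local behavior at the primes dividing $\ff N_f$ and the right image (large enough for the Euler system bound, with slack measured by $a$); controlling this simultaneously with the splitting-element condition requires knowing that these constraints are ``independent'' in a suitable sense, which typically rests on a Chebotarev/approximation argument in the big Galois group generated by $\overline{\rho}_f$ and the abelian part coming from $\mathrm{Ind}_{K/\QQ}\overline{\chi\psi}$. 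I would handle this by citing the relevant lemma from Appendix~\ref{appendix_sec_ES_main} (where \ref{item_HypXa} is presumably introduced and shown stable under twists) rather than redoing the group theory here. A secondary, more bookkeeping-level obstacle is making sure the infinity-type constraints defining $\Sigma^{(i)}_{\rm crit}$ versus $\Sigma^{(i)}_{\rm crit}(\kappa)$ still leave a $p$-adically dense set of avatars factoring through $\Gamma_K$ as $\kappa$ varies — this is routine but needs the compatibility of the weight of $\theta_{\chi\psi}$ with $k_f$ (resp.\ $w(\kappa)$) spelled out, exactly as in the argument used at the end of the proof of Theorem~\ref{thm_BK_type_result}.
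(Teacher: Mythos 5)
Your proposal is considerably more elaborate than the paper's actual argument and takes a genuinely different route, built around a ``bad set'' framework that turns out to be unnecessary here.

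The paper's proof is essentially a one-step verification: for every admissible pair $(\kappa,\psi)$ in the parameter space (i.e.\ $\kappa\in S_\f^{\rm crys}$ and $\psi\in \Sigma^{(i)}_{\rm crit}(\kappa)$ $p$-crystalline with $\psi_p$ factoring through $\Gamma_K$), the representation $X=X_{\f(\kappa),\chi\psi}$ satisfies \emph{all} parts of \ref{item_HypXa}. Concretely, Proposition~\ref{prop_appendix_example} gives (iii) unconditionally (via $\varepsilon_f\varepsilon_{\theta_{\chi\psi}}\neq 1$, a statement about nebentypes having nothing to do with ramification control over the weight space), and gives (i) and (iv) from $\chi\psi\not\equiv\chi^c\psi^c\bmod\m^{a+1}$; part (ii) is the remark following \ref{item_tau}, which quotes [Theorem C.2.4, BLInertOrdMC]. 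Since the conditions hold identically, there is no exceptional locus to excise: the set $\mathcal{S}^{(i)}(a)$ is the \emph{entire} set of admissible pairs, and density is immediate from the density of $S_\f^{\rm crys}$ and of $p$-crystalline characters in $\Gamma_K^{\wedge}$.

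Your proposal instead posits a possibly nonempty bad set, stratified by residue classes $\overline{\psi}$, and plans to show (step 2) that the obstruction is constant on classes and (step 3) to manufacture at least one good $\psi$ per class via a Chebotarev/approximation argument. Two remarks. First, since $\Gamma_K$ is pro-$p$, every $\psi_p$ factoring through $\Gamma_K$ satisfies $\psi_p\equiv 1\bmod\m$, so there is only \emph{one} residue class; the stratification does not buy you anything. Second, and more importantly, step (3) is where you anticipate the work and it is precisely what the paper avoids: there is no residual ramification condition inside \ref{item_HypXa} that needs to be arranged by choosing $\psi$ carefully. Part (iii) of \ref{item_HypXa} is verified once and for all by the nebentype observation, and parts (i), (iv) follow from the congruence $\chi\psi\not\equiv\chi^c\psi^c\bmod\m^{a+1}$ (together with the running hypothesis that $\overline{\rho}_\f\vert_{G_K}$ is absolutely irreducible, which is part of \ref{item_tau}). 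You hedge by writing ``I would handle this by citing the relevant lemma from Appendix~\ref{appendix_sec_ES_main},'' which is the right instinct; but you should then realize that this citation \emph{collapses} the argument and eliminates the need for the bad-set machinery, the per-class Chebotarev argument, and the density-of-the-complement reasoning. Your step (5) for the $a=0$ case is correct and matches the paper's final sentence (via Proposition~\ref{prop_uniqueness_of_the_lattice_in_residually_irred_case}).

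In short: the route you propose would probably reach the destination, but it is a significantly longer road, and the key simplification you are missing is that Proposition~\ref{prop_appendix_example} and the $\tau$-remark verify \ref{item_HypXa} for the entire parameter space, not merely for a generic member of each congruence class.
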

\begin{proof}
We only provide the proof of (ii) as the proof of (i) is very similar. We will also prove only that $\mathcal{S}^{(2)}(a,0)$ is dense in ${\rm Sp}(\LL_{\rm wt}\,\widehat{\otimes}\, \LL(\Gamma_K))(\cO)$ since a small alteration to the argument below allows one to handle the remaining case as well.

In this proof, we shall set $X=X_{\f(\kappa),\chi\psi}$ for $\psi\in \Sigma^{(2)}_{\rm crit}(\kappa)$ $p$-crystalline. As explained in Proposition~\ref{prop_appendix_example}, \ref{item_HypXa}(iii) always holds true. It also follows from Proposition~\ref{prop_appendix_example} that \ref{item_HypXa}(i) and \ref{item_HypXa}(iv) hold true, since $\chi\psi\not\equiv \chi^c\psi^c \mod \frak{m}^{a+1}$. Finally, since we assumed \ref{item_tau}, it follows from \cite[Theorem C.2.4]{BLInertOrdMC} that \ref{item_HypXa}(ii) is valid as well.
The set of $(\kappa,\psi)$ satisfying required conditions is therefore dense in ${\rm Sp}(\LL_{\rm wt}\,\widehat{\otimes}\, \LL(\Gamma_K))$. 
\end{proof}

Before stating our Iwasawa theoretic results, we will explain in the next few paragraphs the relation between the Iwasawa theoretic extended Selmer groups that appear in our statements and their classical counterparts (i.e., Greenberg's Selmer groups). We introduce some notation with that purpose in mind.

\begin{defn}
For $i=1,2$, we define 
$$\frak{X}^{(i)}_{\rm BK}(K_\infty,T_{f,\chi}):= \left(\varinjlim_\alpha H^1_{\mathcal{F}_{{\rm BK}^{(i)}}^*}(K_\alpha,A_{f,\chi\psi}^*(1))\right)^{\vee}\otimes_{\LL_{\cO}(\Gamma_K)}\LL_{\cO}(\Gamma_K)^\iota$$
where the direct limit is over all finite extensions $K_\alpha/K$ contained in $K_\infty$, where we recall that $\LL_{\cO}(\Gamma_K)^\iota$ is the free $\LL_{\cO}(\Gamma_K)$-module of rank one on which $G_K$ acts via the composition $G_K\twoheadrightarrow\Gamma_K\xrightarrow{\gamma\mapsto\gamma^{-1}}\Gamma_K\hookrightarrow \LL_{\cO}(\Gamma_K)$. Similarly, we put
$$\frak{X}^{(i)}_{\rm sBK}(K_\infty,T_{f,\chi}):= \left(\varinjlim_\alpha H^1_{\mathcal{F}_{{\rm sBK}^{(i)}}^*}(K_\alpha,A_{f,\chi\psi}^*(1))\right)^{\vee}\otimes_{\LL_{\cO}(\Gamma_K)}\LL_{\cO}(\Gamma_K)^\iota\,.$$

For any $\ZZ_p$-extension $K_\Gamma$ of $K$ with Galois group $\Gamma$ and any Hecke character $\psi\in \Sigma_{\rm crit}^{(i)}$, we likewise define 
$$\frak{X}^{(i)}_{\rm BK}(K_\Gamma,T_{f,\chi}):= \left(\varinjlim_\alpha H^1_{\mathcal{F}^*_{{\rm BK}^{(i)}}}(K_\alpha,A_{f,\chi\psi}^*(1))\right)^{\vee}\otimes_{\LL_{\cO}(\Gamma)}\LL_{\cO}(\Gamma)^\iota$$
$$\frak{X}^{(i)}_{\rm sBK}(K_\Gamma,T_{f,\chi}):= \left(\varinjlim_\alpha H^1_{\mathcal{F}^*_{{\rm sBK}^{(i)}}}(K_\alpha,A_{f,\chi\psi}^*(1))\right)^{\vee}\otimes_{\LL_{\cO}(\Gamma)}\LL_{\cO}(\Gamma)^\iota$$
where now the direct limit is over all finite extensions $K_\alpha/K$ contained in $K_\Gamma$. 
\end{defn}

We are now ready to the record the relation between the Iwasawa theoretic extended Selmer groups that appear in our statements and their classical counterparts;  this comparison is essentially due to Nekov\'a\v{r}.
\begin{lemma}
\label{lemma_compare_Iwasawa_Nek_Iwasawa_Greenberg_K_infty}
For each $i=1,2$, we have an injective morphism
$$\frak{X}^{(i)}_{\rm BK}(K_\infty,T_{f,\chi})\lra \widetilde{H}^2_{\rm f}(G_{K,\Sigma},\TT_{f,\chi};\Delta^{(i)})$$
of $\LL_{\cO}(\Gamma_K)$-modules with pseudo-null cokernel.
\end{lemma}

\begin{proof}
For a given finite extension $K_\alpha/K$ contained in $K_\infty$, we have the following exact sequence:
\begin{align}
\begin{aligned}
   0\rightarrow H^1_{\mathcal{F}_{{\rm sBK}^{(i)}}}(K_\alpha,T_{f,\chi}) \lra H^1_{\mathcal{F}_{{\rm BK}^{(i)}}}&(K_\alpha,T_{f,\chi})\lra H^0(K_\alpha,{\rm Err})\\
\label{alignstar_strictvsususalBK_seq_bis_alpha}& \lra H^1_{\mathcal{F}_{{\rm sBK}^{(i)}}^*}(K_\alpha,A_{f,\chi}^*(1))^\vee \lra H^1_{\mathcal{F}_{{\rm BK}^{(i)}}^*}(K_\alpha,A_{f,\chi\psi}^*(1))^\vee \rightarrow 0,
\end{aligned}
\end{align}
where $H^0(K_\alpha,{\rm Err})$ is given by the exactness of the sequence
\begin{equation}
    \label{eqn_err_alpha_exact}
 0\lra \bigoplus_{\substack{v\in \Sigma\\ v\nmid p}}H^1_{\rm ur}(K_{\alpha,v}, T_{f,\chi}) \lra    \bigoplus_{\substack{v\in \Sigma\\ v\nmid p}}H^1_{\rm f}(K_{\alpha,v}, T_{f,\chi})\lra H^0(K_\alpha,{\rm Err})\lra 0\,.
\end{equation}
Passing to inverse limit in \eqref{eqn_err_alpha_exact} in $\alpha$, we deduce, thanks to \cite[Prop. B.1.1]{rubin00}, that 
\begin{equation}
    \label{eqn_err_alpha_exact_limit}
 0\lra \varprojlim_\alpha\bigoplus_{\substack{v\in \Sigma\\ v\nmid p}}H^1_{\rm ur}(K_{\alpha,v}, T_{f,\chi}) \lra   \varprojlim_\alpha \bigoplus_{\substack{v\in \Sigma\\ v\nmid p}} H^1_{\rm f}(K_{\alpha,v}, T_{f,\chi})\lra \varprojlim_\alpha H^0(K_\alpha,{\rm Err})\lra 0
\end{equation}
is also exact. Moreover, it follows from \cite[Prop. B.3.4]{rubin00} (see also the proof of \cite{mr02}, Lemma 5.3.1) that 
$\varprojlim_\alpha\bigoplus_{\substack{v\in \Sigma\\ v\nmid p}}H^1_{\rm ur}(K_{\alpha,v}, T_{f,\chi})=\varprojlim_\alpha\bigoplus_{\substack{v\in \Sigma\\ v\nmid p}}H^1(K_{\alpha,v}, T_{f,\chi})$. This fact combined with \eqref{eqn_err_alpha_exact_limit} shows that $ \varprojlim_\alpha H^0(K_\alpha,{\rm Err})=0$.

We now pass to limit in \eqref{alignstar_strictvsususalBK_seq_bis_alpha} to obtain the exact sequence
\begin{align}
\begin{aligned}
   0\rightarrow \varprojlim_\alpha H^1_{\mathcal{F}_{{\rm sBK}^{(i)}}}(K_\alpha,T_{f,\chi}) \rightarrow \varprojlim_\alpha & H^1_{\mathcal{F}_{{\rm BK}^{(i)}}}(K_\alpha,T_{f,\chi})\rightarrow \varprojlim_\alpha H^0(K_\alpha,{\rm Err})=0\\
\label{alignstar_strictvsususalBK_seq_bis_alpha_limit}& \rightarrow \varprojlim_\alpha H^1_{\mathcal{F}_{{\rm sBK}^{(i)}}^*}(K_\alpha,A_{f,\chi}^*(1))^\vee \rightarrow \varprojlim_\alpha H^1_{\mathcal{F}_{{\rm BK}^{(i)}}^*}(K_\alpha,A_{f,\chi\psi}^*(1))^\vee \rightarrow 0
\end{aligned}
\end{align}
which reduces to the pair of isomorphisms
$$\varprojlim_\alpha H^1_{\mathcal{F}_{{\rm sBK}^{(i)}}}(K_\alpha,T_{f,\chi})\xrightarrow{\sim} \varprojlim_\alpha H^1_{\mathcal{F}_{{\rm BK}^{(i)}}}(K_\alpha,T_{f,\chi})\,,$$
\begin{equation}
\label{eqn_strict_vs_non_strict_limit_pontryagin_duals}
\frak{X}^{(i)}_{\rm sBK}(K_\Gamma,T_{f,\chi})^\iota=\varprojlim_\alpha H^1_{\mathcal{F}_{{\rm sBK}^{(i)}}^*}(K_\alpha,A_{f,\chi}^*(1))^\vee \xrightarrow{\sim} \varprojlim_\alpha H^1_{\mathcal{F}_{{\rm BK}^{(i)}}^*}(K_\alpha,A_{f,\chi\psi}^*(1))^\vee=\frak{X}^{(i)}_{\rm BK}(K_\Gamma,T_{f,\chi})^\iota\,.
\end{equation}
Here the equalities on the left and the right extremes follow from definitions and the fact that Pontryagin duality is an equivalence of categories ${\bf LCA}^{\rm op}\to {\bf LCA}$, where ${\bf LCA}$ is the category of locally compact abelian groups and ${\bf LCA}^{\rm op}$ is its opposite. 

On the other hand, Nekov\'a\v{r}'s ``Matlis duality'' \cite[(8.9.10)]{nekovar06} shows that we have an isomorphism
\begin{equation}
\label{eqn_Nekovar_Matlis}
    \widetilde{H}^1_{\rm f}(K_S/K_\infty,A_{f,\chi}^*(1);\Delta^{(i)})^{\vee}\otimes_{\LL_{\cO}(\Gamma_K)}\LL_{\cO}(\Gamma_K)^\iota\stackrel{\sim}{\lra} \widetilde{H}^2_{\rm f}(G_{K,\Sigma},\TT_{f,\chi};\Delta^{(i)})\,.
\end{equation}
Finally, \cite[Prop. 9.6.6]{nekovar06} shows that we have a natural injection
\begin{equation}
\label{eqn_Nekovar_Prop966}
   \frak{X}^{(i)}_{\rm sBK}(K_\Gamma,T_{f,\chi})^\iota\xrightarrow{\beta^\vee} \widetilde{H}^1_{\rm f}(K_S/K_\infty,A_{f,\chi}^*(1);\Delta^{(i)})^{\vee}
    \end{equation}
    with pseudo-null cokernel.
    
    Putting everything together, we conclude that the composition of the arrows
    \begin{align*}
        \frak{X}^{(i)}_{\rm BK}(K_\Gamma,T_{f,\chi})\xrightarrow[\sim] {\eqref{eqn_strict_vs_non_strict_limit_pontryagin_duals}^{-1}}
        \frak{X}^{(i)}_{\rm sBK}(K_\Gamma,T_{f,\chi})\xrightarrow[1-1]{\eqref{eqn_Nekovar_Prop966}} \widetilde{H}^1_{\rm f}(K_S/K_\infty,A_{f,\chi}^*(1);\Delta^{(i)})^{\vee}& \otimes_{\LL_{\cO}(\Gamma_K)}\LL_{\cO}(\Gamma_K)^\iota\\
    & \xrightarrow[\sim]{\eqref{eqn_Nekovar_Matlis}} \widetilde{H}^2_{\rm f}(G_{K,\Sigma},\TT_{f,\chi};\Delta^{(i)})
    \end{align*}
    is injective with pseudo-null cokernel, as required.
\end{proof}

We now state our first Iwasawa theoretic result.
\begin{theorem}
\label{thm_ordinary_Delta1_Selmer_torsion_via_horizontal_ES} 
Suppose $f \in S_{k_f+2}(\Gamma_1(N_f),\epsilon_f)$ is a $p$-ordinary cuspidal eigen-newform and $\chi$ is a ray class character of $K$ verifying \ref{item_RIa} for some natural number $a$. Let $\f$ be the unique branch of the Hida family that admits the slope-zero $p$-stabilized form $f^\alpha$ as a specialization in weight $k_f+2$. Suppose that \ref{item_tau} holds and $\overline{\rho}_{\f}$ is $p$-distinguished. 
\item[i)] All four $\LL_{\cO}(\Gamma_K)$-modules $\widetilde{H}^1_{\rm f}(G_{K,\Sigma},\TT_{f,\chi};\Delta^{(i)})$ and $\widetilde{H}^2_{\rm f}(G_{K,\Sigma},\TT_{f,\chi};\Delta^{(i)})$ ($i=1,2$) are torsion.
\item[ii)] All four $\LL_{\f}(\Gamma_K)$-modules $\widetilde{H}^1_{\rm f}(G_{K,\Sigma},\TT_{\f,\chi};\Delta^{(1)})$ and $\widetilde{H}^2_{\rm f}(G_{K,\Sigma},\TT_{\f,\chi};\Delta^{(i)})$ ($i=1,2$) are torsion.
\end{theorem}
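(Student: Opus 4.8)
The plan is to reduce both assertions to a single generic-specialization argument, exploiting that a finitely generated module over $\LL_\cO(\Gamma_K)$ (resp.\ $\LL_\f(\Gamma_K)$) is torsion as soon as infinitely many of its specializations at a Zariski-dense set of height-one primes have finite cardinality, together with the control theorems afforded by Nekov\'a\v{r}'s formalism. First I would record that, since the Selmer complex $\widetilde{{\bf R}\Gamma}_{\rm f}(G_{K,\Sigma},\TT_{\f,\chi}^{(\Gamma_K)};\Delta^{(i)})$ is perfect with cohomology concentrated in degrees $[0,3]$, and since $\widetilde{H}^0$ and $\widetilde{H}^3$ vanish (the former because $\overline{\rho}_f$ is absolutely irreducible so $\TT$ has no $G_{K,\Sigma}$-invariants; the latter by the local conditions at $p$ and Poitou--Tate, as $\TT^*(1)$ likewise has no invariants), the Euler characteristic formula forces $\widetilde{H}^1$ and $\widetilde{H}^2$ to have the same rank over the relevant Iwasawa algebra. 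Hence it suffices to prove that $\widetilde{H}^2_{\rm f}(G_{K,\Sigma},\TT_{f,\chi};\Delta^{(i)})$ (resp.\ $\widetilde{H}^2_{\rm f}(G_{K,\Sigma},\TT_{\f,\chi};\Delta^{(i)})$) is torsion, and then the $\widetilde{H}^1$ statements follow automatically.

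Next I would invoke Theorem~\ref{thm_good_twists_are_dense}: under \ref{item_tau} and the $p$-distinguished hypothesis, the set $\mathcal{S}^{(i)}_f(a)$ (resp.\ $\mathcal{S}^{(i)}(a)$) of crystalline critical characters $\psi$ (resp.\ pairs $(\kappa,\psi)$) with $\psi_p$ factoring through $\Gamma_K$ and $X_{f,\chi\psi}$ (resp.\ $X_{\f(\kappa),\chi\psi}$) satisfying the running hypotheses of the Euler system machinery, is Zariski-dense in $\mathrm{Sp}(\LL(\Gamma_K))(\cO)$ (resp.\ in $\mathrm{Sp}(\LL_{\rm wt}\,\widehat{\otimes}\,\LL(\Gamma_K))(\cO)$). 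Combining density with Rohrlich's generic nonvanishing result, one sees that among these $\psi$ there are still Zariski-many for which $L(f_K\otimes\chi^{-1}\psi^{-1},0)\neq 0$ — indeed Rohrlich's theorem guarantees nonvanishing of $L(f_K\otimes\chi^{-1}\psi^{-1},0)$ for all but finitely many $\psi$ in any such infinite family of algebraic Hecke characters of $p$-power conductor. For each such $\psi$, Theorem~\ref{thm_BK_type_result} yields that $\widetilde{H}^2_{\rm f}(G_{K,\Sigma},T_{f,\chi\psi};\Delta^{(i)})$ has finite cardinality.

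Finally I would transfer this pointwise finiteness back to the Iwasawa-theoretic statement using the control/base-change isomorphism for Selmer complexes (Nekov\'a\v{r}, \cite[\S8.10]{nekovar06}): for a height-one prime $\cP$ of $\LL_\cO(\Gamma_K)$ corresponding to such a $\psi_p$, the derived specialization of $\widetilde{{\bf R}\Gamma}_{\rm f}(G_{K,\Sigma},\TT_{f,\chi};\Delta^{(i)})$ along $\cP$ computes (up to bounded error coming from $\mathrm{Tor}$ terms and the index in Lemma~\ref{lemma_lattices_theta_pointwise}, which is finite since $X^\circ_{f,\chi\psi}$ sits inside a homothetic copy of $T_{f,\chi}$ with index depending only on $a$) the Selmer group $\widetilde{H}^2_{\rm f}(G_{K,\Sigma},T_{f,\chi\psi};\Delta^{(i)})$; since the latter is finite for a Zariski-dense set of $\cP$, the module $\widetilde{H}^2_{\rm f}(G_{K,\Sigma},\TT_{f,\chi};\Delta^{(i)})$ cannot have positive rank, hence is $\LL_\cO(\Gamma_K)$-torsion. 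The same argument with $\f$ in place of $f$ and $\mathcal{S}^{(i)}(a)$ in place of $\mathcal{S}^{(i)}_f(a)$, together with the fact that $\LL_\f$ is a Krull domain so that torsion is detected at height-one primes, gives part (ii). The main obstacle is bookkeeping in the control theorem: one must ensure that the finitely many "error" contributions (failure of the lattice $X^\circ_{f,\chi\psi}$ to be exactly induced, the local Tamagawa-type factors $H^0(\QQ_p,\mathcal A_i)$ and $\mathrm{Tam}(f,\chi\psi)$, and $\mathrm{Tor}$ terms in the base change) are uniformly bounded along a dense set of specializations — but this is precisely what Lemma~\ref{ref_all_but_finitely_many_fudge_factors_vanish} and the construction of $\mathcal{S}^{(i)}_f(a)$ are designed to provide, so no genuinely new input is required beyond assembling the pieces.
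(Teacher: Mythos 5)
Your overall strategy (specialize, get finiteness from the Bloch--Kato result at a suitable $\psi$, lift back via the control theorem for Selmer complexes) is the right one and matches the paper's in outline. But there are two places where you diverge, and the second is a genuine gap.

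First, the density machinery is unnecessary here. To conclude that a finitely generated module over the integral domain $\LL_\cO(\Gamma_K)$ (or $\LL_\f(\Gamma_K)$) is torsion, it suffices to exhibit a \emph{single} height-one specialization along which the derived base change has finite cohomology in degree $2$; Nekov\'a\v{r}'s control theorem then immediately forces the generic rank to vanish. Your appeal to Theorem~\ref{thm_good_twists_are_dense}, Lemma~\ref{ref_all_but_finitely_many_fudge_factors_vanish}, and a Zariski-dense set of $\psi$'s is machinery designed for the later characteristic-ideal comparison (Theorem~\ref{thm_ordinary_Deltai_MC_via_horizontal_ES}) via the patching argument of \cite[Proposition~A.0.1]{BLInertOrdMC}; for the torsionness claim alone it is overkill and needlessly invites worries about uniform bounds on the local error terms that do not actually arise.

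Second, and this is the real gap: you propose to treat $\Delta^{(1)}$ and $\Delta^{(2)}$ symmetrically by invoking Rohrlich's generic nonvanishing. But for a character $\psi\in\Sigma^{(1)}_{\rm crit}$ the critical point $s=0$ sits strictly inside the critical strip, so the required nonvanishing of $L(f_K\otimes\chi^{-1}\psi^{-1},0)$ is \emph{not} automatic, and as Remark~\ref{rem_compare_with_Forum} makes clear, the applicability of Rohrlich's theorem in the $\Sigma^{(1)}$ direction is conditional (for instance on $k_f>0$ or a hypothesis on $\ff$). The paper sidesteps this entirely: it first invokes the Poitou--Tate exact-sequence argument of \cite[Theorem~3.8]{castellawanGL2families2018} (adjusted for the correction factor $\mathscr{C}(u)$) to show that $\widetilde{H}^2_{\rm f}(G_{K,\Sigma},\TT_{f,\chi};\Delta^{(1)})$ is torsion \emph{if and only if} $\widetilde{H}^2_{\rm f}(G_{K,\Sigma},\TT_{f,\chi};\Delta^{(2)})$ is, and then establishes torsionness only for $\Delta^{(2)}$ by choosing one $\psi\in\Sigma^{(2)}_{\rm crit}$ far enough into the region of absolute convergence (explicitly, $\ell=|\ell_1-\ell_2|\geq k_f+2$ and $\ell_0=\min(\ell_1,\ell_2)<-(k_f+\ell)/2$) so that $L(f_{/K}\otimes\chi^{-1}\psi^{-1},0)\neq 0$ requires no nonvanishing theorem at all. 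Without the Poitou--Tate reduction, your argument does not close the case of $\Delta^{(1)}$. You should add that reduction as the first step and then argue solely in the $\Delta^{(2)}$ column; the Euler--Poincar\'e step you correctly anticipate then transfers torsionness to $\widetilde{H}^1$, and the Hida-family statement follows by first specializing $\f$ at a crystalline arithmetic weight and applying the single-form case.
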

\begin{proof}
The argument using the Poitou--Tate exact sequences in the proof of \cite[Theorem 3.8]{castellawanGL2families2018}  (adjusted to take into account the correction factors $\mathscr C(u)$ and the comparison in Lemma~\ref{lemma_compare_Iwasawa_Nek_Iwasawa_Greenberg_K_infty} of our extended Selmer groups to their classical variants that are used in op. cit.) applies to prove that the $\LL_{\cO}(\Gamma_K)$-module $\widetilde{H}^2_{\rm f}(G_{K,\Sigma},\TT_{f,\chi};\Delta^{(1)})$ is torsion if and only if the $\LL_{\cO}(\Gamma_K)$-module $\widetilde{H}^2_{\rm f}(G_{K,\Sigma},\TT_{f,\chi};\Delta^{(2)})$ is.  

Similarly, the $\LL_\f(\Gamma_K)$-module $\widetilde{H}^2_{\rm f}(G_{K,\Sigma},\TT_{\f,\chi};\Delta^{(1)})$ is torsion if and only if $\widetilde{H}^2_{\rm f}(G_{K,\Sigma},\TT_{\f,\chi};\Delta^{(2)})$ is. We remark that the dual Selmer groups ${\frak X}_{\emptyset,0}(\mathcal{K}_\infty,A_\f^\dagger)$ and   ${\frak X}_{\mathbf{Gr}}(\mathcal{K}_\infty,A_\f^\dagger)$ in op. cit. can be replaced by $\widetilde{H}^2_{\rm f}(G_{K,\Sigma},\TT_{\f,\chi};\Delta^{(2)})$ and $\widetilde{H}^2_{\rm f}(G_{K,\Sigma},\TT_{\f,\chi};\Delta^{(1)})$ respectively (c.f. Lemma~\ref{lemma_compare_Iwasawa_Nek_Iwasawa_Greenberg_K_infty}).

Moreover, it follows from global Euler--Poincar\'e characteristic formulae that $\widetilde{H}^2_{\rm f}(G_{K,\Sigma},\TT_{f,\chi};\Delta^{(i)})$ (resp., $\widetilde{H}^2_{\rm f}(G_{K,\Sigma},\TT_{\f,\chi};\Delta^{(i)})$) is torsion if and only if $\widetilde{H}^1_{\rm f}(G_{K,\Sigma},\TT_{f,\chi};\Delta^{(i)})$ (resp., $\widetilde{H}^1_{\rm f}(G_{K,\Sigma},\TT_{\f,\chi};\Delta^{(i)})$) is.
\item[i)] One may choose a $p$-crystalline character $\psi\in \Sigma^{(2)}_{\rm crit}$ with the property that the Euler product that defines the Rankin--Selberg $L$-series $L(f_{/K}\otimes\chi^{-1}\psi^{-1},s)$ absolutely converges at $s=0$, and therefore $L(f_{/K}\otimes\chi^{-1}\psi^{-1},0)\neq 0$ (a sufficient condition is that if $(\ell_1,\ell_2)$ is infinity-type of $\psi$, then $\ell=|\ell_1-\ell_2|\geq k_f+2$ and $\ell_0=\min(\ell_1,\ell_2)<-(k_f+\ell)/2$). It follows from Theorem~\ref{thm_BK_type_result} that the Selmer group $\widetilde{H}^2_{\rm f}(G_{K,\Sigma},T_{f,\chi\psi};\Delta^{(2)})$ has finite cardinality, and from the control theorem for Selmer complexes that $\widetilde{H}^2_{\rm f}(G_{K,\Sigma},\TT_{f,\chi};\Delta^{(2)})$ (and by the discussion above, also $\widetilde{H}^2_{\rm f}(G_{K,\Sigma},\TT_{f,\chi};\Delta^{(1)})$ as well as $\widetilde{H}^1_{\rm f}(G_{K,\Sigma},\TT_{f,\chi};\Delta^{(i)})$ for $i=1,2$) is torsion.
\item[ii)] Under the running hypotheses, there exists a crystalline specialization $\f(\kappa)$ of the Hida family $\f$ so that Part (i) applies with $f^\alpha=\f(\kappa)$. The required conclusions follow again from the control theorems for Selmer complexes.
\end{proof}

\begin{remark}
If $\chi=\chi^c$, so that $\chi$ is a character of $G_\QQ$, the conclusions of Theorem~\ref{thm_ordinary_Delta1_Selmer_torsion_via_horizontal_ES}  follow from well-known results in cyclotomic Iwasawa theory of Kato \cite{kato04} and Rohrlich \cite{rohrlich88Annalen}.
\end{remark}

\begin{remark}
Under the hypotheses of Theorem~\ref{thm_ordinary_Delta1_Selmer_torsion_via_horizontal_ES}, the argument in the proof of Theorem~\ref{thm_ordinary_Delta1_Selmer_torsion_via_horizontal_ES} may be altered slightly to prove the following twisted versions of the assertions therein: For $i=1,2$,
\begin{equation}
    \label{eqn_twisted_torsion_Gamma_K_1}
    \LL_{\cO}(\Gamma_K)\hbox{-modules } \widetilde{H}^1_{\rm f}(G_{K,\Sigma},\TT_{f,\chi}^\dagger;\Delta^{(i)})
    \hbox{ and } \widetilde{H}^2_{\rm f}(G_{K,\Sigma},\TT_{f,\chi}^\dagger;\Delta^{(i)}) \hbox{ are torsion\,;}
\end{equation}
\begin{equation}
    \label{eqn_twisted_torsion_families_Gamma_K_2}
    \LL_{\f}(\Gamma_K)\hbox{-modules } \widetilde{H}^1_{\rm f}(G_{K,\Sigma},\TT_{\f,\chi}^\dagger;\Delta^{(i)})
    \hbox{ and } \widetilde{H}^2_{\rm f}(G_{K,\Sigma},\TT_{\f,\chi}^\dagger;\Delta^{(i)}) \hbox{ are torsion.}
\end{equation}

\end{remark}



Suppose that $R$ is a UFD. Recall that an $R$-module $M$ is said to be pseudo-null if every $m\in M$ is annihilated by two coprime elements. We will need the following auxiliary result from commutative algebra. 

\begin{lemma}
\label{lemma_pseduo_null_quotients}
Suppose $R$ is a UFD and $N\supset M$ are finitely generated $R$-modules where $M$ is free. If $N/M$ contains a non-zero pseudo-null $R$-submodule, then so does $N$. 
\end{lemma}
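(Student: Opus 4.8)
The plan is to reduce the statement to a clean fact about the torsion-freeness of a suitable subquotient of $N$, and then extract the pseudo-null submodule of $N/M$ back up to $N$. First I would let $P \subset N/M$ be a non-zero pseudo-null $R$-submodule, and let $\widetilde P \subset N$ denote its preimage under the quotient map $\pi\colon N \to N/M$, so that $\widetilde P$ sits in a short exact sequence $0 \to M \to \widetilde P \to P \to 0$. It suffices to produce a non-zero pseudo-null submodule of $\widetilde P$, since a submodule of $\widetilde P$ is a submodule of $N$. If $P$ is itself in the image of some section (which will not happen in general), one would be done trivially; the real content is that $P$, being pseudo-null, has trivial "generic rank," so the inclusion $M \hookrightarrow \widetilde P$ becomes an isomorphism after inverting a suitable height-one prime.

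The key step is a localization argument. Since $R$ is a UFD (hence a Krull domain in which every height-one prime is principal), and $P$ is pseudo-null, $P$ is a torsion $R$-module whose annihilator is not contained in any height-one prime; equivalently, $P_{\mathfrak p} = 0$ for every height-one prime $\mathfrak p$. Localizing the sequence $0 \to M \to \widetilde P \to P \to 0$ at such a $\mathfrak p$, we get $M_{\mathfrak p} \xrightarrow{\sim} \widetilde P_{\mathfrak p}$. Now I would pass to the torsion submodule: let $T := \widetilde P_{\mathrm{tor}}$ be the $R$-torsion submodule of $\widetilde P$. Because $M$ is free, $M \cap T = 0$, so $\pi$ restricts to an injection $T \hookrightarrow P$; thus $T$ is a torsion submodule of the pseudo-null module $P$, hence itself pseudo-null. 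So the dichotomy is: either $T \neq 0$, in which case $T$ is the desired non-zero pseudo-null submodule of $N$ and we are done; or $T = 0$, i.e. $\widetilde P$ is torsion-free.

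It remains to rule out (or rather, to handle) the case $\widetilde P$ torsion-free with $P \neq 0$. Here I would argue with ranks and reflexive hulls. If $\widetilde P$ is torsion-free of finite type over the Krull domain $R$, it embeds into its reflexive hull $\widetilde P^{\vee\vee}$; comparing generic ranks, $\operatorname{rk}_R \widetilde P = \operatorname{rk}_R M + \operatorname{rk}_R P = \operatorname{rk}_R M$ because $P$ is torsion. Hence $M \hookrightarrow \widetilde P$ is an inclusion of torsion-free modules of the same rank, so $Q := \widetilde P/M = P$ is torsion — consistent — and moreover the cokernel $P$ is supported in codimension $\geq 1$. The point is then that $P$ being pseudo-null forces more: for a short exact sequence $0 \to M \to \widetilde P \to P \to 0$ with $M$ free (hence reflexive) and $P$ pseudo-null, the snake/Ext computation gives $\widetilde P^{\vee\vee} = M^{\vee\vee} = M$, and then $P = \widetilde P/M \hookrightarrow M/M = 0$ unless $\widetilde P \neq \widetilde P^{\vee\vee}$, i.e. unless $\widetilde P$ itself fails to be reflexive. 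But if $\widetilde P$ is not reflexive, then $\widetilde P^{\vee\vee}/\widetilde P$ is a non-zero pseudo-null submodule of $\widetilde P^{\vee\vee}/M \cong$ a submodule of a free module quotient — more precisely I would show $\widetilde P^{\vee\vee}/\widetilde P$ is a non-zero pseudo-null $R$-module that, after choosing the embedding $\widetilde P^{\vee\vee} = M$, sits inside $N$ compatibly, yielding the claim.

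\textbf{Main obstacle.} The delicate point is the very last case analysis: producing the pseudo-null submodule of $N$ itself (not merely of some abstract module) when $\widetilde P$ is torsion-free but non-reflexive. One must track the identifications $\widetilde P^{\vee\vee} \cong M$ carefully so that the resulting module $M/\widetilde P \cong \widetilde P^{\vee\vee}/\widetilde P$ is realized as a genuine submodule of $N$ — this requires that the inclusion $\widetilde P \hookrightarrow N$ and the identification of the double dual be compatible, which is where freeness of $M$ (and the consequent $M = M^{\vee\vee}$) is used essentially. I expect that the cleanest route is: take $T = \widetilde P_{\mathrm{tor}}$; if $T \neq 0$ we are done; if $T = 0$ then $\widetilde P \subseteq \widetilde P^{\vee\vee} = M$ (using $M$ free and $\widetilde P/M = P$ pseudo-null, so the inclusion $M\hookrightarrow \widetilde P$ dualizes to an isomorphism), and then $M/\widetilde P$ is a non-zero (as $P\neq 0$ and $T=0$ forces $\widetilde P\subsetneq M$... need $\widetilde P \ne M$, which holds since otherwise $P = 0$) pseudo-null submodule of the quotient, but actually we want it inside $N$: note $M/\widetilde P$ need not embed in $N$. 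So the truly robust version is to instead observe that either $T \ne 0$ (done), or else $\widetilde P$ is torsion-free with $\widetilde P \supsetneq M$, $\widetilde P/M$ torsion; then pick any $0 \ne \bar x \in P$ pseudo-null, lift to $x \in \widetilde P \subseteq N$; the cyclic submodule $Rx \subseteq N$ maps onto a non-zero submodule of the pseudo-null module $P$ with kernel $Rx \cap M$, and since $Rx$ is a torsion-free rank-one module dominating a pseudo-null quotient, $Rx \cap M$ has the same rank $1$, so $Rx/(Rx\cap M)$ is torsion — this shows $Rx$ is \emph{not} the answer, confirming that the only source of a pseudo-null submodule \emph{of $N$} is the torsion submodule $T = N_{\mathrm{tor}}$, and I would finish by proving $T \ne 0$ directly: if $N$ were torsion-free then so would be $N/M$ be... no. Hence I will ultimately argue: $N/M$ torsion-free would contradict the hypothesis, so $N/M$ has torsion, its torsion submodule is pseudo-null (being inside the pseudo-null $P$, after noting $P$ may be enlarged to $(N/M)_{\mathrm{tor}}$ which is still pseudo-null since $N/M$ contains a pseudo-null submodule forcing... ) — this is the step requiring the most care, and it is where I would spend the bulk of the write-up, likely via the reflexive-hull argument $N^{\vee\vee} \cong M \oplus (\text{free})$ is false in general, so instead via: $N_{\mathrm{tor}} \hookrightarrow N/M$ (as $M$ free $\Rightarrow M \cap N_{\mathrm{tor}} = 0$), and conversely any pseudo-null submodule $P$ of $N/M$ has preimage $\widetilde P$ whose torsion $\widetilde P_{\mathrm{tor}}$ maps injectively to $P$ hence is pseudo-null and, crucially, \emph{if $\widetilde P_{\mathrm{tor}} = 0$} then $\widetilde P$ embeds in the free module $M^{\vee\vee}=M$ forcing $P$ to embed in a free module and be torsion-free, contradicting $P \ne 0$ pseudo-null; therefore $\widetilde P_{\mathrm{tor}} = N_{\mathrm{tor}} \cap \widetilde P \ne 0$, giving the required non-zero pseudo-null submodule of $N$.
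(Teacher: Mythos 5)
Your argument, once stripped of the several false starts, is correct and takes a genuinely different route from the paper's. The paper reduces by induction on $\operatorname{rank}_R M$ to the case $M = Rm$ free of rank one, writes $r_1 n = a_1 m$, $r_2 n = a_2 m$ for coprime annihilators $r_1, r_2$ of $n + M$, and exploits freeness of $Rm$ together with UFD arithmetic to produce a nonzero element of $N$ killed by two coprime elements (after replacing $n$ by $n - cm$ for a suitable $c$). Your proof instead looks at the preimage $\widetilde{P} \subset N$ of the pseudo-null submodule $P \subset N/M$ and dichotomizes on its torsion submodule $\widetilde{P}_{\mathrm{tor}}$: since $M$ is free, $M \cap \widetilde{P}_{\mathrm{tor}} = 0$, so $\widetilde{P}_{\mathrm{tor}}$ injects into $P$ and is itself pseudo-null; if it vanishes then $\widetilde{P}$ is torsion-free and, because $P_{\mathfrak p} = 0$ forces $\widetilde{P}_{\mathfrak p} = M_{\mathfrak p}$ at every height-one prime $\mathfrak p$, one gets $\widetilde{P} \subset \bigcap_{\mathfrak p} M_{\mathfrak p} = M$, whence $M \subset \widetilde{P} \subset M$, so $\widetilde{P} = M$ and $P = 0$, a contradiction. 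This is sound; it uses the Krull-domain structure of a UFD (localization at height-one primes, reflexivity of free modules) where the paper stays at the level of manipulations with ring elements, and it has the conceptual merit of exhibiting the sought pseudo-null submodule of $N$ explicitly as $\widetilde{P}_{\mathrm{tor}}$. Two small cautions. First, the conclusion in the torsion-free case is simply $\widetilde{P} = M$, hence $P = 0$; your phrase about $P$ ``embedding in a free module and being torsion-free'' obscures a step that is otherwise clean. Second, most of the surrounding material (the reflexive-hull digressions, the aborted cyclic-submodule attempt with $Rx$) should be deleted; what survives is the short dichotomy above, which is about the same length as the paper's argument.
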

\begin{proof}
By induction on the rank of $M$, we may assume without loss of generality that $M$  has rank one over $R$ and is given by $M=Rm$ for some $m\in N$. Suppose $N/M$ contains a non-zero pseudo-null $R$-submodule and $n+M \in N/M$ belongs to that submodule. It follows that there exists coprime elements $r_1,r_2\in R$ with $r_1n,r_2n\in M= Rm$. Thence, either $r_1n=0=r_2n$, or else $m$ is annihilated by some non-zero element of $R$. Since $M$ is free, the latter scenario is impossible. It follows that $n\in N$ is annihilated by two coprime elements of $R$ and therefore it contains a non-zero pseudo-null submodule.
\end{proof}


\begin{theorem}
\label{thm_ordinary_Deltai_MC_via_horizontal_ES}
Suppose $f \in S_{k_f+2}(\Gamma_1(N_f),\epsilon_f)$ is a cuspidal eigen-newform and $\chi$ is a ray class character of $K$ verifying \ref{item_RIa}. Let $\f$ be the unique branch of the Hida family that admits $f^\alpha$ as a specialization in weight $k_f+2$. Suppose that \ref{item_tau} holds and $\overline{\rho}_{\f}$ is $p$-distinguished. Fix a morphism $u$ as in \S\ref{subsubsec_local_properties_CMgg}.
\item[i)] For some $a_1,a_2\in \ZZ$ we have
\begin{align*}
    \Char_{\LL_{\cO}(\Gamma_K)}\left(\widetilde{H}^2_{\rm f}(G_{K,\Sigma},\TT_{f,\chi};\Delta^{(1)})\right)&\,\Big{|}\,\, \varpi^{a_1}{\LL_{\cO}(\Gamma_K)} L_p(f_{/K}\otimes\chi,\Sigma^{(1)}_{\rm crit})\cdot \mathscr{C}(u)\, ,\\
\Char_{\LL_{\cO}(\Gamma_K)}\left(\widetilde{H}^2_{\rm f}(G_{K,\Sigma},\TT_{f,\chi};\Delta^{(2)})\right){\otimes_\cO\cO_\Phi}&\,\Big{|}\,\, \varpi^{a_2}H_\chi{\Lambda_{\cO_\Phi}(\Gamma_K)} L_p(f_{/K}\otimes\chi,\Sigma^{(2)}_{\rm crit})\cdot \mathscr{C}(u)\,.
\end{align*}
Here, $H_\chi \in \LL_{
\Phi}(\Gamma_\ac)$ is a generator of the congruence module associated the CM form $\theta_\chi$.
\item[ii)] Suppose that $\LL_\f$ is regular. For some $b_1,b_2\in \ZZ$ we have
\begin{align*}
\Char_{\LL_\f(\Gamma_K)}\left(\widetilde{H}^2_{\rm f}(G_{K,\Sigma},\TT_{\f,\chi};\Delta^{(1)})\right)&\,\Big{|}\,\, \varpi^{b_1}H_{\f}{\LL_{\f}(\Gamma_K)}L_p(\f_{/K}\otimes\chi,{\mathbb \Sigma}^{(1)}_{\rm crit})\cdot \mathscr{C}(u)\, \\
\Char_{\LL_{\f}(\Gamma_K)}\left(\widetilde{H}^2_{\rm f}(G_{K,\Sigma},\TT_{\f,\chi};\Delta^{(2)})\right){\otimes_\cO\cO_\Phi}&\,\Big{|}\,\, \varpi^{b_2}H_\chi{\Lambda_{\f,\cO_\Phi}(\Gamma_K)} L_p(\f_{/K}\otimes\chi,{\mathbb \Sigma}^{(2)}_{\rm crit})\cdot \mathscr{C}(u)\,.
\end{align*}
Here, $H_\f \in \LL_{\f}$ is a generator of Hida's congruence ideal.
\end{theorem}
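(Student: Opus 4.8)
The plan is to combine the local reciprocity laws (Proposition~\ref{prop_reciprocity_law}) with the global Poitou--Tate duality framework to convert the bound on the image of the Beilinson--Flach class into a divisibility for the characteristic ideal of the dual Selmer group, then transport back to the Selmer complex $\widetilde{H}^2_{\rm f}$ via Nekov\'a\v{r}'s machinery. This is the natural $\Lambda$-adic (vertical) counterpart of Theorem~\ref{thm_Split_BK_Sigma12}, obtained by running the same argument with the three-variable Beilinson--Flach element ${}^u\mathbb{BF}_{f,\chi}$ (resp. ${}^u\mathbb{BF}_{\f,\chi}$) in place of its specialization, and then feeding the density statement of Theorem~\ref{thm_good_twists_are_dense} to pin down the $\varpi$-power error.

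\textbf{Step 1: Torsionness.} First I would invoke Theorem~\ref{thm_ordinary_Delta1_Selmer_torsion_via_horizontal_ES}, which guarantees that all the Selmer complex cohomology groups $\widetilde{H}^i_{\rm f}(G_{K,\Sigma},\TT_{f,\chi};\Delta^{(j)})$ and $\widetilde{H}^i_{\rm f}(G_{K,\Sigma},\TT_{\f,\chi};\Delta^{(j)})$ are torsion over $\LL_\cO(\Gamma_K)$, resp. $\LL_\f(\Gamma_K)$; this is what makes the characteristic ideals meaningful.

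\textbf{Step 2: Global duality and the Euler system divisibility.} Next I would apply the horizontal Euler system machinery of Appendix~\ref{appendix_sec_ES_main} (Theorem~\ref{thm_appendix_ES_main}) to the $\calF_+$-locally restricted Euler system $\mathbf{BF}=\{{}^u\mathbb{BF}^{f^\alpha,\chi}_m\}_m$ over the $\ZZ_p^2$-tower, exactly as in the proof of Theorem~\ref{thm_Split_BK_Sigma12} but at the level of the big coefficient ring $\LL_\cO(\Gamma_K)$. This gives a bound
$$\Char_{\LL_\cO(\Gamma_K)}\!\left(H^1_{\mathcal{F}_{{\rm BK}^{(i)}}^*}(K,\cdot)^\vee\right) \ \Big|\ \varpi^{t}\cdot\!\left[H^1(K_\q,F^\mp R_f^*(\chi))\widehat{\otimes}\LL(\Gamma_K)^\sharp : \LL_\cO(\Gamma_K)\cdot\res_\q^{(i)}({}^u\mathbb{BF}_{f,\chi})\right]$$
for a suitable $\varpi$-power error $\varpi^t$ accounting for the possible failure of $X_{f,\chi\psi}^\circ\to T_{f,\chi\psi}$ being an isomorphism (the index $t$ governed by $a$ via Lemma~\ref{lemma_lattices_theta_pointwise}). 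Poitou--Tate global duality, together with Nekov\'a\v{r}'s comparison (the exact sequences of Remark~\ref{rem_generalities_about_selmer_complexes_comparison_etc} and Lemma~\ref{lemma_NEK_global_duality_compare_BK}, now read in families), identifies the left-hand side with $\Char_{\LL_\cO(\Gamma_K)}(\widetilde{H}^2_{\rm f}(G_{K,\Sigma},\TT_{f,\chi};\Delta^{(i)}))$ up to a further bounded $\varpi$-power.

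\textbf{Step 3: Feeding in the reciprocity law.} Then I would substitute Proposition~\ref{prop_reciprocity_law}, which computes the index $[H^1(K_\q,F^\mp R_f^*(\chi))\widehat{\otimes}\LL(\Gamma_K)^\sharp : \LL_\cO(\Gamma_K)\cdot\res_\q^{(i)}({}^u\mathbb{BF}_{f,\chi})]$ in terms of $\mathscr{C}(u)\,H_f\,L_p(f_{/K}\otimes\chi,\Sigma^{(1)}_{\rm crit})|_{\Gamma_K}$ (resp. $\mathscr{C}(u)\,H_\chi\,L_p(f_{/K}\otimes\chi,\Sigma^{(2)}_{\rm crit})|_{\Gamma_K}$). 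Absorbing $H_f$ into a $\varpi$-power where it is a unit up to such (and, in the $\Delta^{(2)}$ case, keeping $H_\chi$ explicitly since it is genuinely needed), and combining all the $\varpi$-power errors from Steps 2 and 3 into a single $\varpi^{a_1}$, resp. $\varpi^{a_2}$, yields the asserted divisibility of part (i). For part (ii) the same argument is run verbatim with $\f$ in place of $f^\alpha$, using that $\LL_\f$ regular forces $\LL_\f(\Gamma_K)$ to be a UFD so that characteristic ideals behave well (Lemma~\ref{lemma_pseduo_null_quotients} is used to control pseudo-null discrepancies between Selmer groups and their quotients), and replacing Theorem~\ref{thm_Split_BK_Sigma12} by its $\f$-variant together with the reciprocity law Proposition~\ref{prop_reciprocity_law}(ii); this produces $\varpi^{b_1},\varpi^{b_2}$.

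\textbf{Main obstacle.} The delicate point will be Step 2: making the locally restricted Euler system argument run cleanly over the two-variable ring $\LL_\cO(\Gamma_K)$ when the residual representation may be reducible, and in particular tracking how the correction factor $\mathscr{C}(u)$ from the non-induced Ohta lattice and the index $t$ from Lemma~\ref{lemma_lattices_theta_pointwise} interact with the Poitou--Tate and Nekov\'a\v{r} duality exact sequences without losing control of whether the quotient modules involved are torsion (rather than merely pseudo-null). The bookkeeping of the various $\varpi$-power errors --- ensuring they depend only on $\overline{\rho}_\f$ and $\chi$ and not on the variable $\psi$ --- is where the argument requires genuine care, and is precisely the role played by the density statement Theorem~\ref{thm_good_twists_are_dense} together with the specialization/control theorems for Selmer complexes.
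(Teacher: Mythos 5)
Your proposal misidentifies the mechanism in Step~2. You write that you would ``apply the horizontal Euler system machinery of Appendix~\ref{appendix_sec_ES_main} (Theorem~\ref{thm_appendix_ES_main}) to the $\mathcal{F}_+$-locally restricted Euler system $\ldots$ at the level of the big coefficient ring $\LL_\cO(\Gamma_K)$.'' This does not work: Theorem~\ref{thm_appendix_ES_main} is stated, and proved, for a Galois lattice $X_\circ$ over a discrete valuation ring $\cO$ (so that one can speak of Kolyvagin derivatives, the order of elements, the quantity $\chi(\overline X)$, etc.), and there is no $\LL_\cO(\Gamma_K)$-adic version of it in the paper. Indeed, the whole point of the ``horizontal'' method --- and the reason \S\ref{sec_results_split} carries that title --- is precisely to \emph{avoid} a $\Lambda$-adic Kolyvagin-system argument (which would be blocked by the possible residual reducibility of $\overline X_{f,\chi\psi}$, as emphasised in the introduction). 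Your opening sentence, calling the result ``the natural $\Lambda$-adic (vertical) counterpart of Theorem~\ref{thm_Split_BK_Sigma12},'' is therefore exactly the wrong picture.

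The actual proof runs the $\cO$-level Theorem~\ref{thm_Split_BK_Sigma12} at a \emph{dense family} of specializations $\psi\in\mathcal{S}_f'\subset\mathcal{S}_f^{(2)}(a)$ (density supplied by Theorem~\ref{thm_good_twists_are_dense}), controls the induced specialization of the characteristic ideals $h$ of $\widetilde H^2_{\rm f}$ and $b$ of the Beilinson--Flach quotient via Nekov\'a\v{r}'s control theorems (\cite[Lemma 11.6.8, Theorem 11.7.11]{nekovar06}) and Lemma~\ref{lemma_pseduo_null_quotients}, obtains at each $\psi$ the inequality $\psi(h)\mid\varpi^{c_2}\psi(b)$, and \emph{only then} lifts this to the $\LL_\cO(\Gamma_K)$-adic divisibility $h\mid\varpi^{c_2}b$ by the patching criterion \cite[Proposition A.0.1]{BLInertOrdMC}. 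Finally Proposition~\ref{prop_reciprocity_law} converts the statement about $b$ into one about the $p$-adic $L$-function, and the Poitou--Tate/Castella--Wan argument supplies the $\Delta^{(1)}$ half. You do eventually mention Theorem~\ref{thm_good_twists_are_dense} and ``specialization/control theorems'' in your ``Main obstacle'' paragraph, but you assign them the ancillary role of bookkeeping for the $\varpi$-powers; in fact the density-plus-patching step \emph{is} the engine of the proof. As written, Step~2 of your proposal would stall at the very first move (no big-ring Euler system theorem exists), so this counts as a genuine gap rather than an alternate route.

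Two smaller points. First, your Step~1 (Theorem~\ref{thm_ordinary_Delta1_Selmer_torsion_via_horizontal_ES}) is indeed a prerequisite, but you also need the sharper statement that for dense $\psi$ the relevant error terms $|H^0(\QQ_p,\mathcal{A}_i)|\cdot{\rm Tam}(f,\chi\psi)$ vanish (Lemma~\ref{ref_all_but_finitely_many_fudge_factors_vanish}); without that control the $\varpi$-power $a_1,a_2$ would depend on $\psi$ and the patching lemma would not apply. Second, absorbing $H_f$ into $\varpi^{a_1}$ is correct since $H_f\in\cO$ is a scalar; but in part~(ii), $H_\f\in\LL_\f$ is not a scalar and genuinely appears in the final divisibility, so the treatment of the two cases is not ``verbatim'' as you suggest.
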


One may drop the correction terms $\mathscr{C}(u)$ from the statement of Theorem~\ref{thm_ordinary_Deltai_MC_via_horizontal_ES} if the hypothesis \ref{item_Ind} concerning the CM branches of Hida families holds. In particular, if \ref{item_RIa} is valid with $a=0$, then we can take $\mathscr C(u)=(1)$ thanks to Proposition~\ref{prop_uniqueness_of_the_lattice_in_residually_irred_case}.

\begin{proof}[Proof of Theorem~\ref{thm_ordinary_Deltai_MC_via_horizontal_ES}] Let us fix a $\ZZ_p$-basis $\{\gamma_1,\gamma_2\}$ of $\Gamma_K$. In this proof, we will use $\psi$ to denote the Galois character associated to $\psi$ as well. We put $\Gamma_1:=\gamma_1^{\ZZ_p}$ and $\Gamma_2:=\gamma_2^{\ZZ_p}$, so that $\Gamma_K=\Gamma_1\times\Gamma_2$. We will identify $\Gamma_1$ with the Galois group of the $\ZZ_p$-extension $K_{\Gamma_1}:=K_\infty^{\Gamma_2}$ of $K$, and similarly for $\Gamma_2$.

For $\psi\in {\rm Sp}(\LL(\Gamma_K))(\cO)$, we write $\pi_1^{\psi}: \LL_{\cO}(\Gamma_K)\rightarrow \LL_{\cO}(\Gamma_2)$ for the morphism induced by $\gamma_1\mapsto \psi(\gamma_1)$ and $\gamma_2\mapsto \psi(\gamma_2)\gamma_2$. We also put $\pi_2: \LL_{\cO}(\Gamma_2)\rightarrow \cO$ for the augmentation morphism induced by $\gamma_2\mapsto1$. These give rise to $G_K$-maps 
$$\TT_{f,\chi}\stackrel{\pi_1^{\psi}}{\lra} T_{f,\chi}\otimes{\rm Tw}_\psi\left(\LL_{\cO}(\Gamma_2)^\sharp\right)\stackrel{\pi_2}{\lra}T_{f,\chi\psi}$$
where ${\rm Tw}_\psi\left(\LL_{\cO}(\Gamma_2)^\sharp\right)$ is the free $\LL_{\cO}(\Gamma_2)$-module of rank one on which $G_K$ acts through $\Gamma_2 \rightarrow \LL_{\cO}(\Gamma_2)^\times$, $\gamma_2\mapsto \psi(\gamma_2)\gamma_2$. We set $\pi_\psi:= \pi_2\circ \pi_1^\psi$.
\item[i)] It follows from Theorem~\ref{thm_BK_type_result} combined with the generic non-vanishing argument we have utilized in the proof of Theorem~\ref{thm_ordinary_Delta1_Selmer_torsion_via_horizontal_ES}(i) and the proof of Theorem~\ref{thm_good_twists_are_dense}(i) that the subset $\mathcal{S}_f^{\prime}\subset \mathcal{S}_f^{(2)}(a)$ consisting of the $p$-adic Galois characters associated to $\cO$-valued $p$-crystalline characters $\psi \in \Sigma_{\rm crit}$ for which the $\cO$-module $\widetilde{H}^2_{\rm f}(G_{K,\Sigma},\TT_{f,\chi\psi};\Delta^{(2)})$ has finite cardinality is still dense in ${\rm Sp}(\LL(\Gamma_K))(\cO)$. 

Let us fix a generator 
$$h\in \Char_{\LL_{\cO}(\Gamma_K)}\left(\widetilde{H}^2_{\rm f}(G_{K,\Sigma},\TT_{f,\chi};\Delta^{(2)})\right)\stackrel{\rm Lemma~\ref{lemma_compare_Iwasawa_Nek_Iwasawa_Greenberg_K_infty}}{=}\Char_{\LL_{\cO}(\Gamma_K)}\left(\frak{X}^{(2)}_{\rm BK}(K_\infty, T_{f,\chi})\right).$$ 
For each $\psi \in \mathcal{S}_f^{\prime}$ we have $\pi_\psi(h)\neq 0$; in particular, $\pi_1^\psi(h)\in \LL_{\cO}(\Gamma_2)$ is non-zero. In particular, the height-one prime $\ker(\pi_1^\psi)=(\gamma_1-\psi(\gamma_1))$ is not in the support of the ${\LL_{\cO}(\Gamma_K)}$-module $\frak{X}^{(2)}_{\rm BK}(K_\infty, T_{f,\chi})$. Thence, $\frak{X}^{(2)}_{\rm BK}(K_\infty, T_{f,\chi})[\ker(\pi_1^\psi)]$ is contained in the maximal pseudo-null ${\LL_{\cO}(\Gamma_K)}$-submodule $M_{f,\chi}$ of $\frak{X}^{(2)}_{\rm BK}(K_\infty, T_{f,\chi})$ and we have 
\begin{align*} 
\LL_{\cO}(\Gamma_2)\cdot\pi_1^\psi(h)&=\Char_{\LL_{\cO}(\Gamma_2)}\left(\frak{X}^{(2)}_{\rm BK}(K_\infty, T_{f,\chi})\big{/}\ker(\pi_1^\psi)\frak{X}^{(2)}_{\rm BK}(K_\infty, T_{f,\chi})\right)\\
&=\Char_{\LL_{\cO}(\Gamma_2)}\left(\frak{X}^{(2)}_{\rm BK}(K_{\Gamma_2}, T_{f,\chi\psi})\right)\cdot \Char_{\LL_\cO(\Gamma_2)}(M_{f,\chi})^{-1}\,.
\end{align*}
The same argument also shows that the height one prime $\ker(\pi_2)=(\gamma_2-1)\subset\LL_{\cO}(\Gamma_2)$ is not in the support of $\frak{X}^{(2)}_{\rm BK}(K_{\Gamma_2}, T_{f,\chi\psi})$ and that we have
\begin{align} 
\label{eqn_char_ideal_reduce_pipsi}
\cO\cdot \psi(h)=\cO\cdot\pi_2\circ\pi_1^\psi(h)\,\,\big{|}\,\,{\rm Fitt}_{\cO}\left(\frak{X}^{(2)}_{\rm BK}(K, T_{f,\chi\psi})\right),
\end{align}
Combining \eqref{eqn_char_ideal_reduce_pipsi} with Theorem~\ref{thm_Split_BK_Sigma12}(ii), 
\begin{equation}
\label{eqn_eqn_char_ideal_reduce_pipsi_2}
\psi(h)\,\,\,\,\,\Big{|}\,\,\,\,\,\varpi^{c_1}\left[H^1(K_{\p^c},F^+R_f^*(\chi\psi)):\cO\cdot \res_{\p^c}^{(2)}\left({}^u\BF^{f\otimes\chi\psi}_{\alpha_f,\alpha_{\chi\psi}}\right)\right]
\end{equation}
for some $c_1\in \ZZ$ that doesn't depend on the choice of $\psi$.

Let us now fix a generator
$$b\in \Char_{\LL_{\cO}(\Gamma_K)}\left(H^1(K_{\p^c},F^+R_f^*(\chi)\otimes_{\ZZ_p}\LL(\Gamma_K)^{\sharp})\Big{/}\LL_{\cO}(\Gamma_K)\cdot \res_{\p^c}^{(2)}\left({}^u{\mathbb{BF}}_{f^{\alpha},\chi}\right)\right).$$ 
According to \cite[Lemma 8.11.3(ii)]{nekovar06}, we have the following scenarios for the sctructure of the $\LL_{\cO}(\Gamma_K)$-module $H^2(K_{\p^c},F^+R_f^*(\chi)\otimes_{\ZZ_p}\LL(\Gamma_K)^{\sharp})$: 
\begin{itemize}
    \item It has finite cardinality.
    \item It is a free $\cO$-module of rank one.
\end{itemize} 
Furthermore, the latter scenario would occur only if $\alpha_f\chi(\p^c)=1$, which is impossible since the complex conjugate of this expression has valuation $k_f+1>0$ in $\cO$. This shows that $H^2(K_{\p^c},F^+R_f^*(\chi)\otimes_{\ZZ_p}\LL(\Gamma_K)^{\sharp})$ has finite cardinality. Moreover, \cite[8.11.4(iii)]{nekovar06} tells us that $H^1(K_{\p^c},F^+R_f^*(\chi)\otimes_{\ZZ_p}\LL(\Gamma_K)^{\sharp})$ is torsion-free. This combined with Lemma~\ref{lemma_pseduo_null_quotients} shows that the quotient 
$$H^1(K_{\p^c},F^+R_f^*(\chi)\otimes_{\ZZ_p}\LL(\Gamma_K)^{\sharp})\Big{/}\LL_{\cO}(\Gamma_K)\cdot \res_{\p^c}^{(2)}\left({}^u{\mathbb{BF}}_{f^{\alpha},\chi}\right)$$ 
has no pseudo-null submodules. Combining all these facts, we infer that  
$$\Char_{\LL_{\cO}(\Gamma_2)}\left(\mathscr{H}(\Gamma_2)\right)=\pi_1^\psi(b)$$
for all $\psi \in \mathcal{S}_f^\prime$, where we have put 
$$\mathscr{H}(\Gamma_2):=H^1(K_{\p^c},F^+R_f^*(\chi)\otimes_{\ZZ_p}{\rm Tw}_\psi\left(\LL_{\cO}(\Gamma_2)^\sharp\right))\Big{/}\LL_{\cO}(\Gamma_2)\cdot \pi_1^{\psi}\left(\res_{\p^c}^{(2)}\left({}^u{\mathbb{BF}}_{f^{\alpha},\chi}\right)\right)$$ 
to ease notation. Thence,
\begin{align}\label{eqn_descend_b_viapipsi}
\notag \cO\cdot \psi(b)&=\pi_1\left(\Char_{\LL_{\cO}(\Gamma_2)}\left(\mathscr{H}(\Gamma_2)\right)\right)\\
\notag &={\rm Fitt}_{\cO}\left(\mathscr{H}(\Gamma_2)/(\gamma_2-1)\mathscr{H}(\Gamma_2) \right)\\
&= {\rm Fitt}_{\cO}\left(H^1(K_{\p^c},F^+R_f^*(\chi\psi))\big{/} \cO\cdot \res_{\p^c}^{(2)}\left({}^u\BF^{f\otimes\chi\psi}_{\alpha_f,\alpha_{\chi\psi}}\right)\right)\cdot {\rm Fitt}_{\cO}(\mathscr{H}_{\p^c}(\psi))^{-1}
\end{align}
for all $\psi \in \mathcal{S}_f^\prime$, where the second equality is a special case of \cite[Lemma 11.6.8]{nekovar06}, the $\cO$-module $\mathscr{H}_{\p^c}$ on the third line is a subquotient of $H^2(K_{\p^c},F^+R_f^*(\chi)\otimes_{\ZZ_p}\LL(\Gamma_K)^{\sharp})$ which is given by the exactness of the sequence
$$0\lra \frac{H^1(K_{\p^c},F^+R_f^*(\chi)\otimes_{\ZZ_p}{\rm Tw}_\psi\left(\LL_{\cO}(\Gamma_2)^\sharp\right))}{(\gamma_2-1)H^1(K_{\p^c},F^+R_f^*(\chi)\otimes_{\ZZ_p}{\rm Tw}_\psi\left(\LL_{\cO}(\Gamma_2)^\sharp\right))}\stackrel{\pi_1}{\lra} H^1(K_{\p^c},F^+R_f^*(\chi\psi))\lra \mathscr{H}_{\p^c}(\psi)\lra 0,$$ 
from which the third equality follows. It follows on combining \eqref{eqn_eqn_char_ideal_reduce_pipsi_2} with \eqref{eqn_descend_b_viapipsi} that 
\begin{equation}
\label{eqn_specialised_sought_after_divisibility}
\psi(h)\,\, \mid\,\, \varpi^{c_2} \psi(b)
\end{equation}
for all but finitely many $\psi \in \mathcal{S}_f^\prime$ (avoiding the finite set of characters $\xi\in \mathcal{S}_f^\prime$ for which the prime ideal $(\gamma_2-\xi(\gamma_2))$ is in the support of the torsion $\LL_{\cO}(\Gamma_2)$-module $H^2(K_{\p^c},F^+R_f^*(\chi)\otimes_{\ZZ_p}\LL(\Gamma_2)^{\sharp}$), so that the cardinality of $\mathscr{H}_{\p^c}(\psi)$ is bounded independently of the choice of $\psi$), for some $c_2\in \ZZ$ that does not depend on such $\psi$. Proposition A.0.1 of \cite{BLInertOrdMC} together with the density of $\mathcal{S}_f^\prime$ in ${\rm Sp}(\LL(\Gamma_K))(\cO)$ and \eqref{eqn_specialised_sought_after_divisibility} show that 
 $$ h \mid \varpi^{c_2} b,$$ 
hence also that
\begin{align}
\label{eqn_d_divides_b_2021_09_20}
\begin{aligned}
   &\Char_{\LL_{\cO}(\Gamma_K)}\left(\widetilde{H}^2_{\rm f}(G_{K,\Sigma},\TT_{f,\chi};\Delta^{(2)})\right)= \LL_{\cO}(\Gamma_K) h \qquad \hbox{divides} \\
&\qquad\qquad\qquad \qquad\qquad\qquad \LL_{\cO}(\Gamma_K)\varpi^{c_2} b=\varpi^{c_2} \Char_{\LL_{\cO}(\Gamma_K)}\left(\frac{H^1(K_{\p^c},F^+R_f^*(\chi)\otimes_{\ZZ_p}\LL(\Gamma_K)^{\sharp})}{\LL_{\cO}(\Gamma_K)\cdot \res_{\p^c}^{(2)}\left({}^u{\mathbb{BF}}_{f^{\alpha},\chi}\right)}\right)
\end{aligned}
\end{align} 
 for some $c_2\in \ZZ$. The proof that 
\begin{equation}\label{eqn_proved_divisibility_thm619}
    \Char_{\LL_{\cO}(\Gamma_K)}\left(\widetilde{H}^2_{\rm f}(G_{K,\Sigma},\TT_{f,\chi};\Delta^{(2)})\right){\otimes_\cO\cO_\Phi}\,\Big{|}\,\, \varpi^{a_2}H_\chi L_p(f_{/K}\otimes\chi,\Sigma^{(2)}_{\rm crit})\cdot \mathscr{C}(u)\cdot{\Lambda_{\cO_\Phi}(\Gamma_K)}
\end{equation}
for some $a_2\in \ZZ$ follows from  \eqref{eqn_d_divides_b_2021_09_20} and Proposition~\ref{prop_reciprocity_law}(i). 

 The argument using Poitou--Tate exact sequences in the proof of \cite[Theorem 3.8]{castellawanGL2families2018}, with the divisibility \eqref{eqn_proved_divisibility_thm619} and   Proposition~\ref{prop_reciprocity_law} as  input, taking into account the correction factor $\mathscr{C}(u)$ and the comparison in Lemma~\ref{lemma_compare_Iwasawa_Nek_Iwasawa_Greenberg_K_infty} of our extended Selmer groups to their classical variants that are used in op. cit.,  proves the divisibility 
$$\Char_{\LL_{\cO}(\Gamma_K)}\left(\widetilde{H}^2_{\rm f}(G_{K,\Sigma},\TT_{f,\chi};\Delta^{(1)})\right)\,\Big{|}\,\, \varpi^{a_1} L_p(f_{/K}\otimes\chi,\Sigma^{(1)}_{\rm crit})\cdot \mathscr{C}(u)\cdot \LL_{\cO}(\Gamma_K)$$
for some $a_1\in\ZZ$, concluding the proof of (i).


\item[ii)] This part is proved in a similar way, except that we will verify the first divisibility (rather than the second) 
$$\LL_\f(\Gamma_K)\cdot\mathbb{h}=:\Char_{\LL_\f(\Gamma_K)}\left(\widetilde{H}^2_{\rm f}(G_{K,\Sigma},\TT_{\f,\chi};\Delta^{(1)})\right)\,\Big{|}\,\, \varpi^{b_1}H_{\f}L_p(\f_{/K}\otimes\chi,{\mathbb \Sigma}^{(1)}_{\rm crit})\cdot \mathscr{C}(u)\cdot\LL_\f(\Gamma_K)$$
for some $b_1\in \ZZ$. 

 It follows from Theorem~\ref{thm_ordinary_Delta1_Selmer_torsion_via_horizontal_ES}(ii) that the height-one prime $\ker(\kappa) \subset \LL_{\f}(\Gamma_K)$ (where $\kappa \in {\rm Sp}(\LL(\Gamma_{\rm wt}))(\cO)$ is any crystalline arithmetic specialization) is not in the support of $\widetilde{H}^2_{\rm f}(G_{K,\Sigma},\TT_{\f,\chi};\Delta^{(1)})$ and 
 $$\LL_{\cO}(\Gamma_K)\cdot\kappa(\mathbb{h})\,\, \Big{|}\,\,\Char_{\LL_\cO(\Gamma_K)}\left(\widetilde{H}^2_{\rm f}(G_{K,\Sigma},\TT_{\f(\kappa),\chi};\Delta^{(1)})\right)=\Char_{\LL_\cO(\Gamma_K)}\left(\frak{X}^{(1)}_{\rm BK}(K_\infty,T_{\f(\kappa),\chi})\right)\,.$$
Similar line of argument shows 
$$\cO\cdot \mathbb{h}(\kappa,\psi) \,\,\Big{|}\,\,{\rm Fitt}_{\cO}\left(\frak{X}^{(1)}_{\rm BK}(K,T_{\f(\kappa),\chi\psi})\right)$$
for every $(\kappa,\psi) \in \mathcal{S}^{(1)}(a)$. This combined with Theorem~\ref{thm_Split_BK_Sigma12}(i) yields the divisibility
\begin{equation}
\label{eqn_descent_from_weight_space_1}
    \cO\cdot \mathbb{h}(\kappa,\psi) \,\,\Big{|}\,\, \varpi^{t} \,{\rm Fitt}_{\cO}\left(H^1(K_\p,F^-R_{\f(\kappa)}^*(\chi\psi))\big{/}\cO\cdot \pi_{\psi}\circ\kappa\left(\res_{\p^c}^{(1)}\left({}^u{\mathbb{BF}}_{\f,\chi}\right)\right)\right)
\end{equation}
for some $t\in \ZZ$ that doesn't depend on the choice of $(\kappa,\psi)\in \mathcal{S}^{(1)}(a)$. 

Let us now choose any generator 
$$\mathbb{b} \in \Char_{\LL_\f\,\widehat{\otimes}\,\LL(\Gamma_K)}\left(H^1(K_{\p},F^-R_{\f}^*(\chi)\otimes\LL(\Gamma_K)^\sharp)\Big{/}(\LL_\f\,\widehat{\otimes}\,\LL(\Gamma_K))\cdot\res_{\p}\left({}^u\mathbb{BF}_{\f,\chi}\right) \right).$$ 
Since the $G_{K_{\p}}$-representation $F^-R_{\f}^*(\chi)$ is unramified, it follows that $H^1(K_{\p},F^-R_{\f}^*(\chi)\otimes\LL(\Gamma_K)^\sharp)$ is a free  $\LL_{\f}(\Gamma_K)$-module of rank one and the natural morphism
$$H^1(K_{\p},F^-R_{\f}^*(\chi)\otimes\LL(\Gamma_K)^\sharp)\stackrel{\pi_\psi\circ \kappa}{\lra} H^1(K_{\p},F^-R_{\f(\kappa)}^*(\chi\psi))$$
is surjective. Thence,
\begin{align*}
\Char_{\LL_\f\,\widehat{\otimes}\,\LL(\Gamma_K)}&\left(H^1(K_{\p},F^-R_{\f}^*(\chi)\otimes\LL(\Gamma_K)^\sharp)\Big{/}(\LL_\f\,\widehat{\otimes}\,\LL(\Gamma_K))\cdot\res_{\p}\left({}^u\mathbb{BF}_{\f,\chi}\right) \right)\\
&={\rm Fitt}_{\LL_\f\,\widehat{\otimes}\,\LL(\Gamma_K)}\left(H^1(K_{\p},F^-R_{\f}^*(\chi)\otimes\LL(\Gamma_K)^\sharp)\Big{/}(\LL_\f\,\widehat{\otimes}\,\LL(\Gamma_K))\cdot\res_{\p}\left({}^u\mathbb{BF}_{\f,\chi}\right) \right)
\end{align*}
and $\mathbb{b}(\kappa,\psi)\in \cO$ generates the ideal ${\rm Fitt}_{\cO}\left(H^1(K_\p,F^-R_{\f(\kappa)}^*(\chi\psi))\big{/}\cO\cdot \pi_{\psi}\circ\kappa\left(\res_{\p^c}^{(1)}\left({}^u{\mathbb{BF}}_{\f,\chi}\right)\right)\right)$. This combined with \eqref{eqn_descent_from_weight_space_1} yields
$$\mathbb{h}(\kappa,\psi) \mid \varpi^t \mathbb{b}(\kappa,\psi)$$
for all $(\kappa,\psi)\in \mathcal{S}^{(1)}(a) $ and for some $t\in \ZZ$ that doesn't depend on the choice of $(\kappa,\psi)$. The proof of Part (ii) now follows on combining \cite[Proposition A.0.1]{BLInertOrdMC} and Proposition~\ref{prop_reciprocity_law}(ii).
\end{proof}


\subsection{Descent}
\label{subsec_descent_to_Gamma}
Throughout this section, we will assume that $\Gamma\in {\rm Gr}(\Gamma_K)$ is transversal to $\Gamma_\cyc$, in the sense that the associated $\ZZ_p$-extension $K_\Gamma/K$ is linearly disjoint over $K$ with the cyclotomic $\ZZ_p$-extension $K^\cyc/K$.
\subsubsection{Nekov\'a\v{r}'s descent formalism}
\label{subsubsec_Nek_descent}
We recall Nekov\'a\v{r}'s descent formalism that  developed in \cite{nekovar06} and make some of the constructions therein explicit. We will later apply this discussion with $X=\TT_{f,\chi}^{(\Gamma)}$, $R=\LL_{\cO}(\Gamma)$ and $\TT_{\f,\chi}^{(\Gamma)}$, $R=\LL_\f(\Gamma)$, where $\Gamma$ is transversal to $\Gamma_\cyc$ in the sense explained above.

\begin{defn}
\label{defn_general_derived_char_ideal}
\item[i)]For $R$ and $X$ as in Definition~\ref{defn_selmer_complex_general_ord}, let us set $R_\cyc:=R\,\widehat{\otimes}\LL(\Gamma_\cyc)$ and $X_\cyc:=X\,\widehat{\otimes}\LL(\Gamma_\cyc)^\sharp$. A choice of Greenberg local conditions $\Delta_X$ as in \cite[\S6.1]{nekovar06} give rise to local conditions $\Delta_{X_\cyc}$ for $X_\cyc$ and allows one to consider the Selmer complex
$$\widetilde{{\bf R}\Gamma}_{\rm f}(G_{K,\Sigma},X_\cyc;\Delta_{X_\cyc})\in D_{\rm ft} (_{R_\cyc}{\rm Mod})\,.$$
\item[ii)] Let ${\rm ht}_1(R)$ denote the set of height-$1$ primes of $R$. For any $P\in {\rm ht}_1(R)$ of $R$, let us write $P_\cyc\subset R_\cyc$ for the prime ideal of $R_\cyc$ generated by $P$ and $\gamma_\cyc-1$. 
\item[iii)] Suppose $M$ is a torsion $R_\cyc$-module of finite-type. Following \cite[Definition 11.6.6]{nekovar06}, we define
$$a_P(M_{P_\cyc}):={\rm length}_{R_P}\left(\varinjlim_r \frac{ (\gamma_\cyc-1)^{r-1}M_{P_\cyc}}{(\gamma_\cyc-1)^{r}M_{P_\cyc}} \right)\,.$$
\item[iv)] For each $P\in {\rm ht}_1(R)$, we let ${\rm Tam}_v(X,P)$ denote the Tamagawa factor at a prime $v\in \Sigma$ coprime to $p$, which is given as in \cite[Definition~7.6.10]{nekovar06}.
\item[v)] Let us set $X^*(1):={\rm Hom}(X,R)(1)$, which is a free $R$-module of finite rank endowed with a continuous action of $G_{K,\Sigma}$ in the obvious manner. We let $\Delta_X^*$ denote the dual local conditions for $X^*(1)$ (in the sense that the formalism in \cite[\S10.3.1]{nekovar06} applies). We then have an $R$-adic (cyclotomic) height pairing
$$\frak{h}_{X}^{\rm Nek}: \widetilde{H}_{\rm f}^1(G_{K,\Sigma},X;\Delta_X)\otimes \widetilde{H}_{\rm f}^1(G_{K,\Sigma},X^*(1);\Delta_X^*)\lra R$$
given as \S11.1.4 of op. cit., with $\Gamma=\Gamma_\cyc$. 
\item[vi)] If $R$ is a Krull domain, we define the $R$-adic regulator ${\rm Reg}_X$ on setting
$${\rm Reg}_X:=\Char_R\left({\rm coker}(\widetilde{H}_{\rm f}^1(G_{K,\Sigma},X;\Delta_X)\stackrel{{\rm adj}(\frak{h}_{X}^{\rm Nek})}{\lra}\widetilde{H}_{\rm f}^1(G_{K,\Sigma},X^*(1);\Delta_X^*))\right)$$
where ${\rm adj}$ denotes adjunction. Note that ${\rm Reg}_X$ is non-zero if and only if $\frak{h}_{X}^{\rm Nek}$ is non-degenerate.
\end{defn}
\begin{defn}
\label{def_connecting_a_P_to_leading_term}
\item[i)]
Suppose $R$ is a Krull domain so that $R_P$ is a discrete valuation ring for every $P\in {\rm ht}_1(R)$ and $(R_\cyc)_{P_\cyc}$ is a regular ring. Let $M$ be a finitely generated torsion $R_\cyc$-module. Let $r(M_{P_\cyc})$ denote the largest integer such that 
$$\Char_{(R_\cyc)_{P_\cyc}}(M_{P_\cyc})\in (\gamma_\cyc-1)^{r(M_{P_\cyc})}(R_\cyc)_{P_\cyc}\,.$$
We set 
$$\partial_\cyc\, \Char_{(R_\cyc)_{P_\cyc}}(M_{P_\cyc}):=\mathds{1}_\cyc\left((\gamma_\cyc-1)^{-r(M_{P_\cyc})}\Char_{(R_\cyc)_{P_\cyc}}(M_{P_\cyc})\right)\subset R_P\,$$
where $\mathds{1}_\cyc:R_\cyc\rightarrow R$ is the augmentation map.
\item[ii)] 
Suppose in addition that every primes in ${\rm ht}_1(R)$ is principal; this requirement is equivalent to asking that $R$ is a unique factorization domain. In that case, one may similarly define an integer $r(M)$ with 
$$\Char_{R_\cyc}(M)\in (\gamma_\cyc-1)^{r(M)}R_\cyc$$
and set
$$\partial_\cyc\,\Char_{R_\cyc}(M):=\mathds{1}_\cyc\left((\gamma_\cyc-1)^{-r(M)}\Char_{R_\cyc}(M)\right) \in R\,.$$
Observe in this particular scenario that we have 
\begin{equation}\label{eqn_char_Rcyc_localized_vs_char_Rcyc}
    \Char_{(R_\cyc)_{P_\cyc}}(M_{P_\cyc})=\left(\Char_{R_\cyc}(M)\right)_{P_\cyc}
\end{equation}
\begin{equation}\label{eqn_derived_char_Rcyc_localized_vs_char_Rcyc}
\partial_\cyc\, \Char_{(R_\cyc)_{P_\cyc}}(M_{P_\cyc})=\left(\partial_\cyc\,\Char_{R_\cyc}(M)\right)_P\,.    
\end{equation}
\end{defn}
\begin{lemma}[Nekov\'a\v{r}]
\label{lemma_nekovar_a_vs_partialcyc}
Suppose $R$ is a Krull domain. Then, 
$$a_P(M_{P_\cyc})=\ord_P\left(\partial_\cyc\, \Char_{(R_\cyc)_{P_\cyc}}(M_{P_\cyc})\right)\,.$$
If $R$ is in addition a unique factorization domain, then
$$a_P(M_{P_\cyc})=\left(\partial_\cyc\,\Char_{R_\cyc}(M)\right)_P\,.$$
\end{lemma}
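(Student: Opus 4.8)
\textbf{Proof plan for Lemma~\ref{lemma_nekovar_a_vs_partialcyc}.}
The statement is essentially a translation of the definition of $a_P(M_{P_\cyc})$ in Definition~\ref{defn_general_derived_char_ideal}(iii) into the language of characteristic ideals, and the plan is to reduce everything to the one-variable case over the discrete valuation ring $R_P$. First I would localise at a fixed $P\in{\rm ht}_1(R)$: since $R$ is a Krull domain, $R_P$ is a discrete valuation ring, and $(R_\cyc)_{P_\cyc}$ is the localisation of $R_P[[\gamma_\cyc-1]]$ at the maximal ideal generated by the uniformiser $\pi$ of $R_P$ and by $\gamma_\cyc-1$, hence a two-dimensional regular local ring. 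The module $M_{P_\cyc}$ is a finitely generated torsion $(R_\cyc)_{P_\cyc}$-module; because $M$ is $R_\cyc$-torsion, $M_{P_\cyc}$ has finite length as an $R_P[[\gamma_\cyc-1]]$-module after inverting $\pi$ is \emph{not} quite right, so instead I would use that $(R_\cyc)_{P_\cyc}$ is a UFD (being a two-dimensional regular local ring) to write $\Char_{(R_\cyc)_{P_\cyc}}(M_{P_\cyc})=(g)$ for some $g$, and factor $g=(\gamma_\cyc-1)^{r}\cdot h$ with $(\gamma_\cyc-1)\nmid h$, where $r=r(M_{P_\cyc})$ in the notation of Definition~\ref{def_connecting_a_P_to_leading_term}(i). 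Then $\partial_\cyc\,\Char_{(R_\cyc)_{P_\cyc}}(M_{P_\cyc})=\mathds{1}_\cyc(h)R_P$, and I must show $\ord_P(\mathds{1}_\cyc(h))=a_P(M_{P_\cyc})$.

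The core computation is then the following: for the direct-limit quantity $a_P(M_{P_\cyc})={\rm length}_{R_P}\big(\varinjlim_r (\gamma_\cyc-1)^{r-1}M_{P_\cyc}\big/(\gamma_\cyc-1)^{r}M_{P_\cyc}\big)$, I would argue that the transition maps $(\gamma_\cyc-1)^{r-1}M_{P_\cyc}/(\gamma_\cyc-1)^{r}M_{P_\cyc}\to (\gamma_\cyc-1)^{r}M_{P_\cyc}/(\gamma_\cyc-1)^{r+1}M_{P_\cyc}$ stabilise to an isomorphism for $r$ large (this is where one uses that $M_{P_\cyc}$ has a submodule of finite colength on which $(\gamma_\cyc-1)$ acts nilpotently and a complementary part that is $(\gamma_\cyc-1)$-divisible modulo the former up to finite length), so the limit is just the stable value, call it $L$. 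Passing to the structure theorem for finitely generated modules over the regular local ring, one reduces to cyclic modules $(R_\cyc)_{P_\cyc}/(q^m)$ for prime elements $q$; for $q$ coprime to $\gamma_\cyc-1$ the stable quotient vanishes, while for $q=\gamma_\cyc-1$ one gets ${\rm length}_{R_P}\big((R_\cyc)_{P_\cyc}/(\pi,(\gamma_\cyc-1))\big)=1$ per factor of $(\gamma_\cyc-1)^{m}$ only after collecting the $\pi$-part — more precisely the $\pi$-adic valuation of $h$ is what survives. I would organise this via the snake lemma applied to multiplication by $(\gamma_\cyc-1)$ on a free presentation of $M_{P_\cyc}$, reading off that the stable cokernel has $R_P$-length equal to $\ord_P(\mathds{1}_\cyc(h))$, i.e. the $\pi$-adic valuation of the $(\gamma_\cyc-1)$-free part $h$ evaluated at $\gamma_\cyc=1$. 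This is precisely the content of \cite[Lemma~11.6.8]{nekovar06} and its surrounding discussion, which I would cite for the bookkeeping rather than reproduce in full.

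For the second assertion, when $R$ is moreover a UFD, $R_\cyc=R[[\gamma_\cyc-1]]$ is \emph{not} automatically a UFD but the global characteristic ideal is still principal on height-one primes, and equations~\eqref{eqn_char_Rcyc_localized_vs_char_Rcyc} and \eqref{eqn_derived_char_Rcyc_localized_vs_char_Rcyc} of Definition~\ref{def_connecting_a_P_to_leading_term}(ii) already record the compatibility $\big(\partial_\cyc\,\Char_{R_\cyc}(M)\big)_P=\partial_\cyc\,\Char_{(R_\cyc)_{P_\cyc}}(M_{P_\cyc})$. Combining this with the first assertion gives $a_P(M_{P_\cyc})=\ord_P\big(\partial_\cyc\,\Char_{(R_\cyc)_{P_\cyc}}(M_{P_\cyc})\big)=\ord_P\big(\partial_\cyc\,\Char_{R_\cyc}(M)\big)_P$, which is the displayed formula (the right-hand side of the lemma as stated should be read as this order, the notation being slightly compressed). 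The main obstacle I anticipate is the careful justification that the direct limit defining $a_P$ stabilises and equals the $R_P$-length of the ``leading coefficient'' cokernel — this requires the two-dimensional regularity of $(R_\cyc)_{P_\cyc}$ and a clean application of the structure theory / snake lemma argument, and it is exactly the step where one must be careful not to conflate $(\gamma_\cyc-1)$-torsion with $\pi$-torsion; everything else is formal manipulation of characteristic ideals.
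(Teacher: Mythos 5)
Your proposal is correct and follows essentially the same route as the paper: the first displayed identity is exactly \cite[Lemma~11.6.8]{nekovar06} (which you cite as well, after sketching its proof via localisation at $P$ and the structure theory over the regular local ring $(R_\cyc)_{P_\cyc}$), and the second follows from the first together with the compatibility~\eqref{eqn_derived_char_Rcyc_localized_vs_char_Rcyc} recorded in Definition~\ref{def_connecting_a_P_to_leading_term}(ii), which is precisely what the paper invokes. The paper's own proof is a bare two-sentence citation, so your additional sketch of Nekov\'a\v{r}'s argument (stabilisation of the direct limit, snake lemma on a free presentation, reading off the $\pi$-adic valuation of the $(\gamma_\cyc-1)$-free part of the generator) goes somewhat beyond what the paper records but is consistent with it.
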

\begin{proof}
The first part is \cite[Lemma 11.6.8]{nekovar06}. The second part follows from this combined with \eqref{eqn_derived_char_Rcyc_localized_vs_char_Rcyc}.
\end{proof}
\begin{proposition}[Nekov\'a\v{r}]
\label{prop_big_BSD}
With the notation of Definition~\ref{defn_general_derived_char_ideal}, suppose that the ring $R$ is a Krull domain and $\widetilde{H}^1_{\rm f}(G_{K,\Sigma},X_\cyc;\Delta_{X_\cyc})$ and $\widetilde{H}^2_{\rm f}(G_{K,\Sigma},X_\cyc;\Delta_{X_\cyc})$ are torsion as $R_\cyc$-modules. Assume also that  
$$\widetilde{H}^0_{\rm f}(G_{K,\Sigma},X;\Delta_{X})=\widetilde{H}^3_{\rm f}(G_{K,\Sigma},X;\Delta_{X})=0$$
$$\widetilde{H}^0_{\rm f}(G_{K,\Sigma},X_\cyc;\Delta_{X_\cyc})=\widetilde{H}^3_{\rm f}(G_{K,\Sigma},X_\cyc;\Delta_{X_\cyc})=0$$
and that the $R$-adic height pairing $\frak{h}^{\rm Nek}_X$ is non-degenerate $($in alternative wording, we assume that ${\rm Reg}_X$ is non-zero$)$. Let $P\in {\rm ht}_1(R)$ be any height-one prime such that ${\rm Tam}_v(X,P)=0$ for every $v\in \Sigma$ prime to $p$.  Then
\begin{align*}
    \ord_P\left(\partial_{\cyc}\Char_{(R_\cyc)_{P_\cyc}}\left(\widetilde{H}^2_{\rm f}(G_{K,\Sigma},X_\cyc;\Delta_{X_\cyc})_{P_{\cyc}}\right)\right)=& \,{\rm ord}_{P}\left({\rm Reg}(X)\right)\\
    & +{\rm length}_{R_P}\left( \left(\widetilde{H}^2_{\rm f}(G_{K,\Sigma},X;\Delta_{X})_{P}\right)_{R_P{\rm -tor}}\right)\,.
\end{align*}
If in addition $R$ is a Krull unique factorization domain, we have
\begin{align*}
    \ord_P\left(\partial_{\cyc}\Char_{R_\cyc}\left(\widetilde{H}^2_{\rm f}(G_{K,\Sigma},X_\cyc;\Delta_{X_\cyc})\right)\right)= \,{\rm ord}_{P}&\left({\rm Reg}(X)\right)\\
    & +\ord_P\left({\Char}_{R}\left(\widetilde{H}^2_{\rm f}(G_{K,\Sigma},X;\Delta_{X})_{R-{\rm tor}}\right)\right)\,.
\end{align*}
\end{proposition}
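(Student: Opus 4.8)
The plan is to derive Proposition~\ref{prop_big_BSD} from the general descent machinery of Nekov\'a\v{r} (\cite[\S11]{nekovar06}), specifically the Bockstein-spectral-sequence computation of the leading term of the characteristic ideal of $\widetilde{H}^2_{\rm f}$ along the cyclotomic direction. First I would recall Nekov\'a\v{r}'s identity: under the stated vanishing of $\widetilde{H}^0_{\rm f}$ and $\widetilde{H}^3_{\rm f}$ (both over $R$ and over $R_\cyc$) and the torsionness of $\widetilde{H}^1_{\rm f}(G_{K,\Sigma},X_\cyc;\Delta_{X_\cyc})$ and $\widetilde{H}^2_{\rm f}(G_{K,\Sigma},X_\cyc;\Delta_{X_\cyc})$, the localization $\widetilde{H}^i_{\rm f}(G_{K,\Sigma},X;\Delta_X)_P$ is a finitely generated $R_P$-module for $i=1,2$ whose free parts have equal rank, and the Bockstein map $\beta_P$ induced by $\gamma_\cyc-1$ gives a pairing on these free parts whose determinant (with respect to bases) computes $a_P$ of the cyclotomic Selmer module. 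Concretely I would invoke \cite[Theorem 11.7.11]{nekovar06} (or its immediate corollary), which expresses $a_P\!\left(\widetilde{H}^2_{\rm f}(G_{K,\Sigma},X_\cyc;\Delta_{X_\cyc})_{P_\cyc}\right)$ as the sum of $\ord_P({\rm Reg}_X)$, the length of the $R_P$-torsion submodule of $\widetilde{H}^2_{\rm f}(G_{K,\Sigma},X;\Delta_X)_P$, and a sum of local Tamagawa contributions $\ord_P({\rm Tam}_v(X,P))$ over $v\in\Sigma$ prime to $p$; the hypothesis ${\rm Tam}_v(X,P)=0$ for all such $v$ kills the last term. This requires checking that the cyclotomic height pairing $\frak{h}^{\rm Nek}_X$ of Definition~\ref{defn_general_derived_char_ideal}(v) is precisely the pairing Nekov\'a\v{r} uses in the Bockstein formalism — which it is, by construction in \cite[\S11.1]{nekovar06} — and that our ${\rm Reg}_X$ of Definition~\ref{defn_general_derived_char_ideal}(vi) agrees with the discriminant of that pairing localized at $P$, which follows from the interpretation of characteristic ideals of cokernels of adjunction maps.

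Next I would translate the statement into the language of the excerpt. By Lemma~\ref{lemma_nekovar_a_vs_partialcyc}, $a_P\!\left(\widetilde{H}^2_{\rm f}(G_{K,\Sigma},X_\cyc;\Delta_{X_\cyc})_{P_\cyc}\right)=\ord_P\!\left(\partial_\cyc\,\Char_{(R_\cyc)_{P_\cyc}}\!\left(\widetilde{H}^2_{\rm f}(G_{K,\Sigma},X_\cyc;\Delta_{X_\cyc})_{P_\cyc}\right)\right)$, so the first displayed equality of the proposition is exactly Nekov\'a\v{r}'s identity rephrased. For the torsion-submodule term, I note that $\widetilde{H}^2_{\rm f}(G_{K,\Sigma},X;\Delta_X)_P$ over the DVR $R_P$ splits as free part plus torsion part, and the torsion part has finite length equal to ${\rm length}_{R_P}\!\left(\left(\widetilde{H}^2_{\rm f}(G_{K,\Sigma},X;\Delta_X)_P\right)_{R_P\text{-tor}}\right)$, matching the right-hand side of the first display verbatim.

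For the second (unique-factorization-domain) display, I would add the hypothesis that $R$ is a Krull UFD, so that every height-one prime is principal and the globalization formulae \eqref{eqn_char_Rcyc_localized_vs_char_Rcyc} and \eqref{eqn_derived_char_Rcyc_localized_vs_char_Rcyc} apply. Then $\partial_\cyc\,\Char_{(R_\cyc)_{P_\cyc}}(M_{P_\cyc})=\left(\partial_\cyc\,\Char_{R_\cyc}(M)\right)_P$ with $M=\widetilde{H}^2_{\rm f}(G_{K,\Sigma},X_\cyc;\Delta_{X_\cyc})$, so $\ord_P$ of the left side equals $\ord_P$ of the global derived characteristic ideal. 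On the other side, for a finitely generated torsion $R$-module $N:=\widetilde{H}^2_{\rm f}(G_{K,\Sigma},X;\Delta_X)_{R\text{-tor}}$ one has $\ord_P\!\left(\Char_R(N)\right)={\rm length}_{R_P}(N_P)$, and since localization is exact, $N_P=\left(\widetilde{H}^2_{\rm f}(G_{K,\Sigma},X;\Delta_X)_P\right)_{R_P\text{-tor}}$; hence the torsion-length term in the first display equals $\ord_P\!\left(\Char_R\!\left(\widetilde{H}^2_{\rm f}(G_{K,\Sigma},X;\Delta_X)_{R\text{-tor}}\right)\right)$, giving the second display. Throughout, the regulator term $\ord_P({\rm Reg}_X)=\ord_P({\rm Reg}(X))$ is unchanged since ${\rm Reg}_X$ is already a global object in $R$.

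\emph{Main obstacle.} The delicate point is verifying that the hypotheses of Proposition~\ref{prop_big_BSD} are exactly those under which Nekov\'a\v{r}'s leading-term formula \cite[Theorem 11.7.11]{nekovar06} is valid, and in particular bookkeeping the local Tamagawa factors: one must confirm that the only extraneous contributions to $a_P$ beyond ${\rm Reg}_X$ and the $R_P$-torsion length are the $\ord_P({\rm Tam}_v(X,P))$ for $v\nmid p$ (the local terms at $v\mid p$ being absorbed into the Selmer complex's local conditions and hence into the height pairing and regulator), so that the vanishing hypothesis on ${\rm Tam}_v$ genuinely suffices. Making this precise requires unwinding Nekov\'a\v{r}'s definition \cite[\S7.6.10]{nekovar06} of the Tamagawa factors and his Bockstein spectral sequence at $P_\cyc$, and checking that the spectral sequence degenerates in the expected range under our vanishing assumptions on $\widetilde{H}^0_{\rm f}$ and $\widetilde{H}^3_{\rm f}$ — this is where the bulk of the (mostly citation-driven) verification lies.
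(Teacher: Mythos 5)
Your proposal is correct and takes essentially the same route as the paper: the paper's proof is simply the citation of Nekov\'a\v{r}'s \cite[Theorem 11.7.11]{nekovar06} combined with Lemma~\ref{lemma_nekovar_a_vs_partialcyc} for the first display, and then \eqref{eqn_char_Rcyc_localized_vs_char_Rcyc} and \eqref{eqn_derived_char_Rcyc_localized_vs_char_Rcyc} for the UFD refinement. Your expanded discussion of the Bockstein formalism, the role of the Tamagawa hypothesis, and the exactness of localization is just an unpacking of what that citation entails.
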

\begin{proof}
The first assertion is \cite[11.7.11]{nekovar06}, combined with Lemma~\ref{lemma_nekovar_a_vs_partialcyc}. The second statement follows from the first, using \eqref{eqn_char_Rcyc_localized_vs_char_Rcyc} and \eqref{eqn_derived_char_Rcyc_localized_vs_char_Rcyc}.
\end{proof}

\subsubsection{Descent to $\Gamma\in {\rm Gr}(\Gamma_K)$}
\label{subsubsec_descent_to_Gamma}
We now restrict our attention to the setting where $X=\TT_{f,\chi}^{(\Gamma)}$, $R=\LL_{\cO}(\Gamma)$ and $\TT_{\f,\chi}^{(\Gamma)}$, $R=\LL_\f(\Gamma)$, assuming that $\Gamma\in {\rm Gr}(\Gamma_K)$ is transversal to $\Gamma_\cyc$ in the sense explained above.

\begin{defn}
\label{defn_lif_gammacyc}
Suppose $\gamma_\cyc\in \Gamma_\cyc$ is a fixed topological generator as in \S\ref{subsubsec_Nek_descent}. We define its lift $\gamma_\cyc^{(\Gamma)}\in \Gamma_K$ parallel to $\Gamma$ as the unique element which maps to $\gamma_\cyc$ and ${\rm id}$ under the natural surjections $\Gamma_K\twoheadrightarrow \Gamma_\cyc$ and $\Gamma_K\twoheadrightarrow \Gamma$, respectively.
\end{defn}
The choice of the lifting $\gamma_\cyc^{(\Gamma)}\in \Gamma_K$ induces a split exact sequence
$$\xymatrix@R=.05cm{
1\ar[r]& \Gamma_\cyc \ar[r]& \Gamma_K \ar[r]& \Gamma\ar[r]& 1\\
&\gamma_\cyc\ar@{|->}[r]& \gamma_\cyc^{(\Gamma)}&&
}$$
of topological groups, as well as a commutative diagram $$\xymatrix@C=.4cm@R=.3cm
{\TT_{f,\chi}\ar[rr]^(.4){\sim}\ar[dr]_{\pi_\Gamma}&&\TT_{f,\chi}^{(\Gamma)}\,\widehat{\otimes}\,\LL(\Gamma_\cyc)^
\sharp\ar[dl]^{\frak{a}_{\cyc}}
\\
& \TT_{f,\chi}^{(\Gamma)}&}$$
(and a similar diagram when $\TT_{f,\chi}$ and $\TT_{f,\chi}^{(\Gamma)}$ are replaced by $\TT_{\f,\chi}$ and $\TT_{\f,\chi}^{(\Gamma)}$, respectively) where $\pi_\Gamma$ induced from the natural projection $\Gamma_K\twoheadrightarrow \Gamma$ and the map $\frak{a}_{\cyc}$ from the augmentation map $\gamma_\cyc\mapsto 1$. Through these identifications, one may apply the formalism in \S\ref{subsubsec_Nek_descent} to deduce the following consequence of Theorem~\ref{thm_ordinary_Deltai_MC_via_horizontal_ES}.

\begin{corollary}
\label{cor_main_conj_along_Gamma}
Suppose $f \in S_{k_f+2}(\Gamma_1(N_f),\epsilon_f)$ is a cuspidal eigen-newform and $\chi$ is a ray class character of $K$ verifying \ref{item_RIa} for some natural number $a$. Let $\f$ be the unique branch of the Hida family that admits $f$ as a specialization in weight $k_f+2$. Assume that \ref{item_tau} holds and $\overline{\rho}_{\f}$ is $p$-distinguished. Let us choose a morphism $u$ as in \S\ref{subsubsec_local_properties_CMgg} and $\Gamma\in {\rm Gr}(\Gamma_K)$.
\item[i)] The following the divisibilities in Conjecture~\ref{IMC_split_definite_indefinite_ord} hold true:
\begin{align*}
    \Char_{\LL_{\cO}(\Gamma)}\left(\widetilde{H}^2_{\rm f}(G_{K,\Sigma},\TT_{f,\chi}^{(\Gamma)};\Delta^{(1)})\right) \otimes_{\ZZ_p}\QQ_p&\,\Big{|}\,\,  \mathscr{C}(u)\cdot{\LL_{\cO}(\Gamma)} L_p(f_{/K}\otimes\chi,\Sigma^{(1)}_{\rm crit}){\big \vert}_\Gamma \otimes_{\ZZ_p}\QQ_p\, ,\\
    \Char_{\LL_{\cO}(\Gamma)}\left(\widetilde{H}^2_{\rm f}(G_{K,\Sigma},\TT_{f,\chi}^{(\Gamma)};\Delta^{(2)})\right){\otimes_{\cO}\Phi}&\,\Big{|}\,\, \mathscr{C}(u)\cdot H_\chi{\Lambda_\Phi(\Gamma)} L_p(f_{/K}\otimes\chi,\Sigma^{(2)}_{\rm crit})\vert_{\Gamma}\,.
\end{align*}
\item[ii)] Assume in addition that $\LL_\f$ is regular. The following the divisibilities in Conjecture~\ref{IMC_ord_split_families} hold true:
\begin{align*}
    \Char_{\LL_\f(\Gamma)}\left(\widetilde{H}^2_{\rm f}(G_{K,\Sigma},\TT_{\f,\chi}^{(\Gamma)};\Delta^{(1)})\right)\otimes_{\ZZ_p}\QQ_p&\,\Big{|}\,\,\mathscr{C}(u)\cdot H_{\f}{\LL_{\f}(\Gamma_K)}L_p(\f_{/K}\otimes\chi,{\mathbb \Sigma}^{(1)}_{\rm crit}){\big \vert}_\Gamma\otimes_{\ZZ_p}\QQ_p\, ,\\
\Char_{\LL_{\f}(\Gamma)}\left(\widetilde{H}^2_{\rm f}(G_{K,\Sigma},\TT_{\f,\chi}^{(\Gamma)};\Delta^{(2)})\right){\otimes_{\cO}\Phi}&\,\Big{|}\,\, \mathscr{C}(u)\cdot H_\chi{\Lambda_{\f,\Phi}(\Gamma)} L_p(\f_{/K}\otimes\chi,{\mathbb \Sigma}^{(2)}_{\rm crit}){\big \vert}_\Gamma\,.
\end{align*}
\end{corollary}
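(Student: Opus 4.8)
\textbf{Proof proposal for Corollary~\ref{cor_main_conj_along_Gamma}.}

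The plan is to deduce both parts from Theorem~\ref{thm_ordinary_Deltai_MC_via_horizontal_ES} by descent along the cyclotomic direction, using Nekov\'a\v{r}'s control-type results and the identifications set up in \S\ref{subsubsec_descent_to_Gamma}. First, I would reduce to the case $\Gamma\neq\Gamma_\ac$ (for $\Gamma_\ac$ the descent is carried out separately in the anticyclotomic sections), and then fix the lift $\gamma_\cyc^{(\Gamma)}\in\Gamma_K$ parallel to $\Gamma$ as in Definition~\ref{defn_lif_gammacyc}, which yields the splitting $\Gamma_K\cong\Gamma\times\Gamma_\cyc$ and the commutative triangle relating $\TT_{f,\chi}$, $\TT_{f,\chi}^{(\Gamma)}\,\widehat\otimes\,\LL(\Gamma_\cyc)^\sharp$ and $\TT_{f,\chi}^{(\Gamma)}$ via $\pi_\Gamma$ and $\frak a_\cyc$. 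Under this identification, $\LL_\cO(\Gamma_K)=\LL_\cO(\Gamma)\,\widehat\otimes\,\LL(\Gamma_\cyc)=R_\cyc$ with $R=\LL_\cO(\Gamma)$, and $\TT_{f,\chi}=X_\cyc$ with $X=\TT_{f,\chi}^{(\Gamma)}$, so the Selmer complex $\widetilde{{\bf R}\Gamma}_{\rm f}(G_{K,\Sigma},\TT_{f,\chi};\Delta^{(i)})$ is precisely $\widetilde{{\bf R}\Gamma}_{\rm f}(G_{K,\Sigma},X_\cyc;\Delta_{X_\cyc})$.

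The core step is to pass from a characteristic-ideal divisibility over $R_\cyc$ to one over $R$ by applying the augmentation map $\frak a_\cyc=\mathds 1_\cyc$. Concretely, Theorem~\ref{thm_ordinary_Deltai_MC_via_horizontal_ES}(i) gives, for some $a_i\in\ZZ$,
\[
\Char_{\LL_\cO(\Gamma_K)}\left(\widetilde H^2_{\rm f}(G_{K,\Sigma},\TT_{f,\chi};\Delta^{(i)})\right)\ \Big|\ \varpi^{a_i}\,(\text{$L$-function})\cdot\mathscr C(u).
\]
Since $\widetilde H^2_{\rm f}(G_{K,\Sigma},\TT_{f,\chi};\Delta^{(i)})$ is $\LL_\cO(\Gamma_K)$-torsion (Theorem~\ref{thm_ordinary_Delta1_Selmer_torsion_via_horizontal_ES}), after inverting $p$ one may work over the regular ring $\LL_L(\Gamma_K)$, so that $\Char$ is well-behaved. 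I would then invoke Nekov\'a\v{r}'s control theorem for Selmer complexes (the base-change spectral sequence of \cite[\S8.10]{nekovar06}, or \cite[Lemma~11.6.8]{nekovar06} together with \eqref{eqn_char_Rcyc_localized_vs_char_Rcyc}) to compare $\Char_{R}\big(\widetilde H^2_{\rm f}(G_{K,\Sigma},X_\cyc;\Delta_{X_\cyc})\otimes_{R_\cyc,\mathds 1_\cyc}R\big)$ with $\Char_{R}\big(\widetilde H^2_{\rm f}(G_{K,\Sigma},X;\Delta_{X})\big)$; the two differ only by error terms (images of $\widetilde H^1$ under $\gamma_\cyc^{(\Gamma)}-1$ and Tamagawa-type factors) which are themselves $R$-torsion and, crucially, become trivial after $\otimes\QQ_p$ because $\widetilde H^1_{\rm f}$ is torsion over $R_\cyc$ and the relevant cohomology is concentrated in the expected range $[0,3]$. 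Applying $\mathds 1_\cyc$ to the $L$-function side reproduces $L_p(f_{/K}\otimes\chi,\Sigma^{(i)}_{\rm crit})\vert_\Gamma$ (respectively $H_\chi L_p(\cdots)\vert_\Gamma$) by the very definition of the restriction $\vert_\Gamma$ and the compatibility of the twisting morphism ${\rm Tw}$ with projection, and the powers $\varpi^{a_i}$ are absorbed into $\otimes\QQ_p$. This yields part (i). Part (ii) is identical with $R=\LL_\f(\Gamma)$, using the regularity of $\LL_\f$ to guarantee that $\LL_\f(\Gamma)$ is a unique factorization domain (so \eqref{eqn_char_Rcyc_localized_vs_char_Rcyc}--\eqref{eqn_derived_char_Rcyc_localized_vs_char_Rcyc} and Lemma~\ref{lemma_nekovar_a_vs_partialcyc} apply), together with the fact from Theorem~\ref{thm_ordinary_Delta1_Selmer_torsion_via_horizontal_ES}(ii) that the relevant Selmer modules are $\LL_\f(\Gamma_K)$-torsion and hence remain torsion after specialization along $\mathds 1_\cyc$.

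The main obstacle I anticipate is controlling the error terms in the descent precisely enough: one must check that $\widetilde H^1_{\rm f}(G_{K,\Sigma},\TT_{f,\chi}^{(\Gamma)};\Delta^{(i)})$ has no $R$-torsion supported at $(\gamma_\cyc^{(\Gamma)}-1)$ after inverting $p$, and that the Tamagawa factors ${\rm Tam}_v$ do not interfere — but both issues are sidestepped by the $\otimes_{\ZZ_p}\QQ_p$ in the statement, which is why the corollary is only asserted up to $\QQ_p$-isogeny. A secondary point requiring care is that $\Char$ over the non-regular ring $\LL_\cO(\Gamma)$ (when $\LL_\f$ is not assumed regular in part (i), though there $R=\LL_\cO(\Gamma)$ is already a complete regular local ring tensored up, hence fine) behaves well; here there is genuinely no issue since $\LL_\cO(\Gamma)\cong\cO[[T_1,\dots,T_d]]$ is regular, and in part (ii) the regularity hypothesis on $\LL_\f$ is exactly what is imposed to make the argument go through. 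Finally, one should note that this is precisely the strategy already deployed in \cite[\S3]{castellawanGL2families2018} and in \cite{nekovar06}, so no new technical input beyond bookkeeping of the correction factor $\mathscr C(u)$ is needed.
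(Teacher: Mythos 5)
Your overall strategy — descent from $\Gamma_K$ to $\Gamma$ via the splitting of Definition~\ref{defn_lif_gammacyc} and Nekov\'a\v{r}'s formalism, feeding in Theorem~\ref{thm_ordinary_Deltai_MC_via_horizontal_ES} — is the paper's strategy. However, your error-term analysis in the central step is not correct as written and papers over the actual mechanism. You assert that the control-theorem error terms ``become trivial after $\otimes\QQ_p$ because $\widetilde H^1_{\rm f}$ is torsion over $R_\cyc$,'' but being $R$-torsion does not make a module vanish after inverting $p$, and neither does concentration of cohomology in $[0,3]$. What the paper actually does is: first reduce to the case $L_p(f_{/K}\otimes\chi,\Sigma^{(i)}_{\rm crit})\vert_\Gamma\neq 0$ (the other case being vacuous), use the divisibility of Theorem~\ref{thm_ordinary_Deltai_MC_via_horizontal_ES} to conclude that $(\gamma_\cyc^{(\Gamma)}-1)$ does \emph{not} divide $\Char_{\LL_\cO(\Gamma_K)}(\widetilde H^2_{\rm f}(\TT_{f,\chi};\Delta^{(i)}))$, deduce via control and the Euler characteristic formula that both $\widetilde H^1_{\rm f}$ and $\widetilde H^2_{\rm f}$ over $\LL_\cO(\Gamma)$ are torsion, and conclude that the regulator ${\rm Reg}(\TT_{f,\chi}^{(\Gamma)})=\LL_\cO(\Gamma)$ is a unit (because the height pairing is trivially non-degenerate on a torsion source with torsion-free target). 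Only then does Proposition~\ref{prop_big_BSD} — which carries that regulator term in front — give a clean descent identity, and it is applied only at height-one primes $P\nmid p$ (after checking ${\rm Tam}_v(X,P)=0$ by \cite[Cor.~8.9.7.4]{nekovar06}), which is where the $\otimes_{\ZZ_p}\QQ_p$ actually enters. Your ``main obstacle'' paragraph misidentifies the issue as $R$-torsion supported at $(\gamma_\cyc^{(\Gamma)}-1)$; it is rather the regulator, and the $L$-function nonvanishing input is indispensable to pin it down.

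Two further points. First, the lift $\gamma_\cyc^{(\Gamma)}$ of Definition~\ref{defn_lif_gammacyc} is only defined when $\Gamma$ is transversal to $\Gamma_\cyc$, so your argument as written does not cover $\Gamma=\Gamma_\cyc$; the paper handles that case by swapping the roles of $\Gamma_\cyc$ and an auxiliary $\Gamma'\in{\rm Gr}(\Gamma_K)$ in \S\ref{subsubsec_Nek_descent}. Second, there is no need to exclude $\Gamma_\ac$ from the corollary and defer it to the anticyclotomic sections — $\Gamma_\ac$ is transversal to $\Gamma_\cyc$ and is covered by exactly the same argument; the anticyclotomic sections contain refinements, not the basic divisibility.
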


One may eliminate the correction terms $\mathscr{C}(u)$ from the statement of Corollary~\ref{cor_main_conj_along_Gamma} if the hypothesis \ref{item_Ind} concerning the CM branches of Hida families holds. In particular, if \ref{item_RIa} is valid with $a=0$, then we can take $\mathscr C(u)=(1)$ thanks to Proposition~\ref{prop_uniqueness_of_the_lattice_in_residually_irred_case}. 

\begin{proof}[Proof of Corollary~\ref{cor_main_conj_along_Gamma}]
\item[i)] Let us first assume that $\Gamma\in {\rm Gr}(\Gamma_K)$ is transversal to $\Gamma_\cyc$. 

We first begin with noting that 
$$\widetilde{H}^0_{\rm f}(G_{K,\Sigma},\TT_{\f,\chi}^{(\Gamma)};\Delta^{(i)})=\widetilde{H}^3_{\rm f}(G_{K,\Sigma},\TT_{\f,\chi}^{(\Gamma)};\Delta^{(i)})=0$$
$$\widetilde{H}^0_{\rm f}(G_{K,\Sigma},\TT_{\f,\chi};\Delta^{(i)})=\widetilde{H}^3_{\rm f}(G_{K,\Sigma},\TT_{\f,\chi};\Delta^{(i)})=0$$
because we have assumed \ref{item_tau}.

If $L_p(f_{/K}\otimes\chi,\Sigma^{(1)}_{\rm crit}){\big \vert}_\Gamma=0$, there is nothing to prove and we therefore assume that $L_p(f_{/K}\otimes\chi,\Sigma^{(1)}_{\rm crit}){\big \vert}_\Gamma\neq 0$. It follows from Theorem~\ref{thm_ordinary_Deltai_MC_via_horizontal_ES}(i) that 
$\gamma_\cyc^{(\Gamma)}-1$ does not divide $\Char_{\LL_\cO(\Gamma_K)}\left(\widetilde{H}^2_{\rm f}(G_{K,\Sigma},\TT_{f,\chi};\Delta^{(1)})\right)$, which in turn shows that the $\LL_{\cO}(\Gamma)$-module
$$\widetilde{H}^2_{\rm f}(G_{K,\Sigma},\TT_{f,\chi};\Delta^{(1)})/(\gamma_\cyc^{(\Gamma)}-1)\widetilde{H}^2_{\rm f}(G_{K,\Sigma},\TT_{f,\chi};\Delta^{(1)})\stackrel{\sim}{\lra}\widetilde{H}^2_{\rm f}(G_{K,\Sigma},\TT_{\f,\chi}^{(\Gamma)};\Delta^{(1)})$$
is torsion. By global Euler-Poincar\'e characteristic formulae, it follows that $\widetilde{H}^1_{\rm f}(G_{K,\Sigma},\TT_{\f,\chi}^{(\Gamma)};\Delta^{(1)})$ is $\LL_{\cO}(\Gamma)$-torsion as well. This discussion shows that 
\begin{equation}
\label{eqn_Nek_for_Gamma_1}
\widetilde{H}^2_{\rm f}(G_{K,\Sigma},\TT_{f,\chi}^{(\Gamma)};\Delta^{(1)})_{\LL_{\cO}(\Gamma)-{\rm tor}}=\widetilde{H}^2_{\rm f}(G_{K,\Sigma},\TT_{f,\chi}^{(\Gamma)};\Delta^{(1)})\,,
\end{equation}
\begin{align}
\notag \partial_{\cyc}\Char_{\LL_{\cO}(\Gamma_K)}\left(\widetilde{H}^2_{\rm f}(G_{K,\Sigma},\TT_{f,\chi};\Delta^{(1)})\right)&=\Char_{\LL_{\cO}(\Gamma_K)}\left(\widetilde{H}^2_{\rm f}(G_{K,\Sigma},\TT_{f,\chi};\Delta^{(1)})\right)\big{\vert}_{\Gamma}\\
\label{eqn_Nek_for_Gamma_2}
&\qquad \qquad\Big{\vert}\,\,\,  \mathscr{C}(u)\cdot {\LL_{\cO}(\Gamma_K)}L_p(f_{/K}\otimes\chi,\Sigma^{(1)}_{\rm crit}){\big \vert}_\Gamma\,,
\end{align}
\begin{equation}
\label{eqn_Nek_for_Gamma_3}
{\rm Reg}(\TT_{f,\chi}^{\Gamma})=\LL_{\cO}(\Gamma)\,.
\end{equation}
Moreover, it follows from \cite[Corollary 8.9.7.4]{nekovar06} applied with $T=\TT_f$ that ${\rm Tam}_v(X,P)=0$ for every height-one prime of $\LL_{\cO}(\Gamma)$. Combining this with \eqref{eqn_Nek_for_Gamma_1}, \eqref{eqn_Nek_for_Gamma_2}, \eqref{eqn_Nek_for_Gamma_3} and Proposition~\ref{prop_big_BSD} for every height one prime $P$ of $\LL_{\cO}(\Gamma)$ which does not contain $p$, we deduce that
$$\Char_{\LL_{\cO}(\Gamma)}\left(\widetilde{H}^2_{\rm f}(G_{K,\Sigma},\TT_{f,\chi}^{(\Gamma)};\Delta^{(1)})\right) \,\Big{|}\,\,  \mathscr{C}(u)\cdot{\LL_{\cO}(\Gamma)} L_p(f_{/K}\otimes\chi,\Sigma^{(1)}_{\rm crit}){\big \vert}_\Gamma \otimes_{\ZZ_p}\QQ_p\,,$$
as required.

The setting that concerns the local conditions $\Delta^{(2)}$ may be handled in an identical manner.

In the remaining case when $\Gamma=\Gamma_\cyc$, we may simply replace the role of $\Gamma_\cyc$ in \S\ref{subsubsec_Nek_descent} by any other $\Gamma^\prime\in {\rm Gr}(\Gamma_K)$ and argue as above.
\item[ii)] The argument in the first part applies verbatim.
\end{proof}


\subsection{Anticyclotomic Iwasawa Theory}
\label{subsec_split_indefinite_definite_ord}
We shall now concentrate in the  particular case when $\Gamma=\Gamma_\ac$ and refine Corollary~\ref{cor_main_conj_along_Gamma}. We assume throughout \S\ref{subsec_split_indefinite_definite_ord} that $\chi^c=\chi^{-1}$ (i.e., the character $\chi$ is anticyclotomic or a ring class character) and we work with the conjugate self-dual twists $\TT_{f,\chi}^\dagger=\TT_{f,\chi}^\ac\,\widehat{\otimes}\,\LL(\Gamma_\cyc)^\sharp$ and $\TT_{\f,\chi}^\dagger=\TT_{\f,\chi}^\ac\,\widehat{\otimes}\,\LL(\Gamma_\cyc)^\sharp$ of $\TT_{f,\chi}$ and $\TT_{\f,\chi}$, respectively. Let us put $N_f=N_f^+N_f^-$ so that $N_f^+$ (resp., $N_f^-$) is only divisible by primes that are split (resp., inert) in $K/\QQ$.

The following are the main results of \cite{chidahsiehcrelle, hsiehnonvanishing}.

\begin{theorem}[Chida--Hsieh, Hsieh]
\label{thm_Chida_Hsieh} Suppose $N_f^-$ is square-free.
\item[i)] If $N_f^-$ is the product of an odd number of primes, then  $L_p(f_{/K}\otimes\chi,\Sigma^{(1)}_{\rm cc})\big{\vert}_{\Gamma_\ac}$ is non-trivial. 
\item[ii)] If $N_f^-$ is a product of even number of primes, then $L_p(f_{/K}\otimes\chi, \Sigma^{(2)}_{\rm cc}){\big \vert}_{\Gamma_\ac}$ is non-trivial.
\end{theorem}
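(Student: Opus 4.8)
\textbf{Proof proposal for Theorem~\ref{thm_Chida_Hsieh}.}

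The statement is a reformulation of the non-vanishing results of Chida--Hsieh \cite{chidahsiehcrelle} and Hsieh \cite{hsiehnonvanishing} for anticyclotomic $p$-adic $L$-functions, so the plan is to translate their theorems into the present normalization. First I would recall from Remark~\ref{rem_cc_vs_Galois} the bijection between $\Sigma_{\rm cc}(k_f)$ and continuous algebraic characters of $G_{p^\infty}$, and observe that by Definition~\ref{defn_Hidas_padic_Lfunctions}(v), the element $L_p(f_{/K}\otimes\chi,\Sigma^{(i)}_{\rm cc})\big{\vert}_{\Gamma_\ac}$ is obtained from $L_p(f_{/K}\otimes\chi,\Sigma^{(i)}_{\rm crit})$ by applying the twisting map ${\rm Tw}$ and then projecting $H_{p^\infty}\twoheadrightarrow \Gamma_\ac$. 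Since twisting and projection are injective on the relevant rings (resp.\ send non-zero elements to non-zero elements, as ${\rm Tw}$ is an automorphism and the projection corresponds to restricting an analytic function to a Zariski-dense set of specializations once non-triviality at one point is known), it suffices to exhibit a single anticyclotomic Hecke character $\xi$ with $\psi={\rm Tw}(\xi)\in \Sigma^{(i)}_{\rm cc}$ of $p$-power conductor at which the interpolated value is non-zero.

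Next I would invoke the interpolation formula recorded in the Remark following Definition~\ref{defn_Hidas_padic_Lfunctions}, namely
$$L_p(f_{/K}\otimes\chi, \Sigma^{(i)}_{\rm cc})(\xi_p)\,\dot{=}\,L(f_{/K},\chi^{-1}\xi^{-1},k_f/2+1)\,,$$
valid up to explicit non-zero factors for $\psi=\xi|\cdot|^{k_f/2}\in\Sigma^{(i)}_{\rm cc}(k_f)$ with $p$-power conductor. Thus the non-triviality of $L_p(f_{/K}\otimes\chi,\Sigma^{(i)}_{\rm cc})\big{\vert}_{\Gamma_\ac}$ is reduced to the statement that $L(f_{/K},\chi^{-1}\xi^{-1},k_f/2+1)\neq 0$ for at least one anticyclotomic $\xi$ of $p$-power conductor in the appropriate (definite resp.\ indefinite) range; one must of course also check that the relevant interpolation factors (the Euler-type factors $\cE(\chi\psi)$, $\cE^*(\chi\psi)$, $\cE(\chi\psi,f)$ and the archimedean/Gamma-factors) are non-zero for this choice, which follows since $\chi^2\neq\mathds{1}$ rules out the degenerate Euler factor and the weight of $\theta_{\chi\psi}$ is controlled so as not to collide with $k_f+2$. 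Here the sign of the functional equation, governed by $N_f^-$ being a product of an odd (part i) versus even (part ii) number of primes, dictates which of the two families $\Sigma^{(1)}_{\rm cc}$ or $\Sigma^{(2)}_{\rm cc}$ can contain a non-vanishing central value.

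The non-vanishing input itself is precisely what Chida--Hsieh and Hsieh establish: in the definite case (odd number of primes in $N_f^-$), \cite[Theorem B]{chidahsiehcrelle} together with the refinement in \cite{hsiehnonvanishing} shows that the anticyclotomic $p$-adic $L$-function of $f_{/K}\otimes\chi$ is a non-zero element of $\Lambda_L(\Gamma_\ac)$ (equivalently, that $\mu$-invariant is finite and the function does not vanish identically), which after matching normalizations via the twist ${\rm Tw}$ gives part (i); in the indefinite case (even number of primes in $N_f^-$), the analogous statement for the $\Sigma^{(2)}$-branch, which interpolates central values in the range of absolute convergence combined with Hsieh's generic non-vanishing of toric periods, gives part (ii). The only genuine work is bookkeeping: one must verify that the conductor hypotheses (that $\chi\psi$ has conductor coprime to $N_f$, which by the Remark after Definition~\ref{defn_central_criticality_1_2} forces $\epsilon_f=\mathds{1}$ and $f\in S_{k_f+2}(\Gamma_0(N_f))$) and the square-freeness of $N_f^-$ match the hypotheses in op.\ cit., and that the explicit constant ``$\dot{=}$'' never vanishes on the locus of interest. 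I expect the main obstacle to be exactly this reconciliation of normalizations and running hypotheses --- in particular, tracking the CM congruence factor $H_\chi$ in the $\Sigma^{(2)}$-case and confirming it does not interfere with non-triviality --- rather than any new arithmetic input, since the hard analytic content is entirely supplied by \cite{chidahsiehcrelle, hsiehnonvanishing}.
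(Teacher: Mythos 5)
You have correctly identified the essential fact: the paper gives no proof of this theorem at all — it is attributed to and cited directly from Chida--Hsieh \cite{chidahsiehcrelle} and Hsieh \cite{hsiehnonvanishing}, with nothing more than the surrounding sentence ``The following are the main results of \cite{chidahsiehcrelle, hsiehnonvanishing}.'' Your proposal is therefore a reconstruction of the translation that the authors leave implicit, and its overall skeleton is right: to show non-triviality of an element of $\LL_\Phi(\Gamma_\ac)$, it suffices to exhibit a single anticyclotomic specialization at which it is non-zero; via the interpolation formula this becomes a non-vanishing statement for a central critical Rankin--Selberg $L$-value; and the input is Chida--Hsieh when $N_f^-$ is a product of an odd number of primes (sign $+1$ on $\Sigma^{(1)}_{\rm cc}$), resp.\ Hsieh when $N_f^-$ is a product of an even number of primes (sign $+1$ on $\Sigma^{(2)}_{\rm cc}$).

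Two imprecisions in the way you set this up are worth flagging. First, the parenthetical in your opening paragraph is circular: the projection $\LL(H_{p^\infty})\to\LL(\Gamma_\ac)$ is very far from injective, so non-vanishing of $L_p(f_{/K}\otimes\chi,\Sigma^{(i)}_{\rm crit})$ in $\LL(H_{p^\infty})$ does \emph{not} formally imply non-vanishing of its restriction to $\Gamma_\ac$; that restriction's non-vanishing is precisely the deep content of the cited theorems. What you actually (correctly) do in the rest of the argument is exhibit one anticyclotomic specialization at which the value is non-zero, and no injectivity claim is needed or available. Second, your remark that the $\Sigma^{(2)}_{\rm cc}$-branch ``interpolates central values in the range of absolute convergence'' is false: central critical values are never in the range of absolute convergence. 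The range-of-absolute-convergence device is used elsewhere in the paper for \emph{non-central} specializations in $\Sigma^{(2)}_{\rm crit}$ (e.g.\ in the proof of Theorem~\ref{thm_ordinary_Delta1_Selmer_torsion_via_horizontal_ES}(i)), but it has no bearing on $\Sigma^{(2)}_{\rm cc}$; part (ii) rests entirely on Hsieh's non-vanishing result for the BDP-type $p$-adic $L$-function, and you should not give the impression that there is a soft argument hiding behind it. Finally, the hypothesis $\chi^2\neq\mathds{1}$ you invoke does not appear in the theorem statement as written in the paper; if it is genuinely required by the source papers, that is a caveat about the paper's formulation, but it should not be silently inserted as if it were part of the claim.
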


\begin{theorem}
\label{thm_ordinary_Delta1_Delta2_anticyclo_Selmer_torsion_via_horizontal_ES} 
Suppose $f \in S_{k_f+2}(\Gamma_0(N_f))$ is a cuspidal eigen-newform and $\chi$ is a ring class character of $K$. We assume that $N_f^-$ is square-free. Let $\f$ be the unique branch of the Hida family that admits $f$ as a specialization in weight $k_f+2$. Suppose that $\overline{\rho}_{\f}$ is  $p$-distinguished.
\item[i)] Assume that one of the following holds:
\begin{itemize}
    \item[(A)] $\chi^2\neq \mathds{1}$ and the hypothesis \ref{item_tau} holds true.
    \item[(B)] { \ref{item_tau}, \ref{item_HSS}, \ref{item_HnEZ}} and {\ref{item_HDist}} are valid.
    \item[(C)] $\chi=\mathds{1}$, $k_f\equiv 0$ (mod $p-1$) and the hypotheses \ref{item_SU1}--\ref{item_SU3} are valid.
\end{itemize}  
If $N_f^-$ is the product of an odd (resp., even) number of primes, then both $\LL_{\cO}(\Gamma_\ac)$-modules $\widetilde{H}^1_{\rm f}(G_{K,\Sigma},\TT_{f,\chi}^\ac;\Delta^{(1)})$ and $\widetilde{H}^2_{\rm f}(G_{K,\Sigma},\TT_{f,\chi}^\ac;\Delta^{(1)})$ (resp., $\widetilde{H}^1_{\rm f}(G_{K,\Sigma},\TT_{f,\chi}^\ac;\Delta^{(2)})$ and $\widetilde{H}^2_{\rm f}(G_{K,\Sigma},\TT_{f,\chi}^\ac;\Delta^{(2)})$) are torsion.
\item[ii)] Assume either (A) or (B) above, or else one of the following conditions holds:
\begin{itemize}
    \item[(C')] $\chi=\mathds{1}$ and the hypotheses \ref{item_SU1}--\ref{item_SU3} are valid.
\end{itemize} 
If $N_f^-$ is the product of an odd (resp., even) number of primes, then both $\LL_{\f}(\Gamma_\ac)$-modules $\widetilde{H}^1_{\rm f}(G_{K,\Sigma},\TT_{\f,\chi}^\ac;\Delta^{(1)})$ and $\widetilde{H}^2_{\rm f}(G_{K,\Sigma},\TT_{\f,\chi}^\ac;\Delta^{(1)})$ (resp., $\widetilde{H}^1_{\rm f}(G_{K,\Sigma},\TT_{\f,\chi}^\ac;\Delta^{(2)})$ and $\widetilde{H}^2_{\rm f}(G_{K,\Sigma},\TT_{\f,\chi}^\ac;\Delta^{(2)})$) are torsion.
\end{theorem}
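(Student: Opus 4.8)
\textbf{Proof plan for Theorem~\ref{thm_ordinary_Delta1_Delta2_anticyclo_Selmer_torsion_via_horizontal_ES}.}
The plan is to deduce the asserted torsionness of the anticyclotomic Selmer groups from the corresponding statements over the full $\ZZ_p^2$-tower (Theorem~\ref{thm_ordinary_Delta1_Selmer_torsion_via_horizontal_ES} and its twisted variant \eqref{eqn_twisted_torsion_Gamma_K_1}, \eqref{eqn_twisted_torsion_families_Gamma_K_2}) by a control/descent argument along $\Gamma_\ac$, combined with the non-vanishing results of Chida--Hsieh and Hsieh (Theorem~\ref{thm_Chida_Hsieh}). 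First I would treat case (A), which is the one germane to the running hypotheses of \S\ref{subsec_split_indefinite_definite_ord}: here $\chi^2\neq\mathds{1}$ ensures $\chi\not\equiv\chi^c$ and \ref{item_RIa} holds with a finite $a$, so Theorem~\ref{thm_ordinary_Delta1_Selmer_torsion_via_horizontal_ES} and the twisted assertion \eqref{eqn_twisted_torsion_Gamma_K_1} apply and tell us that $\widetilde{H}^j_{\rm f}(G_{K,\Sigma},\TT_{f,\chi}^\dagger;\Delta^{(i)})$ is $\LL_\cO(\Gamma_K)$-torsion for $i,j\in\{1,2\}$. The point then is that the relative augmentation ideal $\ker(\LL_\cO(\Gamma_K)\twoheadrightarrow\LL_\cO(\Gamma_\ac))$, generated by $\gamma_\cyc^{(\Gamma_\ac)}-1$ in the notation of Definition~\ref{defn_lif_gammacyc}, must be shown not to divide $\Char_{\LL_\cO(\Gamma_K)}\big(\widetilde{H}^2_{\rm f}(G_{K,\Sigma},\TT_{f,\chi}^\dagger;\Delta^{(i)})\big)$; granting this, the control isomorphism
$$\widetilde{H}^2_{\rm f}(G_{K,\Sigma},\TT_{f,\chi}^\dagger;\Delta^{(i)})\big/(\gamma_\cyc^{(\Gamma_\ac)}-1)\stackrel{\sim}{\lra}\widetilde{H}^2_{\rm f}(G_{K,\Sigma},\TT_{f,\chi}^\ac;\Delta^{(i)})$$
forces the right-hand side to be $\LL_\cO(\Gamma_\ac)$-torsion, and then global Euler--Poincar\'e characteristic formulae (as in the proof of Theorem~\ref{thm_ordinary_Delta1_Selmer_torsion_via_horizontal_ES}) give the same for $\widetilde{H}^1_{\rm f}$. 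The divisibility in Corollary~\ref{cor_main_conj_along_Gamma}(i) already bounds $\Char_{\LL_\cO(\Gamma_\ac)}\big(\widetilde{H}^2_{\rm f}(G_{K,\Sigma},\TT_{f,\chi}^\ac;\Delta^{(i)})\big)\otimes\QQ_p$ by $\mathscr{C}^{\rm ac}(u)\cdot$ the restriction of the relevant anticyclotomic $p$-adic $L$-function to $\Gamma_\ac$ (with $i=1$ in the indefinite/odd case and $i=2$ in the definite/even case, after twisting by ${\rm Tw}$), and Theorem~\ref{thm_Chida_Hsieh} says precisely that this $L$-function is non-zero in the matching parity; hence the characteristic ideal is non-zero, i.e.\ the module is torsion. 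This is the cleanest route and handles (A) and the family version at once.

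For case (B) I would argue identically, using instead that under \ref{item_HIm}, \ref{item_HSS}, \ref{item_HnEZ}, \ref{item_HDist} the results of \cite{BLForum} (recalled in Remark~\ref{rem_what_is_known_split_def_indef_ord}(a)) already give the relevant divisibility over $\Gamma_K$, and then the non-vanishing of Theorem~\ref{thm_Chida_Hsieh} upon restriction to $\Gamma_\ac$, together with the control theorem, yields torsionness; since in that setting one has the $p$-distinguished hypothesis $\mathscr{C}^{\rm ac}(u)=(1)$ there is no correction factor to worry about. Case (C) (resp.\ (C')) is the $\chi=\mathds{1}$ scenario: there the torsionness over $\Gamma_K$ is Kato's theorem together with Skinner--Urban (Theorem~\ref{thm_IMC_ord_split_families}) in the family case, and the anticyclotomic non-vanishing is again Theorem~\ref{thm_Chida_Hsieh} for $\chi=\mathds{1}$; one must however keep track of the integrality/regularity hypotheses needed to invoke Skinner--Urban for part (ii), which is why (C') drops the weight condition present in (C). In each case the passage from $\Gamma_K$ to $\Gamma_\ac$ is the control-theorem step, valid because $\Gamma_\ac$ is transversal to $\Gamma_\cyc$ and the cohomology of the Selmer complex is concentrated in degrees $[0,3]$ with vanishing $\widetilde{H}^0$ and $\widetilde{H}^3$ under \ref{item_tau}.

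The main obstacle I anticipate is ensuring that the specialization $\gamma_\cyc^{(\Gamma_\ac)}-1$ genuinely avoids the support of $\widetilde{H}^2_{\rm f}$ over $\Gamma_K$ — equivalently, that $\Gamma_\ac$ is not ``unfortunate'' in the sense alluded to in the commented-out Definition in \S\ref{subsubsec_IwThe1_ord_families}. This is where the square-freeness of $N_f^-$ and the parity of the number of its prime factors enter essentially: they guarantee via Theorem~\ref{thm_Chida_Hsieh} that the appropriate anticyclotomic $p$-adic $L$-function is non-zero, which is exactly the non-divisibility one needs (in the odd case for $\Delta^{(1)}$, in the even case for $\Delta^{(2)}$). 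A secondary subtlety is that the divisibility from Corollary~\ref{cor_main_conj_along_Gamma} is only stated after inverting $p$ (and carries the factor $\mathscr{C}^{\rm ac}(u)$), so one should phrase the conclusion as torsionness rather than attempt any sharper statement; but torsionness is insensitive to $\otimes\QQ_p$, so this causes no real difficulty. Finally, for part (ii) one needs $\LL_\f$ to be a Krull domain (and, where Skinner--Urban is invoked, regular) in order to speak of characteristic ideals and to run the control theorem for the $\LL_\f(\Gamma_\ac)$-adic Selmer complex; these are standard reductions and already in force in the ambient setup.
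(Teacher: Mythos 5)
Your approach is correct in outcome, but it is a genuinely different route from the paper's, and there is one hypothesis mismatch worth flagging. You go through the full $\Gamma_K$-adic main conjecture divisibility (Theorem~\ref{thm_ordinary_Deltai_MC_via_horizontal_ES}/Corollary~\ref{cor_main_conj_along_Gamma}) and then descend, whereas the paper bypasses Corollary~\ref{cor_main_conj_along_Gamma} entirely in case (A): it uses global Euler--Poincar\'e to reduce to $\widetilde{H}^2$, a control theorem to reduce to a \emph{single} anticyclotomic specialization $\psi$, then applies Theorem~\ref{thm_BK_type_result} (finiteness of the Bloch--Kato-type Selmer group at $\psi$ when $L(f_K\otimes\chi^{-1}\psi^{-1},0)\neq 0$) together with Chida--Hsieh to supply such a $\psi$. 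For case (B) the paper simply cites \cite{BLForum} rather than re-running the argument, and for case (C) it invokes \cite[Theorem~3.36]{skinnerurbanmainconj} directly (no Kato needed).

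The trade-off: your route through Corollary~\ref{cor_main_conj_along_Gamma} is logically valid for (A) and (B) but imports extra machinery — the whole $\Gamma_K$-adic divisibility and the correction ideal $\mathscr{C}(u)$ — where one only needs that the $L$-function does not vanish identically along $\Gamma_\ac$. The direct route is leaner because it specializes to a single character and only needs non-vanishing of a single central $L$-value; you avoid having to justify that the augmentation ideal $(\gamma_\cyc^{(\Gamma_\ac)}-1)$ does not divide $\mathscr{C}(u)\cdot\LL_{\cO}(\Gamma_K)$ (true, but an extra check). Your argument gives something slightly stronger (a $\Gamma_\ac$-adic divisibility, not merely torsionness), but the paper deliberately defers that to Theorems~\ref{thm_ordinary_definite_ac} and~\ref{thm_ordinary_Delta1_Selmer_torsion_via_horizontal_ES_indefinite}, where it is combined with the regulator and mock-leading-term formalism.

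One genuine gap in your sketch: you invoke Corollary~\ref{cor_main_conj_along_Gamma} in case (C), where $\chi=\mathds{1}$. That corollary (and Theorem~\ref{thm_ordinary_Deltai_MC_via_horizontal_ES} which it relies on) requires $\chi$ to satisfy \ref{item_RIa}, i.e.\ $\chi\neq\chi^c$, which fails for $\chi=\mathds{1}$. You notice this implicitly by switching to Kato/Skinner--Urban for (C), but the first two paragraphs of your write-up read as though Corollary~\ref{cor_main_conj_along_Gamma} is the uniform engine; make the case distinction explicit. A minor point of language: $\chi^2\neq\mathds{1}$ guarantees $\chi\neq\chi^c$ (i.e.\ \ref{item_RIa} for some $a$), not ``$\chi\not\equiv\chi^c$'' mod the maximal ideal, which would be \ref{item_HDist}; the weaker condition is all that is needed here, so this does not affect correctness, but the phrasing should be tightened.
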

\begin{proof}
\item[i)] In the situation of (B), the assertions were already proved in \cite{BLForum}. When (C) holds, the required result is a direct consequence of \cite[Theorem 3.36]{skinnerurbanmainconj} combined with control theorems for Selmer complexes~\cite[Corollary 8.10.2]{nekovar06} and Theorem~\ref{thm_Chida_Hsieh}(i).

Thanks to global Euler--Poincar\'e characteristic formulae, we only need to prove that the $\LL_{\cO}(\Gamma_\ac)$-module $\widetilde{H}^2_{\rm f}(G_{K,\Sigma},\TT_{f,\chi}^\ac;\Delta^{(1)})$ (resp., $\widetilde{H}^2_{\rm f}(G_{K,\Sigma},\TT_{f,\chi}^\ac;\Delta^{(2)})$) is torsion if $N_f^-$ is the product of an odd (resp., even) number of primes. By the control theorem for Selmer complexes, we are reduced to checking that the module
$\widetilde{H}^2_{\rm f}(G_{K,\Sigma},T_{f,\chi\psi}^\dagger;\Delta^{(1)})$ (resp., $\widetilde{H}^2_{\rm f}(G_{K,\Sigma},T_{f,\chi\psi}^\dagger;\Delta^{(2)})$) has finite cardinality for some Hecke character $\mathds{1}\neq \psi \in \Sigma^{(1)}_{\rm cc}$ (resp., $\psi \in \Sigma^{(2)}_{\rm cc}$) whose associated $p$-adic Galois character $\psi_p$ factors through $\Gamma_\ac$.

In the situation of (A), the existence of such $\psi$ follows as a consequence of Theorem~\ref{thm_BK_type_result} combined with Theorem~\ref{thm_Chida_Hsieh}(i) when $N_f^-$ is the product of an odd number of primes (resp.,  Theorem~\ref{thm_Chida_Hsieh}(ii) $N_f^-$ is the product of an even number of primes). 

\item[ii)] The proof of this portion reduces to (i) thanks to the control theorem for Selmer complexes~\cite[Proposition 8.10.1]{nekovar06}.
\end{proof}
\begin{corollary}
\label{cor_thm_ordinary_Delta1_Delta2_anticyclo_Selmer_torsion_via_horizontal_ES}
Suppose $f \in S_{k_f+2}(\Gamma_0(N_f))$ is a cuspidal eigen-newform and $\chi$ is a ring class character of $K$. We assume that $N_f^-$ is square-free. Let $\f$ be the unique branch of the Hida family that admits $f$ as a specialization in weight $k_f+2$. Suppose that $\overline{\rho}_{\f}$ is $p$-distinguished.
\item[i)]  Assume that one of the following holds:
\begin{itemize}
    \item[(A)] $\chi^2\neq \mathds{1}$ and the hypothesis \ref{item_tau} holds true.
    \item[(B)] {{ \ref{item_tau}, \ref{item_HSS}, \ref{item_HnEZ}} and \rm{\ref{item_HDist}}} are valid.
    \item[(C)] $\chi=\mathds{1}$, $k_f\equiv 0$ (mod $p-1$) and the hypotheses \ref{item_SU1}--\ref{item_SU3} are valid.
\end{itemize}   
Then, ${\rm rank}_{\LL_{\cO}(\Gamma_\ac)}\left(\widetilde{H}^1_{\rm f}(G_{K,\Sigma},\TT_{f,\chi}^\ac;\Delta^{(1)})\right)\leq 1\,.$
\item[ii)] Assume either (A) or (B) above, or else one of the following conditions holds:
\begin{itemize}
     \item[(C')] $\chi=\mathds{1}$ and the hypotheses \ref{item_SU1}--\ref{item_SU3} are valid.
\end{itemize}  
Then ${\rm rank}_{\LL_{\f}(\Gamma_\ac)}\left(\widetilde{H}^1_{\rm f}(G_{K,\Sigma},\TT_{\f,\chi}^\ac;\Delta^{(1)})\right)\leq 1\,.$
\end{corollary}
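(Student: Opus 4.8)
The plan is to deduce Corollary~\ref{cor_thm_ordinary_Delta1_Delta2_anticyclo_Selmer_torsion_via_horizontal_ES} from Theorem~\ref{thm_ordinary_Delta1_Delta2_anticyclo_Selmer_torsion_via_horizontal_ES} by a straightforward rank count built on the global Euler--Poincar\'e characteristic formula for Selmer complexes. The key point is that the relevant rank is insensitive to which factor of $N_f^-$ has even or odd parity: in the setting of local conditions $\Delta^{(1)}$, one of the two local Greenberg conditions at $p$ is the full cohomology and the other is trivial, so the rank computation is the same as in the $\Delta^{(2)}$ case up to swapping the roles of $\p$ and $\p^c$.

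First I would record the global Euler characteristic identity. For $X=\TT_{f,\chi}^\ac$ (resp.\ $\TT_{\f,\chi}^\ac$) over $R=\LL_\cO(\Gamma_\ac)$ (resp.\ $\LL_\f(\Gamma_\ac)$), Nekov\'a\v{r}'s formalism \cite[Proposition~8.9.7, \S0.13]{nekovar06} gives, since $\widetilde H^0_{\rm f}=\widetilde H^3_{\rm f}=0$ under \ref{item_tau}, the relation
\begin{align*}
\operatorname{rank}_R \widetilde H^1_{\rm f}(G_{K,\Sigma},X;\Delta^{(1)}) - \operatorname{rank}_R \widetilde H^2_{\rm f}(G_{K,\Sigma},X;\Delta^{(1)}) = \operatorname{rank}_R \widetilde H^1_{\rm f} - \operatorname{rank}_R\widetilde H^2_{\rm f}\,,
\end{align*}
where the right-hand side is computed from the local terms at $\infty$ and the Greenberg local conditions at $p$; a direct tally using that $K$ is imaginary quadratic and that $F^+\TT^\ac$ has rank one (resp., one full and one trivial local condition at $\p,\p^c$ in the $\Delta^{(2)}$ normalisation, which for $\Delta^{(1)}$ specialises to $F^+$ at both primes) shows this Euler characteristic equals $1$. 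Hence $\operatorname{rank}_R\widetilde H^1_{\rm f}(\Delta^{(1)}) = 1 + \operatorname{rank}_R\widetilde H^2_{\rm f}(\Delta^{(1)})$.

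Next I would invoke Theorem~\ref{thm_ordinary_Delta1_Delta2_anticyclo_Selmer_torsion_via_horizontal_ES} to kill the $\widetilde H^2$ term. When $N_f^-$ is a product of an odd number of primes, part (i) (resp.\ (ii)) of that theorem, under hypotheses (A), (B), (C) (resp.\ (A), (B), (C${}'$)), asserts that $\widetilde H^2_{\rm f}(G_{K,\Sigma},\TT_{f,\chi}^\ac;\Delta^{(1)})$ (resp.\ with $\f$) is $R$-torsion, so $\operatorname{rank}_R\widetilde H^2_{\rm f}(\Delta^{(1)})=0$ and the Euler characteristic identity forces $\operatorname{rank}_R\widetilde H^1_{\rm f}(\Delta^{(1)})=1$. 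When $N_f^-$ is a product of an even number of primes, Theorem~\ref{thm_ordinary_Delta1_Delta2_anticyclo_Selmer_torsion_via_horizontal_ES} instead gives torsionness of the $\Delta^{(2)}$-Selmer groups; but one always has the inclusion of local conditions $\Delta^{(1)}\subseteq \Delta^{(2)}$ at $\p^c$ (the $\Delta^{(1)}$ condition $F^+$ sits inside the $\Delta^{(2)}$ condition $0$ at $\p^c$ is reversed — rather one has the Poitou--Tate comparison), so I would instead argue via the long exact sequence comparing the $\Delta^{(1)}$ and $\Delta^{(2)}$ Selmer complexes, whose connecting terms are local $H^\bullet(K_{\p^c},F^+R_f^*(\chi\psi)\,\widehat\otimes\,\dots)$ of rank at most one, to conclude that $\operatorname{rank}_R\widetilde H^1_{\rm f}(\Delta^{(1)})\le \operatorname{rank}_R\widetilde H^1_{\rm f}(\Delta^{(2)}) + 1 = 1$ after again applying the Euler characteristic formula to $\Delta^{(2)}$. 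In either parity the conclusion $\operatorname{rank}_R\widetilde H^1_{\rm f}(\Delta^{(1)})\le 1$ follows.

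The main obstacle I anticipate is bookkeeping the local Euler-characteristic contributions at $p$ correctly and uniformly across the two parity cases and the two choices $\Delta^{(1)},\Delta^{(2)}$, keeping careful track of the ranks of the semilocal cohomology of $F^+R_f^*(\chi)\,\widehat\otimes\,\LL(\Gamma_\ac)^\sharp$ at $\p$ and $\p^c$ (one of which is a free rank-one $R$-module and the other of which may have a rank drop governed by whether $\alpha_f\chi(\p^c)=1$, which is impossible here as noted in the proof of Theorem~\ref{thm_ordinary_Deltai_MC_via_horizontal_ES}). I would organise this as a single lemma computing $\chi(\widetilde{\mathbf R\Gamma}_{\rm f}(\dots;\Delta^{(i)}))$ in terms of $[K:\QQ]$, $\operatorname{rank} F^+$, and the archimedean terms, valid verbatim for both $f$ and $\f$, and then the corollary is immediate.
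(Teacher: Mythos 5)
Your overall strategy -- deduce the rank bound from Theorem~\ref{thm_ordinary_Delta1_Delta2_anticyclo_Selmer_torsion_via_horizontal_ES} via an Euler--Poincar\'e count in the odd case and a comparison of Selmer groups in the even case -- is aligned with what the paper does. However there are two errors in the execution.

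First, your claim that the Euler characteristic of $\widetilde{\mathbf{R}\Gamma}_{\rm f}(G_{K,\Sigma},\TT_{f,\chi}^\ac;\Delta^{(1)})$ equals $1$ is incorrect: it is $0$. The paper states this explicitly at the start of the proof of Theorem~\ref{thm_ordinary_Delta1_Selmer_torsion_via_horizontal_ES_indefinite}: ``It follows from the global Euler--Poincar\'e characteristic formulae that $\mathrm{rank}\,\widetilde{H}^1_{\rm f}(\Delta^{(1)})=\mathrm{rank}\,\widetilde{H}^2_{\rm f}(\Delta^{(1)})$.'' This is forced by the conjugate self-duality of $\TT_{f,\chi}^\ac$ and the self-duality of the Greenberg local condition $\Delta^{(1)}$ (which puts $F^+$ at both $\p$ and $\p^c$). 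If your value of $1$ were correct, then in the case where $N_f^-$ has an odd number of prime factors, $\widetilde H^2_{\rm f}(\Delta^{(1)})$ being torsion (from Theorem~\ref{thm_ordinary_Delta1_Delta2_anticyclo_Selmer_torsion_via_horizontal_ES}(i)) would force $\mathrm{rank}\,\widetilde H^1_{\rm f}(\Delta^{(1)})=1$, directly contradicting that same theorem, which asserts $\widetilde H^1_{\rm f}(\Delta^{(1)})$ is \emph{also} torsion (hence rank $0$). Recompute your local tally; the correct Euler characteristic is $0$.

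Second, your treatment of the even case is not correct as written. The local conditions $\Delta^{(1)}$ and $\Delta^{(2)}$ are not nested: at $\p$ one has $F^+\TT \subset \TT$ (so $\Delta^{(1)}\subset\Delta^{(2)}$ there), while at $\p^c$ one has $0 \subset F^+\TT$ (so $\Delta^{(2)}\subset\Delta^{(1)}$ there). There is therefore no single exact sequence directly comparing $\widetilde H^1_{\rm f}(\Delta^{(1)})$ to $\widetilde H^1_{\rm f}(\Delta^{(2)})$ with ``one rank-one connecting term.'' The paper's proof sidesteps this by introducing an auxiliary Selmer group $H^1_{\calF_+}(K,\TT_{f,\chi}^\ac)$ taking the larger of the two local conditions at each prime above $p$ (full cohomology at $\p$, $F^+$ at $\p^c$). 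Both $\widetilde H^1_{\rm f}(\Delta^{(1)})$ and $\widetilde H^1_{\rm f}(\Delta^{(2)})$ sit inside $H^1_{\calF_+}$, and the exact sequence
$$0\lra \widetilde{H}^1_{\rm f}(\Delta^{(2)})\lra H^1_{\calF_+}(K,\TT_{f,\chi}^\ac) \lra H^1(K_{\p^c},F^+\TT_{f,\chi}^\ac)$$
together with the torsionness of $\widetilde H^1_{\rm f}(\Delta^{(2)})$ and the rank-one-ness of the local term shows $\mathrm{rank}\,H^1_{\calF_+}\leq 1$, hence $\mathrm{rank}\,\widetilde H^1_{\rm f}(\Delta^{(1)})\leq 1$. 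Your final inequality $\mathrm{rank}\,\widetilde H^1_{\rm f}(\Delta^{(1)})\le \mathrm{rank}\,\widetilde H^1_{\rm f}(\Delta^{(2)}) + 1$ is true, but only as a composite of these two comparisons through $\calF_+$; stating it as a direct consequence of a $\Delta^{(1)}$-vs-$\Delta^{(2)}$ exact sequence papers over the real argument.
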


\begin{proof}
\item[i)] In the setting of (B), this is \cite[Theorem 3.5]{BLForum}. In the situation of (A) when $N_f^-$ has odd number of prime factors and in the setting of (C), the required bound on the rank follows as an immediate consequence of Theorem~\ref{thm_ordinary_Delta1_Delta2_anticyclo_Selmer_torsion_via_horizontal_ES}(i). 

In the situation of (A) when $N_f^-$ has even number of prime factors,
it follows from Theorem~\ref{thm_ordinary_Delta1_Delta2_anticyclo_Selmer_torsion_via_horizontal_ES}(i) that $\widetilde{H}^1_{\rm f}(G_{K,\Sigma},\TT_{f,\chi}^\ac;\Delta^{(2)})$ is torsion. Moreover, the exactness of the sequence
$$0\lra \widetilde{H}^1_{\rm f}(G_{K,\Sigma},\TT_{f,\chi}^\ac;\Delta^{(2)})\lra H^1_{\calF_+}(K,\TT_{f,\chi}^\ac) \lra H^1(K_{\p^c},F^+\TT_{f,\chi}^\ac)$$ 
(where $H^1_{\calF_+}(K,\TT_{f,\chi}^\ac)$ is the Selmer group given as in \cite[Definition 3.1]{BLForum}) together with the fact that the $\LL_{\cO}(\Gamma_\ac)$-module $ H^1(K_{\p^c},F^+\TT_{f,\chi}^\ac)$ is of rank one, shows that $\LL_{\cO}(\Gamma_\ac)$-module $H^1_{\calF_+}(K,\TT_{f,\chi}^\ac)$ has rank at most one. Since $H^1_{\calF_+}(K,\TT_{f,\chi}^\ac)$ contains $\widetilde{H}^1_{\rm f}(G_{K,\Sigma},\TT_{f,\chi}^\ac;\Delta^{(1)})$, the asserted bound on the rank of the latter follows.
\item[ii)] The proof of this portion reduces to (i) thanks to the control theorem for Selmer complexes~\cite[Proposition 8.10.1]{nekovar06}.
\end{proof}
\begin{defn}
\label{defn_lift_gammacyc_ac}
Suppose $\gamma_\cyc\in \Gamma_\cyc$ is a fixed topological generator as in \S\ref{subsubsec_Nek_descent}. We define its lift (which we will denote by the same symbol) $\gamma_\cyc\in \Gamma_K$ parallel to $\Gamma_\ac$ as the unique element which maps to $\gamma_\cyc$ and ${\rm id}$ under the natural surjections $\Gamma_K\twoheadrightarrow \Gamma_\cyc$ and $\Gamma_K\twoheadrightarrow \Gamma_\ac$, respectively.
\end{defn}

\begin{defn}
\label{def_mock_leading_term}
Suppose $R$ is a Krull unique factorization domain and $J\subset R_\cyc$ is an ideal. We set
$$\partial_\cyc^* J =\begin{cases}
\mathds{1}_\cyc(J) & \hbox{ if } (\gamma_\cyc-1)\nmid J\\
\mathds{1}_\cyc((\gamma_\cyc-1)^{-1}J) & \hbox{ if } (\gamma_\cyc-1)\mid J\,.
\end{cases}$$
and call it the mock leading term for $J$. In particular, for a torsion $R_\cyc$-module $M$ we have
$$\partial_\cyc^*\Char_{R_\cyc}(M)=\begin{cases}
\partial_\cyc\Char_{R_\cyc}(M) & \hbox{ if } r(M)\leq 1\\
0 & \hbox{ if } r(M)>1\,.
\end{cases}$$
\end{defn}
Theorem~\ref{thm_big_BSD_step1} below presents algebraic formulations of $\LL_{\cO}(\Gamma_\ac)$-adic and  $\LL_\f(\Gamma_\ac)$-adic BSD formulae. Their analytic counterparts (which involves suitable $p$-adic $L$-functions) have been established in certain sub-cases; see {\cite[Theorem 1.1]{BLForum}.} 
\begin{theorem}[$\LL(\Gamma_\ac)$-adic and $\LL_\f\,\widehat{\otimes}\LL(\Gamma_\ac)$-adic BSD-type formula]
\label{thm_big_BSD_step1} Suppose $f \in S_{k_f+2}(\Gamma_0(N_f))$ is a cuspidal eigen-newform and $\chi$ is a ring class character of $K$. We assume that $N_f^-$ is square-free. Let $\f$ be the unique branch of the Hida family that admits $f^\alpha$ as a specialization in weight $k_f+2$. Suppose that $\overline{\rho}_{\f}$ is $p$-distinguished. 
\item[i)]  Assume that one of the following holds:
\begin{itemize}
    \item[(A)] $\chi^2\neq \mathds{1}$ and the hypothesis \ref{item_tau} holds true.
    \item[(B)] {{\ref{item_tau}, \ref{item_HSS}, \ref{item_HnEZ}} and \rm{\ref{item_HDist}}} are valid.
    \item[(C)] $\chi=\mathds{1}$, $k_f\equiv 0$ (mod $p-1$) and the hypotheses \ref{item_SU1}--\ref{item_SU3} are valid.
\end{itemize}   
Then,
\begin{align*}\partial_{\cyc}^{*}\Char_{\LL_\cO(\Gamma_K)}\left(\widetilde{H}^2_{\rm f}(G_{K,\Sigma},\TT_{f,\chi}^\dagger;\Delta^{(1)})\right)={\rm Reg}_{\TT_{f,\chi}^\ac}\cdot \Char_{\LL_\cO(\Gamma_\ac)}\left(\widetilde{H}^2_{\rm f}(G_{K,\Sigma},\TT_{f,\chi}^\ac;\Delta^{(1)})_{{\rm tor}}\right)\,.
    \end{align*}
\item[ii)] Assume either (A) or (B) above, or else one of the following conditions holds:
\begin{itemize}
    \item[(C')] $\chi=\mathds{1}$ and the hypotheses \ref{item_SU1}--\ref{item_SU3} are valid.
\end{itemize}  
Assume in addition that $\LL_\f$ is regular. Then,
\begin{align*}\partial_{\cyc}^{*}\Char_{\LL_\f(\Gamma_K)}\left(\widetilde{H}^2_{\rm f}(G_{K,\Sigma},\TT_{\f,\chi}^\dagger;\Delta^{(1)})\right)={\rm Reg}_{\TT_{\f,\chi}^\ac}\cdot \Char_{\LL_\f(\Gamma_\ac)}\left( \widetilde{H}^2_{\rm f}(G_{K,\Sigma},\TT_{\f,\chi}^\ac;\Delta^{(1)})_{{\rm tor}}\right)\,.
    \end{align*}
\end{theorem}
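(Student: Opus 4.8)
\textbf{Proof plan for Theorem~\ref{thm_big_BSD_step1}.} The statement is a purely algebraic $\LL(\Gamma_\ac)$-adic (resp.\ $\LL_\f(\Gamma_\ac)$-adic) Birch--Swinnerton-Dyer--type formula, so the plan is to deduce it as a special case of Nekov\'a\v{r}'s descent machinery recalled in \S\ref{subsubsec_Nek_descent}, applied with $X=\TT_{f,\chi}^\ac$ and $R=\LL_\cO(\Gamma_\ac)$ (resp.\ $X=\TT_{\f,\chi}^\ac$ and $R=\LL_\f(\Gamma_\ac)$), and with the cyclotomic direction $\Gamma_\cyc$ playing the role of the $\Zp$-extension in Proposition~\ref{prop_big_BSD}. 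Concretely, one identifies $X_\cyc=\TT_{f,\chi}^\ac\,\widehat\otimes\,\LL(\Gamma_\cyc)^\sharp\cong \TT_{f,\chi}^\dagger$ via the splitting of Definition~\ref{defn_lift_gammacyc_ac}, so that the left-hand side of the desired formula is exactly $\partial_\cyc^*\Char_{R_\cyc}$ of $\widetilde H^2_{\rm f}(G_{K,\Sigma},X_\cyc;\Delta^{(1)})$. The main work is to verify the hypotheses of Proposition~\ref{prop_big_BSD} in the present situation.

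First I would record the vanishing of the degree-$0$ and degree-$3$ cohomology of the relevant Selmer complexes, both over $R$ and over $R_\cyc$; this follows from \ref{item_tau} (which forces the residual representation to have no $G_K$-invariants or coinvariants of the required kind) exactly as in the proof of Corollary~\ref{cor_main_conj_along_Gamma}. Next I would establish that $\widetilde H^1_{\rm f}(G_{K,\Sigma},X_\cyc;\Delta^{(1)})$ and $\widetilde H^2_{\rm f}(G_{K,\Sigma},X_\cyc;\Delta^{(1)})$ are $R_\cyc$-torsion: since $X_\cyc\cong\TT_{f,\chi}^\dagger$ (resp.\ $\TT_{\f,\chi}^\dagger$), this is precisely the twisted torsion statement \eqref{eqn_twisted_torsion_Gamma_K_1} (resp.\ \eqref{eqn_twisted_torsion_families_Gamma_K_2}) obtained as a by-product of Theorem~\ref{thm_ordinary_Delta1_Selmer_torsion_via_horizontal_ES}. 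I would also invoke \cite[Corollary~8.9.7.4]{nekovar06} (applied with $T=\TT_f$, resp.\ the Hida-family analogue) to see that the prime-to-$p$ Tamagawa factors ${\rm Tam}_v(X,P)$ vanish for \emph{every} height-one prime $P$ of $R$, so the hypothesis ``${\rm Tam}_v(X,P)=0$ for all $v\in\Sigma$ prime to $p$'' in Proposition~\ref{prop_big_BSD} holds for all $P$ simultaneously, not just generically. Note that in cases (A), (B), (C) (resp.\ (A), (B), (C$'$)) the torsionness of the anticyclotomic Selmer groups $\widetilde H^2_{\rm f}(G_{K,\Sigma},X;\Delta^{(1)})$ themselves is furnished by Theorem~\ref{thm_ordinary_Delta1_Delta2_anticyclo_Selmer_torsion_via_horizontal_ES}.

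\emph{The hard part will be} the non-degeneracy of the $R$-adic cyclotomic height pairing $\frak h_X^{\rm Nek}$, i.e.\ the statement that ${\rm Reg}_{X}\neq 0$; Proposition~\ref{prop_big_BSD} requires this as an \emph{input}, and it is not automatic. The way around this is that, over the total ring $R$, the two sides of the identity in Proposition~\ref{prop_big_BSD} are equalities of \emph{ideals} in $R$ rather than of integers, and Nekov\'a\v{r}'s formula \cite[(11.7.11)]{nekovar06} is stated as an identity of (possibly zero) ideals: when $\frak h_X^{\rm Nek}$ is degenerate one has ${\rm Reg}_X=0$ and simultaneously $r\big(\widetilde H^2_{\rm f}(G_{K,\Sigma},X_\cyc;\Delta^{(1)})\big)>1$, so that $\partial_\cyc^*\Char_{R_\cyc}=0$ on the left as well, by the convention fixed in Definition~\ref{def_mock_leading_term}; hence the formula holds trivially as $0=0$. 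Thus I would organise the argument by first running Proposition~\ref{prop_big_BSD} in the regime where $r\le 1$ of the relevant module (equivalently where $\partial_\cyc^*$ sees a nonzero leading term), obtaining the identity with $\partial_\cyc$ in place of $\partial_\cyc^*$ via Lemma~\ref{lemma_nekovar_a_vs_partialcyc} and \eqref{eqn_derived_char_Rcyc_localized_vs_char_Rcyc}, and then observing that in the complementary regime both sides vanish. Finally, part~(ii) follows from part~(i) by the standard control-theorem argument: assuming $\LL_\f$ regular so that $\LL_\f(\Gamma_K)$ is a UFD and characteristic ideals behave well under specialisation \eqref{eqn_char_ideals}, one passes from the Hida-family Selmer complexes to those of individual crystalline specialisations $\f(\kappa)$ using \cite[Proposition~8.10.1, Corollary~8.10.2]{nekovar06}, reducing to the case already treated; here one also uses that the conditions (A), (B) and (C$'$) are each preserved under such specialisation.
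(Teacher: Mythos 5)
Your overall framework is the right one and agrees with the paper's: the theorem is an application of Proposition~\ref{prop_big_BSD} with $X=\TT_{f,\chi}^\ac$ (resp.\ $\TT_{\f,\chi}^\ac$) and $R=\LL_\cO(\Gamma_\ac)$ (resp.\ $\LL_\f(\Gamma_\ac)$), using the identification $X_\cyc\cong\TT_{f,\chi}^\dagger$ furnished by Definition~\ref{defn_lift_gammacyc_ac}; the Tamagawa-factor vanishing via \cite[Corollary~8.9.7.4]{nekovar06} is correct, as is the torsionness of the $\Gamma_K$-modules via \eqref{eqn_twisted_torsion_Gamma_K_1} and \eqref{eqn_twisted_torsion_families_Gamma_K_2}. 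But there is a genuine gap in your treatment of the regulator. You assert that degeneracy of $\frak h_X^{\rm Nek}$ implies ``${\rm Reg}_X=0$ and \emph{simultaneously} $r(\widetilde H^2_{\rm f}(G_{K,\Sigma},X_\cyc;\Delta^{(1)}))>1$,'' and you attribute this to \cite[(11.7.11)]{nekovar06}. That formula \emph{is} Proposition~\ref{prop_big_BSD}; it requires non-degeneracy as an input and says nothing about what happens when ${\rm Reg}_X=0$. A priori one could have ${\rm Reg}_X=0$ with $r\le1$, in which case the left side $\partial^*_\cyc\Char_{R_\cyc}$ would be a nonzero ideal while the right side vanishes, contradicting the claimed identity. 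Symmetrically, in your ``$r\le1$ regime'' you feed Proposition~\ref{prop_big_BSD} the hypothesis ${\rm Reg}_X\neq0$ without verifying it; you are using $r\le1\Rightarrow{\rm Reg}_X\neq0$, which again is not automatic. The equivalence ``$r\le1\Leftrightarrow{\rm Reg}_X\neq0$'' is precisely the technical heart of the proof, and you have treated it as a formal convention.

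What closes the gap is a different result of Nekov\'a\v{r}: \cite[Proposition~11.7.6(vii)]{nekovar06}, which \emph{unconditionally} gives the inequality
$r\big(\widetilde H^2_{\rm f}(G_{K,\Sigma},X_\cyc;\Delta^{(1)})\big)\ge {\rm rank}_{R}\widetilde H^1_{\rm f}(G_{K,\Sigma},X;\Delta^{(1)})$ with equality if and only if ${\rm Reg}_X\neq0$. This converts the regulator question into a statement about the anticyclotomic rank, for which you then need \emph{both} bounds: an upper bound ${\rm rank}_{R}\widetilde H^1_{\rm f}(G_{K,\Sigma},X;\Delta^{(1)})\le1$, which is Corollary~\ref{cor_thm_ordinary_Delta1_Delta2_anticyclo_Selmer_torsion_via_horizontal_ES}, and a lower bound ${\rm rank}_{R}\widetilde H^1_{\rm f}(G_{K,\Sigma},X;\Delta^{(1)})\ge1$, which relies on the Bertolini--Darmon--Prasanna formula in families. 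With the upper bound, ${\rm Reg}_X\neq0$ forces $r={\rm rank}\le1$; with the lower bound, ${\rm Reg}_X=0$ forces $r>{\rm rank}\ge1$, i.e.\ $r\ge2$. None of these three ingredients appears in your proposal, so the dichotomy your argument leans on is not established. (A smaller point: your parenthetical claim that Theorem~\ref{thm_ordinary_Delta1_Delta2_anticyclo_Selmer_torsion_via_horizontal_ES} supplies torsionness of $\widetilde H^2_{\rm f}(G_{K,\Sigma},X;\Delta^{(1)})$ itself is off when $N_f^-$ has an even number of prime factors, where it is $\Delta^{(2)}$ rather than $\Delta^{(1)}$ that is torsion — fortunately this is not needed, since the formula takes $(-)_{\rm tor}$ on the right.)
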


\begin{proof}
We only explain the proof of (ii) since the proof of (i) is very similar.

The asserted equality follows from Proposition~\ref{prop_big_BSD} once we verify that the following properties hold true.
\begin{enumerate}
    \item ${\rm Tam}_v(\TT_{\f,\chi}^{\ac},P)=0$ for every height-one prime $P$ of $\LL_\f(\Gamma_\ac)$.
    \item The $\LL_\f(\Gamma_K)$-modules $\widetilde{H}^1_{\rm f}(G_{K,\Sigma},\TT_{\f,\chi}^\dagger;\Delta^{(1)})$ and $\widetilde{H}^2_{\rm f}(G_{K,\Sigma},\TT_{\f,\chi}^\dagger;\Delta^{(1)})$ are torsion.
    \item $r(\widetilde{H}^2_{\rm f}(G_{K,\Sigma},\TT_{\f,\chi}^\dagger;\Delta^{(1)}))>1$ if and only if ${\rm Reg}_{\TT_{\f,\chi}^\ac}=0$. 
\end{enumerate}
Property (1) follows from \cite[Corollary 8.9.7.4]{nekovar06} applied with $T=\TT_{\f,\chi}^\dagger$ and $\Gamma=\Gamma_\ac$. Property (2) is 
\begin{itemize}
    \item the twisted variant \eqref{eqn_twisted_torsion_families_Gamma_K_2} of Theorem~\ref{thm_ordinary_Delta1_Selmer_torsion_via_horizontal_ES}(ii) when (A) holds;
    \item a consequence of \cite[Theorem 3.19]{BLForum} (via control theorems for Selmer complexes) in the situation of (B);
    \item a consequence of \cite[Theorem 17.4(i)]{kato04} (via control theorems for Selmer complexes) in the situation of (C)  (without requiring \ref{item_SU1}--\ref{item_SU3}.
\end{itemize} 
We now explain how to use \cite[Proposition 11.7.6(vii)]{nekovar06} to prove that $r(\widetilde{H}^2_{\rm f}(G_{K,\Sigma},\TT_{\f,\chi}^\ac;\Delta^{(1)})\leq 1$ if ${\rm Reg}_{\TT_{\f,\chi}^\ac}\neq 0$ (which is the content of the property (3)). 

Since $\LL_\f(\Gamma_\ac)$ is an integral domain, it follows that the set denoted by $Q$ in op. cit. consists only of the zero ideal $(0)$ of $\LL_\f(\Gamma_\ac)$. In this situation, the ideal $(\gamma_\cyc-1) \in \LL_\f(\Gamma_K)$ (which makes sense thanks to Definition~\ref{defn_lift_gammacyc_ac}) corresponds to $\overline{\frak{q}}$ in the notation of Nekov\'a\v{r}. 

It follows from  \cite[Proposition 11.7.6(vii)]{nekovar06} that 
$${\rm length}_{\LL_\f(\Gamma_K)_{(\gamma_\cyc-1)}}\left(\widetilde{H}^2_{\rm f}(G_{K,\Sigma},\TT_{\f,\chi}^\dagger;\Delta^{(1)})_{(\gamma_\cyc-1)}\right)\geq {\rm length}_{\LL_\f(\Gamma_\ac)_{(0)}}\left(\widetilde{H}^1_{\rm f}(G_{K,\Sigma},\TT_{\f,\chi}^\ac;\Delta^{(1)})_{(0)}\right)$$
with equality if and only if $({\rm Reg}_{\TT_{\f,\chi}^\ac})_{(0)}\neq 0$ (which is equivalent to asking ${\rm Reg}_{\TT_{\f,\chi}^\ac}\neq 0$, since $\LL_\f(\Gamma_\ac)$ is an integral domain). 
Thence,
\begin{align*}
    r(\widetilde{H}^2_{\rm f}(G_{K,\Sigma},\TT_{\f,\chi}^\dagger;\Delta^{(1)}))&={\rm length}_{\LL_\f(\Gamma_K)_{(\gamma_\cyc-1)}}\left(\widetilde{H}^2_{\rm f}(G_{K,\Sigma},\TT_{\f,\chi}^\dagger;\Delta^{(1)})_{(\gamma_\cyc-1)}\right)\\
    &\geq {\rm length}_{\LL_\f(\Gamma_\ac)_{(0)}}\left(\widetilde{H}^1_{\rm f}(G_{K,\Sigma},\TT_{\f,\chi}^\ac;\Delta^{(1)})_{(0)}\right)\\
    &={\rm rank}_{\LL_\f(\Gamma_\ac)}\left(\widetilde{H}^1_{\rm f}(G_{K,\Sigma},\TT_{\f,\chi}^\ac;\Delta^{(1)})\right)
\end{align*}
with equality if and only if $({\rm Reg}_{\TT_{\f,\chi}^\ac})\neq 0$.  Since ${\rm rank}_{\LL_\f(\Gamma_\ac)}\left(\widetilde{H}^1_{\rm f}(G_{K,\Sigma},\TT_{\f,\chi}^\ac;\Delta^{(1)}\right)\leq 1$, it follows from Corollary~\ref{cor_thm_ordinary_Delta1_Delta2_anticyclo_Selmer_torsion_via_horizontal_ES}(ii) that this inequality is strict if $r(\widetilde{H}^2_{\rm f}(G_{K,\Sigma},\TT_{\f,\chi}^\dagger;\Delta^{(1)}))>1$, and in turn also that ${\rm Reg}_{\TT_{\f,\chi}^\ac}=0$ in that case. Conversely, if  ${\rm Reg}_{\TT_{\f,\chi}^\ac}=0$,  then the inequality above is strict and we have 
$$r(\widetilde{H}^2_{\rm f}(G_{K,\Sigma},\TT_{\f,\chi}^\dagger;\Delta^{(1)}))\geq 1+ {\rm rank}_{\LL_\f(\Gamma_\ac)}\left(\widetilde{H}^1_{\rm f}(G_{K,\Sigma},\TT_{\f,\chi}^\ac;\Delta^{(1)}\right)\,.$$
To complete the verification of (3), it therefore remains to prove that ${\rm rank}_{\LL_\f(\Gamma_\ac)}\left(\widetilde{H}^1_{\rm f}(G_{K,\Sigma},\TT_{\f,\chi}^\ac;\Delta^{(1)}\right)\geq1$. This can be easily proved using the formula of Bertolini--Darmon--Prasanna in families, c.f. \cite{bertolinidarmonprasanna13,CastellapadicvariationofHeegnerpoints}.

This concludes the proof of Theorem~\ref{thm_big_BSD_step1}(ii).
\end{proof}

\begin{defn}
\label{def_mock_leading_term_2}
Given an element $\mathbb{A}=\sum_{0\geq } A_n\cdot(\gamma_\cyc-1)^n\in \LL_\f(\Gamma_\ac)\,\widehat{\otimes}\LL(\Gamma_\cyc)$ $($so that we have $A_n\in \LL_\f(\Gamma_\ac)$$)$, we set 
$$\partial_\cyc^* \mathbb{A}:=\begin{cases}
A_0\,,& \hbox{ if } \mathds{1}_{\cyc}(\mathbb{A})\neq 0,\\
A_1\,,& \hbox{ if } \mathds{1}_{\cyc}(\mathbb{A})=0
\end{cases}$$
and call it the mock leading term of the power series $\mathbb{A}$.
\end{defn}
\begin{corollary}
\label{cor_big_BSD_step2} 
 Suppose $f \in S_{k_f+2}(\Gamma_0(N_f))$ is a cuspidal eigen-newform and $\chi$ is a ring class character of $K$. We assume that $N_f^-$ is square-free. Let $\f$ be the unique branch of the Hida family that admits $f$ as a specialization in weight $k_f+2$. Suppose that $\overline{\rho}_{\f}$ is $p$-distinguished. Let us fix a morphism $u$ as in \S\ref{subsubsec_local_properties_CMgg}.
\item[i)] Suppose Conjecture~\ref{IMC_split_definite_indefinite_ord_twisted}{\rm (i)} holds true\footnote{This is equivalent, thanks to \cite[Theorem 3.19(i)]{BLForum}, to the requirement that Conjecture~\ref{IMC_split_definite_indefinite_ord_twisted}{\rm (ii)} holds true, granted the validity of \ref{item_Ind}.} with $\Gamma=\Gamma_K$.
In the situation of Theorem~\ref{thm_big_BSD_step1}(i) we have
$$\LL_{L}(\Gamma_\ac)\cdot \partial_\cyc^* \left({\rm Tw}\left(L_p(f_{/K}\otimes\chi,  {\Sigma}^{(1)})\right)\right)={\rm Reg}_{\TT_{f,\chi}^\ac}\cdot \Char_{\LL_{\cO}(\Gamma_\ac)}\left( \widetilde{H}^2_{\rm f}(G_{K,\Sigma},\TT_{f,\chi}^\ac;\Delta^{(1)})_{{\rm tor}}\right)\otimes\QQ_p\,.$$
Moreover we have (resp., the opposite containment)
$$\LL_{L}(\Gamma_\ac)\cdot \mathscr{C}(u)\cdot \partial_\cyc^* \left({\rm Tw}\left(L_p(f_{/K}\otimes\chi,  {\Sigma}^{(1)})\right)\right)\subset {\rm Reg}_{\TT_{f,\chi}^\ac}\cdot \Char_{\LL_{\cO}(\Gamma_\ac)}\left(\widetilde{H}^2_{\rm f}(G_{K,\Sigma},\TT_{f,\chi}^\ac;\Delta^{(1)})_{{\rm tor}}\right) \otimes\QQ_p$$
if the containment (resp., the opposite containment)
$$\LL_{L}(\Gamma_\ac)\cdot \mathscr{C}(u)\cdot {\rm Tw}\left(L_p(f_{/K}\otimes\chi,  {\Sigma}^{(1)})\right)\subset \Char_{\LL_{\cO}(\Gamma_K)}\left(\widetilde{H}_{\rm f}^2(G_{K,\Sigma},\TT_{f,\chi}^\dagger;\Delta^{(1)}) \right)\otimes_{\Zp}\Qp$$
in Conjecture~\ref{IMC_split_definite_indefinite_ord}{\rm (i)} holds  true.
\item[ii)] 
Suppose Conjecture~\ref{IMC_ord_split_families}{\rm (i)} holds true
with $\Gamma=\Gamma_K$. In the situation of Theorem~\ref{thm_big_BSD_step1}(ii) we have
$$\left(\LL_\f(\Gamma_\ac)\otimes_{\ZZ_p}\QQ_p\right) \partial_\cyc^* \left(H_\f \ {\rm Tw}_{\f}\left(L_p(\f_{/K}\otimes\chi,  \mathbb{\Sigma}^{(1)})\right)\right)={\rm Reg}_{\TT_{\f,\chi}^\ac} \Char_{\LL_\f(\Gamma_\ac)}\left( \widetilde{H}^2_{\rm f}(G_{K,\Sigma},\TT_{\f,\chi}^\ac;\Delta^{(1)})_{{\rm tor}}\right)\otimes\QQ_p.$$
Moreover we have (resp., the opposite containment)
$$\left(\LL_\f(\Gamma_\ac)\otimes_{\ZZ_p}\QQ_p\right)\,\mathscr{C}(u)\, \partial_\cyc^* \left(H_\f {\rm Tw}_{\f}\left( L_p(\f_{/K}\otimes\chi,  \mathbb{\Sigma}^{(1)})\right)\right)\subset {\rm Reg}_{\TT_{\f,\chi}^\ac} \Char_{\LL_\f(\Gamma_\ac)}\left( \widetilde{H}^2_{\rm f}(G_{K,\Sigma},\TT_{\f,\chi}^\ac;\Delta^{(1)})_{{\rm tor}}\right)\otimes\QQ_p$$
if the containment (resp., the opposite containment)
$$\left(\LL_\f(\Gamma_K)\otimes_{\ZZ_p}\QQ_p\right)\, \mathscr{C}(u)\, H_\f {\rm Tw}_{\f}\left(L_p(\f_{/K}\otimes\chi,  \mathbb{\Sigma}^{(1)})\right)\subset \Char_{\LL_\f(\Gamma_K)}\left(\widetilde{H}_{\rm f}^2(G_{K,\Sigma},\TT_{\f,\chi}^\dagger;\Delta^{(1)}) \right)\otimes_{\Zp}\Qp$$
in Conjecture~\ref{IMC_ord_split_families}{\rm (i)} holds  true.
\end{corollary}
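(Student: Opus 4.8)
\textbf{Proof proposal for Corollary~\ref{cor_big_BSD_step2}.} The plan is to combine the purely algebraic $\LL$-adic BSD formulae of Theorem~\ref{thm_big_BSD_step1} with the divisibilities in the relevant Iwasawa Main Conjectures, passing through the operator $\partial_\cyc^*$ throughout. I will only write out (ii), since (i) is the analogous statement over $\LL_\cO(\Gamma_K)$ instead of $\LL_\f(\Gamma_K)$ and the argument is verbatim the same with $\TT_{f,\chi}$ replacing $\TT_{\f,\chi}$ and $L_p(f_{/K}\otimes\chi,\Sigma^{(1)}_{\rm crit})$ replacing $H_\f L_p(\f_{/K}\otimes\chi,\mathbb{\Sigma}^{(1)})$.

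First I would record the compatibility of $\partial_\cyc^*$ with the twist ${\rm Tw}_\f$ and the lift $\gamma_\cyc$ of Definition~\ref{defn_lift_gammacyc_ac}: under the identification $\TT_{\f,\chi}^\dagger = \TT_{\f,\chi}^\ac\,\widehat\otimes\,\LL(\Gamma_\cyc)^\sharp$ compatible with the splitting $\Gamma_K = \Gamma_\ac\times\gamma_\cyc^{\ZZ_p}$, the relative augmentation $\mathds{1}_\cyc\colon \LL_\f(\Gamma_K)\to\LL_\f(\Gamma_\ac)$ is exactly the map $\gamma_\cyc\mapsto 1$, and $\partial_\cyc^*$ of Definition~\ref{def_mock_leading_term_2} applied to ${\rm Tw}_\f(L_p(\f_{/K}\otimes\chi,\mathbb{\Sigma}^{(1)}))\vert_{\Gamma_K}$ agrees with $\partial_\cyc^*$ of Definition~\ref{def_mock_leading_term} applied to the ideal it generates in $\LL_\f(\Gamma_\ac)_\cyc$ (this is where the hypothesis that $\LL_\f$ is regular enters, so that $\LL_\f(\Gamma_\ac)$ is a Krull UFD and Definition~\ref{def_mock_leading_term} applies). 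With this in place, applying $\partial_\cyc^*$ to the divisibility
$$\left(\LL_\f(\Gamma_K)\otimes_{\ZZ_p}\QQ_p\right)\, \mathscr{C}(u)\, H_\f {\rm Tw}_{\f}\left(L_p(\f_{/K}\otimes\chi,  \mathbb{\Sigma}^{(1)})\right)\subset \Char_{\LL_\f(\Gamma_K)}\left(\widetilde{H}_{\rm f}^2(G_{K,\Sigma},\TT_{\f,\chi}^\dagger;\Delta^{(1)}) \right)\otimes_{\Zp}\Qp$$
and using that $\partial_\cyc^*$ is monotone with respect to divisibility in $\LL_\f(\Gamma_K)$ and sends $\Char_{\LL_\f(\Gamma_K)}(-)$ to $\partial_\cyc^*\Char_{\LL_\f(\Gamma_K)}(-)$, I obtain a containment of ideals in $\LL_\f(\Gamma_\ac)\otimes\QQ_p$ between $\mathscr{C}(u)\,\partial_\cyc^*(H_\f{\rm Tw}_\f(L_p(\f_{/K}\otimes\chi,\mathbb{\Sigma}^{(1)})))$ and $\partial_\cyc^*\Char_{\LL_\f(\Gamma_K)}(\widetilde{H}^2_{\rm f}(G_{K,\Sigma},\TT_{\f,\chi}^\dagger;\Delta^{(1)}))$. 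Theorem~\ref{thm_big_BSD_step1}(ii) identifies the latter with ${\rm Reg}_{\TT_{\f,\chi}^\ac}\cdot \Char_{\LL_\f(\Gamma_\ac)}(\widetilde{H}^2_{\rm f}(G_{K,\Sigma},\TT_{\f,\chi}^\ac;\Delta^{(1)})_{\rm tor})$, which yields the asserted one-sided containment. The opposite containment under the opposite hypothesis is symmetric. When Conjecture~\ref{IMC_ord_split_families}(i) holds with $\Gamma=\Gamma_K$ — i.e.\ the divisibility is an equality and $\mathscr{C}(u)=(1)$ may be arranged (or absorbed, since the equality forces the cokernel term to be a unit) — both containments hold and we get the clean equality displayed first.

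The main obstacle is ensuring that $\partial_\cyc^*$ genuinely respects the divisibility in both directions and that no spurious factor of $(\gamma_\cyc-1)$ is gained or lost: the issue is that $\partial_\cyc^*$ is only ``mock,'' so $\partial_\cyc^*(AB)$ need not equal $\partial_\cyc^*A\cdot\partial_\cyc^*B$ once the $(\gamma_\cyc-1)$-adic orders add up past $1$. The way around this is to invoke the torsion statements already available: by \eqref{eqn_twisted_torsion_families_Gamma_K_2} (the twisted variant of Theorem~\ref{thm_ordinary_Delta1_Selmer_torsion_via_horizontal_ES}(ii)), the module $\widetilde{H}^2_{\rm f}(G_{K,\Sigma},\TT_{\f,\chi}^\dagger;\Delta^{(1)})$ is $\LL_\f(\Gamma_K)$-torsion, and property (3) in the proof of Theorem~\ref{thm_big_BSD_step1}(ii) controls whether $r(\widetilde{H}^2_{\rm f})\le 1$ via the non-vanishing of ${\rm Reg}_{\TT_{\f,\chi}^\ac}$; since $\widetilde{H}^1_{\rm f}(G_{K,\Sigma},\TT_{\f,\chi}^\ac;\Delta^{(1)})$ has rank $\le 1$ by Corollary~\ref{cor_thm_ordinary_Delta1_Delta2_anticyclo_Selmer_torsion_via_horizontal_ES}(ii), one checks that the $(\gamma_\cyc-1)$-orders on both sides of the main-conjecture divisibility differ by at most the order of $\mathscr{C}(u)$, which is coprime to $(\gamma_\cyc-1)$ (as $\mathscr{C}(u)$ is the characteristic ideal of a $\LL_\cO(\Gamma_\p)$-module and $\Gamma_\p$ is transversal to $\Gamma_\cyc$). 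Hence $\partial_\cyc^*$ may be applied termwise to the divisibility without loss, and the argument goes through; the remaining verifications (that $\partial_\cyc^* H_\f = H_\f$ since $H_\f\in\LL_\f$ is constant in the cyclotomic direction, and that ${\rm Reg}_{\TT_{\f,\chi}^\ac}\ne 0$ follows from the families version of the Bertolini--Darmon--Prasanna formula as in the proof of Theorem~\ref{thm_big_BSD_step1}) are routine.
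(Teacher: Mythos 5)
The paper gives no explicit proof for this corollary, treating it as a routine consequence of applying the mock derivative $\partial_\cyc^*$ to the Main Conjecture divisibility (or equality) over $\Gamma_K$ and substituting the purely algebraic formula of Theorem~\ref{thm_big_BSD_step1}. Your overall plan — apply $\partial_\cyc^*$, invoke Theorem~\ref{thm_big_BSD_step1}, and deal separately with the ``clean'' and the $\mathscr{C}(u)$-corrected cases — is precisely the intended argument. The monotonicity of $\partial_\cyc^*$ on ideals of a Krull UFD is easy to check by cases on the $(\gamma_\cyc-1)$-adic orders (the key observation being that $J_2\subset J_1$ forces $\ord_{(\gamma_\cyc-1)}J_2\ge\ord_{(\gamma_\cyc-1)}J_1$), so your assertion is correct even if it would benefit from a line of justification; and your remark that $\mathscr{C}(u)$ (extended from $\Lambda_\cO(\Gamma_\p)$) is coprime to $(\gamma_\cyc-1)$ correctly resolves the multiplicativity issue when splitting off that factor.

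The genuine gap is in your derivation of the first (clean) equality. You write that when Conjecture~\ref{IMC_ord_split_families}(i) holds, ``both containments hold and $\mathscr{C}(u)=(1)$ may be arranged (or absorbed, since the equality forces the cokernel term to be a unit).'' This is not right on either reading: Conjecture~\ref{IMC_ord_split_families}(i) is the identity $H_\f\,{\rm Tw}_\f(L_p)\vert_{\Gamma_K}=\Char(\cdot)\otimes\QQ_p$ and makes no reference to $\mathscr{C}(u)$ — it is not ``the two $\mathscr{C}(u)$-containments holding simultaneously,'' and the equality in no way forces $\mathscr{C}(u)$ to be the unit ideal. If you intersect the two $\mathscr{C}(u)$-containments you obtain $\mathscr{C}(u)\,\partial_\cyc^*(\cdot)={\rm Reg}\cdot\Char(\cdot_{\rm tor})$, which still carries the correction factor. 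The correct route for the first display is to apply $\partial_\cyc^*$ directly to the clean Main Conjecture equality (after transporting it through the twist isomorphism $\TT_{\f,\chi}\leftrightarrow\TT_{\f,\chi}^\dagger$, which is compatible with ${\rm Tw}_\f$ and leaves $H_\f\in\LL_\f$ fixed) and then substitute Theorem~\ref{thm_big_BSD_step1}(ii); this yields the clean identity with no $\mathscr{C}(u)$ at any stage. Your treatment of the one-sided ``Moreover'' containments is fine as written.
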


One may drop the correction terms $\mathscr{C}(u)$ from the statement of Corollary~\ref{cor_big_BSD_step2} if the hypothesis \ref{item_Ind} concerning the CM branches of Hida families holds. In particular, if \ref{item_RIa} is valid with $a=0$ (which translates in the setting of Corollary~\ref{cor_big_BSD_step2} to the requirement that $\chi^2\not\equiv \mathds 1 \mod \m_\cO$), then we can take $\mathscr C(u)=(1)$ thanks to Proposition~\ref{prop_uniqueness_of_the_lattice_in_residually_irred_case}. 


\subsubsection{Anticyclotomic Iwasawa Theory in the definite case}
\label{subsubsec_split_definite_ord}
Throughout in \S\ref{subsubsec_split_definite_ord}, we assume that $N_f^-$ is a square-free product of an odd number of primes. We will explain how to obtain unconditional versions of Corollary~\ref{cor_big_BSD_step2} in this set up.  

 Let ${\rm ver}_{\rm ac}: \Gamma_\ac\to\Gamma_K$ be the Verschiebung map, given by $\gamma\mapsto \widehat{\gamma}^{c-1}$, where $\widehat{\gamma}\in \Gamma_K$ is any lift of $\gamma\in \Gamma_\ac$ and $c$ is the generator of $\Gal(K/\QQ)$.

\begin{defn}
\label{defn_C_ac_u}
Given a morphism $u$ as in \S\ref{subsubsec_local_properties_CMgg}, we define the ideal $\mathscr{C}^{\ac}(u)\subset \LL_{\cO}(\Gamma)$ on setting
$$\mathscr{C}^{\ac}(u):=\ker\left(\LL_{\cO}(\Gamma_\ac)\xrightarrow{{\rm ver}_{\rm ac}}\LL_{\cO}(\Gamma_K)\twoheadrightarrow  \LL_{\cO}(\Gamma_\p)\twoheadrightarrow \LL_{\cO}(\Gamma_\p)/\mathscr{C}(u)\right)\,.$$
\end{defn}
Note that $\mathscr{C}^{\ac}(u)=(1)$ if $\mathscr{C}(u)=(1)$.
\begin{theorem}
\label{thm_ordinary_definite_ac} 
Suppose $f \in S_{k_f+2}(\Gamma_0(N_f))$ is a cuspidal eigen-newform and $\chi$ is a ring class character of $K$. Let $\f$ be the unique branch of the Hida family that admits $f^\alpha$ as a specialization in weight $k_f+2$. Suppose that $\overline{\rho}_{\f}$ is $p$-distinguished. 
\item[i)] Assume at least one of the following holds:
\begin{itemize}
    \item[(A)] $\chi^2\neq \mathds{1}$ and the hypothesis \ref{item_tau} holds true.
    \item[(B)] {\ref{item_tau}, \ref{item_HSS}, \ref{item_HnEZ}} and \rm{\ref{item_HDist}} are valid.
    \item[(C)] $\chi=\mathds{1}$, $k_f\equiv 0$ (mod $p-1$) and the hypotheses  \ref{item_SU2}--\ref{item_SU3} are valid.
\end{itemize} 
Then,
$$\Char_{\LL_{\cO}(\Gamma_\ac)}\left( \widetilde{H}^2_{\rm f}(G_{K,\Sigma},\TT_{f,\chi}^\ac;\Delta^{(1)})\right){\otimes_\cO\Phi}\\,\, \Big{|}\,\,\, \LL_{{\Phi}}(\Gamma_\ac)\cdot \mathscr{C}^{\ac}(u)\, L_p(f_{/K}\otimes\chi,  {\Sigma}^{(1)}_{\rm cc})\big{\vert}_{\Gamma_\ac}\,.$$
One may take $\mathscr{C}^{\ac}(u)=(1)$ in the situation of (B) or (C). Moreover, the asserted divisibility is an equality in the latter case.
\item[ii)] Assume that either (A), (B) or (C') below holds:
\begin{itemize}
    \item[(C')] $\chi=\mathds{1}$ and the hypothesis \ref{item_SU2}--\ref{item_SU3} are valid.
\end{itemize} 
Then,
$$\Char_{\LL_{\f}(\Gamma_\ac)}\left( \widetilde{H}^2_{\rm f}(G_{K,\Sigma},\TT_{\f,\chi}^\ac;\Delta^{(1)})\right)\otimes\QQ_p\,\, \Big{|}\,\,\, \LL_{L}(\Gamma_\ac)\cdot\mathscr{C}^{\ac}(u)\, H_{\f}L_p(\f_{/K}\otimes\chi,  \mathbb{\Sigma}^{(1)}_{\rm cc})\big{\vert}_{\Gamma_\ac}\,.$$
One may take $\mathscr{C}^{\ac}(u)=(1)$ in the situation of (B) or (C'). Moreover, the asserted divisibility is an equality in the latter case.
\end{theorem}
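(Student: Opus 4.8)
\textbf{Proof proposal for Theorem~\ref{thm_ordinary_definite_ac}.} The plan is to descend from the $\ZZ_p^2$-adic divisibilities of Corollary~\ref{cor_main_conj_along_Gamma} (equivalently, Theorem~\ref{thm_ordinary_Deltai_MC_via_horizontal_ES}) to the anticyclotomic line $\Gamma_\ac$ by combining the algebraic $\LL(\Gamma_\ac)$-adic Birch--Swinnerton-Dyer formula of Theorem~\ref{thm_big_BSD_step1} with the analytic specialization formula of Corollary~\ref{cor_big_BSD_step2}, and then feeding in the non-vanishing input of Theorem~\ref{thm_Chida_Hsieh} to control the mock leading term. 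Concretely, for part (i): under hypothesis (A) the running assumptions \ref{item_tau} and $\chi^2\neq\mathds 1$ (so that $\chi$ verifies \ref{item_RIa} for some $a$, and $\overline{\rho}_{\f}$ is $p$-distinguished) put us in the situation where Theorem~\ref{thm_ordinary_Delta1_Delta2_anticyclo_Selmer_torsion_via_horizontal_ES}(i) and Corollary~\ref{cor_thm_ordinary_Delta1_Delta2_anticyclo_Selmer_torsion_via_horizontal_ES}(i) apply; since $N_f^-$ is a product of an \emph{odd} number of primes, the relevant torsion statement holds for $\Delta^{(1)}$. By Theorem~\ref{thm_Chida_Hsieh}(i), $L_p(f_{/K}\otimes\chi,\Sigma^{(1)}_{\rm cc})\big|_{\Gamma_\ac}$ is non-trivial; tracing through the interpolation and the Bertolini--Darmon--Prasanna formula in families (as in the proof of Theorem~\ref{thm_big_BSD_step1}), this forces $\widetilde{H}^1_{\rm f}(G_{K,\Sigma},\TT_{f,\chi}^\ac;\Delta^{(1)})$ to have rank exactly one and the regulator ${\rm Reg}_{\TT_{f,\chi}^\ac}$ to be non-zero, so that $r(\widetilde{H}^2_{\rm f}(G_{K,\Sigma},\TT_{f,\chi}^\dagger;\Delta^{(1)}))\leq 1$ and the mock leading term $\partial_\cyc^*$ computes the honest derived characteristic ideal.

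With these finiteness/non-degeneracy facts in hand, I would invoke Corollary~\ref{cor_big_BSD_step2}(i): the one-sided divisibility in Conjecture~\ref{IMC_split_definite_indefinite_ord_twisted}(i) that we have \emph{already proved} along $\Gamma_K$ (this is exactly the content of Corollary~\ref{cor_main_conj_along_Gamma}(i), up to the factor $\mathscr{C}(u)$ and a power of $\varpi$, which is absorbed by tensoring with $\QQ_p$) yields
\[
\LL_{L}(\Gamma_\ac)\cdot \mathscr{C}(u)\cdot \partial_\cyc^* \left({\rm Tw}\left(L_p(f_{/K}\otimes\chi,  {\Sigma}^{(1)})\right)\right)\subset {\rm Reg}_{\TT_{f,\chi}^\ac}\cdot \Char_{\LL_{\cO}(\Gamma_\ac)}\left(\widetilde{H}^2_{\rm f}(G_{K,\Sigma},\TT_{f,\chi}^\ac;\Delta^{(1)})_{{\rm tor}}\right) \otimes\QQ_p.
\]
Combined with the definition $L_p(f_{/K}\otimes\chi,\Sigma^{(1)}_{\rm cc})\big|_{\Gamma_\ac}$ as the image of ${\rm Tw}(L_p(f_{/K}\otimes\chi,\Sigma^{(1)}_{\rm crit}))$ under $H_{p^\infty}\twoheadrightarrow\Gamma_\ac$ (Definition~\ref{defn_Hidas_padic_Lfunctions}(v)) and the compatibility $\partial_\cyc^*$ with this restriction, this rearranges to the claimed divisibility with the correction ideal $\mathscr{C}^{\ac}(u)$ in place of $\mathscr{C}(u)$ — here one uses the very definition of $\mathscr{C}^{\ac}(u)$ in Definition~\ref{defn_C_ac_u} as the pullback of $\mathscr{C}(u)$ along ${\rm ver}_{\rm ac}$ and the projection $\Gamma_K\twoheadrightarrow\Gamma_\p$, together with the fact that the self-dual twist $\dagger$ restricted to $\Gamma_\ac$ sees $\chi$ only through $\Gamma_\p$ and its conjugate. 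Finally, $\mathscr{C}^{\ac}(u)=(1)$ when $\chi^2\not\equiv\mathds 1\bmod\m_\cO$ by Proposition~\ref{prop_uniqueness_of_the_lattice_in_residually_irred_case} (which gives $\mathscr{C}(u)=(1)$), and in the situation (B) or (C) this also holds by the hypotheses in force there, while the opposite divisibility in those cases is supplied by the work of Skinner--Urban (Theorem~\ref{thm_IMC_ord_split_families}) and \cite{BLForum} respectively, upgrading the divisibility to an equality. Part (ii) is proved the same way, using the family versions: Theorem~\ref{thm_ordinary_Delta1_Delta2_anticyclo_Selmer_torsion_via_horizontal_ES}(ii), Corollary~\ref{cor_thm_ordinary_Delta1_Delta2_anticyclo_Selmer_torsion_via_horizontal_ES}(ii), Theorem~\ref{thm_big_BSD_step1}(ii), Corollary~\ref{cor_big_BSD_step2}(ii), Corollary~\ref{cor_main_conj_along_Gamma}(ii) (assuming $\LL_\f$ regular throughout), with $H_\f$ inserted to account for the congruence ideal and the twist ${\rm Tw}_{\f}$ replacing ${\rm Tw}$.

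The main obstacle, and the step requiring the most care, is the bookkeeping of the \emph{correction ideals} $\mathscr{C}(u)$, $\mathscr{C}^{\ac}(u)$ and the auxiliary powers of $\varpi$ as one descends through $\Gamma_K\twoheadrightarrow\Gamma_\ac$: the $\varpi$-powers disappear upon $\otimes\QQ_p$, but one must check that the image of $\mathscr{C}(u)\subset\LL_\cO(\Gamma_\p)$ under ${\rm ver}_{\rm ac}$ composed with $\Gamma_K\twoheadrightarrow\Gamma_\p$ really is $\mathscr{C}^{\ac}(u)$ as an ideal of $\LL_\cO(\Gamma_\ac)$, i.e. that Definition~\ref{defn_C_ac_u} is the correct transport of the three-variable correction factor — this is where the anticyclotomic self-duality of $\TT_{f,\chi}^\ac$ (Remark~\ref{remark_why_central_critical_twists}) and the decomposition \eqref{eqn_CM_Hida_Rep} are used. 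A secondary subtlety is verifying the non-degeneracy of the Nekov\'a\v{r} height pairing (equivalently ${\rm Reg}_{\TT_{f,\chi}^\ac}\neq 0$) purely from the $\Gamma_\ac$-adic Heegner/BDP class being non-torsion; this is exactly the point invoked at the end of the proof of Theorem~\ref{thm_big_BSD_step1} via \cite{bertolinidarmonprasanna13, CastellapadicvariationofHeegnerpoints}, and I would cite it rather than reprove it. Everything else — the control theorems for Selmer complexes \cite[Corollary 8.10.2, Proposition 8.10.1]{nekovar06}, the vanishing of Tamagawa factors \cite[Corollary 8.9.7.4]{nekovar06}, and the equivalence of the two halves of the main conjecture under $N_f^-$ square-free \cite[Theorem 3.19(i)]{BLForum} — enters as a black box exactly as in the proof of Theorem~\ref{thm_big_BSD_step1}.
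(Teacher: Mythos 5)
Your proposal misidentifies the rank picture and, as a result, invokes the wrong mechanism for the regulator. Theorem~\ref{thm_ordinary_definite_ac} lives in \S\ref{subsubsec_split_definite_ord}, where the standing assumption is that $N_f^-$ is a square-free product of an \emph{odd} number of primes (the \emph{definite} case). You correctly recall this and correctly note that, by Theorem~\ref{thm_ordinary_Delta1_Delta2_anticyclo_Selmer_torsion_via_horizontal_ES} and the non-vanishing from Theorem~\ref{thm_Chida_Hsieh}(i), the relevant $\Delta^{(1)}$-Selmer groups are $\LL_{\cO}(\Gamma_\ac)$-torsion. But then you contradict this by claiming ``this forces $\widetilde{H}^1_{\rm f}(G_{K,\Sigma},\TT_{f,\chi}^\ac;\Delta^{(1)})$ to have rank exactly one and the regulator ${\rm Reg}_{\TT_{f,\chi}^\ac}$ to be non-zero'' via the Bertolini--Darmon--Prasanna formula. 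That is the \emph{indefinite} argument, used in Theorem~\ref{thm_ordinary_Delta1_Selmer_torsion_via_horizontal_ES_indefinite} and at the end of the proof of Theorem~\ref{thm_big_BSD_step1}, and it cannot apply here: a torsion module has rank zero, not one. In the definite case $\widetilde{H}^1_{\rm f}(G_{K,\Sigma},\TT_{f,\chi}^\ac;\Delta^{(1)})$ and its dual are \emph{trivial} (they are torsion and, under \ref{item_tau}, torsion-free), so ${\rm Reg}_{\TT_{f,\chi}^\ac}=\LL_{\cO}(\Gamma_\ac)$ holds vacuously — there is no height-pairing non-degeneracy to check, and BDP plays no role. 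This is exactly the one-line observation the paper opens its proof with.

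On the structure of the argument: the paper's proof of (A) is considerably more direct than your route. Once one has noted ${\rm Reg}_{\TT_{f,\chi}^\ac}=\LL_{\cO}(\Gamma_\ac)$ and the identity $\partial_\cyc^*\bigl({\rm Tw}(L_p(f_{/K}\otimes\chi,\Sigma^{(1)}))\bigr)=L_p(f_{/K}\otimes\chi,\Sigma^{(1)}_{\rm cc})\big|_{\Gamma_\ac}$, case (A) is a \emph{restatement} of Corollary~\ref{cor_main_conj_along_Gamma}(i) applied with $\Gamma=\Gamma_\ac$, with no detour through Theorem~\ref{thm_big_BSD_step1}. The descent-through-$\Gamma_K$ machinery you describe (Theorem~\ref{thm_big_BSD_step1} plus Corollary~\ref{cor_big_BSD_step2}(i)) is the route the paper uses only for case (C), where the input is the equality from Skinner--Urban rather than a one-sided divisibility; applying it in case (A) is not wrong per se, but it is a longer path than the paper takes and does not need the BDP non-degeneracy input. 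Your bookkeeping concerns about $\mathscr{C}(u)$ versus $\mathscr{C}^{\ac}(u)$ are reasonable to raise, but note that Definition~\ref{defn_C_ac_u} defines $\mathscr{C}^{\ac}(u)$ as the \emph{kernel} of a composite map out of $\LL_{\cO}(\Gamma_\ac)$ (a pullback), not as an image of $\mathscr{C}(u)$; your phrasing conflates the two, even though the intended point (that $\mathscr{C}^{\ac}(u)$ is the correct transport of $\mathscr{C}(u)$ to $\Gamma_\ac$) is right.
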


One may drop the correction terms $\mathscr{C}(u)$ from the statement of Theorem~\ref{thm_ordinary_definite_ac} if the hypothesis \ref{item_Ind} concerning the CM branches of Hida families holds. In particular, if \ref{item_RIa} is valid with $a=0$ (which translates in the setting of Theorem~\ref{thm_ordinary_definite_ac} to the requirement that $\chi^2\not\equiv \mathds 1 \mod \m_\cO$), then we can take $\mathscr C(u)=(1)$ thanks to Proposition~\ref{prop_uniqueness_of_the_lattice_in_residually_irred_case}. 

\begin{proof}[Proof of Theorem~\ref{thm_ordinary_definite_ac}] Note in this scenario we have ${\rm Reg}_{\TT_{\f,\chi}^\ac}=\LL_{\cO}(\Gamma_\ac)$ (since both Selmer groups involved in the definition of this regulator are trivial) and we have 
$$\partial_\cyc^* \left({\rm Tw}\left(L_p(f_{/K}\otimes\chi,  {\Sigma}^{(1)})\right)\right)= L_p(f_{/K}\otimes\chi,  {\Sigma}^{(1)}_{\rm cc})\big{\vert}_{\Gamma_\ac}\,.$$

\item[i)] In the setting of (B), the required divisibility is \cite[Theorem 1.1(i)]{BLForum}, whereas in the situation of (C), the asserted equality follows from \cite[Theorem 3.36]{skinnerurbanmainconj} combined with Corollary~\ref{cor_big_BSD_step2}(i) (which we apply in light of our observations at the start of this proof). In the remaining case when (A) holds true, this is a restatement of Corollary~\ref{cor_main_conj_along_Gamma}(i) applied with $\Gamma=\Gamma_\ac$.
\item[ii)] The proof of this portion proceeds in a manner identical to Part (i).
\end{proof}


\subsubsection{Anticyclotomic Iwasawa Theory in the indefinite case}
\label{subsubsec_split_indefinite_ord} Throughout in \S\ref{subsubsec_split_definite_ord}, we assume that $N_f^-$ is a square-free product of an even number of primes. We will explain how to obtain unconditional versions of Corollary~\ref{cor_big_BSD_step2} in this set up. 

\begin{theorem}
\label{thm_ordinary_Delta1_Selmer_torsion_via_horizontal_ES_indefinite}
Suppose $f \in S_{k_f+2}(\Gamma_0(N_f))$ is a cuspidal eigen-newform and $\chi$ is a ring class character of $K$. Let $\f$ be the unique branch of the Hida family that admits $f^\alpha$ as a specialization in weight $k_f+2$. Suppose that $\overline{\rho}_{\f}$ is $p$-distinguished, as well as that $\LL_\f$ is a regular ring. Let us choose a morphism $u$ as in \S\ref{subsubsec_local_properties_CMgg} and define the correction factor $\mathscr{C}^{\ac}(u)$ as in Definition~\ref{defn_C_ac_u}.
\item[i)] Assume at least one of the following holds:
\begin{itemize}
    \item[(A)] $\chi^2\neq \mathds{1}$ and the hypothesis \ref{item_tau} holds true.
    \item[(B)] \ref{item_tau}, \ref{item_HSS}, \ref{item_HnEZ} and \ref{item_HDist} are valid.
\end{itemize} 
Then,
\begin{itemize}
    \item[i.1)]  Both $\LL_{\cO}(\Gamma_\ac)$-modules $\widetilde{H}^1_{\rm f}(G_{K,\Sigma},\TT_{f,\chi}^\ac;\Delta^{(1)})$ and $\widetilde{H}^2_{\rm f}(G_{K,\Sigma},\TT_{f,\chi}^\ac;\Delta^{(1)})$ have rank one.
    \item[i.2)] We have a containment
    $$ \LL_{L}(\Gamma_\ac)\cdot \mathscr{C}^{\ac}(u)\,\partial_\cyc^* \left({\rm Tw}\left(L_p(f_{/K}\otimes\chi,  {\Sigma}^{(1)})\right){\big{\vert}_{\Gamma_K}}\right)\,\, \subset \,\,{\rm Reg}_{\TT_{f,\chi}^\ac}\cdot\Char_{\LL_{\cO}(\Gamma_\ac)}\left( \widetilde{H}^2_{\rm f}(G_{K,\Sigma},\TT_{f,\chi}^\ac;\Delta^{(1)})_{\rm tor}\right)\otimes\QQ_p$$
      where we can take $\mathscr{C}^{\ac}(u)=(1)$ in the setting of (B).
\end{itemize}

\item[ii)] Assume that one of the hypothesis (A)or  (B) holds.
Then,
\begin{itemize}
    \item[ii.1)] Both $\LL_{\f}(\Gamma_\ac)$-modules  $\widetilde{H}^1_{\rm f}(G_{K,\Sigma},\TT_{\f,\chi}^\ac;\Delta^{(1)})$ and $\widetilde{H}^2_{\rm f}(G_{K,\Sigma},\TT_{\f,\chi}^\ac;\Delta^{(1)})$ have rank one.
    \item[ii.2)] We have a containment
    $$(\LL_{\f}(\Gamma_\ac)\otimes_{\ZZ_p}\QQ_p) \mathscr{C}^{\ac}(u)\,H_{\f}\, \partial_\cyc^* \left({\rm Tw}_\f\left(L_p(\f_{/K}\otimes\chi,  \mathbb{\Sigma}^{(1)})\right){\big{\vert}_{\Gamma_K}}\right)\,\subset\, {\rm Reg}_{\TT_{\f,\chi}^\ac}\cdot\Char_{\LL_{\f}(\Gamma_\ac)}\left( \widetilde{H}^2_{\rm f}(G_{K,\Sigma},\TT_{\f,\chi}^\ac;\Delta^{(1)})_{\rm tor}\right)\otimes\QQ_p$$
  where we can take $\mathscr{C}^{\ac}(u)=(1)$ in the setting of (B).
\end{itemize}
\end{theorem}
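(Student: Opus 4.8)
The plan is to deduce Theorem~\ref{thm_ordinary_Delta1_Selmer_torsion_via_horizontal_ES_indefinite} from the algebraic BSD-type formula of Theorem~\ref{thm_big_BSD_step1} together with the main divisibility in Corollary~\ref{cor_main_conj_along_Gamma}, following the same mechanism we used in Corollary~\ref{cor_big_BSD_step2} but now without assuming the Iwasawa Main Conjecture. First I would establish part (i.1): since $N_f^-$ is a square-free product of an even number of primes, Corollary~\ref{cor_thm_ordinary_Delta1_Delta2_anticyclo_Selmer_torsion_via_horizontal_ES}(i) gives ${\rm rank}_{\LL_{\cO}(\Gamma_\ac)}\widetilde{H}^1_{\rm f}(G_{K,\Sigma},\TT_{f,\chi}^\ac;\Delta^{(1)})\leq 1$, so I must produce a non-torsion class. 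This comes from the Beilinson--Flach element: the $\LL_\f(\Gamma_\ac)$-adic (resp.\ $\LL_\cO(\Gamma_\ac)$-adic) formula of Bertolini--Darmon--Prasanna in families (see \cite{bertolinidarmonprasanna13,CastellapadicvariationofHeegnerpoints}) exhibits a non-torsion element in $\widetilde{H}^1_{\rm f}(G_{K,\Sigma},\TT_{f,\chi}^\ac;\Delta^{(1)})$, exactly as invoked at the end of the proof of Theorem~\ref{thm_big_BSD_step1}(ii); hence the rank is precisely one, and by the global Euler--Poincar\'e characteristic formula $\widetilde{H}^2_{\rm f}(G_{K,\Sigma},\TT_{f,\chi}^\ac;\Delta^{(1)})$ also has rank one. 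Part (ii.1) follows from (i.1) by the control theorem for Selmer complexes \cite[Proposition 8.10.1]{nekovar06}, choosing a crystalline specialization of $\f$.

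Next I would prove the containment (i.2). Apply Theorem~\ref{thm_big_BSD_step1}(i), which is available under hypothesis (A) or (B), to get
\[
\partial_{\cyc}^{*}\Char_{\LL_\cO(\Gamma_K)}\left(\widetilde{H}^2_{\rm f}(G_{K,\Sigma},\TT_{f,\chi}^\dagger;\Delta^{(1)})\right)={\rm Reg}_{\TT_{f,\chi}^\ac}\cdot \Char_{\LL_\cO(\Gamma_\ac)}\left(\widetilde{H}^2_{\rm f}(G_{K,\Sigma},\TT_{f,\chi}^\ac;\Delta^{(1)})_{{\rm tor}}\right).
\]
On the other hand, Corollary~\ref{cor_main_conj_along_Gamma}(i), in its twisted incarnation (one applies the descent formalism of \S\ref{subsubsec_descent_to_Gamma} to the conjugate self-dual twist $\TT_{f,\chi}^\dagger$, exactly as the proof of Corollary~\ref{cor_main_conj_along_Gamma} does for $\TT_{f,\chi}^{(\Gamma)}$), yields
\[
\Char_{\LL_{\cO}(\Gamma_K)}\left(\widetilde{H}^2_{\rm f}(G_{K,\Sigma},\TT_{f,\chi}^\dagger;\Delta^{(1)})\right)\,\Big|\,\mathscr{C}(u)\cdot {\rm Tw}\left(L_p(f_{/K}\otimes\chi,\Sigma^{(1)}_{\rm crit})\right){\big\vert}_{\Gamma_K}\cdot\LL_{\cO}(\Gamma_K).
\]
Applying the mock leading term operator $\partial_\cyc^*$ to both sides of this divisibility and tracking the bookkeeping between $\partial_\cyc^*$ of an ideal of $\LL_\cO(\Gamma_K)=\LL_\cO(\Gamma_\ac)\,\widehat{\otimes}\LL(\Gamma_\cyc)$ and $\partial_\cyc^*$ applied to the $p$-adic $L$-function (here the relation $\mathscr{C}^{\ac}(u)$ enters: it is the image of $\mathscr C(u)$ under ${\rm ver}_{\rm ac}$ composed with the projection to $\LL_\cO(\Gamma_\p)$, which is precisely the error one picks up when restricting the $\Gamma_\p$-indexed correction to the anticyclotomic line, cf.\ Definition~\ref{defn_C_ac_u}), one concludes
\[
\LL_{L}(\Gamma_\ac)\cdot \mathscr{C}^{\ac}(u)\,\partial_\cyc^* \left({\rm Tw}\left(L_p(f_{/K}\otimes\chi,  {\Sigma}^{(1)})\right){\big{\vert}_{\Gamma_K}}\right)\subset \partial_\cyc^*\Char_{\LL_\cO(\Gamma_K)}\left(\widetilde{H}^2_{\rm f}(G_{K,\Sigma},\TT_{f,\chi}^\dagger;\Delta^{(1)})\right)\otimes\QQ_p.
\]
Combining this with the displayed BSD formula gives (i.2). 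The case (B) reduces to $\mathscr{C}^{\ac}(u)=(1)$ by Proposition~\ref{prop_uniqueness_of_the_lattice_in_residually_irred_case}, since (B) includes \ref{item_HDist}, and one checks directly that $\mathscr{C}(u)=(1)$ in that situation (this is already implicit in \cite{BLForum}). Part (ii.2) is the $\LL_\f(\Gamma_\ac)$-adic analogue: one runs the identical argument using Theorem~\ref{thm_big_BSD_step1}(ii) and Corollary~\ref{cor_main_conj_along_Gamma}(ii), which is where the regularity hypothesis on $\LL_\f$ is needed so that $\LL_\f(\Gamma)$ is a UFD and the derived-characteristic-ideal formalism of \S\ref{subsubsec_Nek_descent} applies, and carrying the congruence factor $H_\f$ along.

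The main obstacle I anticipate is the careful compatibility check in the $\partial_\cyc^*$-bookkeeping: one must verify that applying the mock leading term operator is compatible with divisibility of ideals in $\LL_\cO(\Gamma_K)$ and that it correctly converts ${\rm Tw}(L_p(\cdots))|_{\Gamma_K}$ into its restriction along $\Gamma_\ac$ up to the stated correction, using that $\widetilde{H}^2_{\rm f}(G_{K,\Sigma},\TT_{f,\chi}^\dagger;\Delta^{(1)})$ is torsion over $\LL_\cO(\Gamma_K)$ with $r(\cdot)\le 1$ (the latter is precisely property (3) verified inside the proof of Theorem~\ref{thm_big_BSD_step1}, which uses Corollary~\ref{cor_thm_ordinary_Delta1_Delta2_anticyclo_Selmer_torsion_via_horizontal_ES} and \cite[Proposition 11.7.6(vii)]{nekovar06}). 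If $r(\cdot)>1$ then ${\rm Reg}_{\TT_{f,\chi}^\ac}=0$ and both sides of (i.2) and (ii.2) vanish, so the containment holds trivially; the substantive case is $r(\cdot)=1$, where the non-vanishing of the regulator is equivalent to non-degeneracy of Nekov\'a\v r's height pairing and both sides are genuinely non-zero. A secondary point requiring care is the precise matching of the twisting maps ${\rm Tw}$ (resp.\ ${\rm Tw}_\f$) on $L$-functions with the identification $\TT_{f,\chi}^\dagger=\TT_{f,\chi}^\ac\,\widehat\otimes\,\LL(\Gamma_\cyc)^\sharp$ and the choice of lift $\gamma_\cyc\in\Gamma_K$ parallel to $\Gamma_\ac$ from Definition~\ref{defn_lift_gammacyc_ac}, but this is a direct unwinding of definitions rather than a genuine difficulty.
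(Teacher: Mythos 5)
Your proof is correct and follows the same overall architecture as the paper: upper rank bound from Corollary~\ref{cor_thm_ordinary_Delta1_Delta2_anticyclo_Selmer_torsion_via_horizontal_ES}(i), a lower bound exhibiting a non-torsion class, then combine the algebraic BSD formula of Theorem~\ref{thm_big_BSD_step1} with the $\Gamma_K$-divisibility via the mock leading term operator (which is exactly the content of Corollary~\ref{cor_big_BSD_step2}, which the paper cites and you re-derive in-line). The one genuine divergence is how the rank-one \emph{lower} bound in (i.1) is produced. The paper exhibits the explicit class ${}^u\mathbb{BF}_{f^\alpha,\chi}^{\ac}$: its non-triviality comes from Proposition~\ref{prop_reciprocity_law} (the KLZ explicit reciprocity law), and the fact that it lands in the $\Delta^{(1)}$-Selmer group uses the injectivity of the Coleman map ${\rm Col}^{(1,\chi)}$ from \cite[Theorem 3.11]{BLForum}, which forces $\res_\p^{(1)}({}^u\mathbb{BF}_{f^\alpha,\chi}^\ac)=0$. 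You instead invoke the BDP formula (which exhibits a non-torsion Heegner/generalized Heegner cycle); note this is the mechanism the paper uses in the proof of Theorem~\ref{thm_big_BSD_step1}(ii) to verify that ${\rm rank}\,\widetilde{H}^1_{\rm f}\geq 1$, but \emph{not} the one it uses here, and your phrasing conflates the Beilinson--Flach class with the Heegner cycle coming from BDP — these are different elements, produced by different Euler systems. Both routes give the lower bound, but the BF route is more internal to the present paper and requires only what has already been set up; the BDP route imports Castella's interpolated BDP formula and Hsieh's non-vanishing results, an arguably heavier external input. Your handling of case (B) is also slightly different: the paper simply cites \cite[Theorem 1.1(ii)]{BLForum}, whereas you re-run the argument noting that \ref{item_HDist} forces \ref{item_RIa} with $a=0$, so $\mathscr{C}(u)=(1)$ by Proposition~\ref{prop_uniqueness_of_the_lattice_in_residually_irred_case}; this works. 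Finally, you correctly flag the $\partial_\cyc^*$-monotonicity issue — it does hold here because the $\Gamma_K$-divisibility forces both sides to vanish to the same order modulo $(\gamma_\cyc-1)$ — and the paper glosses over this in exactly the same way, so no deduction there.
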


One may drop the correction factors $\mathscr{C}^{\ac}(u)$ from the statement of Theorem~\ref{thm_ordinary_Delta1_Selmer_torsion_via_horizontal_ES_indefinite} if the hypothesis \ref{item_Ind} concerning the CM branches of Hida families holds. In particular, if \ref{item_RIa} is valid with $a=0$ (which translates in the setting of Theorem~\ref{thm_ordinary_Delta1_Selmer_torsion_via_horizontal_ES_indefinite} to the requirement that $\chi^2\not\equiv \mathds 1 \mod \m_\cO$), then we can take $\mathscr C(u)=(1)$ thanks to Proposition~\ref{prop_uniqueness_of_the_lattice_in_residually_irred_case}. 

\begin{proof}[Proof of Theorem~\ref{thm_ordinary_Delta1_Selmer_torsion_via_horizontal_ES_indefinite}] It follows from the global Euler--Poincar\'e characteristic formulae that $${\rm rank}\,\widetilde{H}^1_{\rm f}(G_{K,\Sigma},\TT_{f,\chi}^\ac;\Delta^{(1)})={\rm rank }\,\widetilde{H}^2_{\rm f}(G_{K,\Sigma},\TT_{f,\chi}^\ac;\Delta^{(1)})$$ 
$${\rm rank}\,\widetilde{H}^1_{\rm f}(G_{K,\Sigma},\TT_{\f,\chi}^\ac;\Delta^{(1)})={\rm rank}\,\widetilde{H}^2_{\rm f}(G_{K,\Sigma},\TT_{\f,\chi}^\ac;\Delta^{(1)}).$$
\item[i)] In the setting of (B), this is \cite[Theorem 1.1(ii)]{BLForum}. 

We explain the proof of i.1) assuming (A). It follows from Corollary~\ref{cor_thm_ordinary_Delta1_Delta2_anticyclo_Selmer_torsion_via_horizontal_ES}(i) that  $${\rm rank}_{\LL_{\cO}(\Gamma_\ac)}\left(\widetilde{H}^1_{\rm f}(G_{K,\Sigma},\TT_{f,\chi}^\ac;\Delta^{(1)})\right)\leq 1\,.$$
To prove that ${\rm rank}_{\LL_{\cO}(\Gamma_\ac)}\left(\widetilde{H}^1_{\rm f}(G_{K,\Sigma},\TT_{f,\chi}^\ac;\Delta^{(1)})\right)= 1$, we need to exhibit a non-zero element of the torsion-free module $\widetilde{H}^1_{\rm f}(G_{K,\Sigma},\TT_{f,\chi}^\ac;\Delta^{(1)})$. The required element is the image ${}^u{\mathbb{BF}}_{f^{\alpha},\chi}^{\ac}$ of the Beilinson--Flach element ${}^u{\mathbb{BF}}_{f^{\alpha},\chi}^\dagger \in H^1_{\calF_+}(K,\TT_{f,\chi}^\dagger)$. The element ${}^u{\mathbb{BF}}_{f^{\alpha},\chi}^{\ac}$ is indeed non-trivial thanks to the twisted version Proposition~\ref{prop_reciprocity_law} (which is also proved in \cite{KLZ2}); see also the proof of Theorem~3.13(ii) in \cite{BLForum}. It also follows from \cite[Theorem 3.11]{BLForum} and the fact that the Coleman map denoted by ${\rm Col}^{(1,\chi)}$ in op. cit. is injective that 
$$\res_{\p}^{(1)}\left({}^u{\mathbb{BF}}_{f^{\alpha},\chi}^{\ac}\right)=0,$$ 
so that ${}^u{\mathbb{BF}}_{f^{\alpha},\chi}^{\ac} \in \widetilde{H}^1_{\rm f}(G_{K,\Sigma},\TT_{f,\chi}^\ac;\Delta^{(1)})$, as required.

The proof of i.2) in the setting of (A) follows on combining  Theorem~\ref{thm_ordinary_Deltai_MC_via_horizontal_ES}(i) with Corollary~\ref{cor_big_BSD_step2}(i).

\item[ii)] The proof of this portion proceeds in a manner identical to Part (i). 
\end{proof}


\appendix


\section{Locally restricted Euler systems}
\label{appendix_sec_ES_main}
Our goal in this appendix is to produce bounds on Selmer groups in terms of locally restricted Euler systems with minimal set of hypotheses.

\subsection{Set up}
\label{appendix_subsec_setup}
Suppose $p>2$ is a prime. Let $E$ be a finite extension of $\Qp$ and $\cO\subset E$ its valuation ring, $\frak{m} \subset \cO$ the maximal ideal and $\varpi\in \frak{m}$ is a fixed uniformizer. Let $S$ denote a finite set of places of $\QQ$ which contains the archimedean place and the prime $p$. Suppose $X$ is a finite dimensional $E$-vector space endowed with a continuous action of $G_{\QQ,S}$ and let $X_\circ\subset X$ denote a $G_{\QQ,S}$-stable $\cO$-lattice. Let us put $\overline{X}=X_\circ/\frak{m}X_\circ$ and call it (by slight abuse) the residual representation of $X$. We will assume that $\overline{X}$ is not the trivial representation. 

Let us also set $X^*(1):=\Hom(X,\QQ_p)(1)$ to denote the Tate dual of $X$; similarly define the lattice $X_\circ^*(1):=\Hom(X_\circ,\ZZ_p)(1)$ and $\overline{X}^*(1):=\Hom(\overline{X},\mu_p)$. Let us also set $W=X\otimes\QQ_p/\ZZ_p$ and similarly define the divisible group $W^*(1)$. Finally, let us define the extension $\Omega/\QQ$ as the fixed field of $\ker\left( G_{\QQ}\lra {\rm Aut}(X)\oplus {\rm Aut}(\mu_{p^\infty})\right)\,.$

Throughout this appendix, $M$ will always denote a power of $p$ and we will put $W_M:=X_\circ/MX_\circ$ and similarly define $W_M^*(1)$.

We will consider the following hypotheses on $X$:
\begin{enumerate}
   \item[\mylabel{item_HypXcirc}{{\bf Hyp($X_\circ$)}}] The following conditions simultaneously hold true:
   \begin{itemize}
    \item[i)]The residual representation $\overline{X}$ is irreducible.
    \item[ii)] There exists $\tau\in G_{\QQ(\mu_{p^\infty})}$ such that $X_\circ/(\tau-1)X_\circ$ is a free $\cO$-module of rank one.
    \item[iii)] $H^1(\Omega/\QQ,\overline{X})=0=H^1(\Omega/\QQ,\overline{X}^*(1))$.
\end{itemize}
      \end{enumerate}

\begin{enumerate}
   \item[\mylabel{item_HypX}{{\bf Hyp($X$)}}]  The following conditions simultaneously hold true: \begin{itemize}
    \item[i)]The representation ${X}$ is irreducible and $\overline{X}^{G_\QQ}=0=(\overline{X}^*(1))^{G_\QQ}$.
    \item[ii)] There exists $\tau\in G_{\QQ(\mu_{p^\infty})}$ such that $X/(\tau-1)X$ is an $E$-vector space of dimension one.
    \item[iii)] $H^1(\Omega/\QQ,\overline{X})=0=H^1(\Omega/\QQ,\overline{X}^*(1))$.
\end{itemize}
      \end{enumerate}

\begin{enumerate}
   \item[\mylabel{item_HypXa}{{\bf Hyp($X,a$)}}] The following conditions hold true for a natural number $a$: \begin{itemize}
    \item[i)]The representation ${X}$ is irreducible and $\overline{X}^{G_\QQ}=0=(\overline{X}^*(1))^{G_\QQ}$.
    \item[ii)] There exists $\tau\in G_{\QQ(\mu_{p^\infty})}$ such that $X_\circ/(\tau-1)X_\circ$ is a free $\cO$-module of rank one.
    \item[iii)] $H^1(\Omega/\QQ,\overline{X})=0=H^1(\Omega/\QQ,\overline{X}^*(1))$.
    \item[iv)] Suppose $\tau$ is as in (ii). The ideal $\frak{m}^a$ annihilates the maximal $G_{\QQ,S}$-stable submodule of $(\tau-1)W$ and of $(\tau-1)W^*(1)$.
\end{itemize}
      \end{enumerate}

\begin{enumerate}
   \item[\mylabel{item_HypXab}{\bf  Hyp($X,a,b$)}] The following conditions hold true for a pair of natural numbers $a,b$: \begin{itemize}
    \item[i)]The representation ${X}$ is irreducible and $\overline{X}^{G_\QQ}=0=(\overline{X}^*(1))^{G_\QQ}$.
    \item[ii)] There exists $\tau\in G_{\QQ(\mu_{p^\infty})}$ such that $X/(\tau-1)X$ is an $E$-vector space of dimension one.
    \item[iii)] $H^1(\Omega/\QQ,\overline{X})=0=H^1(\Omega/\QQ,\overline{X}^*(1))$.
    \item[iv)] Suppose $\tau$ is as in (ii). The ideal $\frak{m}^a$ annihilates the maximal $G_{\QQ,S}$-stable submodule of $(\tau-1)W$ and of $(\tau-1)W^*(1)$.
    \item[v)]  Suppose $\tau$ is as in (ii). The length of $(X_\circ/(\tau-1)X_\circ)_{\rm tor}$ is bounded by $b$.
\end{itemize}
      \end{enumerate}

\begin{remark}
Note that \ref{item_HypXa} is equivalent to \ref{item_HypXab}$_{\vert b=0}$\,; whereas both of the equivalent hypotheses \ref{item_HypXa}$_{\vert a=0}$ and \ref{item_HypXab}$_{\vert a=b=0}$\, follow from \ref{item_HypXcirc}. Any one of these hypotheses also implies \ref{item_HypX}.
\end{remark}

\begin{defn}
\item[i)] Let $N$ denote the product of non-archimedean primes in $S$. Let $\cR$ denote the set of square-free products $p_1\cdots p_s$ with $p_i$ coprime to $N$. For each integer $m=p_1\cdots p_s\in \cR$, let us write $\QQ(m)=\QQ(p_1)\cdots\QQ(p_s^s)$, where we write $\QQ(\ell)$ for the maximal $p$-extension in $\QQ(\mu_\ell)$ for a prime number $\ell$. Let us write $S_m$ for the primes of $\QQ(m)$ lying above the primes in $S$, as well all primes ramified in $\QQ(m)/\QQ$. For $m\in \cR$, let us put $\Delta_m:=\Gal(\QQ(m)/\QQ)$.
\item[ii)] A Selmer structure $\mathcal{F}$ for the pair $(X_\circ,\cR)$ is a choice
$$H^1_{\mathcal{F}}(\QQ(m)_v,X_\circ)\subset H^1(\QQ(m)_v,X_\circ)$$
for every $v\in S_m$. Local Tate duality then determines the dual Selmer structure $\mathcal{F}^*$ on $X_\circ^*(1)$. For any quotient $Y$ of $X_\circ$, we shall denote by $\mathcal{F}$ the Selmer structure on $(Y,\cR)$ induced from $X_\circ\twoheadrightarrow Y$. In this scenario, we define the Selmer groups
$$H^1_{\mathcal{F}}(\QQ(m),Z):=\ker\left(H^1(G_{\QQ,S},Z)\lra \oplus_{v\in S_m}\frac{H^1(\QQ(m)_v,Z)}{H^1_{\mathcal{F}}(\QQ(m)_v,Z)} \right)$$
for $Z=X_\circ, W_M$. We similarly define the dual Selmer groups 
$H^1_{\mathcal{F}^*}(\QQ(m),Z)$ for $Z=X_\circ^{*}(1), W_M^*(1)$.
\item[iii)] The canonical Selmer structure $\mathcal{F}_{\rm can}$ for the pair $(X_\circ,\cR)$ is given on setting for every $m\in \cR$
$$H^1_{\mathcal{F}_{\rm can}}(\QQ(m)_v,X_\circ)=\ker\left(H^1(\QQ(m)_v,X_\circ) \lra H^1(\QQ(m)_v^{\rm ur},X) \right)$$
for every non-archimedean $v \in S_m$ prime to $p$, and
$$H^1_{\mathcal{F}_{\rm can}}(\QQ(m)_v,X_\circ)=H^1(\QQ(m)_v,X_\circ)$$
for every $v$ above $p$.
\item[iv)] Given a Selmer structure $\mathcal{F}$ for the pair $(X_\circ,\cR)$, we write $\mathcal{F}< {\mathcal{F}_{\rm can}}$ to indicate that
$$H^1_{\mathcal{F}}(\QQ(m)_v,X_\circ)=H^1_{\mathcal{F}_{\rm can}}(\QQ(m)_v,X_\circ)$$
for every non-archimedean $v \in S_m$ prime to $p$, and
$$H^1_{\mathcal{F}}(\QQ(m)_v,X_\circ)\subseteq H^1(\QQ(m)_v,X_\circ)\,.$$
\item[v)] A Selmer structure $\mathcal{F}< {\mathcal{F}_{\rm can}}$ is said to be Cartesian if for every $M$ we have:
\begin{itemize}
    \item For every $m\in\cR$ and a $p$-primary integer $M$, the restriction map induces an isomorphism 
    $$H^1_{\mathcal{F}}(\QQ,W_M)\stackrel{\sim}{\lra} H^1_{\mathcal{F}}(\QQ(m),W_M)^{\Delta_m}.$$
    \item For  every $m\in\cR$ and $p$-primary integers $M^\prime,M$ with $M^\prime\mid M$ we have
    $$H^1_{\mathcal{F}}(\QQ(m)_v,W_{M^\prime}) =\ker\left(H^1(\QQ(m)_v,W_{M^\prime})\stackrel{[M/M^\prime]}{\lra}\frac{H^1(\QQ(m)_v,W_{M})}{H^1_{\mathcal{F}}(\QQ(m)_v,W_{M})} \right)$$
    for every $v$ dividing $p$. 
\end{itemize}
\end{defn}
\begin{defn}[Locally restricted Euler system] \label{defn_appendix_locrestES}
\item[i)] Suppose that $\mathbf{c}=\{c_m\}_{m\in \cR}$ is an Euler system for the triple $(X_\circ,\cup_m\QQ(m),N)$ in the sense of \cite[\S II]{rubin00}. We say that $\bf{c}$ is $\mathcal{F}$-locally restricted if for every $m\in \cR$ we have 
$$c_m\in H^1_{\mathcal{F}}(\QQ(m),X_\circ)$$
for a Cartesian Selmer structure $\mathcal{F}<\mathcal{F}_{\rm can}$ on $(X_\circ,\cR)$.
\item[ii)] Suppose that $\mathbf{c}=\{c_m\}_{m\in \cR}$ is an $\mathcal{F}$-locally restricted Euler system for the triple $(X_\circ,\cup_m\QQ(m),N)$. We define 
$${\rm ind}_{\cO}(\mathbf{c},\mathcal{F})\in \ZZ_{\geq 0}\cup \{\infty\}$$
as the largest integer $n$ such that $c_{\QQ}\in H^1_{\mathcal{F}}(\QQ,X_\circ)$ is divisible by $\frak{m}^n$ in $H^1_{\mathcal{F}}(\QQ,X_\circ)/H^1_{\mathcal{F}}(\QQ,X_\circ)_{\rm tor}$ (with the obvious convention that ${\rm ind}_{\cO}(\mathbf{c},\mathcal{F})=\infty$ if $c_{\QQ}\in H^1_{\mathcal{F}}(\QQ,X_\circ)_{\rm tor}$).
\end{defn}
Note that an $\mathcal{F}_{\rm can}$-locally restricted Euler system is none other than an Euler system in the usual sense.  
\begin{remark}
\label{remark_appendix_simplify_defn_index}Suppose that $\mathbf{c}=\{c_m\}_{m\in \cR}$ is an $\mathcal{F}$-locally restricted Euler system for $(X_\circ,\cup_m\QQ(m),N)$.
\item[i)] As in \cite{rubin00}, let us write ${\rm ind}_{\cO}(\mathbf{c})$ to denote the largest integer $n$ such that $c_{\QQ}\in H^1(G_{\QQ,S},X_\circ)$ is divisible by $\frak{m}^n$ in $H^1(G_{\QQ,S},X_\circ)/H^1(G_{\QQ,S},X_\circ)_{\rm tor}$. The exactness of the sequence
\begin{align*}
    {\rm Tor}_1^{\cO}\left(\frac{H^1(G_{\QQ,S},X_\circ)}{H^1(G_{\QQ,S},X_\circ)_{\rm tor}+H^1_{\mathcal{F}}(\QQ,X_\circ)},\cO/\frak{m}^n\right)\lra& \frac{H^1_{\mathcal{F}}(\QQ,X_\circ)}{H^1_{\mathcal{F}}(\QQ,X_\circ)_{\rm tor}+\frak{m}^nH^1_{\mathcal{F}}(\QQ,X_\circ)} \\
     &\lra\frac{H^1(G_{\QQ,S},X_\circ)}{H^1(G_{\QQ,S},X_\circ)_{\rm tor}+\frak{m}^nH^1_{\mathcal{F}}(\QQ,X_\circ)}
\end{align*}
for any natural number $n$ shows that 
\begin{itemize}
    \item ${\rm ind}_{\cO}(\mathbf{c},\mathcal{F}) \leq {\rm ind}_{\cO}(\mathbf{c})$;
    \item if we have 
    $${\rm Tor}_1^{\cO}\left(\frac{H^1(G_{\QQ,S},X_\circ)}{H^1(G_{\QQ,S},X_\circ)_{\rm tor}+H^1_{\mathcal{F}}(\QQ,X_\circ)},\cO/\frak{m}^n\right)=\left(\frac{H^1(G_{\QQ,S},X_\circ)}{H^1(G_{\QQ,S},X_\circ)_{\rm tor}+H^1_{\mathcal{F}}(\QQ,X_\circ)}\right)[\frak{m}^n]=0,$$
    then ${\rm ind}_{\cO}(\mathbf{c},\mathcal{F}) = {\rm ind}_{\cO}(\mathbf{c})$.
\end{itemize}
\item[ii)] Suppose in addition that \ref{item_HypX} holds true (in the main body of the current article, our main results will always have this assumption). Then $H^1(G_{\QQ,S},X_\circ)_{\rm tor}=\{0\}$ and our conclusions (i) simplify to 
\begin{itemize}
    \item ${\rm ind}_{\cO}(\mathbf{c},\mathcal{F}) \leq {\rm ind}_{\cO}(\mathbf{c})$;
    \item if ${H^1(G_{\QQ,S},X_\circ)}/{H^1_{\mathcal{F}}(\QQ,X_\circ)}$ is torsion-free, then ${\rm ind}_{\cO}(\mathbf{c},\mathcal{F}) = {\rm ind}_{\cO}(\mathbf{c})$.
\end{itemize}
Notice in addition that we have an injection
$${H^1(G_{\QQ,S},X_\circ)}/{H^1_{\mathcal{F}}(\QQ,X_\circ)}\hookrightarrow {H^1(\QQ_p,X_\circ)}/{H^1_{\mathcal{F}}(\QQ_p,X_\circ)},$$
so we conclude in this scenario that if the quotient  ${H^1(\QQ_p,X_\circ)}/{H^1_{\mathcal{F}}(\QQ_p,X_\circ)}$
is torsion free, it follows that ${\rm ind}_{\cO}(\mathbf{c},\mathcal{F}) = {\rm ind}_{\cO}(\mathbf{c})$.
\item[iii)] We continue to assume the validity of \ref{item_HypX}. Then, 
$$\frak{m}^{{\rm ind}_{\cO}(\mathbf{c},\mathcal{F})}\mid {\rm Fitt}_{\cO}\left(H^1_{\mathcal{F}}(\QQ,X_\circ)/\cO\cdot c_\QQ\right)\,,$$
with the convention that if $c_\QQ=0$, we read this divisibility as $0=0$.

This divisibility is an equality if $H^1_{\mathcal{F}}(\QQ,X_\circ)$ is (necessarily free) of rank one (in the main body of the current article, this will always be the case whenever we apply the machinery in this appendix).
\end{remark}

\begin{defn}
\item[i)] Given a Selmer structure $\mathcal{F}<\mathcal{F}_{\rm can}$ on $(X_\circ,\cR)$ and $m\in \cR$, we define the Selmer structure $\mathcal{F}_m < \mathcal{F}$ on $X_\circ$ by the requirement that
$$H^1_{\mathcal{F}_m}(\QQ_\ell,X_\circ)=
\begin{cases}
H^1_{\mathcal{F}}(\QQ_\ell, X_\circ) & \hbox{ if } \ell \nmid m\\
0& \hbox{ if } \ell \mid m\,.
\end{cases}$$
We also define the Selmer structure $\mathcal{F}^m$ on $X_\circ$ by the requirement that
$$H^1_{\mathcal{F}^m}(\QQ_\ell,X_\circ)=
\begin{cases}
H^1_{\mathcal{F}}(\QQ_\ell, X_\circ) & \hbox{ if } \ell \nmid m\\
H^1(\QQ_\ell, X_\circ)& \hbox{ if } \ell \mid m\,.
\end{cases}$$
We then write $\mathcal{F}_m^*$ to denote the Selmer structure on $X_\circ^*(1)$ dual to $\mathcal{F}^m$, so that we have
$$H^1_{\mathcal{F}_m^*}(\QQ_\ell,X_\circ^*(1))=
\begin{cases}
H^1_{\mathcal{F}^*}(\QQ_\ell, X_\circ^*(1)) & \hbox{ if } \ell \nmid m\\
0& \hbox{ if } \ell \mid m\,.
\end{cases}$$
\item[ii)] Given $m\in \cR$, let us write $\nu(m)$ for the number of prime divisors of $m$. We define the integer $\chi(\overline{X})$ on setting 
$$\chi(\overline{X})={\min}_{m\in \cR}\left\{\nu(m): H^1_{\mathcal{F}^*_{m}}(\QQ_\ell,\overline{X}^*(1))=0 \right\}\,.$$ 
\end{defn}
\begin{remark}
The proof of \cite[Lemma V.2.3]{rubin00} shows that $\chi(\overline{X})$ is well-defined.
\end{remark}

\begin{defn}
\label{defn_Kolyvagin_derivatives}
Let us fix $M$ to be a power of $p$. Let us write $\cR_M\subset \cR$ for the set defined as in \cite[Definition IV.1.1]{rubin00} with $F=\QQ$.

Given an $\mathcal{F}$-locally restricted Euler system $\mathbf{c}=\{c_m\}_{m\in \cR}$ for $(X_\circ,\cup_m\QQ(m),N)$ and $n\in \cR_M$, we let $\kappa_{r,M}\in H^1(G_{\QQ,S},W_M)$ denote the Kolyvagin's derivative class given as in \cite[Definition IV.4.10]{rubin00}.
\end{defn}
\begin{proposition}
\label{prop_locallyrestrictedES_Kolyvagin_derivative}
Let $M$ be a fixed power of $p$.  Let $\mathbf{c}=\{c_m\}_{m\in \cR}$ be an $\mathcal{F}$-locally restricted Euler system for $(X_\circ,\cup_m\QQ(m),N)$. Assume that $\overline{X}^{G_\QQ}=\{0\}$. Then for any $r\in \cR_M$ we have
$$\kappa_{r,M}\in H^1_{\mathcal{F}^r}(\QQ,W_M)\,.$$
\end{proposition}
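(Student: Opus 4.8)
\textbf{Proof plan for Proposition~\ref{prop_locallyrestrictedES_Kolyvagin_derivative}.}

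The plan is to follow the standard Kolyvagin-system argument (as in \cite[Chapter IV]{rubin00}) while keeping careful track of the local conditions imposed by the Cartesian Selmer structure $\mathcal{F}$. The statement to be proved is a purely local assertion at each place $v$ of $\QQ$: we must show that $\res_v(\kappa_{r,M})$ lands in $H^1_{\mathcal{F}^r}(\QQ_v,W_M)$, which by definition means $\res_v(\kappa_{r,M})\in H^1_{\mathcal{F}}(\QQ_v,W_M)$ for $v\nmid r$ (and $v$ finite, not dividing $p$), the full local condition $H^1(\QQ_v,W_M)$ for $v\mid r$, and the condition $H^1_{\mathcal{F}}(\QQ_v,W_M)=H^1(\QQ_v,W_M)$ at $v=p$ (recall $\mathcal{F}<\mathcal{F}_{\rm can}$, so the condition at $p$ is the relaxed one and there is nothing to check there). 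So the content is really at the finite places prime to $rp$: one must verify that the derived class still satisfies the same finite/unramified-type local conditions that $\mathbf{c}$ satisfies.

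First I would recall the construction of $\kappa_{r,M}$: writing $r=\ell_1\cdots\ell_s\in\cR_M$, one forms the Kolyvagin derivative operator $D_r=\prod D_{\ell_i}\in\ZZ[\Gal(\QQ(r)/\QQ)]$, applies it to $c_r\in H^1(\QQ(r),X_\circ)$, reduces mod $M$, and uses the congruence $D_r c_r \equiv (\text{something fixed by }\Delta_r)\pmod M$ together with the Cartesian property of $\mathcal{F}$ to descend to a well-defined class $\kappa_{r,M}\in H^1(G_{\QQ,S},W_M)$. The key point is that because $\mathbf{c}$ is $\mathcal{F}$-locally restricted, each $c_m$ lies in $H^1_{\mathcal{F}}(\QQ(m),X_\circ)$; in particular $c_r\in H^1_{\mathcal{F}}(\QQ(r),X_\circ)$. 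For a place $w$ of $\QQ(r)$ above a finite $v\nmid rp$, the class $\res_w(D_r c_r)$ lies in $H^1_{\mathcal{F}}(\QQ(r)_w,X_\circ)=H^1_{\mathcal{F}_{\rm can}}(\QQ(r)_w,X_\circ)$ since $D_r$ acts through the Galois group and preserves this subgroup; reducing mod $M$ and invoking the second bullet in the definition of a Cartesian Selmer structure (compatibility of $H^1_{\mathcal{F}}(-,W_{M'})$ with $H^1_{\mathcal{F}}(-,W_M)$ under $M'\mid M$, applied here with $M'=M$), together with the first bullet (the restriction isomorphism onto $\Delta_r$-invariants) to identify the descended class, shows $\res_v(\kappa_{r,M})\in H^1_{\mathcal{F}}(\QQ_v,W_M)$. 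For $v\mid r$ there is no condition to verify beyond membership in $H^1(\QQ_v,W_M)$, which is automatic. This is essentially the argument of \cite[Proposition IV.4.11 and Corollary IV.4.13]{rubin00}, with ``unramified'' replaced throughout by ``belonging to $H^1_{\mathcal{F}}$''.

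The main obstacle I anticipate is not the places $v\nmid rp$—there the argument is the formal transcription just sketched—but making sure the descent step producing $\kappa_{r,M}$ from $D_rc_r$ genuinely uses only the Cartesian hypothesis and the assumption $\overline{X}^{G_\QQ}=\{0\}$ (needed so that $H^0(\QQ(r),W_M)$ behaves well and the finite-level classes are rigid enough for the congruence $D_rc_r\equiv\bar\kappa\pmod M$ to pin down a unique global class). One must check that the Cartesian property, as stated for the pair $(X_\circ,\cR)$, propagates to all the intermediate fields $\QQ(d)$ for $d\mid r$ that appear in the inductive telescoping identity for $D_r c_r$; this is where one leans on $\mathcal{F}<\mathcal{F}_{\rm can}$ (so that away from $p$ the conditions are the canonical unramified-style ones, which are automatically Cartesian and compatible with base change along $\QQ(d)/\QQ$) and on the first Cartesian bullet to run the descent. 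Once that bookkeeping is in place the proposition follows; I would present it as a short lemma-by-lemma adaptation of \cite[\S IV.4]{rubin00} rather than reproving the congruences.
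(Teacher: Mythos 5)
Your plan misidentifies where the content of the proposition lies, and as a result never verifies the condition that actually matters. You assert that ``the condition at $p$ is the relaxed one and there is nothing to check there'', but that is not what $\mathcal{F}<\mathcal{F}_{\rm can}$ means: the definition only forces $\mathcal{F}$ to agree with $\mathcal{F}_{\rm can}$ at non-archimedean places prime to $p$, and at places above $p$ it merely requires $H^1_{\mathcal{F}}(\QQ(m)_v,X_\circ)\subseteq H^1(\QQ(m)_v,X_\circ)$, which can be a strict inclusion. Indeed, the whole point of a \emph{locally restricted} Euler system is that the classes $c_m$ satisfy a nontrivial local condition at $p$, and the proposition asserts that the Kolyvagin derivative classes $\kappa_{r,M}$ inherit this condition. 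Since $p\nmid r$ for $r\in\cR_M$, the Selmer structure $\mathcal{F}^r$ still imposes the restricted condition $H^1_{\mathcal{F}}(\QQ_p,W_M)$ at $p$; that is the nontrivial check, and your proof never performs it.

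Related to this, you invoke the second Cartesian bullet at places $v\nmid rp$, but that bullet is stated \emph{only for $v$ dividing $p$}. Away from $p$ (and away from $r$), the verification that $\kappa_{r,M}$ is unramified is the standard argument from \cite[\S IV]{rubin00} and requires no Cartesian hypothesis at all, since there $\mathcal{F}$ agrees with $\mathcal{F}_{\rm can}$. The Cartesian hypothesis is needed precisely at $p$: the first bullet identifies $H^1_{\mathcal{F}}(\QQ,W_M)$ with $H^1_{\mathcal{F}}(\QQ(r),W_M)^{\Delta_r}$ so that the descended class lands in the restricted Selmer group globally, and the second bullet (at $p$) controls the passage from $X_\circ$-coefficients to $W_M$-coefficients in the local condition. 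This is why the paper's proof, after citing \cite[Theorem 3.25]{kbbesrankr}, records as its one-line gloss: ``the key property is that $\mathcal{F}$ is Cartesian''. If you redirect the Cartesian argument to the place $p$ (where it belongs) and drop it at places prime to $p$ (where it is both unavailable and unnecessary), your outline becomes a correct account of the proof.
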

\begin{proof}
This is proved in a manner identical to \cite[Theorem 3.25]{kbbesrankr}. The key property is that $\mathcal{F}$ is Cartesian. 
\end{proof}
\subsection{Euler system bounds}
Suppose that $\mathbf{c}=\{c_m\}_{m\in \cR}$ is an $\mathcal{F}$-locally restricted Euler system for $(X_\circ,\cup_m\QQ(m),N)$ for a Cartesian Selmer structure $\mathcal{F}<\mathcal{F}_{\rm can}$, in the sense of Definition~\ref{defn_appendix_locrestES}(i).
\begin{theorem}
\label{thm_appendix_ES_main}
\item[i)] Suppose that \ref{item_HypX} holds true and ${\rm ind}_{\cO}(\bf{c})<\infty$. Then 
$$\left|H^1_{\mathcal{F}^*}(\QQ,W^*(1))\right|<\infty\,.$$
\item[ii)] Suppose that Hyp($X$,a,b) holds true for some natural numbers $a$ and $b$. Then 
$${\rm length}_{\rm \cO}\left(H^1_{\mathcal{F}^*}(\QQ,W^*(1))\right)\leq t+{\rm ind}_{\cO}(\bf{c})$$
where $t=t(a,b,\chi(\overline{X}))\in \ZZ$ is a constant that only depends on $a,b$ and $\chi(\overline{X})$.
\item[iii)] Suppose that Hyp($X$,a) holds true for some natural number $a$. Then 
$${\rm length}_{\rm \cO}\left(H^1_{\mathcal{F}^*}(\QQ,W^*(1))\right)\leq {t}+{\rm ind}_{\cO}(\bf{c})$$
where $t\in \ZZ$ is a constant that only depends on $a$ and $\chi(\overline{X})$.
\item[iv)] Suppose that \ref{item_HypXcirc} holds true for some natural numbers $a$ and $b$. Then 
$${\rm length}_{\rm \cO}\left(H^1_{\mathcal{F}^*}(\QQ,W^*(1))\right)\leq {\rm ind}_{\cO}(\bf{c})\,.$$
\end{theorem}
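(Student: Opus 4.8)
The statement to be proved is Theorem~\ref{thm_appendix_ES_main}(iv): under the strong hypothesis \ref{item_HypXcirc}, the dual Selmer group $H^1_{\mathcal{F}^*}(\QQ,W^*(1))$ has length bounded by ${\rm ind}_{\cO}(\mathbf{c})$, with no error term. The plan is to reduce this to the classical Euler system bound of Rubin~\cite{rubin00}, while keeping track of the distinction between the usual index ${\rm ind}_{\cO}(\mathbf{c})$ (relative to $H^1(G_{\QQ,S},X_\circ)$) and the relevant Selmer quotient. First I would observe that \ref{item_HypXcirc} is precisely the package of hypotheses under which \cite[Theorem II.2.3 / Chapter V]{rubin00} applies verbatim to the Euler system $\mathbf{c}$: the residual representation $\overline X$ is irreducible (so in particular $\overline X^{G_\QQ}=0=(\overline X^*(1))^{G_\QQ}$ and $H^1(G_{\QQ,S},X_\circ)$ is torsion-free), there is an element $\tau\in G_{\QQ(\mu_{p^\infty})}$ with $X_\circ/(\tau-1)X_\circ$ free of rank one over $\cO$, and the $H^1(\Omega/\QQ,-)$ vanishing conditions hold. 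Under these hypotheses Rubin's machinery yields
$${\rm length}_{\cO}\left(H^1_{\mathcal{F}_{\rm can}^*}(\QQ,W^*(1))\right)\leq {\rm ind}_{\cO}(\mathbf{c}).$$

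The second step is to pass from the canonical Selmer structure to the given Cartesian structure $\mathcal{F}<\mathcal{F}_{\rm can}$. Here I would invoke Proposition~\ref{prop_locallyrestrictedES_Kolyvagin_derivative} together with Remark~\ref{remark_appendix_simplify_defn_index}(ii): because $\mathcal{F}$ is Cartesian and $\overline X^{G_\QQ}=0$, the Kolyvagin derivative classes $\kappa_{r,M}$ built from $\mathbf{c}$ already lie in the more restrictive Selmer groups $H^1_{\mathcal{F}^r}(\QQ,W_M)$, which is exactly the input needed to run the Kolyvagin-system-style descent argument of \cite{kbbesrankr} while respecting the local conditions of $\mathcal{F}$ at $p$. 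This is the same bookkeeping as in parts (ii) and (iii), except that under \ref{item_HypXcirc} the quantity $a$ (controlling the $G_{\QQ,S}$-stable submodules of $(\tau-1)W$ and $(\tau-1)W^*(1)$) and the quantity $b$ (controlling the torsion of $X_\circ/(\tau-1)X_\circ$) are both zero. Therefore in the argument proving (ii), every place where a factor $\frak m^a$ or $\frak m^b$ was lost now contributes nothing, and the constant $t=t(a,b,\chi(\overline X))$ collapses to $0$. Concretely, I would re-examine the chain of inequalities in the proof of (ii): the step bounding ${\rm length}_{\cO}(H^1_{\mathcal{F}^*}(\QQ,W^*(1)))$ in terms of the order of a Kolyvagin derivative class $\kappa_{r,M}$, the step comparing this to ${\rm ind}_{\cO}(\mathbf{c},\mathcal{F})$, and the step identifying ${\rm ind}_{\cO}(\mathbf{c},\mathcal{F})$ with ${\rm ind}_{\cO}(\mathbf{c})$. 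The last identification uses Remark~\ref{remark_appendix_simplify_defn_index}(ii): since $\overline X$ is irreducible, $H^1(G_{\QQ,S},X_\circ)$ is torsion-free, and one needs the relevant local quotient at $p$ to be torsion-free as well; under \ref{item_HypXcirc} this is automatic because the whole module-theoretic discrepancy that the error term was absorbing has vanished, so ${\rm ind}_{\cO}(\mathbf{c},\mathcal{F})={\rm ind}_{\cO}(\mathbf{c})$.

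Putting these together: the Cartesian-refined Euler system bound gives ${\rm length}_{\cO}(H^1_{\mathcal{F}^*}(\QQ,W^*(1)))\leq {\rm ind}_{\cO}(\mathbf{c},\mathcal{F})$ with no extra constant (since $a=b=0$), and then the identification ${\rm ind}_{\cO}(\mathbf{c},\mathcal{F})={\rm ind}_{\cO}(\mathbf{c})$ finishes the proof. In fact the cleanest route is simply to note that \ref{item_HypXcirc} is the hypothesis set under which the present appendix's machinery specializes exactly to the classical theory, so (iv) is an immediate corollary of (ii) (or of (iii)) upon setting $a=b=0$ and observing $t(0,0,\chi(\overline X))=0$; alternatively one can cite Rubin directly. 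The main obstacle — though it is more a matter of care than of genuine difficulty — is verifying that the constant $t$ arising in part (ii) really is identically $0$ when $a=b=0$, i.e.\ tracing through the proof of (ii) to confirm that every contribution to $t$ is of the form (something)$\cdot a$ or (something)$\cdot b$ and that the term indexed only by $\chi(\overline X)$ also disappears once the residual representation is irreducible (this last point is where irreducibility of $\overline X$, as opposed to the weaker conditions in \ref{item_HypX}, is essential, because it is what makes the Chebotarev/Kolyvagin-prime selection lossless). Once that accounting is done, no further work is needed.
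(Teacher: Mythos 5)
Your ``cleanest route'' --- deducing (iv) from (ii) by setting $a=b=0$ and observing that the constant vanishes --- is exactly the paper's proof. In fact there is nothing to ``trace'': the proof of (ii) produces the explicit constant $t(a,b,\chi(\overline{X})) = \bigl(\chi(\overline{X})^{4a} + 4\,\chi(\overline{X})^{2a}\bigr)(a+b)$, which has $(a+b)$ as an outright factor, so $t(0,0,\chi(\overline{X}))=0$ with no separate argument about the $\chi(\overline{X})$ dependence; the only thing left to check is that \ref{item_HypXcirc} implies \ref{item_HypXab}$_{a=b=0}$, which the paper records in the Remark after the hypotheses.

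Two of your side remarks would mislead a reader. First, the suggested first step --- cite Rubin for a bound on $H^1_{\mathcal{F}_{\rm can}^*}(\QQ,W^*(1))$ and then ``pass'' to $\mathcal{F}$ --- cannot work as a reduction. Since $\mathcal{F} < \mathcal{F}_{\rm can}$, local Tate duality reverses the inclusion and gives $H^1_{\mathcal{F}^*}(\QQ,W^*(1)) \supseteq H^1_{\mathcal{F}_{\rm can}^*}(\QQ,W^*(1))$: you want to bound the larger group, and Rubin's bound on the smaller one is of no help. Controlling the larger dual Selmer group from the Euler system data is precisely what the Kolyvagin-derivative argument of Lemmas~\ref{lemma_Rubin_5_2_3} and \ref{lemma_Rubin_5_2_5}, run relative to the Cartesian structure $\mathcal{F}$, is for; it has to be redone from scratch and cannot be post-processed from the classical theorem. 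Second, you worry about whether ${\rm ind}_{\cO}(\mathbf{c},\mathcal{F}) = {\rm ind}_{\cO}(\mathbf{c})$. No such equality is needed: the proof of (ii) bounds the length by ${\rm ind}_{\cO}(\mathbf{c},\mathcal{F})$ plus the constant, and Remark~\ref{remark_appendix_simplify_defn_index} always gives ${\rm ind}_{\cO}(\mathbf{c},\mathcal{F}) \leq {\rm ind}_{\cO}(\mathbf{c})$, so the theorem's statement in terms of ${\rm ind}_{\cO}(\mathbf{c})$ holds \emph{a fortiori}; the sufficient condition for equality in that Remark is irrelevant here, and your claim that it is ``automatic'' under \ref{item_HypXcirc} is unjustified (the torsion-freeness of the local quotient at $p$ depends on $\mathcal{F}$, not on the hypotheses on $X$).

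Finally, your proposal only addresses part (iv), whereas the statement given is all four parts. Since (iv) is, both in the paper and in your plan, a one-line consequence of (ii), the mathematical content you would need to supply for a complete proof lies in (i) and (ii) --- the refined Chebotarev/Kolyvagin argument under \ref{item_HypXab} keeping track of the $a$- and $b$-losses --- which you take as already established.
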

\begin{remark}
\item[i)] 
When $\mathcal{F}=\mathcal{F}_{\rm can}$, Theorem~\ref{thm_appendix_ES_main}(i) is \cite[Theorem II.2.3]{rubin00} and Theorem~\ref{thm_appendix_ES_main}(iv) is Theorem II.2.2 in op. cit. A careful study of the proofs of these results in \cite[\S V]{rubin00} yields Theorem~\ref{thm_appendix_ES_main}(ii) and Theorem~\ref{thm_appendix_ES_main}(iii) when $\mathcal{F}=\mathcal{F}_{\rm can}$.
\item[ii)] 
Theorem~\ref{thm_appendix_ES_main}(iv) should be compared to the main results of \cite{kbbstark,kbbesrankr,kbbstick} where the Selmer structure $\mathcal{F}$ is denoted by $\mathcal{F}_{\mathcal{L}}$ in op. cit. and corresponds to a rank-one direct summand $\mathcal{L}$ of $H^1(\QQ_p,X_\circ^*(1))$ complementary to the Bloch--Kato submodule. Note that the notion of a locally restricted Euler system is already present in \cite{kbbstick}.
\end{remark}

We will follow the argument in \cite[\S V]{rubin00} very closely for the proof of this theorem, explaining wherever it is required to refine the arguments in op. cit. (by keeping track of the $p$-local properties of the Euler system and Kolyvagin derivatives). For Theorem~\ref{thm_appendix_ES_main}(i), we will modify the argument of Rubin in \cite[\S V.3]{rubin00} suitably. For Theorem~\ref{thm_appendix_ES_main}(ii), we will improve on the argument in \S V.2 of op. cit. Proofs of (iii) and (iv) will easily follow from the proof of (ii). 

We first introduce some notions and establish various auxiliary results which we will rely on in our proof of Theorem~\ref{thm_appendix_ES_main}.

\begin{defn}
For an $\cO$-module $Z$ and $z\in Z$, let us define ${\rm order}(z,Z)={\inf}\{n\geq 0: {\frak{m}^n}z=0\} \in \NN \cup \infty$.
\end{defn}
Note that if $\phi:Z_1\ra Z_2$ is any homomorphism of $\cO$-modules and $z\in Z_1$, then ${\rm order}(z,Z_1)\geq {\rm order}(\phi(z),Z_2)$. If $\phi$ is injective, then ${\rm order}(z,Z_1)={\rm order}(\phi(z),Z_2)$.
\begin{defn}
Suppose $F$ is an algebraic number field and $\kappa\in H^1(\QQ,\ast)$. We write $(\kappa)_F \in H^1(F,\ast)$ for the image of $\kappa$ under the restriction map.
\end{defn}
\begin{defn}
Suppose $\mathcal{F}$ is a Cartesian Selmer structure on $X_\circ$. Let us put $H^1_{\mathcal{F}}(\QQ,X)\subset H^1(G_{\QQ,S},X)$ for the subspace generated by  $H^1_{\mathcal{F}}(\QQ,X_\circ)$ and $H^1_{\mathcal{F}}(\QQ,W)$ for the image of  $H^1_{\mathcal{F}}(\QQ,X)$ under the natural map induced by $X\lra W$. 
As a consequence of our assumption that $\mathcal{F}$ is Cartesian, it follows that 
$$\iota_M^{-1}(H^1_{\mathcal{F}}(\QQ,W))=H^1_{\mathcal{F}}(\QQ,W_M)\,.$$
\end{defn}

\begin{lemma}
\label{lemma_aux_to_choose_k_optimally}
Fix a power $M$ of $p$. Let $\mathcal{F}$ be a Cartesian Selmer structure on $X_\circ$. Assume that {\rm Hyp}$(X)$ holds true. For any positive integer $n$ we have a natural isomorphism 
$$H^1_{\mathcal{F}^*_n}(\QQ,W_M^*(1))^\vee/\frak{m}H^1_{\mathcal{F}^*_n}(\QQ,W_M^*(1))^\vee\stackrel{\sim}{\lra} H^1_{\mathcal{F}^*_n}(\QQ,\overline{X}^*(1))^\vee.$$
\end{lemma}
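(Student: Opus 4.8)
The statement is a standard ``Nakayama-type'' compatibility between reduction modulo $\varpi$ and reduction modulo $M$ for the dual Selmer group, so the plan is to relate $H^1_{\mathcal{F}^*_n}(\QQ,W_M^*(1))$ to $H^1_{\mathcal{F}^*_n}(\QQ,\overline{X}^*(1))$ by dualizing the exact sequence of $G_{\QQ,S}$-modules
\begin{equation*}
0\lra \overline{X}^*(1)\stackrel{\times M/\varpi}{\lra} W_M^*(1)\stackrel{\varpi}{\lra} W_M^*(1)\,,
\end{equation*}
and to chase the induced maps on Selmer groups through the long exact cohomology sequence, using the Cartesian property of $\mathcal{F}^*$ (equivalently of $\mathcal{F}$) to control local conditions at $p$. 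First I would record that, under \textbf{Hyp($X$)}(i), one has $\overline{X}^{G_\QQ}=0=(\overline{X}^*(1))^{G_\QQ}$, which forces $H^0(G_{\QQ,S},W_M^*(1))[\varpi]=0$ and hence $W_M^*(1)^{G_\QQ}$ has no $\varpi$-torsion; together with the vanishing $H^0(G_{\QQ,S},W_M^*(1))\otimes(\cO/\varpi)$-type statements, this gives exactness of the relevant pieces after taking $G_{\QQ,S}$-cohomology. The outcome at the level of $H^1(G_{\QQ,S},-)$ is a short exact sequence
\begin{equation*}
0\lra H^1(G_{\QQ,S},W_M^*(1))/\varpi\lra H^1(G_{\QQ,S},\overline{X}^*(1))\lra H^2(G_{\QQ,S},W_M^*(1))[\varpi]\,,
\end{equation*}
but in fact what I want is the dual picture: applying Pontryagin duality turns the surjection $H^1_{\mathcal{F}^*_n}(\QQ,W_M^*(1))\twoheadrightarrow H^1_{\mathcal{F}^*_n}(\QQ,\overline{X}^*(1))$-in-the-wrong-direction into the asserted isomorphism on $\vee/\varpi\vee$.

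The key steps, in order: (1) Show the natural map $H^1_{\mathcal{F}^*_n}(\QQ,\overline{X}^*(1))\hookrightarrow H^1_{\mathcal{F}^*_n}(\QQ,W_M^*(1))[\varpi]$ induced by $\overline{X}^*(1)\hookrightarrow W_M^*(1)$ is an isomorphism onto the $\varpi$-torsion. Injectivity is immediate from $H^0(G_{\QQ,S},W_M^*(1)/\overline{X}^*(1))$-considerations together with \textbf{Hyp($X$)}(i); surjectivity on the $\varpi$-torsion of the ambient $H^1(G_{\QQ,S},-)$ follows from the long exact sequence, and then one must check that the local conditions match, i.e.\ that a class landing in the $\mathcal{F}^*_n$-Selmer group for $W_M^*(1)$ pulls back to the $\mathcal{F}^*_n$-Selmer group for $\overline{X}^*(1)$. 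At primes $\ell\nmid mp$ this is the usual fact that unramified classes reduce to unramified classes (and conversely, using $H^0$ of the unramified quotient vanishes because $\overline{X}$, hence $\overline{X}^*(1)$, has no Frobenius-fixed quotient of the relevant type — here one may need the hypotheses feeding into the fact that $H^1(\QQ_\ell^{\rm ur},-)$ behaves well); at $\ell\mid n$ the condition is $0$ on both sides so there is nothing to check; and at primes above $p$ this is \emph{exactly} the second bullet in the definition of a Cartesian Selmer structure (applied with $M'=\varpi$, $M=M$, to $\mathcal F$, then dualized to $\mathcal F^*$). (2) Dualize: from $H^1_{\mathcal{F}^*_n}(\QQ,\overline{X}^*(1))\xrightarrow{\sim}H^1_{\mathcal{F}^*_n}(\QQ,W_M^*(1))[\varpi]$ and the tautological identity $(A[\varpi])^\vee\cong A^\vee/\varpi A^\vee$ for any cofinitely generated $\cO$-module $A$, conclude
\begin{equation*}
H^1_{\mathcal{F}^*_n}(\QQ,W_M^*(1))^\vee/\varpi H^1_{\mathcal{F}^*_n}(\QQ,W_M^*(1))^\vee\;\cong\;\bigl(H^1_{\mathcal{F}^*_n}(\QQ,W_M^*(1))[\varpi]\bigr)^\vee\;\cong\;H^1_{\mathcal{F}^*_n}(\QQ,\overline{X}^*(1))^\vee\,,
\end{equation*}
and check the maps are natural (compatible with the inclusion $\overline X^*(1)\hookrightarrow W_M^*(1)$), which is what the statement records.

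\textbf{Main obstacle.} The routine part is the cohomology long-exact-sequence bookkeeping and the Pontryagin duality formalism; the one place that genuinely uses a hypothesis rather than formal nonsense is matching the local conditions at $p$ between $W_M^*(1)$ and $\overline{X}^*(1)$, and this is precisely where ``Cartesian'' is invoked — so the ``hard part'' is really just to cite the definition correctly and observe that dualizing a Cartesian structure on $X_\circ$ gives the analogous compatibility for $\mathcal F^*$ on $X_\circ^*(1)$ (this uses that local Tate duality is a perfect pairing and is compatible with the connecting maps in the Bockstein sequences for $\varpi^j$). A secondary subtlety is ensuring the various $H^0$'s vanish so that ``$[\varpi]$'' and ``$/\varpi$'' interact cleanly with taking Selmer subgroups; all of these vanishings are consequences of \textbf{Hyp($X$)}(i) (absolute irreducibility of $X$ plus $\overline X^{G_\QQ}=0=(\overline X^*(1))^{G_\QQ}$), which is assumed in the statement. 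I would present the argument as: (a) reduce to the ambient cohomology groups, (b) handle local conditions prime by prime citing the Cartesian hypothesis at $p$, (c) dualize. No serious calculation is needed; the whole thing is a diagram chase once the definitions are unwound.
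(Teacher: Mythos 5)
Your argument is correct and follows the same route as the paper: the crux is the isomorphism $H^1_{\mathcal{F}^*_n}(\QQ,\overline{X}^*(1))\stackrel{\sim}{\lra} H^1_{\mathcal{F}^*_n}(\QQ,W_M^*(1))[\frak{m}]$, which the paper simply cites as \cite[Lemma 3.5.3]{mr02} (using the Cartesian hypothesis and $(\overline{X}^*(1))^{G_\QQ}=0$) and then dualizes, whereas you reprove that lemma from scratch by matching local conditions prime by prime. The content is identical; the only difference is that you unwind the cited Mazur--Rubin lemma rather than quoting it.
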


\begin{proof}
Since $\mathcal{F}$ is a Cartesian Selmer structure on $X_\circ$, so is $\mathcal{F}^n$. Since $(\overline{X}^*(1))^{G_\QQ}=0$ by assumption, it follows from \cite[Lemma 3.5.3]{mr02} that 
\begin{equation}\label{eqn_MR_3_5_3}
    H^1_{\mathcal{F}^*_n}(\QQ,\overline{X}^*(1))\stackrel{\sim}{\lra} H^1_{\mathcal{F}^*_n}(\QQ,W_M^*(1))[\frak{m}]\,.
\end{equation}
The required isomorphism follows on dualizing \eqref{eqn_MR_3_5_3}.
\end{proof}
\begin{corollary}
\label{cor_lemma_aux_to_choose_k_optimally}
We retain the notation in Lemma~\ref{lemma_aux_to_choose_k_optimally}. For every positive integer $n$, one may choose a set $C_n\subset H^1_{\mathcal{F}^*_n}(\QQ,W_M^*(1))$ of size at most $\chi(\overline{X})$ and that generates $H^1_{\mathcal{F}^*_n}(\QQ,W_M^*(1))$.
\end{corollary}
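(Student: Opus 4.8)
\textbf{Proof plan for Corollary~\ref{cor_lemma_aux_to_choose_k_optimally}.}

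The plan is to deduce the statement directly from Lemma~\ref{lemma_aux_to_choose_k_optimally} together with the definition of $\chi(\overline{X})$ and Nakayama's lemma. First I would invoke Lemma~\ref{lemma_aux_to_choose_k_optimally}, which gives the natural isomorphism
$$H^1_{\mathcal{F}^*_n}(\QQ,W_M^*(1))^\vee\big{/}\frak{m}H^1_{\mathcal{F}^*_n}(\QQ,W_M^*(1))^\vee\stackrel{\sim}{\lra} H^1_{\mathcal{F}^*_n}(\QQ,\overline{X}^*(1))^\vee\,.$$
Next I would bound the right-hand side: by the definition of $\chi(\overline{X})$, one can choose $m\in\cR$ with $\nu(m)=\chi(\overline{X})$ such that $H^1_{\mathcal{F}^*_m}(\QQ,\overline{X}^*(1))=0$; more to the point, the proof of \cite[Lemma V.2.3]{rubin00} (already cited in the Remark following the definition of $\chi(\overline{X})$) shows that for any $n$ the $\FF$-dimension of $H^1_{\mathcal{F}^*_n}(\QQ,\overline{X}^*(1))$, where $\FF=\cO/\frak{m}$, is at most $\chi(\overline{X})$ — because relaxing the Selmer condition at one prime changes the dimension by at most one, and after relaxing at $\chi(\overline{X})$ suitably chosen primes the group vanishes. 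Hence $H^1_{\mathcal{F}^*_n}(\QQ,\overline{X}^*(1))^\vee$ is an $\FF$-vector space of dimension $\leq\chi(\overline{X})$.

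Combining these two facts, $H^1_{\mathcal{F}^*_n}(\QQ,W_M^*(1))^\vee$ is a finitely generated $\cO$-module whose $\frak{m}$-cotangent space has dimension at most $\chi(\overline{X})$, so by Nakayama's lemma it is generated by at most $\chi(\overline{X})$ elements as an $\cO$-module. Dualizing once more, $H^1_{\mathcal{F}^*_n}(\QQ,W_M^*(1))$ is then cogenerated by at most $\chi(\overline{X})$ elements; equivalently, lifting a basis of the $\frak{m}$-torsion $H^1_{\mathcal{F}^*_n}(\QQ,W_M^*(1))[\frak{m}]\cong H^1_{\mathcal{F}^*_n}(\QQ,\overline{X}^*(1))$ to $H^1_{\mathcal{F}^*_n}(\QQ,W_M^*(1))$ produces a set $C_n$ of size at most $\chi(\overline{X})$ which generates $H^1_{\mathcal{F}^*_n}(\QQ,W_M^*(1))$ as an $\cO$-module (again by Nakayama applied to this finite $\cO$-module, using \eqref{eqn_MR_3_5_3} to identify the $\frak{m}$-torsion with the residual Selmer group).

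The one point requiring a little care — and the only genuine obstacle — is the uniformity of the bound $\chi(\overline{X})$ in $n$: the quantity $\chi(\overline{X})$ is defined via a single optimal choice of $m$, and I must ensure that for \emph{every} $n\in\cR$ the dimension of $H^1_{\mathcal{F}^*_n}(\QQ,\overline{X}^*(1))$ is still controlled by this same $\chi(\overline{X})$. This is exactly the content extracted in \cite[\S V.2]{rubin00}: one can always relax the local conditions defining $\mathcal{F}^*_n$ at primes outside $n$ without increasing dimension, reducing to the optimal configuration. I would spell this reduction out in one or two sentences, otherwise the argument is routine commutative algebra (Nakayama plus Pontryagin duality for finite modules).
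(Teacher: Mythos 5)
Your proof is correct and matches the paper's implicit approach: Lemma~\ref{lemma_aux_to_choose_k_optimally} plus Nakayama reduce the problem to bounding $\dim_{\cO/\frak{m}} H^1_{\mathcal{F}^*_n}(\QQ,\overline{X}^*(1))$ by $\chi(\overline{X})$. The ``uniformity obstacle'' you flag is, however, immediate rather than delicate: since $\mathcal{F}^*_n$ imposes strict conditions at $q\mid n$ and coincides with $\mathcal{F}^*$ at all other places, one has an inclusion $H^1_{\mathcal{F}^*_n}(\QQ,\overline{X}^*(1))\subseteq H^1_{\mathcal{F}^*}(\QQ,\overline{X}^*(1))$, and the right-hand side has $\cO/\frak{m}$-dimension at most $\chi(\overline{X})$ by the exact-sequence count underlying the definition of $\chi(\overline{X})$ (each strictification at a Kolyvagin prime drops the dimension by at most one); your phrase ``relax at primes outside $n$'' is misleading, as $\mathcal{F}^*_n$ and $\mathcal{F}^*$ already agree at those places. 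One minor phrasing point: to produce a generating set one lifts a basis of the cotangent space $Y/\frak{m}Y$ (Nakayama), not of the $\frak{m}$-torsion $Y[\frak{m}]$, which is a submodule; for a finite $\cO$-module these two spaces have equal dimension by the structure theorem, so your count is nevertheless correct.
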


We assume until the end that we have ${\rm ind}_{\cO}({\bf c})<\infty$ for the $\calF$-locally restricted Euler system we are given, since otherwise there is nothing to prove. Let us write $\iota_M$ for natural maps induced from the injection $W_M\hookrightarrow W$. For example, when $\overline{X}^{G_\QQ}=0$, we have an injection
\begin{equation}
    \label{eqn_cartesian_conseq_1_1}
    \iota_M:\,H^1(G_{\QQ,S},W_M)\hookrightarrow H^1(G_{\QQ,S},W)\,.
\end{equation}
\begin{lemma}\label{lemma_rubin_5_1_1}
For any $M$ with ${\rm ord}_{\frak{m}}M\geq {\rm ind}_{\cO}({\bf c},\mathcal{F})$ and assuming the truth of \ref{item_HypX} we have
    $${\rm order}\left(\kappa_{1,M},H^1_{\mathcal{F}}(\QQ,W_M)\right)={\rm order}\left(\iota_M(\kappa_{1,M}),H^1_{\mathcal{F}}(\QQ,W)\right)= {\rm ord}_{\frak{m}}M-{\rm ind}_{\cO}({\bf c},\mathcal{F})\,.$$
\end{lemma}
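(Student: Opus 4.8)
\textbf{Proof plan for Lemma~\ref{lemma_rubin_5_1_1}.}
The plan is to reduce the statement to a transparent property of the first Kolyvagin derivative class $\kappa_{1,M}$, namely that for $m=1$ the derivative operator is trivial, so $\kappa_{1,M}$ is literally the image of $c_\QQ$ under the reduction $H^1(G_{\QQ,S},X_\circ)\to H^1(G_{\QQ,S},W_M)$. First I would recall that by definition of ${\rm ind}_{\cO}({\bf c},\mathcal{F})$ (Definition~\ref{defn_appendix_locrestES}(ii)) we can write $c_\QQ=\varpi^{\,e}\,c'$ with $e={\rm ind}_{\cO}({\bf c},\mathcal{F})$ and $c'\in H^1_{\mathcal{F}}(\QQ,X_\circ)$ a class that is primitive, i.e. not divisible by $\varpi$ modulo torsion; since \ref{item_HypX}(i) forces $H^1(G_{\QQ,S},X_\circ)_{\rm tor}=0$ (as $\overline{X}^{G_\QQ}=0$), the group $H^1_{\mathcal{F}}(\QQ,X_\circ)$ is a free $\cO$-module and $c'$ is genuinely part of an $\cO$-basis of a direct summand. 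The point is then to show that reduction modulo $M$ does not lose more than $e$ units of $\varpi$-divisibility.

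The key steps, in order: (1) Using freeness of $H^1_{\mathcal{F}}(\QQ,X_\circ)$, choose a complement so that $c'$ generates a free rank-one direct summand $\cO c'$; then $H^1_{\mathcal{F}}(\QQ,X_\circ)/\cO c'$ is again $\cO$-free (this is where primitivity of $c'$ is used), hence has no $\varpi$-torsion. (2) Tensoring the short exact sequence $0\to \cO c'\to H^1_{\mathcal{F}}(\QQ,X_\circ)\to Q\to 0$ with $\cO/M$ and using $Q[\varpi]=0$ yields that the image of $c'$ in $H^1_{\mathcal{F}}(\QQ,W_M)$ (via the Cartesian identification $\iota_M^{-1}(H^1_{\mathcal F}(\QQ,W))=H^1_{\mathcal F}(\QQ,W_M)$) has order exactly $M$ in that $\cO/M$-module; equivalently ${\rm order}(c'\bmod M,\ H^1_{\mathcal F}(\QQ,W_M))={\rm ord}_{\frak m}M$. (3) Since $\kappa_{1,M}$ is the image of $c_\QQ=\varpi^{e}c'$, multiplicativity of order under multiplication by $\varpi^{e}$ gives ${\rm order}(\kappa_{1,M},H^1_{\mathcal F}(\QQ,W_M))={\rm ord}_{\frak m}M-e$, valid precisely because ${\rm ord}_{\frak m}M\ge e$ so the expression is nonnegative. (4) For the middle equality, the Cartesian hypothesis on $\mathcal F$ gives the injection $\iota_M\colon H^1_{\mathcal F}(\QQ,W_M)\hookrightarrow H^1_{\mathcal F}(\QQ,W)$ (an instance of \eqref{eqn_cartesian_conseq_1_1} intersected with the Selmer conditions), and an injective map of $\cO$-modules preserves order of elements, so ${\rm order}(\iota_M(\kappa_{1,M}),H^1_{\mathcal F}(\QQ,W))={\rm order}(\kappa_{1,M},H^1_{\mathcal F}(\QQ,W_M))$.

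I expect the main obstacle to be purely bookkeeping: verifying that the Cartesian property of $\mathcal{F}$ really does deliver the clean identification $\iota_M^{-1}\big(H^1_{\mathcal{F}}(\QQ,W)\big)=H^1_{\mathcal{F}}(\QQ,W_M)$ at all levels $M$ simultaneously, and that this is compatible with the reduction of the integral class $c'$ — i.e. that $c'\bmod M$ computed in $H^1(G_{\QQ,S},W_M)$ actually lands in the Selmer submodule $H^1_{\mathcal F}(\QQ,W_M)$ and not just in the ambient cohomology. This is where one must invoke the Cartesian condition (second bullet of its definition, comparing $W_{M'}$ and $W_M$ at primes above $p$) rather than any ad hoc argument; once that compatibility is in hand, steps (1)--(4) are formal manipulations with orders of elements in finitely generated $\cO$-modules. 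A minor secondary point is to make sure the hypothesis ${\rm ord}_{\frak m}M\ge {\rm ind}_{\cO}({\bf c},\mathcal F)$ is exactly what is needed so that $\varpi^{e}$ does not already annihilate $c'\bmod M$; this is immediate from step (2).
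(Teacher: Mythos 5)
Your proposal takes a genuinely different route from the paper: the paper's proof is a two-line citation argument (the first equality from the injectivity of $\iota_M$ recorded in \eqref{eqn_cartesian_conseq_1_1}, which holds because $\overline{X}^{G_\QQ}=0$, and the second equality by quoting Rubin's Lemma~5.1.1 after observing that the order is insensitive to passing from $H^1_{\mathcal F}(\QQ,W)$ to the ambient $H^1(G_{\QQ,S},W)$). You instead re-derive the second equality from scratch. That is a legitimate ambition, but as written step~(2) has a gap that needs to be flagged.

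The problem is that your short exact sequence lives inside the Selmer module. Tensoring $0\to \cO c'\to H^1_{\mathcal F}(\QQ,X_\circ)\to Q\to 0$ with $\cO/M$ gives that $c'\bmod M$ has order $\ord_{\frak m}M$ in $H^1_{\mathcal F}(\QQ,X_\circ)\otimes_{\cO}\cO/M$, but this module is \emph{not} the same as $H^1_{\mathcal F}(\QQ,W_M)$, and the natural map $H^1_{\mathcal F}(\QQ,X_\circ)\otimes\cO/M\to H^1_{\mathcal F}(\QQ,W_M)$ need not be injective. Indeed the kernel of $H^1_{\mathcal F}(\QQ,X_\circ)\to H^1_{\mathcal F}(\QQ,W_M)$ is $H^1_{\mathcal F}(\QQ,X_\circ)\cap M\,H^1(G_{\QQ,S},X_\circ)$ (computed in the ambient cohomology, since $\overline{X}^{G_\QQ}=0$ makes the kernel of $H^1(G_{\QQ,S},X_\circ)\to H^1(G_{\QQ,S},W_M)$ equal to $M\,H^1(G_{\QQ,S},X_\circ)$), and this equals $M\,H^1_{\mathcal F}(\QQ,X_\circ)$ only when $H^1(G_{\QQ,S},X_\circ)/H^1_{\mathcal F}(\QQ,X_\circ)$ is $\cO$-torsion-free. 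The Cartesian condition in the sense used in this appendix constrains the $W_M$-tower and does not supply that saturation for free. The underlying issue is that ${\rm order}(z,Z)$ depends only on the element $z$ and not on a choice of ambient submodule, so $\mathrm{order}(\kappa_{1,M},H^1_{\mathcal F}(\QQ,W_M))$ is literally $\mathrm{order}(\kappa_{1,M},H^1(G_{\QQ,S},W_M))$, and Rubin's argument computes the latter to be $\ord_{\frak m}M-\mathrm{ind}_{\cO}(\mathbf c)$. Your argument would therefore produce $\ord_{\frak m}M-\mathrm{ind}_{\cO}(\mathbf c,\mathcal F)$ only under the saturation hypothesis that forces $\mathrm{ind}_{\cO}(\mathbf c,\mathcal F)=\mathrm{ind}_{\cO}(\mathbf c)$, which is exactly the discussion in Remark~\ref{remark_appendix_simplify_defn_index}(ii). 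The cleanest fix is to run your steps (1)--(4) inside the ambient group: write $c_\QQ=\varpi^{e}c'$ with $c'$ primitive in $H^1(G_{\QQ,S},X_\circ)$ and $e=\mathrm{ind}_{\cO}(\mathbf c)$; then the exact sequence argument, combined with the injectivity $H^1(G_{\QQ,S},X_\circ)/M\hookrightarrow H^1(G_{\QQ,S},W_M)$, goes through without any saturation hypothesis, and you recover precisely Rubin's Lemma~5.1.1. If you wish to state the conclusion with $\mathrm{ind}_{\cO}(\mathbf c,\mathcal F)$, you must add (or verify) the hypothesis that $H^1(\QQ_p,X_\circ)/H^1_{\mathcal F}(\QQ_p,X_\circ)$ is torsion-free.
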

\begin{proof}
The first equality is immediate from \eqref{eqn_cartesian_conseq_1_1}, whereas the second equality is \cite[Lemma 5.1.1]{rubin00}, on noting that
$${\rm order}\left(\iota_M(\kappa_{1,M}),H^1_{\mathcal{F}}(\QQ,W)\right)={\rm order}\left(\iota_M(\kappa_{1,M}),H^1(G_{\QQ,S},W)\right)\,.$$
\end{proof}
\begin{lemma}
\label{lemma_rubin_lemma_3_1}
Fix a power $M$ of $p$. Suppose $F$ is a finite extension of $\QQ$ such that $G_F$ acts trivially on $W_M$ and $W_M^*(1)$. Assume that \ref{item_HypXab} holds true. If 
$$\kappa\in H^1(G_{\QQ,S},W_M),\,\,\,\,\,\eta\in H^1(G_{\QQ,S},W_M^*(1))$$
then there exists an element $\gamma\in G_F$ such that
\begin{itemize}
    \item[1)] ${\rm order}(\kappa(\gamma\tau),W_M/(\tau-1)W_M)\geq {\rm order}((\kappa)_F,H^1(G_{F,S},W_M))-a$,
    \item[2)] ${\rm order}(\eta(\gamma\tau),W_M^*(1)/(\tau-1)W_M^*(1))\geq {\rm order}((\eta)_F,H^1(G_{F,S},W_M^*(1)))-a$\,.
\end{itemize}
\end{lemma}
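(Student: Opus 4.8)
\textbf{Proof plan for Lemma~\ref{lemma_rubin_lemma_3_1}.} This is a quantitative refinement of \cite[Lemma~V.3.1]{rubin00} (the ``Chebotarev'' input in the Euler system machinery), and the plan is to follow Rubin's argument line by line while tracking the loss incurred by the failure of $\tau-1$ to act invertibly on all of $W_M$. First I would recall the key consequence of \ref{item_HypXab}(ii): there is $\tau\in G_{\QQ(\mu_{p^\infty})}$ with $X/(\tau-1)X$ one-dimensional over $E$, hence $W_M/(\tau-1)W_M$ is cyclic and, by \ref{item_HypXab}(iv), the maximal $G_{\QQ,S}$-stable submodule of $(\tau-1)W$ (and of $(\tau-1)W^*(1)$) is killed by $\frak m^a$. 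The point is that the natural projection $W_M\twoheadrightarrow W_M/(\tau-1)W_M$, restricted to any $G_{\QQ,S}$-submodule, has kernel bounded by $\frak m^a$; this is the mechanism converting ``$\tau-1$ acts invertibly on the relevant quotient'' (Rubin's hypothesis, essentially $a=0$) into ``$\tau-1$ acts with bounded kernel,'' and it is the only place where the constant $a$ enters.

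Next I would set up the field-theoretic picture exactly as in op.\ cit.: let $F'=F(W_M,W_M^*(1),\mu_M)=F$ (by hypothesis $G_F$ already acts trivially), consider the cohomology classes $\kappa,\eta$ restricted to $G_F$, which become homomorphisms $G_F\to W_M$ and $G_F\to W_M^*(1)$, and let $F(\kappa)$, $F(\eta)$ be the fixed fields of their kernels. These are abelian extensions of $F$, linearly disjoint from $F(\tau)$ over $F$ after the usual argument invoking \ref{item_HypXab}(iii) ($H^1(\Omega/\QQ,\overline{X})=0=H^1(\Omega/\QQ,\overline{X}^*(1))$, which is what guarantees the relevant restriction maps are injective and the cocycles do not ``collapse''). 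Then a Chebotarev argument produces $\gamma\in G_F$ whose image in $\Gal(F(\kappa)/F)$ is a generator of order ${\rm order}((\kappa)_F,H^1(G_{F,S},W_M))$ of (a cyclic quotient of) that group, and simultaneously controls $\eta$; the subtlety is that one needs $\gamma\tau$, not $\gamma$, and one needs to evaluate the cocycle relation $\kappa(\gamma\tau)=\kappa(\gamma)+\gamma\cdot\kappa(\tau)$ and push forward to $W_M/(\tau-1)W_M$. This is where Rubin shows $\kappa(\gamma)\bmod(\tau-1)W_M$ has essentially the full order of $(\kappa)_F$; in our setting the image of the cyclic $G_F$-stable module generated by $\kappa(\gamma)$ in $W_M/(\tau-1)W_M$ loses at most $\frak m^a$, giving the stated $-a$. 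The argument for $\eta$ is identical with $W_M^*(1)$ in place of $W_M$, using the second half of \ref{item_HypXab}(iv).

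The main obstacle, and the only genuinely new content beyond bookkeeping, is verifying that one can choose a \emph{single} $\gamma$ handling both $\kappa$ and $\eta$ with the claimed orders: one must check that the extensions $F(\kappa)/F$ and $F(\eta)/F$ together with $F(\tau)/F$ are ``independent enough'' that the Chebotarev class of $\gamma\tau$ can be prescribed on all three simultaneously. In Rubin this is handled by \cite[Lemma~V.3.1]{rubin00} via a careful analysis of $W_M$ versus $W_M^*(1)$ as $G_F$-modules (they are Tate-dual, so a priori their cutting-out fields could interact); the resolution uses that $\overline X$ is absolutely irreducible-type input only weakly, and more essentially uses \ref{item_HypXab}(iii). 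I expect the proof to amount to: (1) reduce to $\overline X$-coefficients where \ref{item_HypXab}(iii) gives exact disjointness and Rubin's lemma applies verbatim to produce $\bar\gamma$; (2) lift $\bar\gamma$ to $\gamma\in G_F$ with the required orders mod $M$, using that the relevant cyclic quotients are well-behaved; (3) evaluate the coboundary formula and apply the $\frak m^a$-bound from \ref{item_HypXab}(iv) to pass to the quotient by $(\tau-1)$, absorbing the defect into the ``$-a$''. Hypothesis \ref{item_HypXab}(v) (the bound $b$ on torsion of $X_\circ/(\tau-1)X_\circ$) does not enter this lemma directly — it is used later when converting these orders into the Fitting-ideal bound of Theorem~\ref{thm_appendix_ES_main} — so I would not invoke it here.
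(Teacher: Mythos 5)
Your proposal is correct in spirit and matches the paper's approach: the paper's own proof is just the two-sentence citation ``This is Lemma V.3.1 in \cite{rubin00}; since $p>2$, the lower bounds here are slightly improved as compared to op.~cit.'' — i.e.\ the authors do not re-derive anything, they simply invoke Rubin's lemma, whose hypotheses and constants line up exactly with \textbf{Hyp}$(X,a,b)$, and observe that at odd $p$ the small loss in Rubin's bound can be sharpened. Your plan correctly identifies that the argument is Rubin's Chebotarev argument, that the $-a$ defect arises from \textbf{Hyp}$(X,a,b)$(iv) bounding the maximal stable submodule of $(\tau-1)W$ and $(\tau-1)W^*(1)$, and that condition~(v) is not used here; the remaining detail you sketch (cocycle evaluation at $\gamma\tau$, field-theoretic independence, simultaneity for $\kappa$ and $\eta$) is precisely the content of Rubin's Lemma~V.3.1, so you have reconstructed the cited proof rather than found a different route. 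One small imprecision: the phrase ``linearly disjoint from $F(\tau)$'' is loose (there is no fixed field ``$F(\tau)$'' in Rubin's setup — $\tau$ is a Galois element and the independence is phrased directly in terms of the images of the relevant homomorphisms), but this does not affect the validity of the approach.
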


\begin{proof}
This is Lemma V.3.1 in \cite{rubin00}. Since we assume throughout $p>2$, the lower bounds here are slightly improved as compared to lower bounds in op. cit. 
\end{proof}

\begin{lemma}
\label{lemma_Rubin_5_2_3}
Fix a power $M$ of $p$. Let $\mathcal{F}$ be a Cartesian Selmer structure on $X_\circ$. Let $\bf{c}$ denote an $\mathcal{F}$-locally restricted Euler system and let $\{\kappa_{r,M}\}_{r\in \cR_M}$ denote Kolyvagin's derivative classes. We assume that \ref{item_HypXab} holds true.

Suppose $C\subset H^1_{\mathcal{F}^*}(\QQ,W_M^*(1))$ is any set of cardinality $k$. There exists a set of rational primes $\fP=\{q_1,\cdots,q_k\} \subset \cR_M$ such that for $1\leq i\leq k$,
\begin{itemize}
    \item[i)] ${\rm Fr}_{q_i}$ is conjugate to the image of $\tau$ in $\Gal(\QQ(W_M)/\QQ)$,
     \item[ii)] setting $n_j=q_1\cdots q_j$ (with $n_0=1$ and $n_k=:n$), we have
     $${\rm order}\left(\res_{q_i}(\kappa_{n_{j-1},M}), H^1_{\rm f}(\QQ_{q_i}, W_M) \right)\geq {\rm order}\left((\kappa_{n_{j-1},M})_\Omega,H^1(\Omega,W_M) \right)-a,$$
      \item[iii)] $\frak{m}^a\left(C\cap H^1_{\mathcal{F}^*_n}(\QQ,W_M^*(1))\right)=0$.
\end{itemize}
\end{lemma}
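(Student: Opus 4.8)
\textbf{Proof strategy for Lemma~\ref{lemma_Rubin_5_2_3}.}
The plan is to imitate the proof of \cite[Proposition V.2.3]{rubin00} (and its refinement in \cite[\S4]{kbbesrankr}), choosing the primes $q_i$ one at a time by a Chebotarev argument, while carefully tracking the loss of precision caused by passage from the group $G_\Omega$ to the larger field cut out by $W_M$ and $W_M^*(1)$. First I would set $F$ to be the compositum of $\Omega$ with $\QQ(W_M,W_M^*(1))$, so that $G_F$ acts trivially on both $W_M$ and $W_M^*(1)$; the hypothesis \ref{item_HypXab}(iii), i.e. $H^1(\Omega/\QQ,\overline X)=0=H^1(\Omega/\QQ,\overline X^*(1))$, together with the inflation-restriction sequence ensures that restriction $H^1(G_{\QQ,S},W_M)\to H^1(G_{F,S},W_M)^{\Gal(F/\QQ)}$ and its dual analogue are injective up to bounded torsion, so orders of classes are essentially preserved under $(\cdot)_F$.

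The induction runs over $j$. Suppose $q_1,\dots,q_{j-1}$ have been chosen, put $n_{j-1}=q_1\cdots q_{j-1}$, and consider the two cohomology classes $\kappa:=\kappa_{n_{j-1},M}\in H^1(G_{\QQ,S},W_M)$ and a suitable element $\eta\in C\cap H^1_{\mathcal F^*_{n_{j-1}}}(\QQ,W_M^*(1))$ chosen to detect whether $C$ still has a nonzero image modulo the Selmer conditions at the primes chosen so far. Applying Lemma~\ref{lemma_rubin_lemma_3_1} with this $F$, $\kappa$ and $\eta$ produces $\gamma\in G_F$ with the order estimates (1) and (2) there, with the uniform loss of $a$. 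Then Chebotarev's density theorem applied to the finite extension $F(W_{M^2})/\QQ$ (or to $\Omega(\mu_{M})$-type fields, exactly as in \cite[Lemma~V.2.1]{rubin00}) yields a rational prime $q_j$, unramified everywhere in sight and coprime to $Nn_{j-1}$, whose Frobenius is conjugate to $\gamma\tau$; since $\gamma\in G_F$ this Frobenius is conjugate to the image of $\tau$ in $\Gal(\QQ(W_M)/\QQ)$, giving (i). The standard identification $H^1_{\rm f}(\QQ_{q_j},W_M)\cong W_M/(\mathrm{Fr}_{q_j}-1)W_M$ together with the evaluation-at-Frobenius description of the finite/singular parts (see \cite[Lemma~IV.5.4]{rubin00}) converts estimate (1) of Lemma~\ref{lemma_rubin_lemma_3_1} into the bound (ii), and estimate (2) combined with global duality (Poitou--Tate, as in \cite[Theorem~I.7.3]{rubin00}) forces the image of $\eta$ in $H^1_s(\QQ_{q_j},W_M^*(1))$ to have order at least $\mathrm{order}((\eta)_F,\cdot)-a$, hence after $k$ steps $\frak m^a$ kills $C\cap H^1_{\mathcal F^*_n}(\QQ,W_M^*(1))$, which is (iii). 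Here one uses that $C$ has cardinality $k$, so at most $k$ primes are needed to annihilate its image, and at each stage one picks $\eta$ among the finitely many elements of $C$ not yet handled.

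The main obstacle — and the only place where the argument genuinely departs from \cite{rubin00} — is bookkeeping the uniformity of the constant $a$ across the two interacting inductions (the Kolyvagin derivative recursion $n_{j-1}\mapsto n_j$ on the $\kappa$ side, and the shrinking of the dual Selmer group on the $\eta$ side), and in verifying that \ref{item_HypXab}(iv), which bounds by $\frak m^a$ the maximal $G_{\QQ,S}$-stable submodule of $(\tau-1)W$ and $(\tau-1)W^*(1)$, is exactly the input needed to make Lemma~\ref{lemma_rubin_lemma_3_1} applicable with a single $a$ at every stage (rather than a loss that accumulates with $j$). One also has to check that the classes $\kappa_{n_{j-1},M}$ remain in the relaxed Selmer group $H^1_{\mathcal F^{n_{j-1}}}(\QQ,W_M)$ so that the local conditions at $q_1,\dots,q_{j-1}$ do not obstruct the Chebotarev step; this is exactly Proposition~\ref{prop_locallyrestrictedES_Kolyvagin_derivative}, which is available because $\mathcal F$ is Cartesian. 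Once these uniformity points are settled, the remaining manipulations are the routine local computations of \cite[\S IV.5--V.2]{rubin00}, which I would not reproduce in detail.
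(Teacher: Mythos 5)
Your high-level strategy is right and mirrors the paper's proof: enumerate $C=\{\eta_1,\dots,\eta_k\}$, pick $q_i$ inductively by Chebotarev applied to a Frobenius of the form $\gamma\tau$ produced by Lemma~\ref{lemma_rubin_lemma_3_1}, with $\kappa=\kappa_{n_{i-1},M}$ and $\eta=\eta_i$ at stage $i$; property (ii) then falls out of the evaluation-at-Frobenius description of $H^1_{\rm f}(\QQ_{q_i},W_M)$, and property (iii) is checked at the end. Two details are off, however, and one of them would actually break the argument as written.

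First, your choice of $F$. You set $F$ to be the compositum of $\Omega$ with $\QQ(W_M,W_M^*(1))$, but since $\QQ(W_M,W_M^*(1))\subset\Omega$ by construction of $\Omega$, this compositum is $\Omega$ itself, which is generically an \emph{infinite} extension of $\QQ$. Chebotarev, as you later invoke it, requires a finite extension; and the inequality $\mathrm{order}((\kappa)_F)\geq\mathrm{order}((\kappa)_\Omega)$ that the argument needs only runs the right way when $F\subset\Omega$. The paper instead takes $F$ to be the \emph{finite} field cut out by $\ker\left(G_\QQ\to\mathrm{Aut}(W_M)\oplus\mathrm{Aut}(\mu_{p^M})\right)$, notes $F\subset\Omega$, applies Lemma~\ref{lemma_rubin_lemma_3_1} with this $F$, then chooses $q_i$ with Frobenius equal to $\gamma\tau$ in the finite extension $F'/\QQ$ cut out by $(\kappa_{n_{i-1},M})_F\oplus(\eta_i)_F\colon G_F\to W_M\oplus W_M^*(1)$. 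You should replace your $F$ by this one; as it stands, the Chebotarev step does not type-check.

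Second, your derivation of (iii) invokes Poitou--Tate global duality and speaks of the image of $\eta$ in the singular quotient $H^1_s(\QQ_{q_j},W_M^*(1))$; neither is needed, and the singular quotient is the wrong place to look. The relevant local condition on $\eta\in H^1_{\mathcal F^*}$ places $\res_{q_i}(\eta)$ in the \emph{finite} part $H^1_{\rm f}(\QQ_{q_i},W_M^*(1))$, and the Chebotarev choice gives the lower bound $\mathrm{order}(\res_{q_i}(\eta_i),H^1_{\rm f}(\QQ_{q_i},W_M^*(1)))\geq\mathrm{order}((\eta_i)_\Omega,H^1(\Omega,W_M^*(1)))-a$. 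If $\eta_i$ lies in the strict group $H^1_{\mathcal F^*_n}$, then $\res_{q_i}(\eta_i)=0$, so the inequality forces $\mathrm{order}((\eta_i)_\Omega)\leq a$, and then $\frak m^a\eta_i\in\ker(\mathrm{res}_\Omega)=H^1(\Omega/\QQ,W_M^*(1))=0$ by inflation-restriction together with \ref{item_HypXab}(iii). Global duality plays no role here (it enters only later, in the proof of Lemma~\ref{lemma_Rubin_5_2_5}). Also, the element $\eta_i$ processed at stage $i$ is simply the $i$-th member of the fixed enumeration of $C$; no adaptive choice of $\eta$ is needed.
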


\begin{proof}
The argument to prove \cite[Lemma V.2.3]{rubin00} works with small alterations in our set up. Let us write $C=\{\eta_1,\cdots,\eta_k\}$. As in op.cit., we shall explain how to choose the primes $q_i\in \cR_M$ inductively so as to satisfy (i), (ii) and 
\begin{equation}\label{eqn_Rubin_Lemma_5_2_3_2}
   \res_{q_i}(\eta)\in H^1_{\rm f}(\QQ_{q_i},W_M^*(1))\,\,\,\, \hbox{for every } \eta \in C\,; 
\end{equation}
\begin{equation}\label{eqn_Rubin_Lemma_5_2_3_3}
    {\rm order}\left(\res_{q_i}(\eta_i),H^1_{\rm f}(\QQ_{q_i},W_M^*(1))\right)\geq  {\rm order}\left((\eta_i)_{\Omega}, H^1(\Omega,W_M^*(1)) \right)-a\,.
\end{equation}
Suppose $1\leq i \leq k$ and $q_1,\cdots,q_{i-1} \in \cR_M$ are chosen to satisfy (i), (ii), \eqref{eqn_Rubin_Lemma_5_2_3_2} and \eqref{eqn_Rubin_Lemma_5_2_3_3}. Let us denote by $N$ the product of non-archimedean primes in $S$. Let us denote the fixed field of $\ker\left( G_{\QQ}\rightarrow {\rm Aut}(W_M)\oplus {\rm Aut}(\mu_{p^M})\right)$ by $F$; note that $F$ is contained in $\Omega$. We shall apply Lemma~\ref{lemma_rubin_lemma_3_1} with this $F$, $\kappa=\kappa_{n_{i-1},M}$ and $\eta=\eta_i$, to find an element $\gamma\in G_F$. On noting $(\kappa_{n_{i-1},M})_F\in H^1(F,W_M)=\Hom(G_F,W_M)$ and $(\eta_i)_F\in H^1(F,W_M)=\Hom(G_F,W_M)=\Hom(G_F,W_M^*(1))$, we may consider the fixed field $F^\prime$ of  
$$\xymatrix{G_F\ar[rrr]^(.4){(\kappa_{n_{i-1},M})_F\oplus (\eta_i)_F}&&& W_M\oplus W_M^*(1)\,.}$$
We choose $q_i\in \cR_M$ as a prime which doesn't divide $Nn_{j-1}$ and whose arithmetic Frobenius in $F^\prime/\QQ$ (for some choice of a prime of $F^\prime$ above $q_i$) equals $\gamma\tau$. Note that Tchebotarev density theorem guarantees the existence of such $q_i$.

Properties (i) and \eqref{eqn_Rubin_Lemma_5_2_3_2} are immediate by the choice of $q_i$. To verify property (ii) and \eqref{eqn_Rubin_Lemma_5_2_3_3}, we note by \cite[Lemma 1.4.7(i)]{rubin00} that evaluation of cocycles at the (arithmetic) Frobenius ${\rm Fr}_{q_i}$ at $q_i$ induces an isomorphism
$$H^1_{\rm f}(\QQ_{q_i},Z)\cong Z/({\rm Fr}_{q_i}-1)Z=Z/(\tau-1)Z$$
for $Z=W_M,W_M^*(1)$. This in turn shows that
\begin{align}
  \notag  {\rm order}\left(\res_{q_i}(\kappa_{n_{j-1},M}), H^1_{\rm f}(\QQ_{q_i}, W_M) \right)&= {\rm order}\left(\res_{q_i}(\kappa_{n_{j-1},M}),W_M/(\tau-1)W_M \right)\\
  \notag &\geq \notag {\rm order}\left((\kappa_{n_{j-1},M})_F, H^1(F,W_M)\right)-a\\
   \label{align_Rubin_5_2_3_1} &\geq {\rm order}\left((\kappa_{n_{j-1},M})_\Omega,H^1(\Omega,W_M) \right)-a,
\end{align}
where the first inequality is thanks to the choice of $q_i$; and also that
\begin{align}
  \notag  {\rm order}\left(\res_{q_i}(\eta_i), H^1_{\rm f}(\QQ_{q_i}, W_M^*(1)) \right)&= {\rm order}\left(\res_{q_i}(\eta_i),W_M^*(1)/(\tau-1)W_M^*(1) \right)\\
  \notag &\geq \notag {\rm order}\left((\eta_i)_F, H^1(F,W_M^*(1))\right)-a\\
   \label{align_Rubin_5_2_3_2} &\geq {\rm order}\left((\eta_i)_\Omega,H^1(\Omega,W_M^*(1)) \right)-a,
\end{align}
where the first inequality is again immediate by the choice of $q_i$. Note that \eqref{align_Rubin_5_2_3_1} is the sought after property (ii) whereas \eqref{align_Rubin_5_2_3_2} is the property \eqref{eqn_Rubin_Lemma_5_2_3_3}.

To conclude the proof of Lemma~\ref{lemma_Rubin_5_2_3}, we need to check property (iii) with the chosen set of primes $\frak{P}=\{q_1,\cdots,q_k\}$. We explain how to use \eqref{eqn_Rubin_Lemma_5_2_3_3} to verify this property. Suppose $\eta=\eta_i \in C\cap H^1_{\mathcal{F}^*_n}(\QQ,W_M^*(1))$, so that $\res_q(\eta_i)=0$ for every $q\in \frak{P}$. In particular, $\res_{q_i}(\eta_i)=0$ and \eqref{eqn_Rubin_Lemma_5_2_3_3} shows that \begin{align}
   \notag \frak{m}^a\eta\in \ker&\left(H^1(\QQ,W_M^*(1))\lra H^1(\Omega,W_M^*(1))\right)\\
   \label{eqn_rubin_5_2_3_4} &=H^1(\Omega/K,W_M^*(1))\\
  \label{eqn_rubin_5_2_3_5}  &=0,
\end{align}
where \eqref{eqn_rubin_5_2_3_4} follows from the inflation-restriction sequence and \eqref{eqn_rubin_5_2_3_5} is part of our hypothesis \ref{item_HypXab}(iii). This concludes the proof of property (iii).
\end{proof}

\begin{defn}
\label{defn_Rubin_5_2_4}
Let $M$ be a power of $p$ and suppose $n\in \cR_M$. We then have an exact sequence
\begin{equation}
    \label{eqn_Rubin_5_2_4}
    0\lra H^1_{\mathcal{F}}(\QQ,W_M)\lra H^1_{\mathcal{F}^n}(\QQ,W_M) \stackrel{\res_{n,W_M}^s}{\lra} \bigoplus_{q\mid n} H^1_{\rm s}(\QQ_q,W_M)
\end{equation}
where we recall that $H^1_{\rm s}(\QQ_q,W_M):=H^1(\QQ_q,W_M)/H^1_{\rm f}(\QQ_q,W_M)$ is the singular quotient and $\res_{n,W_M}^s:=\oplus_{q\mid n}\, \res_q^s$.
\end{defn}

\begin{lemma}
\label{lemma_Rubin_5_2_5}
Suppose $I=\frak{m}^s$ is a non-zero ideal of $\cO$ and $k$ is a positive integer. Suppose $M$ is a power $p$ satisfying ${\rm ord}_{\frak{m}}M\geq s+{\rm ind}_{\cO}(\bf{c},\mathcal{F})$. Let $\mathcal{F}$ be a Cartesian Selmer structure on $X_\circ$. Let $\bf{c}$ denote an $\mathcal{F}$-locally restricted Euler system and let $\{\kappa_{r,M}\}_{r\in \cR_M}$ denote Kolyvagin's derivative classes. We assume that \ref{item_HypXab} holds true.

Suppose that there exists a finite set of primes $\frak{P}=\{q_1,\cdots,q_k\} \subset \cR_M$ such that for $1\leq i\leq k$,
\begin{itemize}
    \item[a)] ${\rm Fr}_{q_i}$ is conjugate to the image of $\tau$ in $\Gal(\QQ(W_M)/\QQ)$,
     \item[b)] setting $n_j=q_1\cdots q_j$ (with $n_0=1$ and $n_k=:n$), we have
     $${\rm order}\left(\res_{q_i}(\kappa_{n_{j-1},M}), H^1_{\rm f}(\QQ_{q_i}, W_M) \right)\geq {\rm order}\left((\kappa_{n_{j-1},M})_\Omega,H^1(\Omega,W_M) \right)-a\,.$$
\end{itemize}
Then,
$${\rm length}_{\cO}\left({\rm coker}\left(\res_{n,W_I}^s\right)\right)\leq {\rm ind}_{\cO}({\bf c},\mathcal{F})+ \frac{k^2+3k}{2}a+ {(k^2+4k)}b      \,.$$
\end{lemma}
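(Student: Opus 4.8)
The plan is to follow the Kolyvagin-system induction in \cite[\S V.2]{rubin00} for the ``canonical'' Selmer structure, but to keep careful track of the losses incurred by (a) passing to the Cartesian Selmer structure $\mathcal{F}<\mathcal{F}_{\rm can}$, (b) the defect $a$ measuring how far the twisted coinvariants $(\tau-1)W$, $(\tau-1)W^*(1)$ are from being torsion-free, and (c) the defect $b$ measuring the torsion in $X_\circ/(\tau-1)X_\circ$. First I would set up notation: write $r=\mathrm{ind}_\cO(\mathbf{c},\mathcal{F})$, and work with $W_I=W_M[I]$ for $I=\mathfrak{m}^s$ with $\mathrm{ord}_\mathfrak{m}M\geq s+r$, so that Lemma~\ref{lemma_rubin_5_1_1} gives $\mathrm{order}(\kappa_{1,M},H^1_\mathcal{F}(\QQ,W_M))=\mathrm{ord}_\mathfrak{m}M-r$. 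The backbone is the exact sequence \eqref{eqn_Rubin_5_2_4}
$$0\lra H^1_{\mathcal{F}}(\QQ,W_M)\lra H^1_{\mathcal{F}^n}(\QQ,W_M) \xrightarrow{\res_{n,W_M}^s} \bigoplus_{q\mid n} H^1_{\rm s}(\QQ_q,W_M)$$
together with the global duality relation (Poitou--Tate, as in \cite[Theorem I.7.3]{rubin00}) which, since $\mathcal{F}<\mathcal{F}_{\rm can}$ only differs from $\mathcal{F}_{\rm can}$ at primes above $p$ where $\mathcal{F}^n=\mathcal{F}$ for all $n\in\cR_M$, relates $\mathrm{coker}(\res^s_{n,W_M})$ to the dual Selmer group $H^1_{\mathcal{F}^*_n}(\QQ,W_M^*(1))$.

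The induction itself proceeds prime by prime along $\frak{P}=\{q_1,\dots,q_k\}$. At each step one uses the hypotheses (a) and (b) of the lemma: condition (a) (that $\mathrm{Fr}_{q_i}$ is conjugate to $\tau$) lets us invoke \cite[Lemma 1.4.7]{rubin00} to identify $H^1_{\rm f}(\QQ_{q_i},Z)\cong Z/(\tau-1)Z$ and $H^1_{\rm s}(\QQ_{q_i},Z)\cong Z/(\tau-1)Z$ (up to a twist), and condition (b) controls the order of $\res_{q_i}(\kappa_{n_{j-1},M})$ in terms of the global order of $\kappa_{n_{j-1},M}$, losing a factor $\mathfrak{m}^a$. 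The key local-to-global input is the Euler system recursion (Kolyvagin's derivative relation, \cite[Theorem IV.5.4]{rubin00}): $\res^s_{q}(\kappa_{nq,M})$ determines $\res^{\rm f}_q$ of $\kappa_{n,M}$ up to a controlled unit, so propagating the order of $\kappa_{1,M}$ through the $q_i$ one picks up at most $a$ at each of the $\leq k$ stages from (b), plus an additional $a$ from applying Lemma~\ref{lemma_rubin_lemma_3_1} inside the choice of each $q_i$, plus an additional $b$ from the fact that $X_\circ/(\tau-1)X_\circ$ — hence $W_M/(\tau-1)W_M$ for $M\gg0$ — has torsion of length $\leq b$ which must be discarded whenever we read off orders from the $q_i$-local terms. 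Bookkeeping these: each of the $k$ primes contributes a ``diagonal'' loss $\leq a$ used $k-j$ times on later classes, giving a triangular-number total, and each local evaluation contributes a further $O(b)$; matching the stated constant $\frac{k^2+3k}{2}a+(k^2+4k)b$ is then a matter of summing $\sum_{j=1}^k (j a) + \sum_{j=1}^k O(b)$ carefully with the right multiplicities.

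The main obstacle, I expect, is exactly this constant-chasing: the clean statement in \cite[\S V.2]{rubin00} is for $\mathcal{F}=\mathcal{F}_{\rm can}$ and $a=b=0$, where the cokernel of $\res^s_{n,W_I}$ is controlled by $\mathrm{ind}_\cO(\mathbf{c})$ alone, and one must re-do the induction tracking at each step both (i) the ``depth loss'' $a$ coming from Lemma~\ref{lemma_rubin_lemma_3_1} and the $\frak{m}^a$-ambiguity in hypothesis (b), and (ii) the ``torsion loss'' $b$ from the failure of $W_M/(\tau-1)W_M$ to be free. A subtle point is that the Cartesian property of $\mathcal{F}$ is essential for the horizontal maps in the induction to behave functorially under $W_{M'}\hookrightarrow W_M$ (this is what makes $\iota_M^{-1}(H^1_\mathcal{F}(\QQ,W))=H^1_\mathcal{F}(\QQ,W_M)$ hold), so one must make sure the argument never leaves the Cartesian world; fortunately $\mathcal{F}^n$ inherits Cartesianness for $n\in\cR_M$ coprime to $p$, since the modification is only at the auxiliary primes where the conditions are the full local $H^1$, which is automatically Cartesian. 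Once the induction is in place, the length bound on $\mathrm{coker}(\res^s_{n,W_I})$ follows by comparing the length of the source $H^1_{\mathcal{F}^n}(\QQ,W_I)$ (controlled via \eqref{eqn_Rubin_5_2_4} and the order of $\kappa_{n,I}$, which is $\mathrm{ord}_\mathfrak{m}I - r$ minus accumulated defects) against the length of the target $\bigoplus_{q\mid n}H^1_{\rm s}(\QQ_q,W_I)$, and the difference is absorbed into the stated error term.
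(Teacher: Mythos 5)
The proposal correctly identifies the framework (Rubin's \S V.2 induction, with extra bookkeeping for the defects $a$ and $b$), but it stops at the level of a plan and does not supply the mechanism that actually produces the bound. The actual proof does not ``compare the length of the source $H^1_{\mathcal{F}^n}(\QQ,W_I)$ against the length of the target'', which is how you describe the final step; instead it builds a filtration $A^{(0)}\subset A^{(1)}\subset\cdots\subset A^{(k)}=:A$ of a specific submodule of $H^1_{\mathcal{F}^n}(\QQ,W_I)$, where $A^{(i)}={\rm span}_\cO\{\overline\kappa_1,\dots,\overline\kappa_i\}$ and each $\overline\kappa_i$ is a lift into the $I$-torsion of a power of $\frak m$ times the Kolyvagin derivative $\kappa_{n_i,M}$. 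One then bounds ${\rm length}_\cO\bigl(\res_n^s(A^{(i)})/\res_n^s(A^{(i-1)})\bigr)$ from below using (i) the recursion $\frak d_i\geq\frak d_{i-1}-a-2b$ (which uses hypothesis (b) together with Rubin's Theorem 4.5.4/Cor.\ A.2.6, \emph{not} Lemma~\ref{lemma_rubin_lemma_3_1}), and (ii) the shift $i(a+2b)$ built into the definition of $\overline\kappa_i$. Summing these telescoping lower bounds over $i=1,\dots,k$ gives ${\rm length}_\cO(\res_n^s(A))\geq ks-{\rm ind}_\cO(\mathbf{c},\mathcal F)+\tfrac{k(k+3)}{2}(a+2b)$, and this is then subtracted from ${\rm length}_\cO\bigl(\bigoplus_{q\mid n}H^1_{\rm s}(\QQ_q,W_I)\bigr)\leq ks+kb$ to deduce the cokernel bound. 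Without the filtration construction one cannot conclude: knowing the length of the source is not enough, since the cokernel is governed by how much of the target is hit.

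There are two further misattributions worth flagging. First, global duality (Poitou--Tate / \cite[Theorem 2.3.4]{mr02}) plays no role in this lemma; it enters only afterwards, in the proof of Theorem~\ref{thm_appendix_ES_main}(ii), to convert the cokernel bound into a Selmer-group bound once one also knows $H^1_{\mathcal F_n^*}(\QQ,W_I^*(1))=0$. Second, the ``$a$-loss from Lemma~\ref{lemma_rubin_lemma_3_1} inside the choice of each $q_i$'' is the content of Lemma~\ref{lemma_Rubin_5_2_3}, which produces the set $\frak P$ satisfying (a)--(b); in the present lemma the primes are simply \emph{given} with hypothesis (b), so there is a single $a$ per step (from (b)), not two. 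Getting this wrong would already change the coefficient of $a$ in the final bound. These are not cosmetic issues: the specific constant $\tfrac{k^2+3k}{2}a+(k^2+4k)b$ falls out of exactly this filtration bookkeeping, and the proposal's ``triangular-number total plus $O(b)$'' gloss cannot be turned into the stated inequality without re-doing the argument from scratch.
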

\begin{proof}
We shall modify the proof of \cite[Lemma 5.2.5]{rubin00} suitably (under the assumption that $W^{G_\QQ}=0$, which is ensured by the running hypothesis \ref{item_HypXab}) to apply in our set up.

Thanks to \ref{item_HypXab}(i) and \ref{item_HypXab}(i) and (iii), the maps
$$H^1_{\mathcal{F}^n}(\QQ,W_M)\lra H^1(\QQ,W_M) \stackrel{\iota_M}{\lra} H^1(\QQ,W)\lra H^1(\Omega,W)$$
$$H^1(\Omega,W_M)\stackrel{\iota_M}{\lra} H^1(\Omega,W)$$
are all injective for any $n\in \cR_M$. For each $1\leq i\leq k$, we set as in op. cit. 
\begin{align*}
    \frak{d}_i={\rm order}\left(\kappa_{n_i,M},H^1_{\cF^n}(\QQ,W_M)\right)&={\rm order}\left(\kappa_{n_i,M},H^1(\QQ,W_M)\right)={\rm order}\left(\iota_M(\kappa_{n_i,M}),H^1(\QQ,W)\right) \\
    &={\rm order}\left(\iota_M(\kappa_{n_i,M})_{\Omega},H^1(\Omega,W)\right)={\rm order}\left((\kappa_{n_i,M})_{\Omega},H^1(\Omega,W_M)\right)\,.
\end{align*}
By Lemma~\ref{lemma_rubin_5_1_1} and the choice of $M$, 
\begin{equation}
    \label{eqn_Rubin_5_2_5_1}
    \frak{d}_0=\ord_{\frak{m}}M-{\rm ind}_{\cO}({\bf{c}},\mathcal{F})\geq s\,.
\end{equation}
For each $i\geq 1$, 
\begin{align}
    \notag \frak{d}_i \geq {\rm order}&\left(\res_{q_i}^s(\kappa_{n_i,M}),H^1_{\rm s}(\QQ_{q_i},W_M)\right)\\
  \label{eqn_Rubin_5_2_5_2}  &\geq {\rm order}\left(\res_{q_i}(\kappa_{n_{i-1},M}),H^1_{\rm s}(\QQ_{q_i},W_M)\right)-2b\geq \frak{d}_{i-1}-a-2b
\end{align}
where the first inequality is tautological, the second is Theorem 4.5.4 combined with Corollary A.2.6 of op. cit., the final inequality is the assumption (b) of the lemma. Combining \eqref{eqn_Rubin_5_2_5_1} and \eqref{eqn_Rubin_5_2_5_2}, we see that $\frak{d}_i\geq s-i(a+2b)$ for every $i$.

It follows from \cite[Lemma 3.5.3]{mr02} that we have an isomorphism
$$H^1_{\calF^{n_i}}(\QQ,W_I)\stackrel{\sim}{\lra} H^1_{\calF^{n_i}}(\QQ,W_M)[I]$$
induced from the injection $\iota_{I,M}: W_I\hookrightarrow W_M$. Combining this with Proposition~\ref{prop_locallyrestrictedES_Kolyvagin_derivative}, we may find $\overline{\kappa}_i\in H^1_{\calF^{n_i}}(\QQ,W_I)$ such that
$$\cO\cdot\iota_{I,M}(\overline{\kappa}_i)=\frak{m}^{\frak{d}_i-s+i(a+2b)}\kappa_{n_i,M}\,.$$
For every $i\leq k$, we set 
$$A^{(i)}:={\rm span}_{\cO}\{\overline{\kappa}_1,\cdots, \overline{\kappa}_i\} \subset H^1_{\calF^{n_i}}(\QQ,W_I)\subset H^1_{\calF^{n}}(\QQ,W_I)$$
and let $A:=A^{(k)}$. The singular projection map $\res_{q_i}^s$ induces surjections
$$\res_{n}^s(A^{(i)})/\res_{n}^s(A^{(i-1)})\twoheadrightarrow \cO\cdot \res_{q_i}^s(\overline{\kappa}_i)\subset H^1_{\rm s} (\QQ_{q_i},W_I)\,.$$
Thence, 
\begin{align*}
    {\rm length}_{\cO}\left(\res_{n}^s(A^{(i)})/\res_{n}^s(A^{(i-1)}) \right) &\geq {\rm order}\left(\res_{q_i}^s(\overline{\kappa}_i),H^1_{\rm s} (\QQ_{q_i},W_I) \right)\\
    &\geq {\rm order}\left(\res_{q_i}^s(\overline{\kappa}_{n_i,M}),H^1_{\rm s} (\QQ_{q_i},W_M) \right)-(\frak{d}_i-s+i(a+2b))\\
    &\geq s+\frak{d}_{i-1}-\frak{d}_i + (i+1)(a+2b)\,.
\end{align*}
where the second inequality follows from the definition of $\overline{\kappa}_i$ and the third from \eqref{eqn_Rubin_5_2_5_2}. The filtration
$$\res_{n}^s(A)=\res_{n}^s(A^{(k)})\supset \cdots \supset \res_{n}^s(A^{(1)})\supset \res_{n}^s(A^{(0)})=0$$
shows using \eqref{eqn_Rubin_5_2_5_1} and the trivial estimate $\frak{d}_k\leq \ord_{\frak{m}}M$ that
\begin{align}
  \notag  {\rm length}_{\cO}\left( \res_n^s\left(H^1_{\calF^n}(\QQ,W_I)\right)\right)&\geq  {\rm length}_{\cO}\left(\res_n^s(A)\right)\\
   \notag &\geq \sum_{i=1}^k (s+\frak{d}_{i-1}-\frak{d}_i + (i+1)(a+2b))\\
    \notag&=ks+\frak{d}_{0}-\frak{d}_{k}+ \frac{k(k+3)}{2}(a+2b)  \\
    \notag &=ks+\ord_{\frak{m}}M-{\rm ind}_{\cO}({\bf{c}},\mathcal{F})-\frak{d}_{k}+ \frac{k(k+3)}{2}(a+2b)\\
\label{align_Rubin_5_2_5_3}    &\geq ks -{\rm ind}_{\cO}({\bf{c}},\mathcal{F})+ \frac{k(k+3)}{2}(a+2b)\,.
\end{align}
For every $q\in \cR_M$ we have $H^1_{\rm s}(\QQ_q,W_I)=W_I^{{\rm Fr}_q=1}$ by \cite[Lemma 1.4.7(i)]{rubin00}. Hence,
$${\rm length}_{\cO}\left(\oplus_{q\in \frak{P}}H^1_{\rm s}(\QQ_q,W_I)\right)=k\cdot{\rm length}_{\cO}\left(W_I^{\tau=1} \right)=k\cdot{\rm length}_{\cO}\left( W_I/(\tau-1)W_I \right)\leq ks +kb $$
where the final inequality follows as a consequence of Proposition A.2.5 in op. cit. Combining this with \eqref{align_Rubin_5_2_5_3}, we conclude that
$${\rm length}_{\cO}\left({\rm coker}\left(H^1_{\calF^n}(\QQ,W_I)\stackrel{\res_n^s}{\lra}\oplus_{q\mid n} H^1_{\rm s}(\QQ_q,W_I) \right) \right)\leq {\rm ind}_{\cO}({\bf{c}},\mathcal{F}) + \frac{k^2+3k}{2}a+ {(k^2+4k)}b$$
as claimed.
\end{proof}

We are now ready to proceed with the proof of Theorem~\ref{thm_appendix_ES_main}.

\begin{proof}[Proof of Theorem~\ref{thm_appendix_ES_main}] 
We assume in all cases that ${\rm ind}_{\cO}({\bf c})<\infty$ since otherwise there is nothing to prove. 
\item[i)] In this part of the proof, we will largely rely on the argument of Rubin in \cite[\S V.3]{rubin00}. We indicate what portions of the argument that need to be altered to suit our purposes. Recall that we assume the truth of \ref{item_HypXab} in this portion.

Suppose $\eta\in H^1_{\mathcal{F}^*}(\QQ,W_M)$ is an arbitrary element. On applying Lemma~\ref{lemma_rubin_lemma_3_1} with the fixed field $F$ of $\ker\left( G_{\QQ}\lra {\rm Aut}(W_M)\oplus {\rm Aut}(\mu_{p^M})\right)$, the element $\eta \in  H^1_{\mathcal{F}^*}(\QQ,W_M)$ we have fixed and $\kappa=\kappa_{1,M}$, we can choose $\gamma\in G_F$ so that
   \begin{align}
     \notag  {\rm order}(\kappa_{1,M}(\gamma\tau),W_M/(\tau-1)W_M)&\geq {\rm order}((\kappa_{1,M})_F,H^1(G_{F,S},W_M))-a\\
  \label{eqn_rubin_3_1_aux_1}     &\geq {\rm order}((\kappa_{1,M})_\Omega,H^1(\Omega,W))-a
   \end{align}
   \begin{align}
      \notag {\rm order}(\eta(\gamma\tau),W_M^*(1)/(\tau-1)W_M^*(1))&\geq {\rm order}((\eta)_F,H^1(G_{F,S},W_M^*(1)))-a\\
  \label{eqn_rubin_3_1_aux_2}     &\geq {\rm order}((\eta)_\Omega,H^1(\Omega,W^*(1)))-a
   \end{align}
It follows from \ref{item_HypXab}(i) and \ref{item_HypXab}(iii) that the composition
$$ H^1_{\mathcal{F}}(\QQ,W)\lra H^1(G_{\QQ,S},W)\lra H^1(\Omega,W)$$
is injective, thence
$${\rm order}((\kappa_{1,M})_\Omega,H^1(\Omega,W))={\rm order}(\kappa_{1,M},H^1_{\mathcal{F}}(\QQ,W))\,.$$ 
This together with Property (1) above combined with Lemma~\ref{lemma_rubin_5_1_1} yields
\begin{equation}
    \label{eqn_Rubin_eqn_5_16_1}
    {\rm order}(\kappa(\gamma\tau),W_M/(\tau-1)W_M)\geq {\rm ord}_{\frak{m}}M-{\rm ind}_{\cO}({\bf c},\mathcal{F})-a\,.
\end{equation}
Similarly, 
\begin{equation}
    \label{eqn_Rubin_eqn_5_16_2}
    {\rm order}(\eta(\gamma\tau),W_M^*(1)/(\tau-1)W_M^*(1))\geq {\rm order}(\eta,H^1(G_{\QQ,S},W_M^*(1)))-a\,.
\end{equation}
Let $q$ denote the rational prime chosen as in the paragraph following (5.18) in \cite{rubin00}. The argument in the portion from (5.18) until the last three lines of Page 116 in op. cit. applies word by word to conclude that
\begin{equation}
    \label{eqn_Rubin_eqn_5_16_3}
    {\rm order}(\res_q^s(\kappa_{q,M}),H^1_{\rm s}(\QQ_q,W_M))\geq {\rm ord}_{\frak{m}}M-{\rm ind}_{\cO}({\bf c},\mathcal{F})-a-2b
\end{equation}
where $\res_q^s: H^1(G_{\QQ,S},\ast)\lra H^1_s(\QQ_q,\ast)$ is the singular projection; and that
\begin{equation}
    \label{eqn_Rubin_eqn_5_16_4}
    {\rm order}(\res_q(\eta),H^1_{\rm f}(\QQ,W_M^*(1)))\geq {\rm order}(\eta, H^1(G_{\QQ,S},W_M^*(1)))-a\,.
\end{equation}
Moreover, we have as explained in \cite[Page 116]{rubin00}
\begin{equation}
    \label{eqn_Rubin_eqn_116_5}
    {\rm length}_{\cO}\left(H^1_{\rm s}(\QQ,W_M)\right)\leq {\rm ord}_{\frak{m}}M+b\,.
\end{equation}
Applying \cite[Theorem 2.3.4]{mr02} with $\mathcal{G}_1=\mathcal{F}$, $\mathcal{G}_2=\mathcal{F}^q$ and $T=W_M$, we infer that
\begin{equation}
    \label{eqn_MR04_Global_Duality}
    {\rm order}(\res_q(\eta),H^1_{\rm f}(\QQ,W_M^*(1))) \leq {\rm length}_{\cO}\left({\rm coker}\left(H^1_{\mathcal{F}^q}(\QQ,W_M)\stackrel{\res_q^s}{\lra}H^1_{\rm s}(\QQ_q,W_M)\right)\right)\,.
\end{equation}
Moreover, since $\kappa_{q,M}\in H^1_{\mathcal{F}^q}(\QQ,W_M)$, we have
\begin{align}
    \notag {\rm length}_{\cO}\left({\rm coker}\left(H^1_{\mathcal{F}^q}(\QQ,W_M)\stackrel{\res_q^s}{\lra}H^1_{\rm s}(\QQ_q,W_M)\right)\right)\leq {\rm length}_{\cO}&\left(H^1_{\rm s}(\QQ_q,W_M)\right)\\
    \label{eqn_Rubin_eqn_116_6} &-{\rm order}(\res_q^s(\kappa_{q,M}),H^1_{\rm s}(\QQ_q,W_M))\,.
\end{align}
Combining \eqref{eqn_Rubin_eqn_5_16_3}, \eqref{eqn_Rubin_eqn_116_5} \eqref{eqn_MR04_Global_Duality} and \eqref{eqn_Rubin_eqn_116_6}, we conclude that
\begin{equation}
    \label{eqn_Rubin_eqn_116_7}
    {\rm order}(\res_q(\eta),H^1_{\rm f}(\QQ,W_M^*(1)))\leq {\rm ind}_{\cO}({\bf c},\mathcal{F})+a+3b\,.
\end{equation}
Now \eqref{eqn_Rubin_eqn_116_7} with \eqref{eqn_Rubin_eqn_5_16_4} shows that 
\begin{equation}
    \label{eqn_Rubin_eqn_117_1}
   {\rm order}(\eta, H^1(G_{\QQ,S},W_M^*(1)))\leq {\rm ind}_{\cO}({\bf c},\mathcal{F})+2a+3b\,.
  \end{equation}
  This holds true for every $M$ and $\eta \in H^1_{\mathcal{F}^*}(\QQ,W_M^*(1))$. Since $H^1_{\mathcal{F}^*}(\QQ,W^*(1))=\varinjlim_M H^1_{\mathcal{F}^*}(\QQ,W_M^*(1))$, it follows from  \eqref{eqn_Rubin_eqn_117_1} that
$$\frak{m}^{2a+3b+{\rm ind}_{\cO}({\bf c},\mathcal{F})}H^1_{\mathcal{F}^*}(\QQ,W_M^*(1))=0\,.$$
This concludes the proof of Theorem~\ref{thm_appendix_ES_main}(i).


\item[ii)] In this part of the proof, we will largely rely on the argument of Rubin in \cite[\S V.2]{rubin00}. We explain here what portions of the argument we need to modify for our purposes.

In the easier case when $a=0$, the argument we present below applies with slight alteration. We therefore assume henceforth $a\neq 0$. Let $I=\frak{m}^s$ be as in Lemma~\ref{lemma_Rubin_5_2_5} (we will take $s$ as large as necessary). We will apply Lemma~\ref{lemma_Rubin_5_2_3}  on setting $C$ to be the image of $H^1_{\mathcal{F}^*}(\QQ,W_{\frak{m}^{2a}}^*(1))$ under the injective map
$$H^1_{\mathcal{F}^*}(\QQ,W_{\frak{m}^{2a}}^*(1))\lra H^1_{\mathcal{F}^*}(\QQ,W_I^*(1)).$$
It follows from Lemma~\ref{lemma_aux_to_choose_k_optimally} that 
\begin{equation}\label{eqn_bound_on_C_Rubin_5_2_3}
|C|\leq \chi(\overline{X})^{2a}.
\end{equation}
To save ink, let us put $k=|C|$.  Let us choose the set $\frak{P}$ of rational primes as in Lemma~\ref{lemma_Rubin_5_2_3} and define $n$ to be the product of the elements of $\frak{P}$. We then have
$$\frak{m}^aH^1_{\mathcal{F}_n^*}(\QQ,W_{\frak{m}^{2a}}^*(1))=\{0\}\,.$$
Moreover, since the image of the injective map
$$[\frak{m}^a]: H^1_{\mathcal{F}_n^*}(\QQ,W_{\frak{m}^{a}}^*(1))\hookrightarrow  H^1_{\mathcal{F}_n^*}(\QQ,W_{\frak{m}^{2a}}^*(1))$$
is contained in $\frak{m}^aH^1_{\mathcal{F}_n^*}(\QQ,W_{\frak{m}^{2a}}^*(1))=0$, it follows that $H^1_{\mathcal{F}_n^*}(\QQ,W_{\frak{m}^{a}}^*(1))=0$. Since we also have 
$$H^1_{\mathcal{F}_n^*}(\QQ,W_{\frak{m}^{a}}^*(1))\stackrel{\sim}{\lra}H^1_{\mathcal{F}_n^*}(\QQ,W_{I}^*(1))[\frak{m}^{a}]$$
by \cite[Lemma 3.5.3]{mr02}, it follows that \begin{equation}
\label{eqn_all_important_vanishing}
    H^1_{\mathcal{F}_n^*}(\QQ,W_{I}^*(1))=\{0\}\,.
\end{equation}

Using \cite[Theorem 2.3.4]{mr02} with $T=W_I$, $\mathcal{G}_1=\mathcal{F}$ and $\mathcal{G}_2=\mathcal{F}^n$ along with  \eqref{eqn_all_important_vanishing}, we conclude that
\begin{equation}
     \label{eqn_114_1_1}
    {\rm length}_{\cO}\left(H^1_{\mathcal{F}^*}(\QQ,W_{I}^*(1))\right)={\rm length}_{\cO}\left({\rm coker}(\res_n^s)\right)\,.
\end{equation}
   
On combining \eqref{eqn_114_1_1} with the conclusion of Lemma~\ref{lemma_Rubin_5_2_5}, we have
\begin{align}
   \notag     {\rm length}_{\cO}\left(H^1_{\mathcal{F}^*}(\QQ,W_{I}^*(1))\right)&\leq {\rm ind}_{\cO}({\bf c},\mathcal{F})+\frac{k^2+3k}{2}a+ {(k^2+4k)}b
    \\ 
     \label{eqn_114_2} &\leq {\rm ind}_{\cO}({\bf c},\mathcal{F})+(k^2+4k)(a+b)\,.
\end{align}
Note that the right hand side of the inequality \eqref{eqn_114_2} is independent of the choice of $I$. Since \eqref{eqn_114_2} holds for all $I$ and $H^1_{\mathcal{F}^*}(\QQ,W^*(1))=\varinjlim_I H^1_{\mathcal{F}^*}(\QQ,W_{I}^*(1))$,  the proof of Theorem~\ref{thm_appendix_ES_main}(ii) follows once we set
$$t=t(a,b,\chi(\overline{X}))=\left(\chi(\overline{X})^{4a}+4\chi(\overline{X})^{2a}\right)(a+b)\,.$$


\item[iii)] Proof of (ii) shows that $t=t(a,0,\chi(\overline{X}))$ can be chosen in a way that it will only depend on $a$ and $\chi(\overline{X})$. The proof of (iii) follows from (ii) in this case.
\item[iv)] Proof of (ii) shows that one can choose $t=t(0,0,\chi(\overline{X}))=0$ in this case and (iv) follows from (ii).
\end{proof}
\subsection{Examples}
\label{appendix_subsec_examples}
Suppose $f$ is a cuspidal non-CM eigenform as in the main body of our article and let $\xi$ be a Hecke character associated to the imaginary quadratic field $K$, say of infinity type $(\ell_1,\ell_2)$. We recall the theta series $\theta_{\xi}:=\theta(\xi^{-1}|\cdot|^{\ell_1})$. Recall also that $X_{f,\xi}^\circ:=R_f^*\otimes R_{\xi}^*\cong R_f^*\otimes {\rm Ind}_{K/\QQ}\xi_p$, which is a free $\cO$-module of rank $4$ endowed with a continuous action of $G_{\QQ,\Sigma}$, and $X_{f,\xi}:=X_{f,\xi}^\circ\otimes\QQ_p$. In \S\ref{appendix_subsec_examples}, we shall discuss the validity of the hypothesis \ref{item_HypXa} with $X=X_{f,\xi}$.

Throughout \S\ref{appendix_subsec_examples}, we have set  $X=X_{f,\xi}$ and $X_\circ=X_{f,\xi}^\circ$.
\begin{proposition}\label{prop_appendix_example} 
 \item[i)] Suppose $\overline{\rho}_f\vert_{G_K}$ is absolutely irreducible and $\xi\neq \xi^c$. Then  \ref{item_HypXa}(i) is valid.
 \item[ii)] Suppose $\overline{\rho}_f\vert_{G_K}$ is absolutely irreducible and ${\xi}\not\equiv {\xi}^c\mod \frak{m}$. Then \ref{item_HypXcirc}(i) holds true.
\item[iii)] \ref{item_HypXa}(iii) always holds.
\item[iv)] Suppose $\overline{\rho}_f\vert_{G_K}$ is absolutely irreducible and ${\xi}\not\equiv {\xi}^c \mod \frak{m}^{a+1}$. Then \ref{item_HypXa}(iv) holds true.
\item[v)] If \ref{item_tau} holds true, then so does \ref{item_HypXa}(iii).
\end{proposition}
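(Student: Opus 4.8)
The common thread in all five parts is the identification $X_{f,\xi}^{\circ}=R_f^*\otimes{\rm Ind}_{K/\QQ}\xi_p\cong{\rm Ind}_{K/\QQ}\!\left(R_f^*|_{G_K}\otimes\xi_p\right)$ coming from the projection formula, which reduces every assertion to a statement about the two-dimensional $G_K$-representation $R_f^*|_{G_K}$ twisted by the character $\xi_p$ and by its conjugate $\xi_p^c$, together with the standard Mackey criterion: for an absolutely irreducible $G_K$-representation $\sigma$, the induced representation ${\rm Ind}_{K/\QQ}\sigma$ is absolutely irreducible over $G_\QQ$ precisely when $\sigma\not\cong\sigma^c$. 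The plan is to record this reduction once and then treat the items in turn.

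For part (i) I would apply the criterion with $\sigma=R_f^*|_{G_K}\otimes\xi_p$, whose conjugate is $R_f^*|_{G_K}\otimes\xi_p^c$ since $R_f^*$ is a $G_\QQ$-representation; an isomorphism $\sigma\cong\sigma^c$ would force $R_f^*|_{G_K}\cong R_f^*|_{G_K}\otimes(\xi_p^c\xi_p^{-1})$, and comparing determinants shows $\xi_p^c\xi_p^{-1}$ is quadratic, so either it is trivial — impossible since $\xi\neq\xi^c$ — or $R_f^*|_{G_K}$ is induced from a quadratic extension of $K$, forcing $f$ to have CM, against our hypothesis. The vanishing $\overline{X}^{G_\QQ}=0=(\overline{X}^*(1))^{G_\QQ}$ is then immediate: once $\overline{\rho}_f|_{G_K}$ is absolutely irreducible, $\overline{X}|_{G_K}\cong\overline{\rho}_f|_{G_K}\otimes(\overline{\xi}\oplus\overline{\xi}^c)$ has no $G_K$-invariants (and the dual likewise), a fortiori no $G_\QQ$-invariants. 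Parts (ii) and (iv) are the same arguments carried out residually: (ii) deduces irreducibility of $\overline{X}$ from $\overline{\xi}\not\equiv\overline{\xi}^c\bmod\m$, and (iv) upgrades this to the annihilator bound in \ref{item_HypXa}(iv) by pushing $\overline{\xi}\not\equiv\overline{\xi}^c\bmod\m^{a+1}$ through the same lattice-index bookkeeping as in Lemma~\ref{lemma_lattices_theta_pointwise}, so that $\m^a$ kills the maximal $G_{\QQ,S}$-stable submodule of $(\tau-1)W$ and of $(\tau-1)W^*(1)$.

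Parts (iii) and (v) are the cohomological and big-image conditions. Part (iii) asks for the Flach-type vanishing $H^1(\Omega/\QQ,\overline{X})=0=H^1(\Omega/\QQ,\overline{X}^*(1))$, which I would establish by exhibiting a central element of $\Gal(\Omega/\QQ)$ acting on $\overline{X}$, and compatibly on $\overline{X}^*(1)$, with no eigenvalue $1$ — a nontrivial scalar from the image of $\overline{\rho}_f$ together with a suitable value of $\overline{\xi}$ and of the cyclotomic character — and then noting that such an element annihilates the group cohomology; this is the step I expect to demand the most care, since ``always'' must be read against the standing hypotheses on $f$ and $\xi$ and one may not invoke $p$-distinguishedness or big image here. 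For part (v) the plan is shorter: under \ref{item_tau} the image $\overline{\rho}_\f(G_{\QQ(\mu_{p^\infty})})$ contains ${\rm SL}_2(\FF_p)$, so applying \cite[Theorem C.2.4]{BLInertOrdMC} (as recorded in the Remark following \ref{item_tau}) to $X_{f,\xi}^\circ=R_f^*\otimes{\rm Ind}_{K/\QQ}\xi_p$ produces an element $\tau\in G_{\QQ(\mu_{p^\infty})}$ with $X_{f,\xi}^\circ/(\tau-1)X_{f,\xi}^\circ$ free of rank one over $\cO$ — which is \ref{item_HypXa}(ii), the one sub-condition of \ref{item_HypXa} not handled by the other parts — while the same big image also forces the vanishing in \ref{item_HypXa}(iii) upon restriction to ${\rm SL}_2(\FF_p)$ and the classical vanishing of its degree-one cohomology on the relevant coefficient modules (available since $p\geq 7$). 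Across all parts the recurring obstacle is the same degeneracy analysis — ruling out that $R_f^*|_{G_K}$ is twist-equivalent to its $c$-conjugate, or that an invariant vector survives — with the remainder reducing to routine manipulation of induced representations.
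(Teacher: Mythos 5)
Your treatment of parts (i), (ii), and (v) is sound and, for (i), takes a slightly different route from the paper that is worth recording: where you invoke Mackey's criterion and argue that a self-twist $R_f^*|_{G_K}\cong R_f^*|_{G_K}\otimes(\xi_p^c\xi_p^{-1})$ would force $\rho_f|_{G_K}$ to be dihedral and hence $f$ CM, the paper argues more directly by Frobenius reciprocity: any nonzero $G_\QQ$-submodule $U$ of $X_{f,\xi}$ must, after restriction to $G_K$, contain both $V_f^*\otimes\xi_p$ and (by $c$-stability) $V_f^*\otimes\xi_p^c$, and since $\xi_p\neq\xi_p^c$ these two irreducibles share no constituent — the non-CM hypothesis is used in exactly the same way (to rule out a self-twist), just without passing through the induced-representation criterion. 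Both arguments are equivalent in substance; yours makes the role of the non-CM hypothesis more visible.

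Parts (iii) and (iv) are where the proposal does not reach the paper's argument. For (iii), you anticipate a Sah's-lemma/central-element argument and flag it as "the step that will demand the most care," but you do not find the actual hook: the paper's proof is a one-liner that observes $\varepsilon_f\varepsilon_{\theta_\xi}\neq \mathds{1}$ always holds (because $N_f$ and the conductor of $\xi$ are coprime to the discriminant of $K$), and then cites the discussion in \cite[\S11.1 and Remark 11.1.3]{KLZ2} which supplies the vanishing of $H^1(\Omega/\QQ,\overline{X})$ and $H^1(\Omega/\QQ,\overline{X}^*(1))$ precisely under that nebentype condition. Without identifying this specific input the vanishing is not established.

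Part (iv) is a genuine gap. You suggest pushing the congruence $\overline{\xi}\not\equiv\overline{\xi}^c\bmod\m^{a+1}$ through the "lattice-index bookkeeping as in Lemma~\ref{lemma_lattices_theta_pointwise}," but the actual proof has nothing to do with that lemma. The paper works directly with $W_s=W_{f,\xi}[\m^s]$, decomposes $W_s|_{G_K}=(W_{f,s}\otimes\xi_s)\oplus(W_{f,s}\otimes\xi_s^c)$, and shows by contradiction that any $G_\QQ$-stable $U\subset(\tau-1)W_{f,\xi}$ is killed by $\m^a$: if not, a vector in $W_{f,a+1}\otimes\xi_{a+1}\setminus W_{f,a}\otimes\xi_a$ together with $c$-stability and the mod-$\m^{a+1}$ distinctness of $\xi$ and $\xi^c$ forces $U[\m^{a+1}]$ to have $\cO$-length $4(a+1)$, while a separate snake-lemma computation — using that $X_\circ/(\tau-1)X_\circ$ has rank one — bounds ${\rm length}_{\cO}\bigl((\tau-1)W_s\cap W_{a+1}\bigr)$ above by $3(a+1)$. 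It is the rank-one property of $X_\circ/(\tau-1)X_\circ$ that drives the bound, and no amount of lattice-index bookkeeping in the sense of Lemma~\ref{lemma_lattices_theta_pointwise} substitutes for this length comparison. (One small aside: the statement of part (v) as printed refers to \ref{item_HypXa}(iii), but given part (iii) already asserts that unconditionally and given the citation to \cite[Theorem C.2.4]{BLInertOrdMC}, the intended content is \ref{item_HypXa}(ii) — you read the intended meaning correctly.)
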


\begin{proof}
\item[i)] Suppose $U$ is a non-trivial $G_{\QQ}$-stable submodule of $X_{f,\xi}$, so that 
$$0<\dim \Hom_{G_{\QQ}}(U,X_{f,\xi})=\dim \Hom_{G_{\QQ}}(U,{\rm Ind}_{K/\QQ}(V_f^*\otimes\xi_p))\,.$$
It follows from Frobenius reciprocity that
$$\dim \Hom_{G_{K}}(U\vert_{G_K}, V_f^*\otimes\xi_p)>0\,.$$
Since $V_f^*\otimes\xi_p$ is irreducible by assumption, it follows that $U\vert_{G_K}\supset V_f^*\otimes\xi_p$. Since $U$ and $V_f^*$ are $G_\QQ$-representations, they are both invariant under complex conjugation. Thence, we must also have $U\vert_{G_K}\supset V_f^*\otimes\xi_p^c$. Since $\xi_p$ and $\xi_p^c$ are distinct and $V_{f}^*\vert_{G_K}$ is irreducible, it follows that $V_f^*\otimes\xi_p$ and $V_f^*\otimes\xi_p^c$ have no common constituents. Thence, 
$U\vert_{G_K}\supset (V_f^*\otimes\xi_p) \oplus (V_f^*\otimes\xi_p^c)$ has dimension $4$ and $U=X_{f,\xi}$. This concludes the proof that $X_{f,\xi}$ is irreducible. 

Let us write $\mathds{1}$ for the free $\cO/\frak{m}$-vector space of dimension one on which $G_\QQ$ acts trivially. We contend to prove that 
$$\Hom_{G_{\QQ}}(\mathds{1},\overline{X}_{f,\xi})=0=\Hom_{G_{\QQ}}(\mathds{1},\overline{X}_{f,\xi}^*(1))\,.$$
Using Frobenius reciprocity again, $\Hom_{G_{\QQ}}(\mathds{1},\overline{X}_{f,\xi})=\Hom_{G_{K}}(\mathds{1}\vert_{G_K},\overline{R}_{f}^*\otimes \xi_p)$ and the latter vector space is trivial since we assumed $\overline{\rho}_f\vert_{G_K}$ is absolutely irreducible. This shows $\Hom_{G_{\QQ}}(\mathds{1},\overline{X}_{f,\xi})=0$; the proof for $\overline{X}_{f,\xi}^*(1)$ is identical.
\item[ii)] The argument in the first paragraph of the proof of (i) applies verbatim under the running hypothesis.
\item[iii)] This follows from the fact that we have $\varepsilon_f\varepsilon_{\theta_\xi}\neq 1$ (which is an immediate consequence of the fact that $N_f$ and the conductor of $\xi$ are coprime to the discriminant of $K$); see the discussion in \cite[\S11.1]{KLZ2} and the final paragraph of \cite[Remark 11.1.3]{KLZ2}.
\item[iv)] Let us set $W_{f,\xi}:=X_{f,\xi}^\circ\otimes\QQ_p/\ZZ_p$ and suppose $U\subset (\tau-1)W_{f,\xi}$ is a $G_\QQ$-stable submodule. It follows from (i) that $U$ has finite cardinality (since any divisible $G_K$-stable submodule of $(\tau-1)W_{f,\xi}$ is the image of a $G_K$-stable subspace of $(\tau-1)X_{f,\xi}$, which is zero). 

For any $s\in \ZZ^+$, let us put $W_{s}:=W_{f,\xi}[\frak{m}^s]$ and $W_{f,s}:=V_{f}^*/R_f^*[\frak{m}^s]$ to ease notation. Let us also write $\xi_s$ for the character $\xi_p\mod \frak{m}^s$ so that $W_{s}:={\rm Ind}_{K/\QQ}(W_{f,s}\otimes \xi_s)$ is a free $\cO/{\frak{m}^s}$-module of rank 4. 

Let us pick $s\geq a+1$ large enough so that $U\subset (\tau-1)W_s$ and write $U_1$ (respectively, $U_2$) for the projection of $U\vert_{G_K}$ to $W_{f,s}\otimes \xi_s$ (respectively, to $W_{f,s}\otimes \xi_s^c$) under the canonical $G_K$-equivariant splitting
$$W_s\vert_{G_K}=(W_{f,s}\otimes \xi_s)\oplus (W_{f,s}\otimes \xi_s^c)\,.$$
We claim that $U_1\subset W_{f,a}\otimes \xi_a$. Suppose on the contrary that $U_1$ contains a vector $w\in  W_{f,a+1}\otimes \xi_{a+1}$ which is not in $ W_{f,a}\otimes \xi_a$. Since $\overline{\rho}_f\vert_{G_K}$ is absolutely irreducible by assumption, it follows that the $G_K$-orbit of $w$ generates $W_{f,a+1}\otimes \xi_{a+1}$. This shows that $U[\frak{m}^{a+1}]\supset W_{f,a+1}\otimes \xi_{a+1}$. Moreover, since $\xi_p \not\equiv \xi_p^c \mod \frak{m}^{a+1}$ and both representations $U[\frak{m}^{a+1}]$ and $W_{f,a+1}$ are stable under complex conjugation, it follows that 
$$U[\frak{m}^{a+1}]\vert_{G_K}=(W_{f,a+1}\otimes \xi_{a+1})\oplus (W_{f,a+1}\otimes \xi_{a+1}^c).$$
This shows that 
\begin{equation}
    \label{eqn_propA21_1}
    {\rm length}_{\cO}\,U[\frak{m}^{a+1}]=4(a+1).
\end{equation}
On the other hand, since $U \subset (\tau-1)W_s$, we have an inclusion $U[\frak{m}^{a+1}]\subset (\tau-1)W_s\cap W_{a+1}$. We will now prove that 
\begin{equation}
    \label{eqn_propA21_2}
    {\rm length}_{\cO}\,(\tau-1)W_s\cap W_{a+1}\leq 3(a+1)
\end{equation}
whenever $s\geq a+1$. This will contradict \eqref{eqn_propA21_1} and complete the proof that $U$ is annihilated by $\frak{m}^a$.

In order to prove \eqref{eqn_propA21_2}, consider the commutative diagram 
$$\xymatrix{0\ar[r]& W_{a+1}\ar[r]\ar[d]_{\tau-1}&W_s\ar[r]^(.4){[p^a]}\ar[d]_{\tau-1}&W_{s-a-1}\ar[r]\ar[d]^{\tau-1}& 0\\
0\ar[r]&W_{a+1}\ar[r]&W_s\ar[r]^(.4){[p^a]}&W_{s-a-1}\ar[r]& 0
}$$
with exact rows. Snake lemma gives rise to the long exact sequence
\begin{align}
\label{eqn_propA21_snake}
    0\lra W_{a+1}^{\tau=1}\lra W_s^{\tau=1} \lra W_{s-a-1}^{\tau=1}\lra W_{a+1}/(\tau-1)W_{a+1} \stackrel{\alpha}{\lra} W_s/(\tau-1)W_s
\end{align}
and $\ker(\alpha)=\left( W_{a+1}\cap (\tau-1)W_s\right)/(\tau-1)W_{a+1}$. In particular,
\begin{equation}
    \label{eqn_propA21_3}
    {\rm length}_{\cO}\,(\tau-1)W_s\cap W_{a+1}={\rm length}_{\cO}\,\ker(\alpha)+{\rm length}_{\cO}\,(\tau-1)W_{a+1}\,.
\end{equation}
Moreover, \eqref{eqn_propA21_snake} shows that ${\rm coker}(\beta)\stackrel{\sim}{\lra}\ker(\alpha)$ and also that 
$${\rm length}_{\cO}\,\ker(\alpha)+{\rm length}_{\cO}\,W_{s}^{\tau=1}={\rm length}_{\cO}\,W_{a+1}^{\tau=1}+{\rm length}_{\cO}\,W_{s-a-1}^{\tau=1}\,.$$
Combining this equality with \eqref{eqn_propA21_3}, we conclude that
\begin{equation}
    \label{eqn_propA21_4}
    {\rm length}_{\cO}\,(\tau-1)W_s\cap W_{a+1}={\rm length}_{\cO}\,W_{a+1}^{\tau=1}+{\rm length}_{\cO}\,(\tau-1)W_{a+1}+{\rm length}_{\cO}\,W_{s-a-1}^{\tau=1}-{\rm length}_{\cO}\,W_{s}^{\tau=1}\,.
\end{equation}
For any positive integer $r$, the exactness of the sequence
$$0\lra W_r^{\tau=1}\lra W_r\stackrel{\tau-1}{\lra}W_r \lra W_r/(\tau-1)W_r$$
shows that ${\rm length}_{\cO}\, W_r^{\tau=1}={\rm length}_{\cO}\, W_r/(\tau-1)W_r$, hence also that
$${\rm length}_{\cO}\, W_r^{\tau=1}+{\rm length}_{\cO}\,  (\tau-1)W_r={\rm length}_{\cO}\, W_r=4r.$$
These facts combined with \eqref{eqn_propA21_4} shows 
\begin{equation}
        \label{eqn_propA21_5}
{\rm length}_{\cO}\,(\tau-1)W_s\cap W_{a+1}=4(a+1)+{\rm length}_{\cO}\,W_{s-a-1}/(\tau-1)W_{s-a-1}-{\rm length}_{\cO}\,W_{s}/(\tau-1)W_s\,.
\end{equation}
Since we have
$$W_r/(\tau-1)W_r=X_{f,\xi}^\circ/(\tau-1,\frak{m}^r)X_{f,\xi}^\circ\cong \cO/\frak{m}^r$$
we infer that
$${\rm length}_{\cO}\, W_r/(\tau-1)W_r=r\,.$$
Noting that $s-a-1\geq 0$ by choice, this combined with \eqref{eqn_propA21_5} yields
$${\rm length}_{\cO}\,(\tau-1)W_s\cap W_{a+1}=4(a+1)+(s-a-1)-s=3(a+1),$$
completing the proof of \eqref{eqn_propA21_2}.

The portion that concerns the largest proper $G_\QQ$-stable submodule of $(\tau-1)W_{f,\xi}^*(1)$ is proved in an identical manner.
\end{proof}


\section{A corrigendum to \cite{BLForum}}\label{appendix:corrige}

There is a small imprecision in the formula presented in \cite[Theorem~2.1]{BLForum}. In this appendix, we present the correct (and more general) formula. What follows  should replace \S2.2 of  op. cit. In particular, the numberings and notation all correspond to op. cit. Recall that $\Sigma^{(1)}$ denotes the set of Hecke characters of $\AA_K^\times$ with infinity type $(r,s)$ satisfying $1-k/2\le r,s\le k/2-1$, where $k$ is the weight of a fixed modular form $f$.

\setcounter{section}{2}\renewcommand\thesection{\arabic{section}}

\setcounter{subsection}{1}

\subsection{$p$-adic $L$-function of Hida and Perrin-Riou}\label{S:padicL}

Let $\Bf_1$ and $\Bf_2$ be two Hida families of tame levels $N_1$ and $N_2$ respectively. Suppose that $N$ is an integer divisible by both $N_1$ and $N_2$ with the same  prime factors as $N_1N_2$. There exists a 3-variable $p$-adic $L$-function $L_p(\Bf_1,\Bf_2,s)$, where $s$ is the cyclotomic variable, c.f. \cite[\S7.7]{KLZ2}.\footnote{The normalization we have chosen here is the one in  \cite{LLZ1} and differs from that in \cite{KLZ2} by a power of $N$. See the discussion in Remark~2.7.5(1) in op. cit.} More precisely, let  $f_1$ and $f_2$ be weight $k$ (resp., weight $l$) specializations of $\Bf_1$ and $\Bf_2$ with $l<k$. Suppose that $f_1$ and $f_2$ are $p$-old and they are the $p$-ordinary stabilizations of the newforms $f_1^\circ$ and $f_2^\circ$, respectively.  Note that if we specialize  $(\Bf_1,\Bf_2)$ at $(f_1,f_2)$, then $L_p(\Bf_1,\Bf_2,s)$ becomes the $p$-adic $L$-function associated to the Rankin--Selberg product of $f_1^\circ$ and $f_2^\circ$ given by \cite[Theorem~2.7.4]{KLZ2}, multiplied by $N^{2s-k-l+2}$. 
When we evaluate the latter at $s=j\in \ZZ$, we have
\[
L_p(f_1,f_2,j)=\frac{\left\langle f_1^{c},e_{\rm ord}\left(f_2^{[p]}\times\cE_{1/N}(j-l,k-1-j)\right)\right\rangle_{N,p}}{\langle f_1,f_1\rangle_{N,p}},
\]
where $f_1^{c}$ is the conjugate form given as in \cite[\S2.2]{loeffler18}, $f_2^{[p]}$ is the $p$-depletion of $f_2$, $\langle\sim,\sim\rangle_{N,p}$ denotes the Petersson inner product  at level $\Gamma_1(N)\cap \Gamma_0(p)$ and $\cE_{\alpha}(\phi_1,\phi_2)$ denotes the $p$-depleted Eisenstein series 
\begin{align}
&\sum_{n\ge 1,p\nmid n}\left(\sum_{0<d|n}\phi_1(d)\phi_2(n/d)\left[e^{2\pi id/N}-\phi_1\phi_2(-1)e^{-2\pi id/N}\right]\right)q^n\notag\\
=&\sum_{n\ge 1,p\nmid n}\left(\sum_{d|n}\sign(d)\phi_1(d)\phi_2(n/d) e^{2\pi id/N}\right)q^n\label{eq:eisenstein}
\end{align}
whenever $\phi_1$ and $\phi_2$ are two characters on $\Zp^\times$ and $\alpha\in\frac{1}{N}\ZZ/\ZZ$, as given by \cite[Definition~5.3.1]{LLZ1}. Note that we write our characters additively here (so an integer $j$ that appears as an argument of a $p$-adic $L$-function stands for its evaluation under the character $\chi_\cyc^j$).  We recall from \cite[Theorem~2.7.4]{KLZ2} (see also Remark~2.7.5(1) in op. cit.) that this $p$-adic $L$-function has the following interpolation formula. Let  $\alpha_i$ and $\beta_i$ be the roots to the Hecke polynomial of $f_i^\circ$ at $p$, with $\alpha_i$ being the unit root. 
If $j$ is an integer such that $l\le j\le k-1$ and $\chi$ is a finite character on $\Gamma_\cyc$ of conductor $p^n$, then
\begin{equation}\label{eq:interpolationHida}
L_p(f_1,f_2,j+\chi)=\frac{\cE(f_1,f_2,j+\chi)}{\cE(f_1)\cE^*(f_1)}\times\frac{i^{k-l}N^{2j-k-l+2}\Gamma(j)\Gamma(j-l+1)L(f_1^\circ,f_2^\circ,\chi^{-1},j)}{2^{2j+k-l}\pi^{2j+1-l}\langle f_1^\circ,f_1^\circ\rangle_{N_{1}}},
\end{equation}
where $\cE(f_1)=1-\beta_1/p\alpha_1$, $\cE^*(f_1)=1-\beta_1/\alpha_1$ and
\[
\cE(f_1,f_2,j+\chi)=\begin{cases}
\left(1-\frac{p^{j-1}}{\alpha_1\alpha_2}\right)\left(1-\frac{p^{j-1}}{\alpha_1\beta_2}\right)\left(1-\frac{\beta_1\alpha_2}{p^j}\right)\left(1-\frac{\beta_1\beta_2}{p^j}\right)&\text{if $\chi$ is trivial,}\\
\tau(\chi)^2\cdot \left(\frac{p^{2j-2}}{\alpha_1^2\alpha_2\beta_2}\right)^n&\text{if $\chi$ is of conductor $p^n>1$.}
\end{cases}
\]
Recall that $\ff$ is a modulus of $K$ with $(p,\ff)=1$. We write $H_{\ff p^\infty}$ for the ray class group of $K$ of conductor $\ff p^\infty$. As in \cite[\S6.2]{LLZ2}, we define
\[
\Theta=\sum_{\fa:(\fa,p)=1}[\fa]q^{N_{K/\QQ}(\fa)}\in\Lambda(H_{\ff p^\infty})[[ q]],
\]
where $\fa$ runs over ideals of $K$ and $[\fa]$ denotes the corresponding element of $H_{\ff p^\infty}$ via Artin reciprocity map. Given any character $\omega$ of $H_{\ff p^\infty}$, $\Theta(\omega)$ is then the $p$-depleted theta series attached to $\omega$. Note that its level divides $N_{K/\QQ}(\ff)\cdot{\rm disc}(K/\QQ)\cdot p^\infty$. On replacing $N$ by the lowest common multiple of the level of $f$ and $N_{K/\QQ}(\ff)\cdot{\rm disc}(K/\QQ)$ if necessary, we define a 2-variable $p$-adic $L$-function 
\[
L_p(f/K,\Sigma^{(1)})\in\Lambda_L(H_{\ff p^\infty}):=\mathfrak{o}[[H_{\ff p^\infty}]]\otimes L,
\]
which assigns a character $\omega$ of $H_{\ff p^\infty}$ the value
\[
L_p(f/K,\Sigma^{(1)})(\omega)=\frac{\left\langle( f^\lambda)^c,e_\ord(\Theta(\omega)\times\cE_{1/N}(k/2-1-\omega_\QQ,k/2-1))\right\rangle_{N,p}}{\langle  {f^\lambda}, {f^\lambda} \rangle_{N,p}} ,
\]
where $ f^\lambda$ is the ordinary $p$-stabilization of $f$ and $\omega_\QQ$ is the character given by the composition of $\omega$ with $\Zp^\times\hookrightarrow (\cO_K{\otimes}\Zp)^
\times\rightarrow H_{\ff p^\infty}$. 
 Suppose that the theta series of $\omega$, which we denote by $g$, is the ordinary $p$-stabilization of a CM modular form. If $f^\lambda$ and  $g$ vary inside  Hida families  $\Bf_1$ and $\Bf_2$ respectively, then we recover the Hida $p$-adic $L$-function $L_p(\Bf_1,\Bf_2,s)$. More specifically,  we have the formula
 \[
 L_p(f^\alpha,g,k/2)=L_p(f/K,\Sigma^{(1)})(\omega).
 \]

\begin{theorem}
\label{thm:sigma1interpolationformula}
Let $\psi$  be a Hecke character of infinity type $(a,b)$ so that $\psi\chi_\cyc^{-k/2}\in \Sigma^{(1)}$ with conductor dividing $\ff p^\infty$. Let  $\psi_p$ be the $p$-adic avatar of $\psi$. If the conductor of $\psi$ is coprime to $p$, then
  \[ L_p(f/K, \Sigma^{(1)})(\psi_{p}\cdot \chi_\cyc^{-k/2}) = \frac{\cE(f,\psi,0)}{\cE(f)\cE^*(f)}\times\frac{i^{k-b+a-1}N^{b+a-k+1}\Gamma(a)\Gamma(b)}{2^{a+b+k-1}\pi^{a+b}}\times \frac{L(f/K, \psi, 0)}{ \langle f, f\rangle_{N}}  ,\]
  where  $\cE(f)=1-\lambda'/p\lambda$,  $\cE^*(f)=1-\lambda'/\lambda$,  $\lambda$  and $\lambda'$ are the roots of the Hecke polynomial of $f$ at $p$, with the former being the unit root and
  \[
  \cE(f,\psi,j)=\prod_{\fq\in\{\fp,\fp^c\}}\left(1-\frac{p^{j-1}}{\lambda\psi(\fq)}\right)\left(1-\frac{\lambda'\psi(\fq)}{p^{j}}\right).
  \]
  If the $p$-primary part of the conductor of $\psi$ is given by $\fp^m(\fp^c)^n$ with $m+n\ge 1$, then the value of the $p$-adic $L$-function at $\psi_p\cdot \chi_\cyc^{-k/2}$ for $1\le j\le k-1$ is given by
  \[\frac{p^{a(m+n)}\tau(\psi_p)}{\lambda^{m+n}\cE(f)\cE^*(f)}\times\frac{i^{k-b+a-1}N^{a+b-k+1}\Gamma(a)\Gamma(b)}{2^{a+b+k-1}\pi^{a+b}}\times\frac{ L(f/K,\overline{\psi},0) }{\langle f,f\rangle_N},\]
where $\tau(\psi_p)$ is defined by
$\Theta(\psi_p\chi_\cyc^{-b})|_{b-a+1}\begin{pmatrix}
&-1\\p^{m+n}&
\end{pmatrix}=\tau(\psi_p)\overline{\Theta(\psi_p\chi_\cyc^{-b})}$,\footnote{If $\gamma=\begin{pmatrix}
a&b\\c&d
\end{pmatrix}\in \GL_2^+(\QQ)$, the normalization of the slash operator on a weight $\ell$ modular form $h$ is given by
\[
(h|_\ell \gamma)(z)=\det(\gamma)^{\ell-1}(cz+d)^{-\ell}h(\gamma z).
\]}

When $\psi$ has infinity type $(0,0)$ and $j$ is an integer with $1\le j\le k-1$, we have
\[
L_p(f/K, \Sigma^{(1)})(\psi_{p}\cdot \chi_\cyc^{j-k/2})=  \frac{\cE(f,\psi,j)}{\cE(f)\cE^*(f)}\times\frac{i^{k-1}N^{2j-k+1}\Gamma(j)^2}{2^{2j+k-1}\pi^{2j}}\times \frac{L(f/K, \psi, j)}{ \langle f, f\rangle_{N}}  
\]
if the conductor of $\psi$ is coprime to $p$. If the $p$-primary part of the conductor of $\psi$ is given by $\fp^m(\fp^c)^n$ with $m+n\ge 1$, then it is equal to
 \[\frac{p^{j(m+n)}\tau(\psi_p)}{\lambda^{m+n}\cE(f)\cE^*(f)}\times\frac{i^{k-1}N^{2j-k+1}\Gamma(j)^2}{2^{2j+k-1}\pi^{{2j}}}\times\frac{ L(f/K,\overline{\psi},j) }{\langle f,f\rangle_N}.\]
\end{theorem}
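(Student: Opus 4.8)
\textbf{Proof proposal for Theorem~\ref{thm:sigma1interpolationformula}.}
The plan is to deduce the interpolation formula for $L_p(f/K,\Sigma^{(1)})$ from the interpolation formula \eqref{eq:interpolationHida} for the three-variable Rankin--Selberg $p$-adic $L$-function of Hida--Perrin-Riou, by unwinding the definition of $L_p(f/K,\Sigma^{(1)})(\omega)$ as a Petersson inner product against the $p$-depleted theta series $\Theta(\omega)$ and an Eisenstein series, and by matching the resulting Rankin--Selberg $L$-value over $K$ with $L(f/K,\psi,s)$. First I would specialize the construction: take $\omega=\psi_p\chi_\cyc^{-k/2}$ (or the twist $\psi_p\chi_\cyc^{j-k/2}$ in the weight-$(0,0)$ case), observe that the theta series $g:=\Theta(\omega)$ is the ($p$-stabilized) weight-$\ell$ theta series attached to $\psi$, where $\ell=|a-b|+1$ for infinity type $(a,b)$, and note that $g$ varies in a CM Hida family $\Bf_2$ while $f^\lambda$ varies in $\Bf_1$. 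By the identity displayed just before the theorem, $L_p(f/K,\Sigma^{(1)})(\psi_p\chi_\cyc^{-k/2})=L_p(f^\alpha,g,k/2)$, so it remains to evaluate the right-hand side using \eqref{eq:interpolationHida} with $(f_1,f_2)=(f,g)$, weight $k$ for $f_1$, weight $\ell$ for $f_2$, and cyclotomic variable $j=k/2$.

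Next I would carry out the three separate bookkeeping tasks that convert the generic formula \eqref{eq:interpolationHida} into the stated shape. (i) \emph{$L$-value identification:} the Rankin--Selberg convolution $L(f,g,\chi^{-1},s)$ with $g=\theta_\psi$ factors, by Artin formalism for induced representations, as $L(f/K,\psi\cdot(\text{twist}),s)$ up to a shift; one tracks the shift carefully so that the point $j=k/2$ lands at $s=0$, producing $L(f/K,\psi,0)$ (respectively $L(f/K,\overline\psi,0)$ when the conductor of $\psi$ is divisible by primes above $p$, because $p$-depletion/$p$-stabilization of the theta series swaps $\psi$ with its complex conjugate at those primes, exactly as in \cite[\S6.2]{LLZ2}). (ii) \emph{Euler factor matching:} the factor $\cE(f_1,f_2,j+\chi)$ from \eqref{eq:interpolationHida}, with $\alpha_2,\beta_2$ the Hecke roots of $\theta_\psi$ at $p$ — which are $\psi(\fp)$ and $\psi(\fp^c)$ up to normalization — collapses to $\cE(f,\psi,0)=\prod_{\fq\in\{\fp,\fp^c\}}(1-p^{-1}/\lambda\psi(\fq))(1-\lambda'\psi(\fq))$, and in the ramified case to the $\tau(\psi_p)$-Gauss-sum expression, matching the conventions in the statement. (iii) \emph{Archimedean/power-of-$N$ constants:} substitute $l=\ell$, $j=k/2$ into the ratio $i^{k-l}N^{2j-k-l+2}\Gamma(j)\Gamma(j-l+1)/2^{2j+k-l}\pi^{2j+1-l}$ and simplify using $\ell=|a-b|+1$ and the relation between $k/2$ and the infinity type, obtaining $i^{k-b+a-1}N^{a+b-k+1}\Gamma(a)\Gamma(b)/2^{a+b+k-1}\pi^{a+b}$; the Petersson norm $\langle f_1,f_1\rangle_{N_{f_1}}$ becomes $\langle f,f\rangle_N$ (after the remark that $N$ is the lcm of the relevant levels), and $\cE(f)\cE^*(f)$ is unchanged. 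The weight-$(0,0)$ variant is identical except that one keeps the cyclotomic variable $j$ free (with $1\le j\le k-1$) rather than fixing it at $k/2$, which is why $\Gamma(j)^2$, $\pi^{2j}$ and $\cE(f,\psi,j)$ appear.

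The main obstacle I anticipate is task (i) together with the $p$-local normalization: getting the \emph{exact} shift in the $L$-function identification and the \emph{precise} form of the modified Euler factor right — in particular tracking which of $\psi$ or $\overline\psi$ appears, the exact power of $p$ and the Gauss sum $\tau(\psi_p)$ in the case of conductor divisible by $\fp^m(\fp^c)^n$. This is precisely the point where the imprecision in the original \cite[Theorem~2.1]{BLForum} arose, so I would be careful to derive the $\fp$-part and $\fp^c$-part of the interpolation factor symmetrically from the $p$-depletion of $\Theta(\omega)$, rather than treating only the unramified case and extrapolating. A secondary, more routine obstacle is the archimedean constant bookkeeping in task (iii): the powers of $i$, $2$, $\pi$ and $N$ and the Gamma factors must be reconciled between the ``cyclotomic-variable-$j$'' normalization of \eqref{eq:interpolationHida} and the ``Hecke-character-$\psi$'' normalization used in the statement, using the dictionary $j=k/2$, $l=\ell=|a-b|+1$, and $a+b=k$ for $\Sigma^{(1)}_{\rm cc}$-type characters (and the appropriate analogue in general). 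Once these matchings are in place, the four displayed formulae follow by direct substitution.
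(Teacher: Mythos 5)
Your high-level strategy for the unramified case is essentially that of the paper: identify the Petersson pairing defining $L_p(f/K,\Sigma^{(1)})(\psi_p\chi_\cyc^{-k/2})$ with a value of the Hida--Perrin-Riou two-form $p$-adic $L$-function and invoke \eqref{eq:interpolationHida}. But there is a genuine gap at the start of that reduction. You assert that $\Theta(\omega)$ with $\omega=\psi_p\chi_\cyc^{-k/2}$ is the weight-$\ell$ theta series attached to $\psi$; it is not, unless $b=k/2$. One first has to write
\[
\Theta(\psi_p\chi_\cyc^{-k/2})\times\cE_{1/N}(k/2-1-\omega_\QQ,k/2-1)
   = \dd^{k/2-b}\,\Theta(\psi_p\chi_\cyc^{-b})\times\cE_{1/N}(a+b-k/2-1,k/2-1),
\]
and then use Hida's formulae for the effect of $\dd$ (and of the holomorphic projection) on the ordinary projection of a Petersson pairing to pass the Maass--Shimura operator from the theta series onto the Eisenstein series, arriving at the pairing with $\Theta(\psi_p\chi_\cyc^{-b})\times\cE_{1/N}(a-1,k-b-1)$. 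This step is indispensable: only after it is the theta factor a genuine holomorphic weight-$(b-a+1)$ CM form lying in a Hida family. Relatedly, once this reduction is performed, the cyclotomic variable in \eqref{eq:interpolationHida} is $j=b$ (with $l=b-a+1$), not $j=k/2$; your formula happens to give the right $\Gamma$-, $\pi$-, $2$-, $N$- and $i$-exponents only because you did not actually carry out the substitution, and $j=k/2$ gives the wrong answer unless $b=k/2$.

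The second and more serious gap concerns the case $\fp^m(\fp^c)^n\mid\mathrm{cond}(\psi)$ with $m+n\ge1$. You treat it as a routine Gauss-sum bookkeeping variant of the unramified case, but \eqref{eq:interpolationHida} is formulated for the ordinary Rankin--Selberg $p$-adic $L$-function of two Hida families twisted by a \emph{cyclotomic} finite-order character of conductor $p^n$; it does not cover the situation in which the second form $g=\Theta(\psi_p\chi_\cyc^{-b})$ is itself $p$-new of level $Np^{s+1}$ because $\psi$ has nontrivial ramification along $\fp$ or $\fp^c$. The paper therefore has to give a separate, direct computation of the Petersson pairing in this case: after the same $\dd^{k/2-b}$ manipulation as above one expresses the pairing against $W_{Np}(f^\lambda)(p^sz)$, applies an Atkin--Lehner/duality argument to move the Atkin--Lehner operators $W_{p^{s+1}}$ onto the theta series and Eisenstein series (this is where $\tau(\psi_p)$ enters, as the pseudo-eigenvalue of $W_{p^{s+1}}$ on $\Theta(\psi_p\chi_\cyc^{-b})$, and where the $L$-value becomes $L(f/K,\overline{\psi},0)$), and only then invokes the classical Rankin--Selberg--Shimura formula for Petersson pairings. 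Without spelling out this Atkin--Lehner step you cannot obtain the exponent $p^{a(m+n)}/\lambda^{m+n}$ or identify $\tau(\psi_p)$ correctly, and this is precisely the delicate point the corrigendum is meant to fix.
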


\begin{proof}
We have
\begin{align*}
&\Theta(\psi_p\cdot \chi_\cyc^{-k/2})\times\cE_{1/N}(k/2-1-(\psi_p\chi_\cyc^{-k/2})_\QQ,k/2-1)\\
=&\Theta(\psi_p\chi_\cyc^{-b}\cdot \chi_\cyc^{b-k/2})\times\cE_{1/N}(k/2-1-(-a-b+k),k/2-1)\\
=&\dd^{k/2-b} \Theta(\psi_p\chi_\cyc^{-b})\times \cE_{1/N}(a+b-k/2-1,k/2-1),
\end{align*}
where $\dd=q\frac{d}{dq}$. By \cite[Lemma~6.5(iv)]{hida88} and \cite[Lemma~5.3]{hida85}, we can rewrite the Petersson product in the numerator of the definition of $L_p(f/K, \Sigma^{(1)})(\psi_{p}\cdot \chi_\cyc^{-k/2}) $ as
\begin{align*}
&\left\langle(f^\lambda)^c, e_\ord\circ\Xi \left(\Theta(\psi_p\chi_\cyc^{-b})\times\dd^{k/2-b}\cE_{1/N}(a+b-k/2-1,k/2-1)\right)\right\rangle_{N,p}\\
=&\left\langle(f^\lambda)^c, e_\ord\circ\Xi \left(\Theta(\psi_p\chi_\cyc^{-b})\times\cE_{1/N}(a-1,k-b-1)\right)\right\rangle_{N,p},
\end{align*}
where $\Xi$ denotes the holomorphic projection.
Since $\Theta(\psi_p\chi_\cyc^{-b})$ is the $p$-depletion of a CM modular form (say $g$) of weight $b-a+1$, we have
\[
L_p(f/K, \Sigma^{(1)})(\psi_{p}\cdot \chi_\cyc^{-k/2})=L_p(f,g,b).
\]
The first formula thus follows \eqref{eq:interpolationHida}.

We now suppose that $\Theta(\psi)$ is of level $Np^{s+1}$, and primitive at $p^{s+1}$ with $s\ge0$ (so that $s+1=m+n$). 
We write $W_{Np^*}$ for the Atkin-Lehner involution of level $Np^*$ under the normalization we have chosen.
Following the calculations in \cite[pages 224-225]{hidabook}, we have
\begin{align*}
&\left\langle ( f^\lambda)^c,e_\ord\circ\Xi(\Theta(\psi_p\cdot \chi_\cyc^{-k/2})\times\cE_{1/N}(k/2-1-(\psi_p\chi_\cyc^{-k/2})_\QQ,k/2-1))\right\rangle_{N,p}\\
=&
\frac{p^{(k-1)s}}{\lambda^{s}}\left\langle W_{Np}(f^\lambda)(p^sz), e_\ord\circ\Xi \left(\Theta(\psi_p\cdot \chi_\cyc^{-k/2})\times\cE_{1/N}(k/2-1-(\psi_p\chi_\cyc^{-k/2})_\QQ,k/2-1)\right)\right\rangle_{N,p^{s+1}}.
\end{align*}
As in the previous case, we can rewrite the Petersson product above as
\begin{align*}
\frac{p^{(k-1)s}}{\lambda^{s}}\left\langle W_{Np}(f^\lambda)(p^sz), e_\ord\circ\Xi \left(\Theta(\psi_p\chi_\cyc^{-b})\times\cE_{1/N}(a-1,k-b-1)\right)\right\rangle_{N,p^{s+1}}.
\end{align*}
As in \cite[Appendix A, Step 2]{loeffler18},  this is equal to
\[
\frac{p^{(k-1)s}}{\lambda^{s}}\times\left\langle W_{Np}(f^\lambda)(p^sz), e_\ord\circ\Xi \left(\Theta(\psi_p\chi_\cyc^{-b})\times\tilde E\right)\right\rangle_{N,p^{s+1}},\]
where $\tilde{E}$ is the Eisenstein series given by
\[
\sum_{n\ge 1}q^n\sum_{d|n,p\nmid\frac{n}{d}}d^{b-1}(n/d)^{-b}\left(e^{2\pi id/N}+(-1)^{k-1}e^{-2\pi i d/N}\right).
\]

By duality (as in \cite[Appendix A, Step 3]{loeffler18}), we may rewrite the quantity above as
\begin{align*}
&\frac{p^{s+1}}{\lambda^{s}}\times\left\langle W_{N}(f^\lambda), e_\ord\circ\Xi \left(W_{p^{s+1}}(\Theta(\psi_p\chi_\cyc^{-b}))\times W_{p^{s+1}}(\tilde E)\right)\right\rangle\\
=&\frac{cp^{s+1}}{\lambda^{s}}\times{p^{(s+1)(k-2-b)}}\left\langle \overline{f^\lambda}, e_\ord\circ\Xi \left(W_{p^{s+1}}(\Theta(\psi_p\chi_\cyc^{-b}))\times  E_{1/Np^{s+1}})\right)\right\rangle\\
=&\frac{cp^{(s+1)(k-1-b)}}{\lambda^{s}}\left\langle \overline{f^\lambda}, e_\ord\circ\Xi \left(W_{p^{s+1}}(\Theta(\psi_p\chi_\cyc^{-b}))\times  E_{1/Np^{s+1}})\right)\right\rangle,
\end{align*}
where $c$ is the Atkin-Lehner pseudo-eigenvalue of $f^\lambda$ and $E_{1/Np^{s+1}}$ is as defined in \cite[\S4-5]{LLZ1}. 
By a theorem of Rankin-Selberg and Shimura (c.f. \cite[Theorem~7.1]{kato04}), this is equal to 
\[
\frac{cp^{(s+1)(k-1-b)}}{\lambda^{s}}\times\frac{i^{k-b+a-1}\Gamma(a)\Gamma(b)}{2^{a+b+k-1}\pi^{a+b}(Np^{s+1})^{k-a-b-1}}\times D_{Np}\left(f^\lambda,W_{p^{s+1}} \left(\Theta(\psi_p\chi_\cyc^{-b})\right),b\right),
\]
where $D_{Np}$ is the usual Rankin-Selberg convolution with Euler factors dividing $Np$ removed.
We can simplify this expression as
\begin{align*}
&\frac{cN^{a+b-k+1}p^{a(s+1)}}{\lambda^s}\times\frac{i^{k-b+a-1}\Gamma(a)\Gamma(b)}{2^{a+b+k-1}\pi^{{a+b}}}\times D_{Np}\left(f^\lambda,W_{p^{s+1}} \left(\Theta(\psi_p\chi_\cyc^{-b})\right),j\right)\\
=&\frac{cN^{a+b-k+1}p^{a(s+1)}}{\lambda^s}\times\frac{i^{k-b+a-1}\Gamma(a)\Gamma(b)\tau(\psi_p)}{2^{a+b+k-1}\pi^{{a+b}}}\times D_{Np}\left(f,\overline{\Theta(\psi_p\chi_\cyc^{-b})},b\right).
\end{align*}
Finally, recall that $\langle f^\lambda,f^\lambda\rangle_{N,p}=c\lambda\cE(f)\cE^*(f)\langle f,f\rangle_N$ and that $D_{Np}\left(f,\overline{\Theta(\psi_p\chi_\cyc^{-b})},b\right)=L(f/K,\overline{\psi},0)$ (see \cite[\S3.4]{nekovar95}), hence the result.
\end{proof}
\begin{remark}
The absolute value of $\tau(\psi_p)$ is $p^{-(m+n)/2}$ under our normalization of the slash operator.
\end{remark}

\begin{corollary}\label{cor:SU}
Let $\LSU_{f/K}$ be the $2$-variable $p$-adic $L$-function of Skinner-Urban defined in \cite[Theorem~12.7]{skinnerurbanmainconj} and $\psi$ as in Theorem~\ref{thm:sigma1interpolationformula} with infinity type $(0,0)$. Then, the quotient
\[\frac{\LSU_{f/K}(\psi_p\cdot\chi_\cyc^{k-2})}{L_p(f/K, \Sigma^{(1)})(\psi_{p}\cdot \chi_\cyc^{k/2-1})}
\]
is a $p$-adic unit if  \textup{\textbf{\ref{item_HIm}}}, \textup{\textbf{\ref{item_HDist}}} and \textup{\textbf{\ref{item_HSS}}}  hold.
\end{corollary}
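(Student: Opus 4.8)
\textbf{Proof proposal for Corollary~\ref{cor:SU}.}
The plan is to compare both $2$-variable $p$-adic $L$-functions evaluated at the point $\psi_p$ by matching their interpolation formulae, and to reduce the verification that the ratio is a $p$-adic unit to a comparison of Euler factors and canonical periods. First I would recall the interpolation formula for $\LSU_{f/K}$ from \cite[Theorem~12.7]{skinnerurbanmainconj}: evaluated at $\psi_p\cdot\chi_\cyc^{k-2}$ with $\psi$ a finite-order Hecke character, its value is an explicit product of an Euler-type factor at $p$, an archimedean factor, and the algebraic part $L^{\mathrm{alg}}(f/K,\psi,k/2)$ of the Rankin--Selberg $L$-value normalised against a canonical period $\Omega_f$ attached to $f$ (more precisely, the product of a $+$ and a $-$ period, or $\langle f,f\rangle_N$ up to a unit under the running hypotheses). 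On the other side, Theorem~\ref{thm:sigma1interpolationformula} (the $\psi$ of infinity type $(0,0)$, conductor prime to $p$ case, with $j=k/2$) gives
\[
L_p(f/K, \Sigma^{(1)})(\psi_{p}\cdot \chi_\cyc^{k/2-1})=\frac{\cE(f,\psi,k/2)}{\cE(f)\cE^*(f)}\times\frac{i^{k-1}N^{k-1}\Gamma(k/2)^2}{2^{2k-2}\pi^{k}}\times \frac{L(f/K, \psi, k/2)}{ \langle f, f\rangle_{N}}\,.
\]
The key step is then to check that the quotient of the two right-hand sides — Euler factor against Euler factor, archimedean factor against archimedean factor, and $\langle f,f\rangle_N$ against the Skinner--Urban period — is a unit in $\cO$.

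The main input making this work is the set of hypotheses \textbf{\ref{item_HIm}}, \textbf{\ref{item_HDist}} and \textbf{\ref{item_HSS}}. The hypothesis \textbf{\ref{item_HIm}} (equivalently, fullness of $\overline{\rho}_\f$) guarantees that the congruence ideal $H_f$ is a unit and, via the comparison between the naive Petersson-norm period $\langle f,f\rangle_N$ and the cohomological canonical period used by Skinner--Urban, that the ratio of periods is a $p$-adic unit; here one invokes the standard relation (Hida, Vatsal) that these periods agree up to the congruence number, which is a unit under residual irreducibility/fullness. The hypothesis \textbf{\ref{item_HSS}} ensures there are no extraneous Euler factors at primes dividing $\ff$ that could contribute $p$-adic valuation, and that the tame level contributions are units. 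The hypothesis \textbf{\ref{item_HDist}} ($v_p(\chi(\p)-\chi(\p^c))=0$) is what makes the modified Euler factors at $p$ — both the $\cE(f,\psi,k/2)$, $\cE(f)$, $\cE^*(f)$ appearing in Theorem~\ref{thm:sigma1interpolationformula} and the analogous factors in the Skinner--Urban formula — differ by a unit; the point is that under $p$-distinction the relevant $(1-\text{unit})$ terms coincide up to sign and unit, while any $(1-p^{\ast}\cdot\text{unit})$ terms are automatically units in $\cO$ since they are $\equiv 1 \bmod \m_\cO$.

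Concretely, I would proceed as follows. (i) Write out $\LSU_{f/K}(\psi_p\cdot\chi_\cyc^{k-2})$ using \cite[Theorem~12.7]{skinnerurbanmainconj}, isolating its $p$-Euler factor $\cE^{\mathrm{SU}}$, archimedean factor, and algebraic $L$-value with its period. (ii) Divide by the expression from Theorem~\ref{thm:sigma1interpolationformula} displayed above; the genuine $L$-value $L(f/K,\psi,k/2)$ and the archimedean/$\Gamma$/power-of-$N$ factors cancel exactly (this is the content of both being interpolation formulae for the same complex $L$-function at the same point, so the transcendental parts must match — any residual rational factor is then checked to be prime-to-$p$ using \textbf{\ref{item_HSS}}). (iii) Reduce to showing $\cE^{\mathrm{SU}}/\big(\cE(f,\psi,k/2)/\cE(f)\cE^*(f)\big)$ times the period ratio $\Omega^{\mathrm{SU}}/\langle f,f\rangle_N$ lies in $\cO^\times$. (iv) For the Euler factors, expand each $\cE$-term: the unit-root factors $1-\lambda'\psi(\q)/p^{k/2}$ and $1-\lambda'\psi(\q)/p^{k/2-1}$ carry $v_p(\lambda')=k-1>0$ hence are units; the factor $1-p^{k/2-1}/(\lambda\psi(\q))$ is the delicate one and here \textbf{\ref{item_HDist}} plus the explicit shape of the Skinner--Urban factor (which differs from $\cE(f,\psi)$ by interchanging $\p\leftrightarrow\p^c$ or by a $\psi(\p)/\psi(\p^c)$-type term) shows the ratio is a unit. (v) For the period ratio, invoke the Hida--Vatsal comparison under \textbf{\ref{item_HIm}}. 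The main obstacle I anticipate is step (iv)--(v): carefully matching the \emph{precise} normalisation of the $p$-adic $L$-function and period in \cite{skinnerurbanmainconj} (which uses an Eisenstein-series/Klingen construction with its own choice of periods and its own Euler factor at $p$) with the Hida/Perrin-Riou normalisation used here, and confirming that the discrepancy is exactly accounted for by a unit under \textbf{\ref{item_HIm}}, \textbf{\ref{item_HDist}}, \textbf{\ref{item_HSS}} — this is essentially a bookkeeping exercise but one where sign conventions, complex conjugation on $\psi$, and the precise form of the modified Euler factor must be tracked with care.
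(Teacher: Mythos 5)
Your skeleton (compare interpolation formulae term by term, isolate Euler factors and periods, invoke the running hypotheses to control them) is the right general shape, and you correctly flag the period comparison as one of the two things that has to be settled. But there is a genuine gap: you have silently restricted to the case where $\psi$ has conductor prime to $p$ — that is the only interpolation formula you wrote out — whereas the corollary allows $\psi$ ramified at $p$, and that is precisely where the nontrivial content lies. When the $p$-part of the conductor of $\psi$ is $\p^m(\p^c)^n$ with $m+n\ge 1$, the formula of Theorem~\ref{thm:sigma1interpolationformula} contains the Atkin--Lehner pseudo-eigenvalue $\tau(\psi_p)$ (normalised so that $|\tau(\psi_p)|=p^{-(m+n)/2}$), whereas the Skinner--Urban interpolation formula carries a Gauss sum of $\psi_p$ (of absolute value $p^{(m+n)/2}$). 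The paper's proof identifies exactly this as the key computation: the quotient of these two transcendental normalising constants, together with the accompanying powers of $p$ and $\lambda$, must be checked to have trivial $p$-adic valuation, and that is what the $p^{-(m+n)/2}$ versus $p^{(m+n)/2}$ matching achieves. Your proposal never touches this, so your ``match the interpolation formulae'' step (ii) would leave an unaccounted-for $p$-power discrepancy at all points with $m+n\ge 1$.

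On the other pieces: the period comparison is not something you need to rederive via Hida--Vatsal under \textup{\textbf{\ref{item_HIm}}} — the paper simply delegates it to the proof of \cite[Theorem~12.7]{skinnerurbanmainconj}, where the relevant comparison of periods is already carried out; invoking that directly is both shorter and safer than re-running a congruence-number argument. Your discussion of the role of \textup{\textbf{\ref{item_HDist}}} in making the modified $p$-Euler factors unit-equivalent is plausible in spirit but is not what the paper leans on; once the $\tau$-versus-Gauss-sum valuation is sorted out, the Euler-factor and $\Gamma$-factor bookkeeping is mechanical, and the paper treats it as such. In short: keep your framework, but the proof is incomplete until you add the ramified-at-$p$ interpolation formula and the explicit $|\tau(\psi_p)|=p^{-(m+n)/2}$ versus $|$Gauss sum$|=p^{(m+n)/2}$ cancellation, which is the actual heart of the argument.
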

\begin{proof}
The comparison of the two choices of periods is given in the proof of \cite[Theorem~12.7]{skinnerurbanmainconj}. The result follows from the respective interpolation formulae and the fact that the the absolute value of $\tau(\psi_p)$ is $p^{-(m+n)/2}$, whereas that of the Gauss sum of $\psi_p$ is $p^{(m+n)/2}$.
\end{proof}

\begin{corollary}
Let $\LN_{f/K}$ be the $2$-variable $p$-adic $L$-function of Nekovar defined in \cite[\S I.5.10]{nekovar95} and $\psi$ as in Theorem~\ref{thm:sigma1interpolationformula}. Then, the quotient
\[\frac{\LN_{f/K}(\psi_p)}{L_p(f/K, \Sigma^{(1)})(\psi_{p})}
\]
is a $p$-adic unit if  \textup{\textbf{\ref{item_HIm}}}, \textup{\textbf{\ref{item_HDist}}} and \textup{\textbf{\ref{item_HSS}}}  hold.
\end{corollary}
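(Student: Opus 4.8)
The plan is to reduce the statement about Nekov\'a\v{r}'s two-variable $p$-adic $L$-function $\LN_{f/K}$ to the corresponding comparison that has just been carried out for the Skinner--Urban $L$-function in Corollary~\ref{cor:SU}, or else to run the same argument directly. First I would recall from \cite[\S I.5.10]{nekovar95} the precise interpolation property of $\LN_{f/K}$: it is characterized by its values at $p$-adic avatars $\psi_p$ of Hecke characters $\psi$ in the relevant critical range, and the interpolation factor consists of an explicit archimedean period (a power of $2\pi i$ times $\Gamma$-factors), the Petersson norm $\langle f,f\rangle_N$, the modified Euler factors $\cE(f,\psi,j)$, $\cE(f)$, $\cE^*(f)$ at $p$, together with (in the ramified case) a Gauss sum $\tau(\psi_p)$ attached to $\psi_p$ in the \emph{arithmetic} normalization, whose absolute value is $p^{(m+n)/2}$.

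Next I would invoke Theorem~\ref{thm:sigma1interpolationformula}, which gives exactly the analogous interpolation formula for $L_p(f/K,\Sigma^{(1)})$, with the caveat that the relevant twisting factor there is $\tau(\psi_p)$ in the \emph{geometric}/slash-operator normalization, whose absolute value is $p^{-(m+n)/2}$ (as recorded in the Remark following Theorem~\ref{thm:sigma1interpolationformula}). Since both $\LN_{f/K}$ and $L_p(f/K,\Sigma^{(1)})$ interpolate the same complex $L$-values $L(f/K,\psi,\cdot)$ against the same archimedean transcendental factors, the ratio of the two $p$-adic $L$-functions, evaluated at the dense set of such $\psi_p$, equals the ratio of the two $p$-adic interpolation factors. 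All the archimedean data, the $\Gamma$-factors, the powers of $N$, and the complex $L$-value cancel, leaving a quotient of the $p$-adic fudge factors: the $\cE$-factors (which are identical, built from $\lambda,\lambda'$ and $\psi(\fq)$), times the ratio of the two incarnations of the Gauss-sum/$\tau$ term. The hypotheses \textbf{\ref{item_HIm}}, \textbf{\ref{item_HDist}}, \textbf{\ref{item_HSS}} are exactly what is needed to ensure that the comparison of periods between Nekov\'a\v{r}'s normalization and ours introduces only a $p$-adic unit — this is the same input used in Corollary~\ref{cor:SU}, taken from the period computations in \cite{nekovar95} (and compatible with the comparison in the proof of \cite[Theorem~12.7]{skinnerurbanmainconj}).

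I would then argue that the remaining ratio is a $p$-adic unit: in the unramified-at-$p$ case there is no $\tau$-factor on either side and the claim is immediate from the matching $\cE$-factors and the unit comparison of periods; in the ramified case, the arithmetic Gauss sum $\tau(\psi_p)$ appearing in $\LN_{f/K}$ has absolute value $p^{(m+n)/2}$ while the slash-normalized $\tau(\psi_p)$ in $L_p(f/K,\Sigma^{(1)})$ has absolute value $p^{-(m+n)/2}$, and these combine (after accounting for the explicit powers of $p$ and $\lambda$ — note $\lambda$ is a $p$-adic unit — in the two interpolation formulae) to a quantity of $p$-adic absolute value one. Since the set of $\psi$ in question is dense in the appropriate weight space and both sides are elements of $\Lambda_L(H_{\ff p^\infty})$ (up to the usual congruence-ideal denominators, which also match), the equality of the ratio with a unit at a dense set of points forces the ratio to be a unit in the Iwasawa algebra, by a Weierstrass-preparation / density argument. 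The main obstacle I anticipate is purely bookkeeping: making the normalizations of Nekov\'a\v{r}'s $p$-adic $L$-function, his periods, and his Gauss sums line up precisely with the normalizations used in Theorem~\ref{thm:sigma1interpolationformula} and in \cite{loeffler18, hidabook}, so that the cancellations are exact rather than off by an explicit (but still unit) constant; once the dictionary between the two normalizations is fixed, the proof is a direct transcription of the argument for Corollary~\ref{cor:SU}.
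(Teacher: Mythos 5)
Your proposal is in the right direction — compare the two interpolation formulae, piggy‑backing on the period comparison already carried out for Corollary~\ref{cor:SU} — but it asserts the decisive arithmetic without performing it, and it layers on a density/Weierstrass‑preparation argument that the statement does not call for. The corollary as stated is pointwise: for each $\psi$, the ratio of the two \emph{values} is a $p$-adic unit. There is no claim (and, as the Remark immediately following the corollary emphasizes, it would be false) that the ratio defines a unit in the Iwasawa algebra that interpolates as $f$ varies; so your appeal to density of specializations and Weierstrass preparation adds nothing and risks suggesting a statement the paper goes out of its way to disclaim.

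The gap is the explicit $p$-adic bookkeeping, which is the entire content of the proof. Nekov\'a\v{r}'s interpolation factor has exactly one non‑$p$-unit, namely $p^{(m+n)(k-1)/2}$, where $\fp^m(\fp^c)^n$ is the $p$-part of the conductor of $\psi$. On the other hand, specializing Theorem~\ref{thm:sigma1interpolationformula} at $j=k/2$ (as the corollary requires) produces the factor $\tau(\psi_p)\,p^{(m+n)k/2}/\lambda^{m+n}$, where $\lambda$ is a $p$-adic unit and $\tau(\psi_p)$ has $p$-adic valuation $-(m+n)/2$ under the paper's slash‑operator normalization. Its valuation is therefore $-(m+n)/2+(m+n)k/2=(m+n)(k-1)/2$, matching Nekov\'a\v{r}'s exactly. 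You gesture at this cancellation ("these combine\dots to a quantity of $p$-adic absolute value one") but never carry it out; absent this computation there is no proof. You should also revisit your description of Nekov\'a\v{r}'s formula as containing an "arithmetic Gauss sum $\tau(\psi_p)$ of absolute value $p^{(m+n)/2}$" — what is relevant and cited in the paper is the single $p$-power $p^{(m+n)(k-1)/2}$, and introducing a differently normalized Gauss sum without matching it against this quantity leaves the comparison incomplete.
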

\begin{proof}
This again follows from the respective interpolation formulae. The only non $p$-unit in the interpolation factors for $\LN_{f/K}(\psi_p)$ is $p^{(m+n)(k-1)/2}$. This has the same $p$-adic valuation as $\tau(\psi_p)p^{(m+n)k/2}$ that appears in Theorem~\ref{thm:sigma1interpolationformula} with $j=k/2$.
\end{proof}
\begin{remark}
It is important to note that the $p$-adic units 
$$\frac{\LN_{f/K}(\psi_p)}{L_p(f/K, \Sigma^{(1)})(\psi_{p})} \hbox{\,\,\,and\,\,\,\,} \frac{\LSU_{f/K}(\psi_p\cdot\chi_\cyc^{k/2-1})}{\LN_{f/K}(\psi_p)}$$
do not interpolate as $($the $p$-ordinary stabilization of$)$ $f$ runs through a Hida family.  In other words, Nekovar's $p$-adic $L$-function $\LN_{f/K}$ does not interpolate to a $3$-variable $p$-adic $L$-function.
\end{remark}

\bibliographystyle{amsalpha}
\bibliography{references}

\end{document}